\documentclass[11pt]{article}
\usepackage[top=2cm, bottom=2cm, left=2cm, right=2cm]{geometry}

\usepackage{import}

\usepackage{amsmath, amssymb, bbold, mathtools}
\usepackage{algorithm,algpseudocode}
\usepackage{multirow}
\usepackage{graphicx}
\usepackage{tcolorbox}
\usepackage[]{hyperref}
\usepackage{bbding}
\usepackage[export]{adjustbox}
\usepackage{subcaption}
\usepackage{enumerate}
\usepackage{todonotes}

\usepackage{authblk}
\usepackage{cleveref} 

\usepackage{wasysym}
\usepackage{lmodern} 

\usepackage{natbib}
 \bibpunct[, ]{(}{)}{,}{a}{}{,}

\usepackage{tikz}
\usetikzlibrary{calc}
\usetikzlibrary{shapes}
\usepackage{pgfplots}
\usepgfplotslibrary{fillbetween}

\usepackage{amsthm}
\theoremstyle{plain}


\newcounter{parentnumber}

\newtheorem{theorem}{Theorem}
\newtheorem{lemma}{Lemma}

\newtheorem{corollary}{Corollary}
\newtheorem{remark}{Remark}

\newtheorem{proposition}{Proposition}
\newtheorem{assumption}{Assumption}

\usepackage{acronym}
\acrodef{wlog}[WLOG]{without loss of generality}
\acrodef{lsc}[lsc]{lower semi-continuous}

\definecolor{red}{RGB}{163, 31, 52}
\definecolor{gray}{RGB}{194, 192, 191}
\definecolor{blue}{RGB}{59, 89, 152}
\definecolor{green}{RGB}{0, 179, 0}

\newcommand{\abs}[1]{\left|#1\right|}

\newcommand{\E}[2]{{\mathbb E}_{#1} \left[ #2 \right]}

\renewcommand{\tfrac}[2]{{#1}/{#2}}

\newcommand{\set}[2]{\left\{ #1\ : \ #2 \right\}}

\newcommand{\defn}[0]{:=}

\newcommand{\mc}{\mathcal}
\newcommand{\mb}{\mathbb}

\newcommand{\mr}{\mathrm}

\renewcommand{\d}{{\mathrm{d}}}

\renewcommand{\Re}{\mathrm{R}}

\newcommand{\st}{\mr{s.t.}}

\usepackage{setspace}
\title{Optimal Estimators for Heavy-Tailed Mean Estimation via Convex Analysis}
\author[1]{Bart P.G.\ van Parys\thanks{\href{mailto:bart.van.parys@cwi.nl}{bart.van.parys@cwi.nl}}}
\author[2]{Bert Zwart\thanks{\href{mailto:bert.zwart@cwi.nl}{bert.zwart@cwi.nl}}}
\affil[1,2]{Centrum Wiskunde \& Informatica}
\affil[2]{Eindhoven University of Technology}

\date{}

\begin{document}
\maketitle

\begin{abstract}
  We study optimal estimation of the location parameter of a distribution known only to lie in a \emph{symmetric moment class} $\mc C_0$, the mean-zero distributions with bounded moment $\int\phi\,\d\mb P\le B$ for a fixed even moment function $\phi$. Our main result concerns the \emph{fixed-margin} regime, in which the error margin $\Delta$ is held fixed as $n\to\infty$: we give an exact large-deviation characterization of the smallest worst-case probability $\beta_n(\Delta)$ of an error exceeding $\Delta$ that any measurable estimator can guarantee with $n$ observations.
  We show that its exponential rate is exactly a two-point Hellinger exponent over the class shifted to means $\pm\Delta$: $-\tfrac1n\log\beta_n(\Delta)\to r(\Delta)=-\log\sup_{\mb P_{\pm\Delta}\in\mc C_{\pm\Delta}}\int\sqrt{\d\mb P_{-\Delta}\,\d\mb P_{\Delta}}$, achieved non-asymptotically, $\beta_n(\Delta)\le e^{-nr(\Delta)}$, by a monotone $M$-estimator synthesized from a two-parameter convex optimization problem. Lagrangian duality collapses the infinite-dimensional search over estimating functions to a search over only two multipliers, which in turn determine a pair of envelopes characterizing a set of optimal estimating functions. The sandwich shape posited ad hoc by \citet{catoni2012challenging} and \citet{lee2020optimal,bhatt2022nearly} emerges naturally in our framework. For the bounded-variance class ($\phi(x)=x^2$, $B=\sigma^2$) this exponent admits the closed form $r(\Delta)=\tfrac12\log(1+\Delta^2/\sigma^2)$. We turn next to the \emph{fixed-confidence} regime, which instead holds the confidence $\beta$ fixed and lets the optimal error margin $\Delta_n(\beta)$ shrink with $n$. In the context of several concrete classes, the same synthesis remains optimal to leading order in this distinct regime. In the high-confidence limit $\beta\downarrow0$ the synthesized $M$-estimator attains the sharp constant $\sqrt2$ of \citet{catoni2012challenging} for bounded variance and the constant $L(\alpha)$ of \citet{lee2020optimal,bhatt2022nearly} for bounded $\alpha$-moments with $\alpha\in(1,2)$, which we thereby establish as tight. For slowly varying moment functions it is instead leading-order minimax at every fixed confidence level. The least-favorable distributions behind the matching bounds are furthermore remarkably simple and supported on at most three atoms.
\end{abstract}

\section{Introduction}
\label{sec:intro}

We are interested in the classical problem of estimating a location $\mu\in\Re$ from independent random variables $\{X_i\in\Re\}_{i=1}^n$ identically distributed following a distribution $\mb P_\mu$. It is foundational to many downstream problems, such as stochastic optimization, robust regression, and bandit learning, and despite its classical nature has drawn renewed attention in the recent literature \citep{catoni2012challenging,devroye2015subgaussianmeanestimators,lugosi2019mean,lee2020optimal,bhatt2022nearly}. We assume here that we are in a location family, that is $X_i-\mu\sim \mb P_0$ for all $i\in\{1,\dots,n\}$, and that the distribution $\mb P_0$ generating the family is not known exactly. Instead, we know only that $\mb P_0\in \mc C_0$ for some convex subset $\mc C_0$ of the probability simplex $\mc P$, the set of probability measures on $\Re$. We focus on \emph{symmetric moment classes}
\begin{equation}
  \label{eq:moment-class-intro}
  \mc C_0 \;=\; \set{\mb P\in\mc P}{\textstyle\int X\,\d\mb P=0,\ \int\phi(X)\,\d\mb P\leq B},
\end{equation}
for some even, continuous, super-linear function $\phi:\Re\to\Re_+$ non-decreasing on $[0,\infty)$ with $\phi(0)<B$, which subsume the bounded-variance class of \citet{catoni2012challenging} ($\phi(x)=|x|^2$) and the bounded-$\alpha$-moment class of \citet{lee2020optimal,chen2021generalized,bhatt2022nearly} ($\phi(x)=|x|^\alpha$). The super-linearity $\phi(x)/|x|\to\infty$ as $|x|\to\infty$ ensures tightness of $\mc C_0$. We call $\mc C_0$ \emph{symmetric} because the evenness of $\phi$ makes it invariant under the reflection $X\mapsto-X$; the individual members of $\mc C_0$ need not be symmetric distributions. For any $\mu\in\Re$ the corresponding \emph{shifted moment class} is
\begin{equation}\label{eq:shifted-class}
  \mc C_\mu \;\defn\; \set{\mb P_\mu}{\textstyle\int X\,\d\mb P_\mu=\mu,\,\int\phi(X-\mu)\,\d\mb P_\mu\le B}.
\end{equation}
The convex synthesis framework of Section~\ref{sec:synthesis} extends to any convex $\mc C_0\subseteq\mc P$, but we focus on the moment class \eqref{eq:moment-class-intro} since it allows for a matching lower bound in Section~\ref{sec:lower-bound}.

Given an error margin $\Delta>0$ we want to design an estimator $\mu_n$ that for any $n\geq 1$ guarantees
\begin{equation}
  \label{eq:confidence-guarantee}
  \max\bigl(\mb P_\mu\!\left[\mu_n-\mu>\Delta\right], \mb P_\mu\!\left[\mu_n-\mu<-\Delta\right]\bigr) \leq \beta \quad \forall \mu\in\Re,\ \mb P_\mu\in \mc C_\mu
\end{equation}
with the smallest possible confidence level $\beta=\beta_n(\Delta)$. By a simple union bound, Equation~(\ref{eq:confidence-guarantee}) then gives the symmetric error bound $\mb P_\mu\left[|\mu_n-\mu|>\Delta\right]\leq 2\beta$ for all $\mu\in\Re$ and $\mb P_\mu\in \mc C_\mu$.

As already contrasted in the abstract, two distinct notions of optimality run through this paper and need to be carefully distinguished. Holding the confidence level $\beta$ fixed and letting $n\to\infty$, the \emph{fixed-confidence} regime seeks the smallest error margin $\Delta_n(\beta)$ that~\eqref{eq:confidence-guarantee} permits; holding the margin $\Delta$ fixed instead, the \emph{fixed-margin} regime seeks the largest exponential rate at which the confidence level decays, $r^\star(\Delta)=\limsup_{n\to\infty}-\tfrac1n\log\beta_n(\Delta)$. These are two cross-sections of the same $(\Delta,\beta,n)$ trade-off; they weigh the dependence on $n$ differently, and an estimator optimal in one regime need not be optimal in the other (Section~\ref{sec:related}). Our central results concern this fixed-margin rate $r^\star(\Delta)$, for which we obtain matching upper and lower bounds on every symmetric moment class; for the concrete classes of Section~\ref{sec:illustrations-2} we show in addition that the same synthesis is fixed-confidence optimal: it attains the sharp high-confidence leading constant for the bounded-variance and bounded $\alpha$-moment classes, and is leading-order minimax at every fixed $\beta\in(0,\tfrac12)$ for slowly varying classes.

Our central result identifies this optimal rate exactly. Recall the two-point Hellinger exponent of the shifted classes,
\begin{equation}\label{eq:intro-hellinger}
  r(\Delta)\;\defn\;-\log\sup_{\mb P_{\pm\Delta}\in\mc C_{\pm\Delta}}\int\sqrt{\d\mb P_{-\Delta}\,\d\mb P_{\Delta}}.
\end{equation}
No measurable estimator achieves a faster rate, $r^\star(\Delta)\le r(\Delta)$ (Proposition~\ref{prop:LB}), and the rate $r(\Delta)$ is attained by a monotone $M$-estimator synthesized from a two-parameter convex optimization problem (Section~\ref{sec:synthesis}), whose worst-case confidence level obeys the non-asymptotic bound $e^{-nr(\Delta)}$ for every $n$ (Theorem~\ref{thm:matching}). Hence $r^\star(\Delta)=r(\Delta)$, and the synthesized estimator is exponential-rate optimal among \emph{all} estimators, not merely among $M$-estimators.

\subsection{Related Work}
\label{sec:related}

The statistical problem class is captured by the ambiguity set $\mc C_0$, which encodes the estimator's knowledge about the distribution $\mb P_0$ generating the data. The classes reviewed below differ only in this choice of $\mc C_0$.
Most results in the literature, including those surveyed in this section, concern the \emph{fixed-confidence} regime, phrasing optimality in terms of the smallest deviation $\Delta_n(\beta)$ achievable at a fixed confidence level $\beta$, whereas Sections~\ref{sec:synthesis} and~\ref{sec:lower-bound} work in the \emph{fixed-margin} regime. Our optimality results are therefore complementary to, rather than implied by, the existing fixed-$\beta$ bounds. Nevertheless, for the select classes of Section~\ref{sec:illustrations-2} the same synthesis recovers the existing $\Delta_n(\beta)$ guarantees at no extra cost: the sharp high-confidence leading constant for the bounded-variance and bounded $\alpha$-moment classes, and fixed-$\beta$ leading-order minimaxity for the slowly varying classes.

The benchmark throughout is the Gaussian model $\mb P_0=N(0,\sigma^2)$, where the empirical mean is optimal and achieves the sub-Gaussian margin $\sqrt2\,\sigma\sqrt{\log(1/\beta)/n}$ (Section~\ref{sec:bound-vari-class}). Following \citet{devroye2015subgaussianmeanestimators}, the question is how far $\mc C_0$ can be enlarged beyond $\{\mb P_0\}$ while still admitting an estimator whose deviations match this benchmark to leading order. The next two classes give striking answers.

\subsubsection{Bounded Variance Class}
\label{sec:catoni}

Surprisingly, the normality of the distribution $\mb P_0$ is not required to allow for estimators with normal error margins.
\citet{catoni2012challenging} drops the parametric assumption and indeed considers all distributions with known mean and bounded variance,
\begin{equation}
  \label{def:catoni-model}
  \mc C_0\defn \set{\mb P\in \mc P}{\textstyle\int X \d \mb P = 0,~\int |X|^2 \d \mb P \leq \sigma^2}.
\end{equation}
This class allows for arbitrarily heavy tails and may contain distributions with $\E{\mb P_0}{\exp(sX)}=\infty$ for every $s\neq 0$, so the empirical mean is provably suboptimal at any nontrivial confidence level. Catoni's seminal contribution is to identify a family of non-decreasing estimating functions sandwiched, for a free scale parameter $\lambda>0$, as
\begin{equation}
  \label{eq:catoni-butterfly}
  \psi_{2, l}(x):=-\log\!\left(1-x/\lambda + x^2/(2\lambda^2)\right) \leq \psi(x) \leq \log\!\left(1+x/\lambda + x^2/(2\lambda^2)\right)=:\psi_{2, u}(x).
\end{equation}

\begin{figure}[h]
  \centering
\begin{tikzpicture}[trim axis left, trim axis right]
  \begin{axis}[
        width=12cm,
        height=8cm,
  axis lines=middle,
  xlabel={$x/\lambda$},
  domain=-4:4,
  samples=200,
  xmin=-4, xmax=4,
  ymin=-3, ymax=3,
  legend pos=outer north east,
  legend cell align=left,
  grid=major,
  grid style={dashed,gray!30},
  ]

  \addplot[thick, blue, name path=upper] {ln(1+x+x^2/2)};
  \addlegendentry{$\psi_{2, u}$}

  \addplot[thick, red, name path=lower] {-ln(1-x+x^2/2)};
  \addlegendentry{$\psi_{2, l}$}
  
  \addplot[fill=green!20, opacity=0.5] fill between[of=upper and lower];

\end{axis}
\end{tikzpicture}
  \caption{\citet{catoni2012challenging} considers non-decreasing estimating functions $\psi$ in the green region sandwiched between two enveloping functions $\psi_{2, l}$ and $\psi_{2, u}$.}
  \label{fig:catoni-characteristic-functions}
\end{figure}

Every estimating function $\psi$ obeying \eqref{eq:catoni-butterfly} is sandwiched between two envelopes of logarithmic growth, so its asymptotic slope is far gentler than the linear $\psi(x)=x$ of the empirical mean; see Figure \ref{fig:catoni-characteristic-functions}.
\citet{catoni2012challenging} shows that any such $\psi$, with scale parameter $\lambda$ tuned to $\beta$ and $n$, produces an $M$-estimator achieving sub-Gaussian deviations under the variance constraint alone. Concretely, for any $\beta\in(0,1)$ the resulting estimator $\mu_n$ satisfies
\begin{equation}
  \label{eq:catoni-deviation}
  \max(\mb P_0\!\left[\mu_n > \Delta_n\right], \mb P_0\!\left[\mu_n < -\Delta_n\right] )  \leq \beta \quad \forall \mb P_0\in \mc C_0,
\end{equation}
with error margin $\Delta_n = \sqrt{2}\cdot\sigma\sqrt{\log(1/\beta)/(n-2\log(1/\beta))}$ for all $ n>2\log(1/\beta)$ \citep[Proposition~2.4]{catoni2012challenging}. For $n\to\infty$ this matches the error margin required in the normal location subfamily and hence can not be improved for $\beta\downarrow 0$.

Two classical alternatives to Catoni's $M$-estimator achieve the same sub-Gaussian rate under the variance constraint alone. The \emph{median-of-means} estimator $\mu_n^{\mathrm{mom}}$, going back to \citet{nemirovsky1983problem,jerrum1986random,alon1999space}, splits the sample into $k$ blocks, averages within each, and returns the median of the block means; with $k=\lceil 8\log(1/\beta)\rceil$ blocks it satisfies~\eqref{eq:confidence-guarantee} with $\Delta_n=\sigma\sqrt{32\log(1/\beta)/n}$ \citep[Theorem~2]{lugosi2019mean}. The \emph{trimmed mean} \citep{lugosi2021trimmed} instead discards the most extreme observations before averaging; in the variant of \citet[Theorem~6]{lugosi2019mean} it achieves $|\mu_{2n}^{\mathrm{trim}}-\mu|\le 9\sigma\sqrt{\log(8/\beta)/n}$ with probability at least $1-\beta$. Both match the $\sigma\sqrt{\log(1/\beta)/n}$ rate of the Gaussian benchmark above, and so are rate-optimal, but with suboptimal leading constants. In exchange, and unlike Catoni's estimator, neither of these alternative estimators requires knowledge of $\sigma$. Adaptivity to $\sigma$ need not cost the sharp \emph{leading} constant. \citet[Theorem~1]{leevaliant2021optimal} construct an estimator that, requiring no knowledge of the variance, attains the Gaussian margin $\sqrt2\,\sigma\sqrt{\log(1/\beta)/n}$ up to a $1+o(1)$ factor, thereby recovering Catoni's sharp constant $\sqrt2$ without the constant loss of the median-of-means and trimmed-mean estimators. The price of adaptivity surfaces only in lower-order terms: their multiplicative slack is guaranteed to vanish only asymptotically, across the moderate-deviation regime $1\ll\log(1/\beta)\ll n$ ($\beta\downarrow0$ sub-exponentially in $n$) in which the sub-Gaussian constant $\sqrt2$ is meaningful, whereas Catoni attains $\sqrt2$ through the explicit non-asymptotic margin~\eqref{eq:catoni-deviation} valid for every $n>2\log(1/\beta)$. All estimators do depend on the confidence level $\beta$, and this dependence is not an artifact: both \citet{devroye2015subgaussianmeanestimators} and \citet{lugosi2019mean} show that, absent further distributional information, an estimator achieving sub-Gaussian deviations must be tuned to $\beta$, no single estimator being simultaneously optimal over nontrivial ranges of $\beta$.

\subsubsection{Bounded \texorpdfstring{$\alpha$}{alpha}-th Moment Class}
\label{sec:Bhatt-model}

A natural extension is to replace the variance by an $\alpha$-th absolute moment, for $\alpha\in(1,2)$:
\begin{equation}
  \label{def:bhatt-model}
  \mc C_0 \defn \set{\mb P\in \mc P}{\textstyle\int X \d \mb P = 0,~\int \abs{X}^\alpha \d \mb P\leq \sigma^\alpha}.
\end{equation}
Three almost-concurrent works \citep{lee2020optimal,chen2021generalized,bhatt2022nearly} address this problem class by proposing a non-decreasing $\psi$ obeying, for a free constant $\tilde C>0$, the sandwich
\begin{equation}
  \label{eq:samorodnitsky-butterfly}
  \psi_{\alpha, l}(x) = -\log\!\left(1-x/\lambda + \tilde C \abs{x/\lambda}^\alpha\right) \leq \psi(x) \leq \log\!\left(1+x/\lambda + \tilde C \abs{x/\lambda}^\alpha\right) = \psi_{\alpha, u}(x),
\end{equation}
which recovers Catoni's sandwich \eqref{eq:catoni-butterfly} as $\alpha\to 2$ with $\tilde C\to 1/2$. The three works differ only in their choice of $\tilde C$: \citet{chen2021generalized} sets $\tilde C = 1/\alpha$, while \citet{lee2020optimal} and \citet{bhatt2022nearly} both take the value $C(\alpha) \defn \bigl(\tfrac{(\alpha-1)}{\alpha}\bigr)^{\alpha/2} \bigl(\tfrac{(2-\alpha)}{(\alpha-1)}\bigr)^{(2-\alpha)/2}$, which Section~\ref{sec:bounded-alpha-moment-1} shows to be the rate-optimal choice within the family.

Each such $\psi$ achieves the heavy-tailed margin order $\sigma\bigl(\log(1/\beta)/n\bigr)^{(\alpha-1)/\alpha}$, with a leading constant that depends on $\tilde C$. A lower bound of \citet{devroye2015subgaussianmeanestimators} matches this order but with a smaller constant, leaving the order optimal but the sharp constant open. The smallest constant in the family is $L(\alpha)$, attained at $C(\alpha)$; the precise margins and this optimal constant are taken up in Section~\ref{sec:bounded-alpha-moment-1}.

As in the variance case, the median-of-means estimator extends to this heavier-tailed setting. \citet{bubeck2013bandits} and \citet{devroye2015subgaussianmeanestimators} show that, with $k=\lceil 8\log(2/\beta)\rceil$ blocks, $\mu_n^{\mathrm{mom}}$ satisfies~\eqref{eq:confidence-guarantee} with $\Delta_n = 8\sigma\bigl(12\,\log(1/\beta)/n\bigr)^{(\alpha-1)/\alpha}$, again matching the optimal $\bigl(\log(1/\beta)/n\bigr)^{(\alpha-1)/\alpha}$ rate, though with a suboptimal leading constant. On the other hand, $\mu_n^{\mathrm{mom}}$ is independent of both the moment exponent $\alpha$ and the scale $\sigma$, depending only on the confidence $\beta$ through the block count $k$. This is unlike the $M$-estimators of \citet{lee2020optimal,bhatt2022nearly}, and unlike the optimized estimating functions we synthesize below, whose envelopes are tailored to $\alpha$ and $\sigma$.

\subsection{Contributions}
\label{ssec:contributions}

Our main contributions are the following.

\begin{enumerate}
  \item \textbf{A Convex Perspective.} We aim to design good $M$-estimators through their estimating function $\psi$; finding the one with the best exponential rate at error margin $\Delta$ is a convex optimization problem (Section~\ref{sec:synthesis}). By Lagrangian duality the infinite-dimensional search over $\psi$ collapses to a two-parameter semi-infinite convex optimization problem in the multipliers $(\lambda_1,\lambda_2)$, whose value is the best exponential rate achievable by a monotone $M$-estimator. The reformulation is not merely \emph{algorithmic} but also \emph{analytical}: convexity unlocks the standard toolbox (Lagrangian duality, KKT stationarity, strong duality, saddle-point arguments) on what was previously an ad-hoc search, and it is these tools that produce the structural and optimality results that follow.
  \item \textbf{Sandwich Structure.} The KKT conditions of the synthesis problem force every optimized monotone $\psi$ into a sandwich $\psi_l \le \psi \le \psi_u$ of log-tilted form (Section~\ref{sec:synthesis}). This recovers, from first principles, the structural shape of the estimators proposed ad hoc by \citet{catoni2012challenging}, \citet{lee2020optimal,chen2021generalized,bhatt2022nearly}; the dual multipliers parametrizing the envelopes are now \emph{optimized} rather than guessed.
  \item \textbf{Matching Exponential Rate (Main Contribution).} For symmetric moment classes we pin down the optimal fixed-margin rate $r^\star(\Delta)=r(\Delta)$ exactly (Section~\ref{sec:lower-bound}): the rate of the synthesized $M$-estimator coincides with the Chernoff exponent of a two-point Bayes test over the class, the exact large-deviation form of Le Cam's classical two-point lower bound, which in turn upper-bounds the rate of \emph{every} measurable estimator. The synthesized estimator is therefore exponential-rate optimal among all estimators, not merely among $M$-estimators. This is the main result of the paper.
  \item \textbf{Fixed-Confidence Optimality for Concrete Classes.} For the concrete classes of Section~\ref{sec:illustrations-2} the fixed-margin matching transfers to the fixed-confidence regime. For the bounded-variance and bounded $\alpha$-moment classes the synthesized $M$-estimator attains the optimal error margin $\Delta_n(\beta)$ with its sharp high-confidence leading constant; for the slowly varying classes it is leading-order minimax at every fixed confidence $\beta\in(0,\tfrac12)$. We recover Catoni's $\sqrt2$ for the bounded-variance class (Section~\ref{sec:bound-vari-class}), establish the constant $L(\alpha)$ of \citet{lee2020optimal,bhatt2022nearly} as tight for the bounded $\alpha$-moment class with $\alpha\in(1,2)$ (Section~\ref{sec:bounded-alpha-moment-1}), and, pushing beyond polynomial tails, treat the slowly varying moment classes (Section~\ref{sec:slow-growth}), where the optimal rate degenerates to sub-polynomial and the error margin decays only poly- or iterated-logarithmically in $n$.
\end{enumerate}

\paragraph{Relation to prior work.} The closest precedents are \citet{catoni2012challenging} for the bounded-variance class and \citet{lee2020optimal,chen2021generalized,bhatt2022nearly} for the bounded $\alpha$-moment class. Both lines of work \emph{posit} a sandwich-shaped one-parameter family of estimating functions and tune the scaling for the target confidence $(\beta,n)$; both yield only upper bounds on the achievable rate / margin. Our contribution is twofold relative to this prior work: (i) the sandwich structure is \emph{derived} from KKT rather than imposed, and the resulting envelopes are parametrized by two multipliers $(\lambda_1,\lambda_2)$ optimized jointly: Catoni's variance envelope coincides with a particular suboptimal slice of this family, while the $\alpha$-moment envelopes match no slice yet are still dominated by our optimized rate; (ii) for the bounded $\alpha$-moment class we prove a \emph{matching} lower bound via a Skellam/Bessel asymptotic, showing that the leading constant $L(\alpha)$ of \citet{lee2020optimal,bhatt2022nearly} is in fact tight. As with Catoni's and Bhatt's $\psi$'s, the optimal multipliers depend on the target $(\beta,n)$ regime; the convex optimization problem produces the right $(\lambda_1,\lambda_2)$ for any such target.

Our work is also close in spirit to a growing literature which views minimax estimation through a convex duality lens. \citet{polyanskiy2026dualizing} dualize Le Cam's two-point method: by convex duality and a minimax theorem the search for the tightest two-point lower bound becomes a minimization over estimators whose saddle value characterizes the minimax rate, for linear functionals strengthening the modulus-of-continuity bounds of \citet{donoho1991geometrizing}. As they observe, this continues the broader program of \citet{juditsky2020statistical} of recasting statistical inference as convex optimization. These works characterize the minimax \emph{rate} only up to universal constant factors, under quadratic risk; we instead fix the location functional and a confidence criterion and obtain the \emph{exact} large-deviation exponent. Convexity of the distribution class is the common enabler, and \citet{compton2025attainability} show what its absence costs: over the \emph{non-convex} family of translates of a fixed shape-constrained distribution, the two-point testing rate for location is attainable only up to polylogarithmic factors, and for merely symmetric shapes not at all. The classical attainability of the Hellinger modulus indeed requires a convex class \citep{donoho1991geometrizing}, precisely what our moment constraint supplies.

\paragraph{Context.} While our synthesis is phrased over the class of monotone $M$-estimators, the matching lower bound of Section~\ref{sec:lower-bound} applies to \emph{every} measurable estimator: no procedure, inside or outside the $M$-estimator family, achieves a strictly faster exponential rate on a symmetric moment class. This includes the median-of-means and trimmed-mean estimators. As discussed these estimators are \emph{order}-optimal; in the bounded variance regime, for instance, they attain the sharp $\sigma\sqrt{\log(1/\beta)/n}$ scaling of the error margin, but with a {suboptimal leading constant}. What they concede in the constant they gain in adaptivity. The median-of-means and trimmed-mean require no knowledge of the moment bound $B$ or of the exponent $\alpha$, their tuning depending only on the target confidence and sample size, whereas our envelopes are synthesized from a \emph{known} $(\phi,B)$ at a prescribed margin $\Delta$ or confidence level $\beta$. The two approaches are thus complementary rather than competing, trading exact constants on a known class against order-optimal adaptivity to an unknown one. 

\section{Optimized $M$-Estimators}
\label{sec:synthesis}

By an \emph{estimator} we always mean a statistic, i.e., a measurable map $\mu_n:\Re^n\to\Re$ of the sample $(X_1,\dots,X_n)$, where $\Re$ carries its Borel $\sigma$-algebra $\mc B(\Re)$ and the simplex $\mc P$, the set of Borel probability measures on $(\Re,\mc B(\Re))$, is equipped with the topology of weak convergence. 

In this section we aim to design good $M$-estimators by searching over their estimating function $\psi$. This search turns out to be a convex optimization problem. The prior constructions of \cite{catoni2012challenging,bhatt2022nearly,lee2020optimal} all impose a sandwich $\psi_l^\lambda\le\psi\le\psi_u^\lambda$ between log-envelopes parametrized by a single scalar $\lambda$. Our convex analysis perspective will reveal this sandwich shape as a \emph{consequence}, not an assumption. 

\subsection{A Convex Perspective}
\label{ssec:setup}

A classical choice is to consider an $M$-estimate $\mu_n$ with non-decreasing (and hence measurable) estimating function $\psi$. Writing $\psi_n(\theta):=\sum_{i=1}^n\psi(X_i-\theta)$, which is non-increasing in $\theta$, the associated $M$-estimator $\mu_n$ is any location at which $\psi_n$ changes sign:
\begin{equation}
  \label{eq:M-estimate}
  \psi_n(\theta)\;\ge\;0 \ \ \text{for } \theta<\mu_n, \qquad \psi_n(\theta)\;\le\;0 \ \ \text{for } \theta>\mu_n .
\end{equation}
This coincides with the usual estimating equation $\psi_n(\mu_n)=0$ whenever $\psi_n$ has an exact root; the question of when such a $\mu_n$ exists, and its non-uniqueness, is taken up immediately after Theorem~\ref{thm:feasibility-guarantee}. Such $M$-estimators contain the empirical mean as the special case $\psi(x)=x$, and admit efficient computation by bisection.
Whereas in previous work the search for an appropriate function $\psi$ was dominated by a certain number of ad hoc design choices such as the sandwich structure and restricted to a univariate parametric family a much more general approach can be considered. Indeed, the Chernoff bound argument employed in \cite{catoni2012challenging}, \cite{bhatt2022nearly} and \cite{lee2020optimal} allows searching over non-decreasing estimating functions $\psi$ directly.

\begin{theorem}[Convex feasibility implies the confidence guarantee]\label{thm:feasibility-guarantee}
Let $\mc C_0$ be the symmetric moment class~\eqref{eq:moment-class-intro}, $\Delta>0$, $r\in\Re$, and non-decreasing $\psi:\Re\to\Re$. If
\begin{subequations}\label{eq:symmetric-feasibility}
\begin{align}
  \sup_{\mb P\in\mc C_0}\log\E{\mb P}{\exp(\psi(X-\Delta))} \,+\, r &\;\le\; 0, \label{eq:sf-right}\\
  \sup_{\mb P\in\mc C_0}\log\E{\mb P}{\exp(-\psi(X+\Delta))} \,+\, r &\;\le\; 0, \label{eq:sf-left}
\end{align}
\end{subequations}
then any measurable $M$-estimator $\mu_n$ obeying the estimating equation~\eqref{eq:M-estimate}, should one exist, satisfies for every $n\ge 1$,
\begin{equation}
  \label{eq:feasibility-guarantee}
  \max\bigl(\mb P_\mu\!\left[\mu_n-\mu>\Delta\right],\, \mb P_\mu\!\left[\mu_n-\mu<-\Delta\right]\bigr)  \,\le\, e^{-nr}\qquad\text{for all }\mu\in\Re\text{ and }\mb P_\mu\in\mc C_\mu.
\end{equation}
\end{theorem}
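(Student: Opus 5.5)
The plan is to run the standard Chernoff argument through the monotonicity of $\psi_n$, after first reducing to $\mu=0$ by translation. If $X_i\sim\mb P_\mu\in\mc C_\mu$, then $Y_i=X_i-\mu\sim\mb P_0\in\mc C_0$ by the definition~\eqref{eq:shifted-class}. Moreover, the estimating equation~\eqref{eq:M-estimate} is translation equivariant: $\sum_i\psi(X_i-\theta)=\sum_i\psi(Y_i-(\theta-\mu))$, so $\mu_n(X)-\mu$ satisfies the sign-change property for the sample $Y$. It therefore suffices to bound $\mb P_0[\mu_n>\Delta]$ and $\mb P_0[\mu_n<-\Delta]$ for $\mb P_0\in\mc C_0$.

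For the right tail, suppose $\mu_n>\Delta$. Then $\Delta<\mu_n$, and~\eqref{eq:M-estimate} gives $\psi_n(\Delta)\ge0$, that is, $\sum_i\psi(X_i-\Delta)\ge0$. Hence
\[
\mb P_0[\mu_n>\Delta]\le\mb P_0\Bigl[\textstyle\sum_{i=1}^n\psi(X_i-\Delta)\ge0\Bigr]\le\E{\mb P_0}{\exp\bigl(\textstyle\sum_i\psi(X_i-\Delta)\bigr)}=\bigl(\E{\mb P_0}{\exp(\psi(X-\Delta))}\bigr)^n\le e^{-nr},
\]
where the second step is Markov's inequality applied to $e^{s}$ with threshold $1$, the equality uses independence, and the last step is~\eqref{eq:sf-right} applied to $\mb P_0\in\mc C_0$. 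The left tail is symmetric. The event $\mu_n<-\Delta$ implies $\psi_n(-\Delta)\le0$, i.e.\ $\sum_i-\psi(X_i+\Delta)\ge0$, and Markov's inequality together with~\eqref{eq:sf-left} gives the same bound $e^{-nr}$. Taking the maximum of the two bounds yields~\eqref{eq:feasibility-guarantee}.

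Two points need care, and neither should be a real obstacle. First, measurability: $\psi$ is monotone, hence Borel, so $\sum_i\psi(X_i\mp\Delta)$ is a random variable. Since $\mu_n$ is assumed measurable, the events are measurable, and the inclusions above hold pointwise. Second, the expectations may be $+\infty$ for some $\mb P$. The hypothesis rules this out, because the supremum is at most $-r<\infty$, and the product formula $\E{}{\prod_i e^{\psi(X_i-\Delta)}}=\prod_i\E{}{e^{\psi(X_i-\Delta)}}$ holds for nonnegative integrands regardless. The step I would check most carefully is the translation-equivariance reduction, to make sure that an arbitrary measurable selection $\mu_n$ satisfying~\eqref{eq:M-estimate} at the sample $X$ is covered. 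This should be fine, because the argument only uses the sign conditions at the single points $\theta=\mu\pm\Delta$ directly on the original sample. Concretely, $\mu_n-\mu>\Delta$ implies $\sum_i\psi(X_i-\mu-\Delta)\ge0$, which avoids any reduction at all.
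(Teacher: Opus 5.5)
Your proposal is correct and follows essentially the same route as the paper. Both use the sign-change inclusion $\{\mu_n-\mu>\Delta\}\subseteq\{\sum_i\psi(X_i-\mu-\Delta)\ge0\}$, then Markov's inequality and independence, which the paper packages as the nonnegative product $M_n^+$ with $\E{\mb Q}{M_n^+}\le1$; your closing observation that one can work directly at $\theta=\mu\pm\Delta$, with no translation-equivariance reduction, is exactly how the paper handles that point.
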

\begin{proof}
Fix $\mu\in\Re$ and $\mb P_\mu\in\mc C_\mu$, and set $Y_i\defn X_i-\mu$. By the definition of $\mc C_\mu$, the common law $\mb Q$ of the $Y_i$ has $\E{\mb Q}{Y}=0$ and $\E{\mb Q}{\phi(Y)}\le B$, so $\mb Q\in\mc C_0$. Since $\mu_n$ obeys the sign-change condition~\eqref{eq:M-estimate}, $\mu_n>\mu+\Delta$ implies $\psi_n(\mu+\Delta)=\sum_{i=1}^n\psi(X_i-\mu-\Delta)=\sum_{i=1}^n\psi(Y_i-\Delta)\geq 0$, so $\mb P_\mu[\mu_n-\mu>\Delta]\le\mb P_\mu\bigl[\sum_{i=1}^n\psi(Y_i-\Delta)\ge 0\bigr]$.

The right-tail constraint~\eqref{eq:sf-right} controls the moment generating function of a single increment: since $\mb Q\in\mc C_0$, it gives $\log\E{\mb Q}{\exp(\psi(Y-\Delta))}\le -r$, and exponentiating yields $\E{\mb Q}{\exp(\psi(Y-\Delta)+r)}\le 1$. Form the nonnegative process
\[
  M_k^+\;\defn\;\exp\Bigl(\textstyle\sum_{i=1}^k\bigl(\psi(Y_i-\Delta)+r\bigr)\Bigr),\qquad M_0^+\defn 1,
\]
so that $\{\sum_{i=1}^n\psi(Y_i-\Delta)\ge0\}=\{M_n^+\ge e^{nr}\}$. By independence, $\E{\mb Q}{M_n^+}=\bigl(\E{\mb Q}{\exp(\psi(Y-\Delta)+r)}\bigr)^{n}\le 1$, so Markov's inequality gives
\[
  \mb P_\mu[\mu_n-\mu>\Delta]\;\le\;\mb P_\mu[M_n^+\ge e^{nr}]\;\le\;e^{-nr}\,\E{\mb Q}{M_n^+}\;\le\;e^{-nr}.
\]
The left-tail bound is symmetric, with $-\psi(\,\cdot\,+\Delta)$ and~\eqref{eq:sf-left} in place of $\psi(\,\cdot\,-\Delta)$ and~\eqref{eq:sf-right}.
\end{proof}

\begin{remark}[Martingale data]\label{rmk:martingale}
  The feasibility constraints~\eqref{eq:symmetric-feasibility} are a condition on $\psi$ and $r$ alone; the i.i.d.\ assumption on the data enters the proof only through $\E{\mb Q}{M_n^+}=\prod_{i=1}^n\E{\mb Q}{\exp(\psi(Y_i-\Delta)+r)}\le1$. Suppose instead that, for a filtration $(\mathcal F_k)_{k\ge0}$ to which $(X_k)$ is adapted, the data satisfy the weaker martingale property
  \[
    \E{}{X_k\mid\mathcal F_{k-1}}=\mu\qquad\text{and}\qquad\E{}{\phi(X_k-\mu)\mid\mathcal F_{k-1}}\le B\quad\text{almost surely}.
  \]
  Then~\eqref{eq:symmetric-feasibility}, being suprema over $\mc C_0$, implies the conditional bounds $\E{}{\exp(\pm\psi(Y_k\mp\Delta)+r)\mid\mathcal F_{k-1}}\le1$; hence $M_k^+$ and its mirror $M_k^-\defn\exp\bigl(\sum_{i=1}^k(-\psi(Y_i+\Delta)+r)\bigr)$ are nonnegative supermartingales started at $M_0^\pm=1$, and the guarantee~\eqref{eq:feasibility-guarantee} extends verbatim.
\end{remark}

Two remarks on the estimator $\mu_n$ are in order. First, the proof uses~\eqref{eq:M-estimate} only through the inclusion $\{\mu_n>\Delta\}\subseteq\{\psi_n(\Delta)\ge0\}$ (and its left-tail mirror), so the guarantee holds for whichever sign-change location is selected when $\psi_n$ vanishes on an interval or jumps across zero. Second, the theorem is conditional in that it presupposes at least one such $\mu_n$ exists.
Since $\psi_n$ is non-increasing, a sign change exists as soon as $\psi_n$ takes both signs. A clean sufficient condition is the classical requirement
\begin{equation}
  \label{ass:huber}
  \psi(-\infty)<0<\psi(+\infty)
\end{equation}
proposed by \citet[Section 3.2.2]{huber1981robust}. This condition delivers not merely a sign change but an estimator of the kind Theorem~\ref{thm:feasibility-guarantee} requires. Indeed, under~\eqref{ass:huber} we have $\psi_n(-\infty)>0>\psi_n(+\infty)$, so $\{\theta\in\Re:\psi_n(\theta)\le0\}$ is a nonempty proper sub-interval of $\Re$ and its left endpoint
\begin{equation}
  \label{eq:canonical-estimator}
  \widehat\mu_n\defn\inf\Bigl\{\theta\in\Re:\ \textstyle\sum_{i=1}^n\psi(X_i-\theta)\le0\Bigr\}
\end{equation}
is finite and obeys~\eqref{eq:M-estimate}. This canonical selection is Borel measurable, since $\{\widehat\mu_n<c\}=\bigcup_{q\in\mathbb Q,\,q<c}\{\psi_n(q)\le0\}$ is a countable union of measurable events. Thus~\eqref{ass:huber} guarantees a measurable $M$-estimator to which Theorem~\ref{thm:feasibility-guarantee} applies. Observe that the constraints~\eqref{eq:symmetric-feasibility} are jointly convex in $(\psi, r)$ although clearly infinite dimensional.
We characterize the largest rate achievable with a non-decreasing estimating function as the convex optimization problem
\begin{equation}
  \label{eq:r_M}
  r_M(\Delta) = \sup \set{r\in\Re}{\exists \psi~{\rm{ non-decreasing}} ~\st~ \eqref{eq:symmetric-feasibility}}.
\end{equation}
Clearly, as we may take $\psi\equiv0$ it follows that $r_M(\Delta)\geq 0$ for all $\Delta$. For this choice, any estimator satisfies (\ref{eq:M-estimate}) trivially. Any estimating function $\psi$ which fails condition \eqref{ass:huber} is feasible in \eqref{eq:symmetric-feasibility} only for $r\leq 0$ and hence dominated by the trivial choice $\psi\equiv0$. Hence, in the nontrivial case $r_M(\Delta)>0$ we are assured that there exists a sequence of estimating functions $\psi_k$, each satisfying \eqref{ass:huber}, enjoying \eqref{eq:feasibility-guarantee} with $r_k\uparrow r_M(\Delta)$.

Informally, the conditions~\eqref{eq:symmetric-feasibility} are not just sufficient but essentially tight: a Sanov-type large-deviation perspective would suggest that the largest feasible $r$ matches the true worst-case exponential rate of the corresponding error event of $\mu_n$. We do not make this precise as a rigorous Sanov argument requires the mapping $\mb P \to \E{\mb P}{\psi(X)}$ to be continuous and hence is restricted by the Portmanteau theorem to bounded and continuous estimating functions. None of our results require such topological considerations. Section~\ref{sec:lower-bound} establishes the stronger and fully rigorous statement that the synthesized rate is optimal among \emph{all} measurable estimators, not just $M$-estimators.

\subsection{Sandwich Structure}
\label{ssec:sandwich}

Rather than search the infinite-dimensional space of estimating functions $\psi$ defining $r_M(\Delta)$ in~\eqref{eq:r_M}, we carry out a finite-dimensional search over a pair of dual multipliers $(\lambda_1,\lambda_2)$.
Define the upper and lower envelopes
\begin{equation}\label{eq:envelopes}
  \psi_u^\lambda(x) \defn \log\bigl(1+\lambda_1(x+\Delta)+\lambda_2(\phi(x+\Delta)-B)\bigr),\quad
  \psi_l^\lambda(x) \defn -\log\bigl(1-\lambda_1(x-\Delta)+\lambda_2(\phi(x-\Delta)-B)\bigr),
\end{equation}
parametrized by the single pair $(\lambda_1,\lambda_2)\in\Re\times\Re_+$. These envelopes mimic the log-tilted sandwich posited ad hoc by \citet{catoni2012challenging} in~\eqref{eq:catoni-butterfly} and by \citet{lee2020optimal,bhatt2022nearly} in~\eqref{eq:samorodnitsky-butterfly}, now with two multipliers $(\lambda_1,\lambda_2)$ in place of a single scale, to be optimized rather than guessed. It is convenient to name the affine arguments of the two logarithms,
\(
g_u^\lambda(x)\defn 1+\lambda_1 x+\lambda_2(\phi(x)-B)
\)
and
\(
  g_l^\lambda(x)\defn 1-\lambda_1 x+\lambda_2(\phi(x)-B),
\)
so that $\psi_u^\lambda(x)=\log g_u^\lambda(x+\Delta)$ and $\psi_l^\lambda(x)=-\log g_l^\lambda(x-\Delta)$. As $x$ ranges over $\Re$, both envelopes are real-valued at every point exactly when $(\lambda_1,\lambda_2)$ lies in the \emph{admissible set}
\begin{equation}\label{eq:admissible-set}
  C\ \defn\ \set{(\lambda_1,\lambda_2)\in\Re\times\Re_+}{g_u^\lambda(x)>0, ~g_l^\lambda(x)>0 \ \text{for all }x\in\Re}.
\end{equation}
\begin{lemma}[Weak Lagrangian duality]\label{lem:lagrangian-dual}
Let $L:\Re\to\Re$ be measurable and bounded below. Then
\[
  \sup_{\mb P\in \mc C_0} \E{\mb P}{L(X)}\;\le\; \inf_{\lambda_1\in\Re,\,\lambda_2\geq 0}\,\sup_{x\in\Re}\bigl(L(x)-\lambda_1 x-\lambda_2(\phi(x)-B)\bigr),
\]
with both sides taking values in $(-\infty,+\infty]$.
\end{lemma}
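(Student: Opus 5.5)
The plan is the standard weak-duality argument: fix an arbitrary feasible primal point $\mb P\in\mc C_0$ and an arbitrary dual pair $(\lambda_1,\lambda_2)\in\Re\times\Re_+$, show that $\E{\mb P}{L(X)}$ is at most the inner supremum for that pair, and then take the supremum over $\mb P$ on the left and the infimum over $(\lambda_1,\lambda_2)$ on the right. If the inner supremum $\sup_x(L(x)-\lambda_1x-\lambda_2(\phi(x)-B))$ equals $+\infty$, there is nothing to prove for that pair, so assume it is a finite number $s$.

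For such a pair we have the pointwise inequality $L(x)\le s+\lambda_1 x+\lambda_2(\phi(x)-B)$ for every $x\in\Re$. Integrate against $\mb P\in\mc C_0$. Every term is well defined:
\begin{itemize}
\item $L$ is measurable and bounded below, so $\E{\mb P}{L(X)}$ exists in $(-\infty,+\infty]$;
\item $\int|X|\,\d\mb P<\infty$, because $\phi$ is super-linear, so $|x|\le c+\phi(x)$ for some constant $c$, and $\int\phi\,\d\mb P\le B$;
\item $\phi\ge0$ with $\int\phi\,\d\mb P\le B<\infty$, so $\phi$ is integrable.
\end{itemize}
Hence the right-hand side is integrable. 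Monotonicity of the integral (valid because $L^-$ is integrable) gives
\[
\E{\mb P}{L(X)}\le s+\lambda_1\E{\mb P}{X}+\lambda_2\bigl(\E{\mb P}{\phi(X)}-B\bigr)\le s,
\]
using $\E{\mb P}{X}=0$, $\lambda_2\ge0$ and $\E{\mb P}{\phi(X)}\le B$. In particular the left side is finite whenever some dual value is finite.

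To conclude, take the supremum over $\mb P$ and then the infimum over the dual pairs. The value sets are right: the left side lies in $(-\infty,+\infty]$ because $\mc C_0$ is nonempty (the Dirac mass at $0$ lies in it since $\phi(0)<B$) and $L$ is bounded below. The right side is also $>-\infty$, because it dominates the left side by what was just shown.

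The only step needing care is the integrability bookkeeping, namely the integrability of $X$ via super-linearity so that $\lambda_1\E{\mb P}{X}$ makes sense. Otherwise the argument is routine.
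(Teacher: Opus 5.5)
Your proposal is correct and is essentially the paper's own proof. You fix $\mb P\in\mc C_0$ and $(\lambda_1,\lambda_2)\in\Re\times\Re_+$, dispose of the case where the inner supremum is $+\infty$, integrate the pointwise bound, and use $\E{\mb P}{X}=0$ and $\lambda_2\bigl(\E{\mb P}{\phi(X)}-B\bigr)\le 0$ before taking the supremum and then the infimum.

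Your extra bookkeeping is sound and simply makes explicit what the paper takes for granted: integrability of $X$ via super-linearity, nonemptiness of $\mc C_0$ via $\delta_0$, and the right-hand side being $>-\infty$ because it dominates the left.
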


The proof is deferred to Appendix~\ref{app:deferred}.

We may apply Lemma~\ref{lem:lagrangian-dual} with $L(x)=\exp(\psi(x-\Delta)+r)$, which is nonnegative and measurable since the non-decreasing $\psi$ is. If there exist $(\lambda_1,\lambda_2)\in\Re\times\Re_+$ with
\[
  \exp(\psi(x-\Delta)+r)-\lambda_1 x-\lambda_2(\phi(x)-B) \,\leq\, 1 \qquad\forall x\in\Re,
\]
equivalently $\psi(x)+r\le \psi_u^\lambda(x)$ for every $x\in\Re$, then the displayed pointwise bound makes the inner supremum in Lemma~\ref{lem:lagrangian-dual} at most $1$, so  $\sup_{\mb P\in\mc C_0}\E{\mb P}{\exp(\psi(X-\Delta)+r)}\le 1$ and the right-tail constraint~\eqref{eq:sf-right} holds at rate $r$. The same reasoning applied to $L(x)=\exp(-\psi(x+\Delta)+r)$ shows that a pointwise lower envelope $\psi_l^{\lambda}(x) \leq \psi(x)-r$ implies~\eqref{eq:sf-left} at rate $r$. Thus exhibiting multipliers $(\lambda_1,\lambda_2)\in C$ for which
\[
  \psi_l^\lambda(x)+r\;\le\;\psi(x)\;\le\;\psi_u^\lambda(x)-r\qquad\text{for all }x\in\Re
\]
implies \eqref{eq:symmetric-feasibility}.
Denote by $C_\Delta$ the set of pairs $(r,\lambda)\in\Re\times C$ admitting such a non-decreasing $\psi$.
We may hence define
\begin{align}
  r_M'(\Delta)  = &\sup_{(r,\lambda)\in C_\Delta}r \label{eq:r_M_prime} \\
  =&\sup\set{r\in\Re}{\begin{array}{l}\exists \lambda \in C, ~\exists \psi~{\rm{non-decreasing}}:\\
    \psi_l^\lambda(x)+r \,\leq\, \psi(x)\,\leq\, \psi_u^\lambda(x)-r~~ \forall x\in\Re 
  \end{array}}.\nonumber
\end{align}
\begin{lemma}[Finite-dimensional representation]\label{lem:representation}
The synthesis rate~\eqref{eq:r_M_prime} is a finite optimization over the admissible pair $(\lambda_1,\lambda_2)\in C$ alone,
\begin{equation}\label{eq:r_M_pairwise}
  r_M'(\Delta)\;=\;\sup\set{r\in\Re}{\exists\,(\lambda_1,\lambda_2)\in C:\ g_l^\lambda(x-\Delta)\,g_u^\lambda(y+\Delta)\ge e^{2r}\ \ \forall\,x\le y}.
\end{equation}
\end{lemma}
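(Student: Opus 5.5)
The plan is to show that, for a fixed admissible pair $\lambda\in C$ and a fixed $r$, a non-decreasing $\psi$ with $\psi_l^\lambda(x)+r\le\psi(x)\le\psi_u^\lambda(x)-r$ for all $x$ exists if and only if $g_l^\lambda(x-\Delta)\,g_u^\lambda(y+\Delta)\ge e^{2r}$ for all $x\le y$. Taking the supremum over $r$ and $\lambda\in C$ on both sides then turns \eqref{eq:r_M_prime} into \eqref{eq:r_M_pairwise}.

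First write $a(x)\defn\psi_l^\lambda(x)+r=-\log g_l^\lambda(x-\Delta)+r$ and $b(y)\defn\psi_u^\lambda(y)-r=\log g_u^\lambda(y+\Delta)-r$. Both are finite real-valued functions because $\lambda\in C$. Since $g_l^\lambda,g_u^\lambda>0$ and $\log$ is increasing, the pairwise condition $g_l^\lambda(x-\Delta)g_u^\lambda(y+\Delta)\ge e^{2r}$ for all $x\le y$ is equivalent to $a(x)\le b(y)$ for all $x\le y$. So the lemma reduces to a purely order-theoretic statement: a non-decreasing $\psi$ with $a\le\psi\le b$ exists if and only if $a(x)\le b(y)$ whenever $x\le y$.

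The necessity direction is immediate. If such a $\psi$ exists and $x\le y$, then monotonicity gives $a(x)\le\psi(x)\le\psi(y)\le b(y)$.

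For sufficiency, construct $\psi(x)\defn\sup_{t\le x}a(t)$, the smallest non-decreasing majorant of $a$. It is non-decreasing by construction and satisfies $\psi\ge a$. For each $t\le x$ the hypothesis gives $a(t)\le b(x)$, hence $\psi(x)\le b(x)<\infty$, so $\psi$ is real-valued and sandwiched between $a$ and $b$. Because $\psi$ is non-decreasing it is automatically Borel measurable, as the paper's setup requires. One could equally use $\inf_{t\ge x}b(t)$.

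The only step needing care is confirming that the feasible sets coincide exactly, so that the two suprema agree, including the edge cases where the supremum is $+\infty$ or the feasible set in $r$ is empty. Since the equivalence holds pointwise in $(r,\lambda)$, these cases take care of themselves. I expect no real obstacle beyond this bookkeeping. The substance of the lemma is that the existence of a monotone function squeezed between two given functions is characterized by the "staircase" inequality $a(x)\le b(y)$ for $x\le y$, and the monotone envelope $\sup_{t\le x}a(t)$ witnesses it.
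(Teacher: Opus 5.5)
Your proposal is correct and follows essentially the same route as the paper: the paper also proves necessity directly from monotonicity, proves sufficiency with the witness $\psi(y)\defn\sup_{x\le y}\bigl(\psi_l^\lambda(x)+r\bigr)$, and then exponentiates the pairwise condition to get the product form. Your added remarks on the measurability of the monotone envelope and on the edge cases of the supremum are harmless extra bookkeeping that the paper leaves implicit.
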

\begin{proof}
The estimating function can be eliminated from~\eqref{eq:r_M_prime}. For $(\lambda_1,\lambda_2)\in C$, a non-decreasing $\psi$ obeying $\psi_l^\lambda(x)+r\le\psi(x)\le\psi_u^\lambda(x)-r$ for all $x$ exists if and only if
\begin{equation}\label{eq:pairwise-feasibility}
  \psi_l^\lambda(x)+r\ \le\ \psi_u^\lambda(y)-r\qquad\text{for all }x\le y .
\end{equation}
Necessity is immediate, as any non-decreasing interpolant has $\psi_l^\lambda(x)+r\le\psi(x)\le\psi(y)\le\psi_u^\lambda(y)-r$ whenever $x\le y$; for sufficiency, $\psi(y)\defn\sup_{x\le y}\bigl(\psi_l^\lambda(x)+r\bigr)$ is non-decreasing, dominates $\psi_l^\lambda+r$ (take $x=y$), and is dominated by $\psi_u^\lambda-r$ by~\eqref{eq:pairwise-feasibility}. Exponentiating~\eqref{eq:pairwise-feasibility} gives the equivalent
\begin{equation}\label{eq:product-feasibility}
  g_l^\lambda(x-\Delta)\,g_u^\lambda(y+\Delta)\ \ge\ e^{2r}\qquad\text{for all }x\le y .
\end{equation}
Thus $(r,\lambda)\in C_\Delta$ if and only if $(\lambda_1,\lambda_2)\in C$ and~\eqref{eq:product-feasibility} holds; substituting into~\eqref{eq:r_M_prime} gives~\eqref{eq:r_M_pairwise}.
\end{proof}

The representation~\eqref{eq:r_M_pairwise} is immediately recognized as a finite convex optimization problem. Introducing a scalar variable $s$, maximizing $r$ in~\eqref{eq:r_M_pairwise} is equivalent to maximizing $r$ over $(\lambda_1,\lambda_2)\in C$, $s\ge0$ subject to the single exponential-cone constraint $e^{r}\le s$ and the semi-infinite family of rotated second-order cone (hyperbolic) constraints $s^2\le g_l^\lambda(x-\Delta)\,g_u^\lambda(y+\Delta)$, $x\le y$ (which are convex because $g_u^\lambda,g_l^\lambda$ are affine in $\lambda$). 

Taking $(\lambda_1,\lambda_2)=(0,0)\in C$ (whence $g_u^0\equiv g_l^0\equiv1$) and $r=0$ satisfies~\eqref{eq:product-feasibility}, so $r_M'(\Delta)\ge0$. By construction $r_M'(\Delta)\le r_M(\Delta)$, and the matching lower bound of Section~\ref{sec:lower-bound} will show that in fact $r_M'(\Delta)=r_M(\Delta)$. It remains to show that the supremum~\eqref{eq:r_M_pairwise} is attained; the description~\eqref{eq:r_M_pairwise} reduces this to a short compactness argument over $C$.

\begin{lemma}\label{lem:attainment}
If $r_M'(\Delta)<\infty$, then the supremum~\eqref{eq:r_M_pairwise} is attained at multipliers $\lambda^\star\in C$, equivalently $(r_M'(\Delta),\lambda_1^\star,\lambda_2^\star)\in C_\Delta$.
\end{lemma}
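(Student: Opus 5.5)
We take a maximizing sequence $(r_k,\lambda^k)$ with $\lambda^k\in C$, $r_k\uparrow r^\star\defn r_M'(\Delta)$, and $g_l^{\lambda^k}(x-\Delta)\,g_u^{\lambda^k}(y+\Delta)\ge e^{2r_k}$ for all $x\le y$. The plan is to extract a convergent subsequence $\lambda^k\to\lambda^\star$. We then pass to the limit in the pointwise constraints and check that $\lambda^\star\in C$. If $r^\star=0$ this is trivial: $\lambda=(0,0)$ already attains. So assume $r^\star>0$ and, discarding finitely many terms, $r_k\ge r_0>0$.

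\textbf{Boundedness.} Taking $x=y$ in the product constraint gives
\[
g_l^\lambda(x-\Delta)\,g_u^\lambda(x+\Delta)\ge e^{2r_k}.
\]
A cleaner route is to evaluate the one-sided consequences of admissibility at $x=0$. There $g_u^\lambda(0)=g_l^\lambda(0)=1-\lambda_2(B-\phi(0))>0$, which forces $\lambda_2<1/(B-\phi(0))$, using $\phi(0)<B$. For $\lambda_1$, we use $g_u^\lambda(x)>0$ and $g_l^\lambda(x)>0$ at some fixed $x=\pm t$:
\[
1\pm\lambda_1 t+\lambda_2(\phi(t)-B)>0 .
\]
With $\lambda_2$ bounded, this yields $|\lambda_1|t<1+\lambda_2(\phi(t)-B)^+\le$ const. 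Hence $C$ is bounded, and a subsequence converges to some $\lambda^\star$ in the closure of $C$, with $\lambda_2^\star\ge0$.

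\textbf{Closedness of constraints.} For fixed $x\le y$ the map $\lambda\mapsto g_l^\lambda(x-\Delta)g_u^\lambda(y+\Delta)$ is continuous, being a product of affine maps. So $g_l^{\lambda^\star}(x-\Delta)g_u^{\lambda^\star}(y+\Delta)\ge e^{2r^\star}$ for all $x\le y$. Similarly $g_u^{\lambda^\star},g_l^{\lambda^\star}\ge0$ pointwise.

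\textbf{Main obstacle: strict positivity.} The remaining task is to show $\lambda^\star\in C$, i.e.\ $g_u^{\lambda^\star}(z)>0$ and $g_l^{\lambda^\star}(z)>0$ for every $z$. The limit inequality gives this directly. For any $z$, take $y=z-\Delta$ and any $x\le y$. Then
\[
g_l^{\lambda^\star}(x-\Delta)\,g_u^{\lambda^\star}(z)\ge e^{2r^\star}>0,
\]
so $g_u^{\lambda^\star}(z)\neq0$, and by nonnegativity $g_u^{\lambda^\star}(z)>0$. Symmetrically, choosing $x=z+\Delta$ and $y\ge x$ gives $g_l^{\lambda^\star}(z)>0$. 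Thus $\lambda^\star\in C$ and attains $r^\star$, so $(r^\star,\lambda^\star)\in C_\Delta$ by Lemma~\ref{lem:representation}.

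Finiteness of $r^\star$ is used only to make $e^{2r^\star}$ a meaningful finite target. The step needing the most care is the boundedness of $C$. If superlinearity or the choice of $t$ causes trouble, I would instead bound $|\lambda_1|$ via $g_u^\lambda(x)>0$ for large $|x|$ of the appropriate sign, where $\phi(x)\ge0$ is controlled.
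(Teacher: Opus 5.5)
Your proof is correct and follows the paper's argument essentially step for step. Like the paper, you bound $C$ using $g_u^\lambda>0$ at $0$ and at $\pm t$ (the paper takes $t=1$), extract a convergent subsequence, and pass to the limit in the product constraint. You then get strict positivity of each factor because their product is at least $e^{2r^\star}>0$. The case split on $r^\star=0$ and the closing hedge about the choice of $t$ are unnecessary, since any fixed $t>0$ works and $e^{2r^\star}>0$ for every finite $r^\star$.
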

\begin{proof}[Proof of Lemma~\ref{lem:attainment}]
For $(\lambda_1,\lambda_2)\in C$ we have $g_u^\lambda(x)>0$ for all $x$. At $x=0$, $g_u^\lambda(0)=1-\lambda_2(B-\phi(0))>0$ gives, using the standing slack $\phi(0)<B$, $0\le\lambda_2<\bar\lambda_2:=(B-\phi(0))^{-1}$; at $x=\pm1$, with $\phi(-1)=\phi(1)$, the conditions $g_u^\lambda(\pm1)>0$ read $|\lambda_1|<1+\lambda_2|\phi(1)-B|\le1+\bar\lambda_2|\phi(1)-B|=:\bar\lambda_1$. Hence $C\subseteq[-\bar\lambda_1,\bar\lambda_1]\times[0,\bar\lambda_2)$, with compact closure $K:=[-\bar\lambda_1,\bar\lambda_1]\times[0,\bar\lambda_2]\subset\Re\times\Re_+$.
Choose feasible $(r_k,\lambda^k)$ with $r_k\uparrow r_M'(\Delta)$, so $\lambda^k\in C\subseteq K$; along a subsequence $\lambda^k\to\lambda^\star\in K$. The arguments $g_u^\lambda,g_l^\lambda$ are affine, hence continuous, in $\lambda$, and each $\lambda^k\in C$, so for every $x\le y$ the positive values $g_l^{\lambda^k}(x-\Delta),g_u^{\lambda^k}(y+\Delta)$ converge to $g_l^{\lambda^\star}(x-\Delta),g_u^{\lambda^\star}(y+\Delta)\ge0$. Passing~\eqref{eq:product-feasibility} to the limit, with $r_k\to r_M'(\Delta)$, gives
\[
  g_l^{\lambda^\star}(x-\Delta)\,g_u^{\lambda^\star}(y+\Delta)\ \ge\ e^{2r_M'(\Delta)}>0\qquad\text{for all }x\le y .
\]
Taking $y=x$, the two nonnegative factors have strictly positive product, hence are strictly positive; as $x$ is arbitrary, $g_u^{\lambda^\star}(x)>0$ for all $x$, i.e.\ $\lambda^\star\in C$. Therefore $(r_M'(\Delta),\lambda^\star)$ is feasible and the supremum~\eqref{eq:r_M_pairwise} is attained; equivalently, by Lemma~\ref{lem:representation}, $(r_M'(\Delta),\lambda_1^\star,\lambda_2^\star)\in C_\Delta$.
\end{proof}

The estimating function enters~\eqref{eq:r_M_prime} only through the sandwich inequality, and the sandwich structure imposed ad hoc in Equations \eqref{eq:catoni-butterfly} and \eqref{eq:samorodnitsky-butterfly} thus appears naturally in our framework: whenever $r_M'(\Delta)<\infty$, Lemma~\ref{lem:attainment} furnishes an optimiser $(r_M'(\Delta),\lambda_1^\star,\lambda_2^\star)\in C_\Delta$, and any non-decreasing $\psi$ satisfying the sandwich
\[
  \psi_l^{\lambda^\star}(x)+r_M'(\Delta)\le \psi(x)\le\psi_u^{\lambda^\star}(x)-r_M'(\Delta)
\]
achieves the exponential rate $r_M'(\Delta)$.
Our envelope functions are structurally similar to those of \citet{catoni2012challenging} and \citet{bhatt2022nearly}, but carry two multipliers rather than one, and those multipliers are optimized rather than chosen ad hoc.
We note that the sandwich~\eqref{eq:envelopes} pins down $\psi$ only at the points where the upper and lower envelopes \emph{kiss}, i.e.\ where $\psi_u^{\lambda^\star}(x)-r_M'(\Delta) = \psi_l^{\lambda^\star}(x)+r_M'(\Delta)$; elsewhere any non-decreasing interpolant between the two envelopes is equally optimal. This non-uniqueness is not an artefact: the support of a pair of least-favorable distributions identified in Section~\ref{sec:lower-bound} is contained in the kissing points, so on that support $\psi$ is already pinned. Off the support $\psi$ must still respect the envelope sandwich, but the precise interpolant chosen does not affect the worst-case rate.

It remains to check that the optimal envelopes yield a genuine estimator. By the discussion following~\eqref{ass:huber}, this holds as soon as the synthesized $\psi$ obeys Huber's sign condition, which the following corollary guarantees in the nontrivial case.

\begin{corollary}\label{cor:estimator-exists}
If $r_M'(\Delta)>0$, then at the optimal multipliers $(\lambda_1^\star,\lambda_2^\star)$ of Lemma~\ref{lem:attainment} every non-decreasing $\psi$ feasible in~\eqref{eq:r_M_prime} satisfies Huber's condition~\eqref{ass:huber}.
\end{corollary}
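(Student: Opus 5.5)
We need to show that a feasible $\psi$ satisfies $\psi(-\infty)<0<\psi(+\infty)$. The limits exist in $[-\infty,\infty]$ because $\psi$ is monotone. Write $r:=r_M'(\Delta)>0$ and $\lambda=\lambda^\star\in C$. Feasibility in~\eqref{eq:r_M_prime} means
\[
  \psi_l^{\lambda}(x)+r\le\psi(x)\le\psi_u^{\lambda}(x)-r\qquad\text{for all }x\in\Re .
\]
We argue directly from this sandwich. An alternative route is the remark after~\eqref{eq:r_M}: any $\psi$ failing~\eqref{ass:huber} satisfies~\eqref{eq:symmetric-feasibility} only for $r\le0$. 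Lemma~\ref{lem:lagrangian-dual} shows the sandwich implies~\eqref{eq:symmetric-feasibility} at rate $r>0$, so it suffices to establish that remark rigorously. I plan to do this through a concrete test distribution.

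Suppose first that $\psi(+\infty)\le0$, so $\psi\le0$ everywhere. For the left-tail constraint~\eqref{eq:sf-left} we need $\E{\mb P}{\exp(-\psi(X+\Delta))}\le e^{-r}<1$ for every $\mb P\in\mc C_0$. Since $-\psi\ge0$, the integrand is at least $1$, so the expectation is at least $1$ for any $\mb P$. The point mass $\delta_0$ lies in $\mc C_0$ because it has mean $0$ and $\phi(0)<B$. The constraint therefore forces $e^{-r}\ge1$, i.e.\ $r\le0$, a contradiction.

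Symmetrically, suppose $\psi(-\infty)\ge0$, so $\psi\ge0$ everywhere. Then $\exp(\psi(X-\Delta))\ge1$, and the right-tail constraint~\eqref{eq:sf-right} evaluated at $\delta_0$ gives $1\le e^{-r}$, again a contradiction. Hence $\psi(-\infty)<0<\psi(+\infty)$.

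The step needing care is deriving~\eqref{eq:symmetric-feasibility} from the sandwich. We apply Lemma~\ref{lem:lagrangian-dual} exactly as in the paragraph following it, with $L(x)=\exp(\psi(x-\Delta)+r)$. This $L$ is nonnegative and measurable. The pointwise bound $\psi(x)+r\le\psi_u^\lambda(x)$ gives
\[
  \exp\bigl(\psi(x-\Delta)+r\bigr)\le g_u^\lambda(x)=1+\lambda_1x+\lambda_2(\phi(x)-B),
\]
so the dual objective is at most $1$. The left-tail constraint follows in the same way from the lower envelope.

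A more elementary shortcut is also available: evaluate the sandwich directly against $\delta_0$. We have
\[
  \psi(-\Delta)\le\psi_u^\lambda(-\Delta)-r=\log\bigl(1-\lambda_2(B-\phi(0))\bigr)-r<0,
\]
because $\lambda_2\ge0$ and $\phi(0)<B$. Since $\psi$ is non-decreasing, this gives $\psi(-\infty)\le\psi(-\Delta)<0$. Likewise,
\[
  \psi(\Delta)\ge\psi_l^\lambda(\Delta)+r=-\log\bigl(1-\lambda_2(B-\phi(0))\bigr)+r>0,
\]
so $\psi(+\infty)\ge\psi(\Delta)>0$. The quantity $1-\lambda_2(B-\phi(0))$ is positive because $\lambda\in C$, so both logarithms are well defined. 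I would present this direct computation as the proof, since it avoids any strong-duality concerns.
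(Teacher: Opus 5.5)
Your proposal is correct. Your first route is essentially the paper's proof: the sandwich at $(r,\lambda^\star)$ gives the feasibility constraints~\eqref{eq:symmetric-feasibility}, and a $\psi\le 0$ everywhere makes the left-tail expectation at least $1>e^{-r}$, contradicting~\eqref{eq:sf-left}, with the right tail symmetric.

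The direct computation you plan to present takes a slightly different, more elementary path. It skips the detour through the constraints and reads the sign straight off the envelopes at $x=\pm\Delta$, where both affine arguments equal $g_u^\lambda(0)=g_l^\lambda(0)=1-\lambda_2(B-\phi(0))\in(0,1]$. This is the pointwise counterpart of testing the constraints against $\delta_0$. It also gives more than the paper's contradiction argument states: the quantitative bounds $\psi(-\Delta)\le -r<0<r\le\psi(\Delta)$, which locate the sign change of $\psi$ inside $[-\Delta,\Delta]$.

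One correction to your framing. The paper's route relies only on weak duality, Lemma~\ref{lem:lagrangian-dual}, which is proved by integrating a pointwise bound. So there were no strong-duality concerns to avoid. The real gain of your version is that it needs neither that lemma nor any particular member of $\mc C_0$.

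Both arguments use only $r>0$ and $(r,\lambda)\in C_\Delta$, not the optimality of $\lambda^\star$.
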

\begin{proof}
Write $r\defn r_M'(\Delta)>0$. Such a $\psi$ witnesses $(r,\lambda_1^\star,\lambda_2^\star)\in C_\Delta$ and hence satisfies the feasibility constraints~\eqref{eq:symmetric-feasibility} at rate $r$. If $\psi\le0$ everywhere then $e^{-\psi(x+\Delta)}\ge1$ for all $x$, so $\max_{\mb P\in\mc C_0}\E{\mb P}{e^{-\psi(X+\Delta)}}\ge1>e^{-r}$, contradicting the left-tail constraint~\eqref{eq:sf-left}; hence $\psi$ takes a strictly positive value and, being non-decreasing, $\psi(+\infty)>0$. The right-tail constraint~\eqref{eq:sf-right} forces $\psi(-\infty)<0$ symmetrically.
\end{proof}

\section{Matching Lower Bound}
\label{sec:lower-bound}

We establish two results: a two-point hypothesis-testing lower bound on the achievable exponential rate (Proposition~\ref{prop:LB}), and a matching theorem showing that this rate is attained by the synthesized $M$-estimator of Section~\ref{ssec:sandwich} (Theorem~\ref{thm:matching}). Together they imply that for symmetric moment classes the synthesized $M$-estimator is exponential-rate optimal among all measurable estimators.

\subsection{A Two-point Hypothesis Testing Lower Bound}
\label{ssec:two-point-LB}

We show by a two-point hypothesis testing argument that the best achievable exponential rate by \emph{any} measurable estimator is upper bounded by the value of an optimization problem in the same form as the convex synthesis problem~\eqref{eq:symmetric-feasibility}. The reduction to testing two indistinguishable distributions in the class is the classical \emph{two-point method}, formalized through Le Cam's two-point bound and the modulus-of-continuity calculus of \citet{donoho1991geometrizing}.
For any estimator $\mu_n$ define the worst-case confidence level
\begin{equation}\label{eq:beta-n-def}
  \beta_n(\mu_n;\Delta) \defn \max\!\left\{\sup_{\mu\in\Re,\,\mb P_\mu\in \mc C_\mu}\!\!\mb P_\mu\!\left[\mu_n>\mu+\Delta\right],\,
    \sup_{\mu\in\Re,\,\mb P_\mu\in \mc C_\mu}\!\!\mb P_\mu\!\left[\mu_n<\mu-\Delta\right]\right\}.
\end{equation}
For nonnegative measures $\mb P_-,\mb P_+$ on $\Re$ their mirror Hellinger affinity is
\begin{equation}\label{eq:rho-def}
  \rho(\mb P_-,\mb P_+)\defn\int\sqrt{\d\mb P_-\,\d\mb P_+}.
\end{equation}
We allow $\mb P_-,\mb P_+$ to be arbitrary nonnegative measures, not only probability measures, because the strong-duality relaxation below dualizes the unit-mass constraint together with the moment constraints and so evaluates $\rho$ over sub-probability measures.
For any $\Delta>0$, recall the two-point problem~\eqref{eq:intro-hellinger} defining $r(\Delta)$ as minus the logarithm of the supremum of the mirror affinity $\rho$ of~\eqref{eq:rho-def} over pairs $(\mb P_{-\Delta},\mb P_\Delta)\in\mc C_{-\Delta}\times\mc C_\Delta$; we call such a pair \emph{feasible}.

The two suprema of interest, in~\eqref{eq:intro-hellinger} and in its relaxation below, are both over pairs drawn from convex classes that are interchanged by reflection. The following reduction, used repeatedly, lets us restrict such a supremum to mirror pairs.

\begin{proposition}[Reduction to mirror pairs]\label{prop:mirror}
Let $\mc D_-,\mc D_+$ be convex sets of nonnegative measures on $\Re$ that are \emph{reflection-conjugate}: $\bar{\mb P}\defn\mb P\circ(-\mathrm{Id})\in\mc D_\mp$ whenever $\mb P\in\mc D_\pm$. Then the supremum $$\sup\set{\rho(\mb P_-,\mb P_+)}{\mb P_-\in\mc D_-,\,\mb P_+\in\mc D_+}$$ is unchanged when restricted to \emph{mirror pairs}, those with $\mb P_+=\bar{\mb P}_-$. Concretely, every feasible pair $(\mb P_-,\mb P_+)$ is dominated in affinity by the feasible mirror pair $\bigl(\tfrac12(\mb P_-+\bar{\mb P}_+),\,\tfrac12(\mb P_++\bar{\mb P}_-)\bigr)$.
\end{proposition}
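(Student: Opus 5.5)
The plan rests on two properties of the affinity $\rho$: it is invariant under joint reflection, and it is jointly concave. First I will check that the proposed mirror pair is feasible. Set $\mb Q_-\defn\tfrac12(\mb P_-+\bar{\mb P}_+)$ and $\mb Q_+\defn\tfrac12(\mb P_++\bar{\mb P}_-)$. Reflection-conjugacy gives $\bar{\mb P}_+\in\mc D_-$ and $\bar{\mb P}_-\in\mc D_+$, and convexity of $\mc D_\mp$ then gives $\mb Q_-\in\mc D_-$ and $\mb Q_+\in\mc D_+$. Reflection is an involution and is linear on measures, so $\bar{\mb Q}_-=\tfrac12(\bar{\mb P}_-+\mb P_+)=\mb Q_+$. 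Hence $(\mb Q_-,\mb Q_+)$ is a feasible mirror pair.

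Second, I will show that $\rho(\bar{\mb P}_+,\bar{\mb P}_-)=\rho(\mb P_-,\mb P_+)$. To make $\rho$ rigorous for arbitrary nonnegative measures, fix a dominating measure $\nu$ that is reflection-invariant, for instance $\nu=\mb P_-+\mb P_++\bar{\mb P}_-+\bar{\mb P}_+$. Write $p_\pm=\d\mb P_\pm/\d\nu$, so that $\rho=\int\sqrt{p_-p_+}\,\d\nu$; this value does not depend on the choice of dominating measure, by the usual 1-homogeneity argument. Because $\nu$ is reflection-invariant, the densities of the reflected measures are $p_\pm\circ(-\mathrm{Id})$. Changing variables $x\mapsto-x$ in the integral then yields the claimed invariance, and the symmetry of $\sqrt{ab}$ in its two arguments takes care of the order swap.

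Third, I will use joint concavity. The map $(a,b)\mapsto\sqrt{ab}$ is concave and positively homogeneous on $\Re_+^2$, being the geometric mean. Applied pointwise to densities with respect to the common $\nu$, it gives
\[
\sqrt{\tfrac{p_-+\bar p_+}{2}\cdot\tfrac{p_++\bar p_-}{2}}\ \ge\ \tfrac12\sqrt{p_-p_+}+\tfrac12\sqrt{\bar p_+\bar p_-}.
\]
Integrating against $\nu$ and using the second step, I obtain
\[
\rho(\mb Q_-,\mb Q_+)\ \ge\ \tfrac12\rho(\mb P_-,\mb P_+)+\tfrac12\rho(\bar{\mb P}_+,\bar{\mb P}_-)\ =\ \rho(\mb P_-,\mb P_+).
\]
So every feasible pair is dominated in affinity by a feasible mirror pair. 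Since mirror pairs are themselves feasible, the supremum restricted to mirror pairs equals the full supremum, and this also covers the case where the supremum is $+\infty$.

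The only delicate step is the measure-theoretic bookkeeping. I need a single reflection-invariant dominating measure so that the densities of the reflected measures are exactly the reflected densities, and I need to allow infinite masses. For the latter I will restrict to $\sigma$-finite measures, or note that the integrand is nonnegative so that all integrals make sense in $[0,\infty]$. The concavity inequality itself is elementary.
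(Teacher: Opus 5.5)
Your proposal is correct and follows the paper's proof essentially step for step: feasibility of the averaged pair via reflection-conjugacy and convexity, reflection invariance of $\rho$, and joint concavity of the geometric mean to get $\rho(\mathbb Q_-,\mathbb Q_+)\ge\tfrac12\rho(\mathbb P_-,\mathbb P_+)+\tfrac12\rho(\bar{\mathbb P}_+,\bar{\mathbb P}_-)=\rho(\mathbb P_-,\mathbb P_+)$. Your reflection-invariant dominating measure spells out the change-of-variables step that the paper leaves implicit.
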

\begin{proof}
The affinity is invariant under the common bijection $x\mapsto-x$, so $\rho(\bar{\mb P}_+,\bar{\mb P}_-)=\rho(\mb P_-,\mb P_+)$, and $(\bar{\mb P}_+,\bar{\mb P}_-)\in\mc D_-\times\mc D_+$ by reflection-conjugacy. Since $\mc D_-,\mc D_+$ are convex, the average $\mb Q_-\defn\tfrac12(\mb P_-+\bar{\mb P}_+)\in\mc D_-$ and $\mb Q_+\defn\tfrac12(\mb P_++\bar{\mb P}_-)\in\mc D_+$ is feasible; it is a mirror pair, since $\bar{\mb Q}_-=\tfrac12(\bar{\mb P}_-+\mb P_+)=\mb Q_+$; and by joint concavity of $\rho$,
\(
\rho(\mb Q_-,\mb Q_+)\;\ge\;\tfrac12\rho(\mb P_-,\mb P_+)+\tfrac12\rho(\bar{\mb P}_+,\bar{\mb P}_-)\;=\;\rho(\mb P_-,\mb P_+).
\)
\end{proof}

\begin{proposition}\label{prop:LB}
For any measurable estimator $\mu_n$ and any $\Delta>0$,
\[
  \liminf_{n\to\infty} \frac{1}{n}\log\beta_n(\mu_n;\Delta) \,\geq\, -\,r(\Delta).
\]
\end{proposition}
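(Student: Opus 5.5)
Fix a feasible pair $(\mb P_{-\Delta},\mb P_\Delta)\in\mc C_{-\Delta}\times\mc C_\Delta$ with affinity $\rho=\rho(\mb P_{-\Delta},\mb P_\Delta)$. We will show $\liminf_n\frac1n\log\beta_n(\mu_n;\Delta)\ge\log\rho$ for every such pair; taking the supremum over pairs then gives the claim. If $\rho=0$ there is nothing to prove, so assume $\rho>0$.

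\emph{Step 1: reduction to a test.} Under $\mb P_{-\Delta}^{\otimes n}$ the true location is $-\Delta$, and under $\mb P_{\Delta}^{\otimes n}$ it is $+\Delta$. Consider the test that declares $+\Delta$ when $\mu_n>0$, and $-\Delta$ otherwise. The errors are
\[
\mb P_{-\Delta}^{\otimes n}[\mu_n>0]=\mb P_{-\Delta}^{\otimes n}[\mu_n>-\Delta+\Delta]\le\beta_n(\mu_n;\Delta)
\]
and
\[
\mb P_{\Delta}^{\otimes n}[\mu_n\le 0]=\mb P_{\Delta}^{\otimes n}[\mu_n\le\Delta-\Delta].
\]
The second error involves a non-strict inequality, so it is not directly controlled by $\beta_n(\mu_n;\Delta)$, whose events are strict. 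We handle this with a margin: replace $\Delta$ by $\Delta'=\Delta+\epsilon$ on the pair side, i.e.\ start from a feasible pair in $\mc C_{\pm\Delta'}$. Then $\{\mu_n\le0\}\subseteq\{\mu_n<\Delta'-\Delta\}$ under location $\Delta'$, and $\{\mu_n>0\}\subseteq\{\mu_n>-\Delta'+\Delta\}$ under location $-\Delta'$. Both errors are therefore at most $\beta_n(\mu_n;\Delta)$, which yields $\liminf\ge -r(\Delta+\epsilon)$. Afterwards we need continuity from the right of $r$ at $\Delta$. To avoid that, one could alternatively use the fact that the location-family shift by a tiny $\epsilon$ maps $\mc C_{\Delta-\epsilon'}$ feasibility. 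The simplest route, however, is to prove $\sup\rho$ is continuous in $\Delta$: shifting a pair $(\mb P_{-\Delta},\mb P_\Delta)$ to $(\mb P_{-\Delta}*\delta_{-\epsilon},\mb P_\Delta*\delta_{\epsilon})$ and mixing each with a small weight of a nice distribution keeps the moment constraint (using the slack $\phi(0)<B$ via mixing toward $\delta_{\pm\Delta'}$). Concavity and lower semicontinuity then recover the affinity in the limit. I expect this boundary/continuity bookkeeping to be the fiddly part.

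\emph{Step 2: Bayes error versus affinity.} For any test with errors $a_n,b_n$, the standard Le Cam inequality gives
\[
a_n+b_n\ge\int\min(\d\mb P_-^{\otimes n},\d\mb P_+^{\otimes n})\ge\tfrac12\rho(\mb P_-^{\otimes n},\mb P_+^{\otimes n})^2=\tfrac12\rho^{2n},
\]
using Cauchy–Schwarz $\bigl(\int\sqrt{pq}\bigr)^2\le\int\min(p,q)\int\max(p,q)\le2\int\min(p,q)$ together with tensorization of the affinity. This yields $\beta_n\ge\tfrac14\rho^{2n}$, i.e.\ only the rate $-2\log\rho$, which is off by a factor of two. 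This is the main obstacle: the sharp exponent requires the Chernoff rather than the Bhattacharyya bound.

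\emph{Step 3: sharpening via the mirror structure.} The plan is as follows. By Proposition~\ref{prop:mirror}, we may take a mirror pair $\mb P_+=\bar{\mb P}_-$. Since we only need the minimax error $\max(a_n,b_n)\ge\frac12(a_n+b_n)$, the exact asymptotics of the Bayes error give $\frac1n\log\int\min(\d\mb P_-^{\otimes n},\d\mb P_+^{\otimes n})\to\log\inf_{s\in[0,1]}\int p^sq^{1-s}$, the Chernoff information (a Cramér lower bound on the log-likelihood ratio sum, applied under the tilted measure). For a mirror pair the function $s\mapsto\int p^sq^{1-s}$ is symmetric about $s=\tfrac12$ (substitute $x\mapsto -x$) and convex, so its minimum is at $s=\tfrac12$, equal to $\rho$. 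Hence $\beta_n\ge\tfrac12 e^{n\log\rho+o(n)}$. To justify the Chernoff lower bound without moment assumptions, we truncate the log-likelihood ratio (or restrict to a large compact set where both densities are comparable, losing an arbitrarily small amount of affinity) and apply Cramér's theorem to the bounded i.i.d.\ summands $\log(p/q)(X_i)$ under the tilted law $\propto\sqrt{pq}$, which has mean zero by symmetry. Finally, let the truncation level tend to infinity and take the supremum over pairs.
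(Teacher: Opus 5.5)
Your architecture is the paper's. You use an auxiliary margin $\Delta'>\Delta$ to avoid the strict/non-strict boundary, bound $\max(a_n,b_n)$ below by the Bayes error, and invoke Chernoff's exponent. You then use the mirror reduction (Proposition~\ref{prop:mirror}) together with convexity and reflection-symmetry of $s\mapsto\int p^sq^{1-s}$ to identify that exponent with $-\log\rho$. Your sketch of the Chernoff lower bound is sound and replaces the paper's citation of \citet{chernoff1952}. It restricts to a reflection-symmetric set where $p/q$ is bounded, tilts by $\sqrt{pq}$, and uses the zero mean of the log-likelihood ratio; a law of large numbers suffices there, Cram\'er is not needed.

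The gap is in the step you called fiddly: proving $\inf_{\Delta'>\Delta}r(\Delta')\le r(\Delta)$. Translating in opposite directions, $\mb P_{-\Delta}*\delta_{-\epsilon}$ and $\mb P_{\Delta}*\delta_{\epsilon}$, preserves feasibility exactly but destroys overlap. For the two-atom least-favorable pair of Proposition~\ref{prop:bounded-var-synthesis}, the translated measures are mutually singular, so their affinity is $0$ for every small $\epsilon>0$. No limit argument rescues this, because $\rho$ is weakly \emph{upper}, not lower, semicontinuous: $\rho(\delta_{-\epsilon},\delta_{\epsilon})=0$ while $\rho(\delta_0,\delta_0)=1$. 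Mixing in a small weight of another distribution cannot restore overlap between the main components either.

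The missing idea is to move the means with a \emph{common} bijection, the scaling $x\mapsto tx$ with $t=(\Delta+h)/\Delta$. This leaves $\rho$ exactly invariant and sends the means to $\pm(\Delta+h)$. Its cost is a perturbed moment constraint $\int\phi(t(y\mp\Delta))\,\d\mb P_{\pm\Delta}(y)$, which for fast-growing $\phi$ can even be infinite when $t>1$. So you first need near-maximizers that are compactly supported with strict moment slack (Lemma~\ref{lem:compact-strict}):
\begin{enumerate}
  \item mix with $\delta_{\pm\Delta}$, using $\phi(0)<B$;
  \item condition on $[-R,R]$;
  \item restore the exact mean with a vanishing corrective atom.
\end{enumerate}
Dominated convergence then gives feasibility at level $t\Delta$ for $t$ near $1$ (Lemma~\ref{lem:scaling}). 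Your remark about mixing toward $\delta_{\pm\Delta'}$ touches the strict-slack part. Without compactness and an affinity-preserving transport, however, right-continuity of $r$ is not established, so your argument as written yields only $\liminf\ge-\inf_{\Delta'>\Delta}r(\Delta')$.
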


\begin{proof}
Fix $\Delta'>\Delta$ and any $\mb P_{-\Delta'}\in\mc C_{-\Delta'}$, $\mb P_{+\Delta'}\in\mc C_{+\Delta'}$. Since $\Delta'>\Delta$ we have $-\Delta'+\Delta<0<\Delta'-\Delta$ and consequently
\[
  \{\mu_n\geq 0\}\subseteq\{\mu_n>-\Delta'+\Delta\},\qquad
  \{\mu_n<0\}\subseteq\{\mu_n<\Delta'-\Delta\}.
\]
Let $\alpha_n\defn\mb P_{-\Delta'}[\mu_n\ge 0]$ and $\gamma_n\defn\mb P_{\Delta'}[\mu_n<0]$ denote the type-I and type-II errors of the test $\phi_n=\mathbf 1\{\mu_n\ge 0\}$ between the hypotheses $H_-:X_1,\dots,X_n\overset{iid}{\sim}\mb P_{-\Delta'}$ and $H_+:X_1,\dots,X_n\overset{iid}{\sim}\mb P_{\Delta'}$. Specializing the suprema in the definition of $\beta_n(\mu_n;\Delta)$ to $(\mu,\mb P)=(-\Delta',\mb P_{-\Delta'})$ in the right-tail term and to $(\mu,\mb P)=(\Delta',\mb P_{\Delta'})$ in the left-tail term yields
\begin{equation}
\label{eq:LB-reduction}
  \beta_n(\mu_n;\Delta)\,\geq\,\max\!\left\{\mb P_{-\Delta'}[\mu_n\geq 0],\,\mb P_{\Delta'}[\mu_n<0]\right\}=\max(\alpha_n,\gamma_n).
\end{equation}

\emph{Bayes-risk bound.} For two probability measures $\mb P_-,\mb P_+$ on $\Re$ and a sample size $n\ge 1$, denote by
\begin{equation}\label{eq:bayes-error-def}
  \mc R^\star_n(\mb P_-,\mb P_+) \,\defn\, \inf_{\phi:\Re^n\to\{0,1\}}\tfrac12\Bigl(\mb P_-[\phi=1]+\mb P_+[\phi=0]\Bigr)
\end{equation}
the Bayes error of testing $n$ independent observations from $\mb P_-$ versus $\mb P_+$ under a uniform prior, i.e.\ the smallest weighted error attainable by any test.
Since $\mc R^\star_n$ is an infimum over all tests and $\phi_n=\mathbf 1\{\mu_n\ge 0\}$ is one particular test, we get the lower bound
\[
\beta_n(\mu_n;\Delta)\;\geq\; \max(\alpha_n,\gamma_n)\;\geq\;\tfrac12(\alpha_n+\gamma_n)\;\geq\; \mc R^\star_n(\mb P_{-\Delta'},\mb P_{\Delta'})
\]

\emph{Chernoff rate.} \citet{chernoff1952} shows that the Bayes error of the optimal test decays exponentially with exponent the Chernoff information,
\[
  \lim_{n\to\infty} -\tfrac{1}{n}\log\mc R^\star_n(\mb P_{-\Delta'},\mb P_{\Delta'})\,=\, C(\mb P_{-\Delta'},\mb P_{\Delta'})\defn -\log\inf_{s\in[0,1]}\int \d\mb P_{-\Delta'}^{1-s}\,\d\mb P_{\Delta'}^{s}.
\] Combined with the Bayes-risk bound $\max(\alpha_n,\gamma_n)\ge\mc R^\star_n(\mb P_{-\Delta'},\mb P_{\Delta'})$, this gives
\begin{equation}
  \label{eq:chernoff-lb}
  \liminf_{n\to\infty} \tfrac{1}{n}\log\max(\alpha_n,\gamma_n)\,\geq\, -C(\mb P_{-\Delta'},\mb P_{\Delta'}).
\end{equation}

\emph{Lower bound.} By~\eqref{eq:LB-reduction} and~\eqref{eq:chernoff-lb}, $\liminf_{n\to\infty}\tfrac1n\log\beta_n(\mu_n;\Delta)\ge -C(\mb P_{-\Delta'},\mb P_{\Delta'})$ for every feasible pair $\mb P_{\pm\Delta'}\in\mc C_{\pm\Delta'}$; optimizing this bound over all feasible pairs gives $\liminf_{n\to\infty}\tfrac1n\log\beta_n(\mu_n;\Delta)\ge -\inf_{\mb P_{\pm\Delta'}\in\mc C_{\pm\Delta'}}C(\mb P_{-\Delta'},\mb P_{\Delta'})$. Minimizing the Chernoff information over all feasible pairs is, by symmetrization, the same as minimizing $-\log\rho$ over all feasible pairs. Indeed, taking $s=\tfrac12$ in the infimum defining $C$ gives $C\ge-\log\rho$ for every pair, so $\inf C\ge-\log\sup\rho$. For the reverse inequality, note that since $\phi$ is even the reflection $\mb P\mapsto\mb P\circ(-\mathrm{Id})$ maps $\mc C_{\pm\Delta'}$ onto $\mc C_{\mp\Delta'}$, so the convex classes $\mc C_{-\Delta'},\mc C_{\Delta'}$ are reflection-conjugate and Proposition~\ref{prop:mirror} shows that $\sup\rho$ is unchanged when restricted to mirror pairs $\mb P_+=\mb P_-\circ(-\mathrm{Id})$. On such a pair the infimum defining $C$ is attained at $s=\tfrac12$: the function $s\mapsto Z(s)\defn\int\d\mb P_-^{1-s}\,\d\mb P_+^{s}$ is convex and, since $x\mapsto-x$ interchanges $\mb P_-$ and $\mb P_+$, satisfies $Z(s)=Z(1-s)$, so $Z(\tfrac12)\le\tfrac12 Z(s)+\tfrac12 Z(1-s)=Z(s)$ for every $s\in[0,1]$, giving $\inf_{s\in[0,1]}Z(s)=Z(\tfrac12)=\rho(\mb P_-,\mb P_+)$ and hence $C(\mb P_-,\mb P_+)=-\log\rho(\mb P_-,\mb P_+)$. Minimizing $C$ over mirror pairs therefore gives $\inf C\le-\log\sup\rho$. Hence
\[
  \inf_{\mb P_{\pm\Delta'}\in\mc C_{\pm\Delta'}}C(\mb P_{-\Delta'},\mb P_{\Delta'})\;=\;-\log\!\!\sup_{\mb P_{\pm\Delta'}\in\mc C_{\pm\Delta'}}\!\!\rho(\mb P_{-\Delta'},\mb P_{\Delta'})\;=\;r(\Delta'),
\]
and therefore $\liminf_{n\to\infty}\tfrac1n\log\beta_n(\mu_n;\Delta)\ge -r(\Delta')$ for every $\Delta'>\Delta$, whence $$\liminf_{n\to\infty}\tfrac1n\log\beta_n(\mu_n;\Delta)\ge -\inf_{\Delta'>\Delta}r(\Delta').$$

\emph{Removing the auxiliary margin.} It remains to show $\inf_{\Delta'>\Delta}r(\Delta')\le r(\Delta)$, so that the bound becomes $-r(\Delta)$. Fix $\eta\in(0,e^{-r(\Delta)})$. By Lemma~\ref{lem:compact-strict} there is a compactly supported pair $\mb P_{\pm\Delta}\in\mc C_{\pm\Delta}$ with strict moment constraints $\int\phi(x+\Delta)\,\d\mb P_{-\Delta}<B$ and $\int\phi(x-\Delta)\,\d\mb P_\Delta<B$ and affinity $\rho(\mb P_{-\Delta},\mb P_\Delta)\ge e^{-r(\Delta)}-\eta$. For $h>0$ put $t_h=(\Delta+h)/\Delta>1$ and $S_{t_h}(x)=t_h x$. By Lemma~\ref{lem:scaling} the common pushforward $\bigl((S_{t_h})_*\mb P_{-\Delta},(S_{t_h})_*\mb P_\Delta\bigr)$ is feasible in~\eqref{eq:intro-hellinger} at level $t_h\Delta=\Delta+h$ for all $h>0$ small enough, and leaves the affinity unchanged. Hence $e^{-r(\Delta+h)}\ge e^{-r(\Delta)}-\eta$, i.e.\ $r(\Delta+h)\le-\log(e^{-r(\Delta)}-\eta)$, for all small $h>0$, so $\inf_{\Delta'>\Delta}r(\Delta')\le-\log(e^{-r(\Delta)}-\eta)$; letting $\eta\downarrow0$ gives $\inf_{\Delta'>\Delta}r(\Delta')\le r(\Delta)$ and therefore $\liminf_{n\to\infty}\tfrac1n\log\beta_n(\mu_n;\Delta)\ge -r(\Delta)$.
\end{proof}

\subsection{Zero Suboptimality Gap}
\label{ssec:matching}

We now show that the lower-bound rate $r(\Delta)$ of Proposition~\ref{prop:LB} coincides with the synthesis rates $r_{\mathrm{M}}'(\Delta)$ and $r_{\mathrm{M}}(\Delta)$ of Section~\ref{ssec:sandwich} for symmetric moment classes. Proposition~\ref{prop:LB} already gives the upper bound $r_{\mathrm{M}}(\Delta)\le r(\Delta)$: any feasible $(\psi,r)$ in~\eqref{eq:symmetric-feasibility} yields, through Theorem~\ref{thm:feasibility-guarantee}, an $M$-estimator with $\beta_n(\mu_n;\Delta)\le e^{-nr}$ and hence decay rate at least $r$, while Proposition~\ref{prop:LB} caps the decay rate of every measurable estimator at $r(\Delta)$; thus $r\le r(\Delta)$, and the supremum over feasible rates gives $r_{\mathrm{M}}(\Delta)\le r(\Delta)$. The matching theorem supplies the reverse.

\begin{theorem}[Matching]\label{thm:matching}
For the symmetric moment class~\eqref{eq:moment-class-intro} and every $\Delta>0$,
\[
  r(\Delta) \;\leq\; r_{\mathrm{M}}'(\Delta) \;\leq\; r_{\mathrm{M}}(\Delta) \;\leq\; r(\Delta).
\]
\end{theorem}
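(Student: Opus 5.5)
The middle inequality $r_{\mathrm M}'(\Delta)\le r_{\mathrm M}(\Delta)$ holds by construction, and the last, $r_{\mathrm M}(\Delta)\le r(\Delta)$, was just derived from Proposition~\ref{prop:LB} and Theorem~\ref{thm:feasibility-guarantee}. So the work is the first inequality $r(\Delta)\le r_{\mathrm M}'(\Delta)$. I would prove it by convex duality. Lemma~\ref{lem:representation} expresses $r_{\mathrm M}'(\Delta)$ through the semi-infinite program
\[
  e^{2r_{\mathrm M}'(\Delta)}=\sup_{\lambda\in C}\,\inf_{x\le y} g_l^\lambda(x-\Delta)\,g_u^\lambda(y+\Delta).
\]
The aim is to show that the dual of this program is a two-point affinity problem over (sub-)probability measures, whose value is at least $e^{-r(\Delta)}$.

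\textbf{Step 1: reduce to the diagonal.} By the reflection symmetry of $\phi$ and Proposition~\ref{prop:mirror}, I expect to restrict to symmetric constraints. I would first check whether the binding constraints in~\eqref{eq:product-feasibility} can be taken with $x=y$; monotone rearrangement suggests the worst pairs are kissing points. If so, the program becomes
\[
  \sup_{\lambda\in C}\inf_x \sqrt{g_l^\lambda(x-\Delta)g_u^\lambda(x+\Delta)}.
\]
If not, I keep the full $x\le y$ family, and a dual measure then lives on $\{x\le y\}$.

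\textbf{Step 2: the concave Lagrangian and the pointwise identity.} Use the identity $\inf_x \sqrt{ab}=\inf_{\pi}\int\sqrt{a\,b}\,\d\pi$ over probability measures $\pi$. For fixed $\pi$, the map $\lambda\mapsto\int\sqrt{g_l^\lambda(x-\Delta)g_u^\lambda(y+\Delta)}\,\d\pi$ is concave, being a geometric mean of affine functions. Since $C$ has compact closure (Lemma~\ref{lem:attainment}), a Sion-type minimax theorem exchanges $\sup_\lambda$ and $\inf_\pi$.

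For fixed $\pi$, the elementary identity
\[
  \sqrt{ab}=\inf_{t>0}\tfrac12\bigl(ta+b/t\bigr)
\]
converts the inner $\sup_\lambda$ into a linear problem. It is maximized over $\lambda\in\Re\times\Re_+$ and is finite only when the measures $\d\mb P_-\propto t\,\d\pi$ pushed forward by $x\mapsto x-\Delta$ and $\d\mb P_+\propto t^{-1}\d\pi$ pushed forward by $y\mapsto y+\Delta$ satisfy the mean and moment constraints. This is exactly the relaxation over nonnegative measures anticipated around~\eqref{eq:rho-def}: the dual value equals the supremum of $\rho(\mb P_-,\mb P_+)$ over sub-probability pairs satisfying the dualized constraints at means $\mp\Delta$.

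\textbf{Step 3: close the sign.} Since the relaxed dual supremum is at least the supremum over genuine feasible pairs in $\mc C_{-\Delta}\times\mc C_\Delta$, this yields $e^{-r_{\mathrm M}'(\Delta)}\le \sup\rho$ in the right direction only if weak duality runs as
\[
  e^{r_{\mathrm M}'}\text{-side}\;\ge\;\text{any affinity}.
\]
I therefore need to verify the orientation carefully. The cleaner route is direct weak duality: for any $\lambda\in C$ with~\eqref{eq:product-feasibility}, a feasible mirror pair, and $\psi$ the interpolant, one has
\[
  \rho=\int\sqrt{\d\mb P_-\d\mb P_+}\le \Bigl(\int e^{\psi(x-\Delta)}\ldots\Bigr)
\]
via Cauchy--Schwarz, giving $\rho\le e^{-r}$. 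That is the reverse of what is needed, so the genuine content is strong duality: exhibiting, from optimal multipliers $\lambda^\star$ (Lemma~\ref{lem:attainment}), a pair of measures supported on the kissing points with affinity $e^{-r_{\mathrm M}'(\Delta)}$. I would build this pair from the KKT multiplier measure $\pi^\star$ of the inner infimum, normalized by complementary slackness. Stationarity in $\lambda_1$ and $\lambda_2$ gives the mean and moment constraints. Stationarity in the dualized unit mass, together with the $\lambda_0$-normalization, shows the masses equal one.

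The main obstacle is this strong-duality and attainment step: justifying the minimax exchange for a semi-infinite problem on the noncompact index set $\{x\le y\}$, and showing the resulting measures are genuine probability measures rather than sub-probability ones that lose mass to infinity. For the latter I would use super-linearity of $\phi$, which gives tightness. Lemma~\ref{lem:compact-strict} then handles approximating by compactly supported strictly feasible pairs, so it suffices to prove the inequality up to $\eta$ and let $\eta\downarrow0$.
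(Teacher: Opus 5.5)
You have the easy inequalities right, and you correctly see that weak duality only reproduces $r_{\mathrm M}'(\Delta)\le r(\Delta)$. So the theorem rests entirely on a strong-duality step that the proposal never supplies, and the route you sketch (dualizing the semi-infinite synthesis program) fails at concrete points.

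\emph{Bounded domain.} In Step~2 the inner supremum is over $\lambda\in C$, a bounded set. After linearizing with $\sqrt{ab}=\inf_{t>0}\frac12(ta+b/t)$ the supremum is therefore always finite, and the ``finite only when the constraints hold'' mechanism never arises. It arises only if you enlarge the domain to $\Re\times\Re_+$. But enlarging the domain of the inner maximization bounds $e^{r_{\mathrm M}'(\Delta)}$ from \emph{above}, which is the wrong direction, unless you also dualize the infinitely many positivity constraints that define $C$.

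\emph{Shared multipliers.} Because a single $\lambda$ serves both envelopes, you get one summed mean condition and one summed moment condition for the two measures, not the separate constraints of $\mc C_{-\Delta}$ and $\mc C_\Delta$. Also, $\int\d\pi$ is an affinity only if both measures sit on the same points, which fails for your shifted pushforwards and off the diagonal.

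\emph{Relaxation gap.} Even granting the duality, its value is a relaxed affinity over sub-probability measures, and the inequality you invoke in Step~3 is the trivial one. You need the reverse, $r_{\mathrm{relax}}(\Delta)=r(\Delta)$: a relaxed pair can be moved into $\mc C_{-\Delta}\times\mc C_\Delta$ without lowering $\rho$. The paper does this by filling the missing mass with an atom at $\pm\Delta$ and tightening the mean with the shrinkage map $f_t$ plus data processing.

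\emph{KKT measure.} On the noncompact index set $\{x\le y\}$ the infimum over $\pi$ need not be attained. The KKT measure $\pi^\star$ you want to normalize therefore need not exist without an argument you do not give.

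The missing idea is to dualize in the other direction. The affinity problem, relaxed to nonnegative measures with one-sided mass, mean and moment constraints, has only six scalar constraints and a Slater point ($\mb P_\mp=\frac12\delta_{\mp(\Delta+h)}$ with $h>0$ small). The Lagrange duality theorem then gives zero gap \emph{and} attainment of a finite-dimensional dual in $(\alpha_\pm,\beta_\pm,\nu_\pm)\ge0$. The dual function is computed pointwise from $\sup_{p_\pm\ge0}(\sqrt{p_-p_+}-\tilde g_-p_--\tilde g_+p_+)\in\{0,+\infty\}$. It is finite exactly when $\tilde g_\pm\ge0$ and $\tilde g_-\tilde g_+\ge\frac14$ everywhere, so positivity, i.e.\ $\lambda\in C$, comes for free. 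Averaging an optimal multiplier with its reflection makes it symmetric. Then $\lambda=(\alpha/\nu,\beta/\nu)$ gives $\psi_l^\lambda+r\le\psi_u^\lambda-r$ on the diagonal, with $r=-\log(2\nu)$ and $\lambda_1\ge0$ because the mean constraints are one-sided.

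This also settles your Step~1. With $\lambda_1\ge0$, $\psi_l^\lambda$ is non-decreasing on $(-\infty,\Delta]$ and $\psi_u^\lambda$ on $[-\Delta,\infty)$. So gluing $\psi_l^\lambda+r$ on $(-\infty,-\Delta]$ to $\psi_u^\lambda-r$ on $(-\Delta,\infty)$ gives a non-decreasing interpolant, and $(r,\lambda)\in C_\Delta$. No primal pair ever has to be built from the synthesis side.
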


\begin{proof}
The bridge is a \emph{relaxed} lower-bound problem, easier to manipulate because its constraints are one-sided inequalities rather than equalities, which is what lets us identify dual multipliers with the synthesis envelopes. Relaxing both the mean equality and the unit-mass normalization defines
\begin{equation}
  \label{eq:rstar-relax}
  r_{\mathrm{relax}}(\Delta) \;:=\; -\log\sup\set{\rho(\mb P_{-\Delta},\mb P_\Delta)}{\mb P_{\pm\Delta}\in\mc C_{\pm\Delta}^{\mathrm{relax}}},
\end{equation}
where, over nonnegative measures,
\begin{align*}
  \mc C_{-\Delta}^{\mathrm{relax}}&:=\Bigl\{\mb P\ge0:\ \textstyle\int\d\mb P\le1,\ \int(X+\Delta)\,\d\mb P\le0,\ \int\bigl(\phi(X+\Delta)-B\bigr)\,\d\mb P\le0\Bigr\},\\
  \mc C_{\Delta}^{\mathrm{relax}}&:=\Bigl\{\mb P\ge0:\ \textstyle\int\d\mb P\le1,\ \int(X-\Delta)\,\d\mb P\ge0,\ \int\bigl(\phi(X-\Delta)-B\bigr)\,\d\mb P\le0\Bigr\}.
\end{align*}
On a unit-mass $\mb P$ these reduce to $\E{\mb P}{X}\le-\Delta$ (resp.\ $\ge\Delta$) and $\E{\mb P}{\phi(X\mp\Delta)}\le B$, the constraints of $\mc C_{\pm\Delta}$; the relaxation widens them to sub-probability measures with one-sided inequalities, the three constraints dualized in Step~2. The proof then proceeds in three steps.

\paragraph{Step 1: $r(\Delta) = r_{\mathrm{relax}}(\Delta)$.}

\begin{proposition}[No relaxation gap]\label{prop:relax-gap}
$r_{\mathrm{relax}}(\Delta) = r(\Delta)$.
\end{proposition}

The proof is deferred to Appendix~\ref{app:deferred}.

\paragraph{Step 2: KKT recovers the synthesis sandwich envelopes globally.}
Write $r:=r_{\mathrm{relax}}(\Delta)=r(\Delta)$ (Proposition~\ref{prop:relax-gap}).

\emph{Reduction to the Hellinger affinity.} By the definition~\eqref{eq:rstar-relax} of $r_{\mathrm{relax}}$ and Proposition~\ref{prop:relax-gap} (which identifies it with $r$),
\begin{equation}
  \label{eq:hellinger-max}
  e^{-r}\;=\;\max\set{\rho(\mb P_-,\mb P_+)}{\mb P_-\in\mc C^{\mathrm{relax}}_{-\Delta},\ \mb P_+\in\mc C^{\mathrm{relax}}_\Delta}.
\end{equation}
\emph{Least-favorable pair.} The maximum in~\eqref{eq:hellinger-max} is attained. Let $(\mb P^k_{-\Delta},\mb P^k_\Delta)_k$ be a maximizing sequence, so $\rho_k\defn\rho(\mb P^k_{-\Delta},\mb P^k_\Delta)\to e^{-r}$. By the unit-mass reduction of Step~1, we may take each $\mb P^k_{\pm\Delta}$ to be a \emph{probability} measure with $\E{\mb P^k_{-\Delta}}{X}\le-\Delta$, $\E{\mb P^k_\Delta}{X}\ge\Delta$ and $\E{\mb P^k_{\pm\Delta}}{\phi(X\mp\Delta)}\le B$.
The super-linearity of $\phi$ turns the uniform moment bound $\E{\mb P^k_{\pm\Delta}}{\phi(X\mp\Delta)}\le B$ into uniform control of the tails. Writing $\eta(M)\defn\sup_{|x|>M}|x|/\phi(x\mp\Delta)$, super-linearity $\phi(y)/|y|\to\infty$ gives $\eta(M)\to0$ as $M\to\infty$, so by Markov's inequality, for every $k$ and both signs,
\begin{equation}\label{eq:tight-ui-markov}
  \E{\mb P^k_{\pm\Delta}}{|X|\,\mathbf 1\{|X|>M\}}\;\le\;\eta(M)\,\E{\mb P^k_{\pm\Delta}}{\phi(X\mp\Delta)}\;\le\;\eta(M)\,B\;\xrightarrow[M\to\infty]{}\;0
\end{equation}
uniformly in $k$. Thus $\{\mb P^k_{\pm\Delta}\}_k$ is uniformly integrable, and a fortiori tight ($\mb P^k_{\pm\Delta}(|X|>M)\le\eta(M)B/M\to0$). By Prokhorov's theorem~\citep[Theorem~5.1]{billingsley1999} a subsequence converges weakly, $\mb P^k_{\pm\Delta}\rightharpoonup\mb P^\star_{\pm\Delta}$, the limit again a probability measure. The limit is feasible: $\phi$ is continuous and non-negative, so the moment functional is weakly lower semicontinuous (portmanteau), giving $\E{\mb P^\star_{\pm\Delta}}{\phi(X\mp\Delta)}\le\liminf_{k\to\infty}\E{\mb P^k_{\pm\Delta}}{\phi(X\mp\Delta)}\le B$; and the uniform integrability~\eqref{eq:tight-ui-markov} upgrades weak convergence to convergence of the mean~\citep[Theorem~3.5]{billingsley1999}, $\E{\mb P^k_{\pm\Delta}}{X}\to\E{\mb P^\star_{\pm\Delta}}{X}$, whence $\E{\mb P^\star_{-\Delta}}{X}\le-\Delta$ and $\E{\mb P^\star_\Delta}{X}\ge\Delta$. Thus $(\mb P^\star_{-\Delta},\mb P^\star_\Delta)\in\mc C^{\mathrm{relax}}_{-\Delta}\times\mc C^{\mathrm{relax}}_\Delta$. Finally, the affinity is the Rényi divergence of order $\tfrac12$ through $\rho=e^{-D_{1/2}(\mb P_-\|\mb P_+)/2}$ \citep[eq.~(5)]{vanerven2014renyi}, and $D_{1/2}$ is weakly lower semicontinuous in the pair of measures \citep[Theorem~19]{vanerven2014renyi}, so $\rho$ is weakly upper semicontinuous, as also shown directly in \cite[Theorem 2]{coquet1977mesure}; thus $\rho(\mb P^\star_{-\Delta},\mb P^\star_\Delta)\ge\limsup_{k\to\infty}\rho_k=e^{-r}$.
We call such a maximiser a \emph{least-favorable pair}.

\emph{Global sandwich via duality.} The multipliers and the global envelope separation come together from the Lagrangian dual of the maximization \eqref{eq:hellinger-max}, whose three constraints (mass, mean and moment) are all one-sided inequalities. Attaching multipliers $\alpha_\pm,\beta_\pm\ge0$ to the mean and moment constraints and $\nu_\pm\ge0$ to the mass constraints $\int\d\mb P_\pm\le1$, and writing
\begin{align*}
  & \tilde g_-(x;\alpha_-,\beta_-,\nu_-)=\nu_-+\alpha_-(x+\Delta)+\beta_-(\phi(x+\Delta)-B),\\
  & \tilde g_+(x;\alpha_+,\beta_+,\nu_+)=\nu_++\alpha_+(\Delta-x)+\beta_+(\phi(x-\Delta)-B),
\end{align*}
the Lagrangian is
\[
  L(\mb P_-,\mb P_+;\,\alpha_\pm,\beta_\pm,\nu_\pm)\;=\;\int\bigl[\sqrt{\d\mb P_-\,\d\mb P_+}-\tilde g_-\,\d\mb P_- -\tilde g_+\,\d\mb P_+\bigr]+\nu_-+\nu_+,
\]
whose supremum over $\mb P_\pm\ge0$ defines the dual function
\[
  h(\alpha_\pm,\beta_\pm,\nu_\pm)\;\defn\;\sup_{\mb P_-,\mb P_+\ge0}L(\mb P_-,\mb P_+;\,\alpha_\pm,\beta_\pm,\nu_\pm).
\]

\begin{lemma}\label{lem:amgm-dual}
  For $\tilde g_-,\tilde g_+\in\Re$,
  \[
    \sup_{p_-,p_+\geq 0}\bigl(\sqrt{p_{-} p_{+}} - \tilde g_- p_- -\tilde g_+ p_+\bigr) =
    \begin{cases}
      0 & \text{if } \tilde g_-,\tilde g_+\ge0 \text{ and } \tilde g_-\,\tilde g_+\ge\tfrac14,\\
      +\infty & \text{otherwise.}
    \end{cases}
  \]
\end{lemma}
\begin{proof}
If $\tilde g_-<0$, taking $p_-\to\infty$ with $p_+=0$ makes $-\tilde g_-p_-\to+\infty$, and symmetrically if $\tilde g_+<0$; so assume $\tilde g_-,\tilde g_+\ge0$. The best constant in the arithmetic--geometric mean inequality is $$\inf_{p_-,p_+>0}\tfrac{(\tilde g_-p_-+\tilde g_+p_+)}{\sqrt{p_-p_+}}=2\sqrt{\tilde g_-\tilde g_+}.$$ If $\tilde g_-\tilde g_+\ge\tfrac14$ this is $\ge1$, so $\sqrt{p_-p_+}\le\tilde g_-p_-+\tilde g_+p_+$ for all $p_\pm\ge0$; the expression is then $\le0$, attaining $0$ at $p_-=p_+=0$. If $\tilde g_-\tilde g_+<\tfrac14$ the constant is $<1$, so $\sqrt{p_-p_+}>\tilde g_-p_-+\tilde g_+p_+$ at some $(p_-,p_+)$, and scaling that point up sends the expression to $+\infty$ by $1$-homogeneity.
\end{proof}

By Lemma~\ref{lem:amgm-dual} applied pointwise, the dual function simplifies to
\[
  h(\alpha_\pm,\beta_\pm,\nu_\pm)\;=\;\begin{cases}\nu_-+\nu_+ & \text{if }\ \tilde g_-(x),\tilde g_+(x)\ge0\ \text{and}\ \tilde g_-(x)\,\tilde g_+(x)\ge\tfrac14\ \ \forall x\in\Re,\\[2pt] +\infty & \text{otherwise.}\end{cases}
\]
Indeed, if $\tilde g_\pm(x)\ge0$ and $\tilde g_-(x)\,\tilde g_+(x)\ge\tfrac14$ for all $x$, then $\sqrt{\d\mb P_-\,\d\mb P_+}\le\tilde g_-\,\d\mb P_-+\tilde g_+\,\d\mb P_+$ pointwise by the arithmetic--geometric mean inequality of Lemma~\ref{lem:amgm-dual}, so the integral in $L$ is nonpositive and the supremum over $\mb P_\pm\ge0$ equals $\nu_-+\nu_+$, attained at $\mb P_\pm=0$. If instead $\tilde g_-(x_0)<0$ or $\tilde g_-(x_0)\,\tilde g_+(x_0)<\tfrac14$ at some $x_0$, then placing atoms $\mb P_\pm=p_\pm^\star\delta_{x_0}$ at a pair $(p_-^\star,p_+^\star)$ violating that inequality and scaling them up drives the supremum to $+\infty$ by $1$-homogeneity.

Since this no-gap identity is the crux of the matching theorem, we verify the hypotheses of the Lagrange duality theorem for convex optimization problems~\citep[Theorem~1, p.~224]{luenberger1969} rather than assert them. That theorem requires a convex domain $\Omega$, a convex objective with finite infimum, a constraint map convex with respect to the positive cone of a normed space $Z$ \emph{with nonempty interior}, and a Slater point at which every constraint is strict; under these it gives a zero duality gap with the dual optimum attained. We take $\Omega=\set{(\mb P_-,\mb P_+)}{\mb P_-,\mb P_+\ge0}$, so the positivity requirements are kept inside the domain and are not dualized; the constraints sent to $Z$ are then only the six scalar inequalities (two mass, two mean, two moment), giving $Z=\Re^6$. On this domain $f=-\rho$ is convex ($\rho$ jointly concave) with finite infimum $-e^{-r}\in[-1,0]$, and each of the six functionals is affine in $(\mb P_-,\mb P_+)$, so the constraint map is convex into $\Re^6$. For the Slater point, take $t>0$ and the pair $\mb P_-=\tfrac12\delta_{-(\Delta+t)}$, $\mb P_+=\tfrac12\delta_{\Delta+t}$: its masses are $\tfrac12<1$, its means $\int(X+\Delta)\,\d\mb P_-=-\tfrac t2<0$ and $\int(X-\Delta)\,\d\mb P_+=\tfrac t2>0$, and its moments $\int(\phi(X+\Delta)-B)\,\d\mb P_-=\int(\phi(X-\Delta)-B)\,\d\mb P_+=\tfrac12\bigl(\phi(t)-B\bigr)<0$ (as $\phi(0)<B$ and $\phi$ continuous), so all six constraints hold strictly. The hypotheses being met, we obtain
\begin{align*}
  & \max\set{\rho(\mb P_-,\mb P_+)}{\mb P_-\in\mc C^{\mathrm{relax}}_{-\Delta},\ \mb P_+\in\mc C^{\mathrm{relax}}_\Delta}\\
  = & \min_{\alpha_\pm,\beta_\pm,\nu_\pm\ge0}h(\alpha_\pm,\beta_\pm,\nu_\pm)\\
  = & \min\set{\nu_-+\nu_+}{\begin{array}{l}\exists \alpha_\pm,\beta_\pm,\nu_\pm\ge0 ~\st~ \tilde g_-(x;\alpha_-,\beta_-,\nu_-),\, \tilde g_+(x;\alpha_+,\beta_+,\nu_+)\ge0,\\ \tilde g_-(x;\alpha_-,\beta_-,\nu_-)\,\tilde g_+(x;\alpha_+,\beta_+,\nu_+)\ge\tfrac14\ \ \forall x\in\Re \end{array}}\\
  = & e^{-r}.
\end{align*}
The same theorem~\citep[Theorem~1, p.~224]{luenberger1969} furnishes dual attainment: the minimization over multipliers displayed above is achieved, not merely approached. Hence the dual is attained at an optimal multiplier tuple $(\alpha_\pm,\beta_\pm,\nu_\pm)\ge0$ with $\nu_-+\nu_+=e^{-r}$. Because $\phi$ is even, the reflection $x\mapsto-x$ interchanges $\tilde g_-$ and $\tilde g_+$ (indeed $\tilde g_-(-x;\alpha_-,\beta_-,\nu_-)=\tilde g_+(x;\alpha_-,\beta_-,\nu_-)$), so swapping the two triples $(\alpha_-,\beta_-,\nu_-)\leftrightarrow(\alpha_+,\beta_+,\nu_+)$ maps the constraint $\tilde g_-\tilde g_+\ge\tfrac14$ to itself and leaves the objective $\nu_-+\nu_+$ unchanged: the feasible set is invariant under this swap and the objective is too. The feasible set is also convex (each constraint $\tilde g_-(x)\tilde g_+(x)\ge\tfrac14$, $\tilde g_\pm(x)\ge0$ is convex in the multipliers, $\tilde g_\pm$ being affine in them). Averaging our optimal point with its swap therefore stays feasible and keeps the objective at $e^{-r}$ while making it symmetric: $\alpha_-=\alpha_+=:\alpha$, $\beta_-=\beta_+=:\beta$, $\nu_-=\nu_+=:\nu$, with $2\nu=e^{-r}$; in particular $\nu>0$. Write $\lambda=(\lambda_1,\lambda_2)$ with $\lambda_1=\alpha/\nu\ge0$ and $\lambda_2=\beta/\nu\ge0$. Factoring $\tilde g_-=\nu\,e^{\psi_u^\lambda}$ and $\tilde g_+=\nu\,e^{-\psi_l^\lambda}$ through the synthesis envelopes~\eqref{eq:envelopes}, the dual-feasibility constraint $\nu^2 e^{\psi_u^\lambda(x)-\psi_l^\lambda(x)}\ge\tfrac14$ reads, with $-\log(2\nu)=r$,
\begin{equation}\label{eq:global-sandwich}
  \psi_l^{\lambda}(x)+r\;\le\;\psi_u^{\lambda}(x)-r\qquad\forall x\in\Re.
\end{equation}
Thus the dual furnishes multipliers $(\lambda_1,\lambda_2)\ge0$ separating the envelopes by exactly $2r$.

\emph{Existence of a non-decreasing interpolant.} Since $\lambda_1\ge 0$ and $\phi$ is even and non-decreasing on $[0,\infty)$, the upper envelope $\psi_u^{\lambda}(x)=\log[1+\lambda_1(x+\Delta)+\lambda_2(\phi(x+\Delta)-B)]$ is non-decreasing on $[-\Delta,\infty)$, and by the same logic the lower envelope $\psi_l^{\lambda}$ is non-decreasing on $(-\infty,\Delta]$. Define
\[
  \psi(x) \;:=\; \begin{cases}\psi_l^{\lambda}(x)+r & x\le -\Delta,\\[2pt] \psi_u^{\lambda}(x)-r & x>-\Delta.\end{cases}
\]
Each piece is non-decreasing on its domain, and at the join the global sandwich~\eqref{eq:global-sandwich} gives $\psi_l^{\lambda}(-\Delta)+r\le \psi_u^{\lambda}(-\Delta)-r$, so $\psi$ is non-decreasing on $\Re$. By construction it satisfies $\psi_l^\lambda+r\le\psi\le\psi_u^\lambda-r$ pointwise.

\paragraph{Step 3: the LB multipliers lie in the synthesis feasibility set $C_\Delta$.}
By Step~2 the non-decreasing $\psi$ and the dual multipliers $(\lambda_1,\lambda_2)\ge 0$ furnished by~\eqref{eq:global-sandwich} satisfy
\[
  \psi_l^{\lambda}(x)\;\le\;\psi(x)-r\quad\text{and}\quad \psi(x)+r\;\le\;\psi_u^{\lambda}(x)\qquad\forall x\in\Re,
\]
which is exactly the membership condition of $C_\Delta$ in~\eqref{eq:r_M_prime} at rate $r$. Hence $(r,\lambda_1,\lambda_2)\in C_\Delta$, so $r_{\mathrm{M}}'(\Delta)\ge r=r_{\mathrm{relax}}(\Delta)$ and a fortiori $r_{\mathrm{M}}(\Delta)\ge r_{\mathrm{relax}}(\Delta)$. Combined with $r_{\mathrm{M}}'(\Delta)\le r_{\mathrm{M}}(\Delta)$, Step~1, and the upper bound $r_{\mathrm{M}}(\Delta)\le r(\Delta)$ from Proposition~\ref{prop:LB}, this completes the chain; in particular $r_{\mathrm{M}}'(\Delta)=r_{\mathrm{M}}(\Delta)=r(\Delta)$.
\end{proof}

Theorem~\ref{thm:matching} is the main result of the paper: for symmetric moment classes, the synthesized $M$-estimator is exponential-rate optimal among all measurable estimators. On the upper-bound side, the synthesis problem of Section~\ref{ssec:sandwich} produces an $M$-estimator $\mu_n$ (Corollary~\ref{cor:estimator-exists}) whose worst-case tail decays at rate $r_M'(\Delta)$,
\[
  \frac{1}{n}\log\,\sup_{\mu\in\Re}\sup_{\mb P_\mu\in\mc C_\mu}\max(\mb P_\mu[\mu_n-\mu>\Delta], \mb P_\mu[\mu_n-\mu<-\Delta]) \;\leq\; -r_M'(\Delta),
\]
for every $n\geq 1$. On the lower-bound side, Proposition~\ref{prop:LB} forces every measurable estimator $\widetilde\mu_n$ to satisfy
\[
  \liminf_{n\to\infty}\,\frac{1}{n}\log\,\sup_{\mu\in\Re}\sup_{\mb P_\mu\in\mc C_\mu}\max(\mb P_\mu[\widetilde\mu_n-\mu>\Delta], \mb P_\mu[\widetilde\mu_n-\mu<-\Delta]) \;\geq\; -r(\Delta).
\]
Thus the synthesized $M$-estimator is exponential-rate optimal among all estimators, and the two-parameter semi-infinite convex optimization problem~\eqref{eq:r_M_prime} characterizes that optimal rate.

Based on the proof of Theorem~\ref{thm:matching} one may construct a least-favorable pair $(\mb P^\star_{-\Delta},\mb P^\star_\Delta)\in\mc C_{-\Delta}\times\mc C_\Delta$ together with the optimal dual variables, and read off where the pair is supported. After the symmetrization of Step~2 the dual is a minimization over a single triple $(\alpha,\beta,\nu)\ge0$; with the dual-feasibility functions
\begin{align*}
  & g_-(x;\alpha,\beta,\nu)\defn\nu+\alpha(x+\Delta)+\beta(\phi(x+\Delta)-B),\\
  & g_+(x;\alpha,\beta,\nu)\defn\nu+\alpha(\Delta-x)+\beta(\phi(x-\Delta)-B),
\end{align*}
the symmetric dual reads
\[
  e^{-r}=\min\set{2\nu\!\!}{\!\!\alpha,\beta,\nu\ge0,\  g_-(x;\alpha,\beta,\nu),\,g_+(x;\alpha,\beta,\nu)\ge0,\  g_-(x;\alpha,\beta,\nu)\,g_+(x;\alpha,\beta,\nu)\ge\tfrac14\ \ \forall x\in\Re}.
\]
Let $(\alpha^\star,\beta^\star,\nu^\star)$ denote an optimal triple, necessarily with $2\nu^\star=e^{-r}$, write $\lambda^\star=(\lambda_1,\lambda_2)=(\alpha^\star/\nu^\star,\beta^\star/\nu^\star)$ for the induced envelope multipliers, and abbreviate $g^\star_\pm(x)\defn g_\pm(x;\alpha^\star,\beta^\star,\nu^\star)$, which factor through the synthesis envelopes as $g^\star_-=\nu^\star\,e^{\psi_u^{\lambda^\star}}$ and $g^\star_+=\nu^\star\,e^{-\psi_l^{\lambda^\star}}$. Because the optimal dual value $2\nu^\star=e^{-r}$ equals the optimal primal value, $(\mb P^\star_{-\Delta},\mb P^\star_\Delta)$ and the optimal triple $(\alpha^\star,\beta^\star,\nu^\star)$ form a saddle of the Lagrangian \citep[Theorem~1, p.~224]{luenberger1969}, so $(\mb P^\star_{-\Delta},\mb P^\star_\Delta)$ maximizes over the domain $\Omega=\set{(\mb P_-,\mb P_+)}{\mb P_-,\mb P_+\ge0}$ the Lagrangian $L$ evaluated at these optimal dual variables, that is (using $\tilde g_\pm=g^\star_\pm$ and $\nu_-+\nu_+=2\nu^\star$ at the symmetric optimum),
\[
  L(\mb P_-,\mb P_+;\alpha^\star,\beta^\star,\nu^\star)\;=\;\int\bigl[\sqrt{\d\mb P_-\,\d\mb P_+}-g^\star_-\,\d\mb P_- -g^\star_+\,\d\mb P_+\bigr]+2\nu^\star,
\]
with maximal value $2\nu^\star$. Fix a common dominating measure $\omega$ (for instance $\omega=\mb P^\star_{-\Delta}+\mb P^\star_\Delta$) and write $p^\star_\pm=\d\mb P^\star_{\pm\Delta}/\d\omega$, so the integrand at the optimum is $q(x)\defn\sqrt{p^\star_-(x)p^\star_+(x)}-g^\star_-(x)p^\star_-(x)-g^\star_+(x)p^\star_+(x)$. Dual feasibility forces $g^\star_-,g^\star_+>0$ and $g^\star_-g^\star_+\ge\tfrac14$, so by the arithmetic--geometric mean inequality (Lemma~\ref{lem:amgm-dual})
\[
  q(x)\;\le\;\sqrt{p^\star_-(x)p^\star_+(x)}-2\sqrt{g^\star_-(x)g^\star_+(x)}\,\sqrt{p^\star_-(x)p^\star_+(x)}\;=\;\sqrt{p^\star_-(x)p^\star_+(x)}\bigl(1-2\sqrt{g^\star_-(x)g^\star_+(x)}\bigr)\;\le\;0\quad\forall x\in\Re.
\]
Since $\mb P^\star_{\pm\Delta}$ attains the maximum $2\nu^\star$, the integral $\int q\,\d\omega=0$, and $q\le0$ then forces $q=0$ for $\omega$-almost every $x$. Let $N\defn\set{x\in\Re}{g^\star_-(x)g^\star_+(x)>\tfrac14}$ be the set where the contact condition is strict. Consider any $x\in N$ at which moreover $q(x)=0$; by the previous sentence this excludes only an $\omega$-null set of $x\in N$. There the factor $1-2\sqrt{g^\star_-(x)g^\star_+(x)}$ is strictly negative, so the displayed bound becomes the squeeze
\[
  0\;=\;q(x)\;\le\;\sqrt{p^\star_-(x)p^\star_+(x)}\,\bigl(1-2\sqrt{g^\star_-(x)g^\star_+(x)}\bigr)\;\le\;0,
\]
which, the factor being nonzero, is possible only if $\sqrt{p^\star_-(x)p^\star_+(x)}=0$. Substituting $\sqrt{p^\star_-(x)p^\star_+(x)}=0$ into $q(x)=\sqrt{p^\star_-(x)p^\star_+(x)}-g^\star_-(x)p^\star_-(x)-g^\star_+(x)p^\star_+(x)=0$ leaves $g^\star_-(x)p^\star_-(x)+g^\star_+(x)p^\star_+(x)=0$; both summands are nonnegative and $g^\star_\pm(x)>0$, so $p^\star_-(x)=p^\star_+(x)=0$. Thus $p^\star_\pm(x)=0$ for $\omega$-almost every $x\in N$, i.e.\ $\mb P^\star_{\pm\Delta}(N)=0$. This is complementary slackness: $\mb P^\star_{\pm\Delta}$ puts probability mass only on the contact set $\set{x\in\Re}{g^\star_-(x)\,g^\star_+(x)=\tfrac14}$. Since $g^\star_-(x)\,g^\star_+(x)=(\nu^\star)^2 e^{\psi_u^{\lambda^\star}(x)-\psi_l^{\lambda^\star}(x)}$ and $-\log(2\nu^\star)=r$, this contact condition is exactly where the upper and lower synthesis envelopes \emph{kiss}, i.e., $\psi_u^{\lambda^\star}(x)-r=\psi_l^{\lambda^\star}(x)+r$.
Figure~\ref{fig:kissing} depicts this for the bounded-variance class.

\begin{figure}[h]
  \centering
\begin{tikzpicture}[trim axis left, trim axis right]
  \pgfmathsetmacro{\sig}{1}
  \pgfmathsetmacro{\Dl}{1}
  \pgfmathsetmacro{\uu}{sqrt(1+(\Dl/\sig)^2)}
  \pgfmathsetmacro{\rr}{ln(\uu)}
  \pgfmathsetmacro{\lone}{\Dl/(\sig^2+\Dl^2)}
  \pgfmathsetmacro{\ltwo}{\Dl^2/(2*\sig^2*(\sig^2+\Dl^2))}
  \pgfmathsetmacro{\aa}{\sig*\uu}
  \pgfmathsetmacro{\kh}{ln(1+\lone*(\aa+\Dl)+\ltwo*((\aa+\Dl)^2-\sig^2))-\rr}
  \begin{axis}[
    width=12cm,
    height=7cm,
    axis lines=middle,
    xlabel={$x$},
    ylabel={},
    domain=-4.5:4.5,
    samples=200,
    xmin=-4.5, xmax=4.5,
    ymin=-2.1, ymax=2.1,
    grid=major,
    grid style={dashed,gray!30},
    legend pos=outer north east,
    legend cell align=left,
  ]
  \addplot[thick, blue, name path=U] {ln(1+\lone*(x+\Dl)+\ltwo*((x+\Dl)^2-\sig^2)) - \rr};
  \addlegendentry{$\psi_u^{\lambda^\star}-r$}
  \addplot[thick, red, name path=Lo] {-ln(1-\lone*(x-\Dl)+\ltwo*((x-\Dl)^2-\sig^2)) + \rr};
  \addlegendentry{$\psi_l^{\lambda^\star}+r$}
  \addplot[fill=green!20, opacity=0.5] fill between[of=U and Lo];

  \addplot[only marks, mark=*, black, mark size=2.2pt] coordinates {(-\aa,-\kh) (\aa,\kh)};

\end{axis}
\end{tikzpicture}
\caption{The kissing geometry for the bounded-variance class for $\sigma=1$ and $\Delta=1$. In Section~\ref{sec:bound-vari-class} we show that the optimal multipliers are $\lambda_1^\star=\Delta/(\sigma^2+\Delta^2)$, $\lambda_2^\star=\Delta^2/(2\sigma^2(\sigma^2+\Delta^2))$ and achieve the rate $r=\log u$ with $u=\sqrt{1+(\Delta/\sigma)^2}$ of Proposition~\ref{prop:bounded-var-synthesis}. The upper envelope $\psi_u^{\lambda^\star}-r$ (blue) and lower envelope $\psi_l^{\lambda^\star}+r$ (red) bound the admissible band (shaded); any non-decreasing estimating function threading the band is optimal. They touch only at the two kissing points $a_{\pm}=\pm\sigma\sqrt{1+(\Delta/\sigma)^2}$ which support the least-favorable pair $(\mb P^\star_{-\Delta},\mb P^\star_\Delta)$, also given in Proposition~\ref{prop:bounded-var-synthesis}.}
  \label{fig:kissing}
\end{figure}

\section{Three Symmetric Moment Classes}
\label{sec:illustrations-2}

The matching theorem of Section~\ref{ssec:matching} identifies the optimal exponential rate $r(\Delta)$ with the Hellinger exponent of a two-point testing problem between the oppositely shifted moment classes $\mc C_{-\Delta}$ and $\mc C_\Delta$. We now make this concrete in three settings: the bounded-variance class of \citet{catoni2012challenging} (Section~\ref{sec:bound-vari-class}), and the bounded $\alpha$-moment class in the light-tailed regime $\alpha\ge2$ (Section~\ref{sec:bounded-alpha-moment-large}) and the heavy-tailed regime $\alpha\in(1,2)$ (Section~\ref{sec:bounded-alpha-moment-1}) studied by \citet{bhatt2022nearly,lee2020optimal}; a fourth, slowly varying, class is treated in Section~\ref{sec:slow-growth}. In each setting we identify, at least for small $\Delta$, an \emph{almost least-favorable} pair, i.e., a pair of distributions $(\mb P^\star_{-\Delta},\mb P^\star_\Delta)\in\mc C_{-\Delta}\times\mc C_\Delta$ whose Hellinger affinity matches the optimal rate to leading order, and, remarkably, in every case these take the same embarrassingly simple form: mirror-symmetric and supported on at most three points $\{-a,0,a\}$,
\begin{equation}\label{eq:almost-lf-pair}
  \mb P^\star_{\Delta} \,=\, p_-\,\delta_{-a} + p_0\,\delta_0 + p_+\,\delta_{a},\qquad \mb P^\star_{-\Delta} \,=\, \mb P^\star_{\Delta}\circ(-\mathrm{Id}) \,=\, p_+\,\delta_{-a} + p_0\,\delta_0 + p_-\,\delta_{a}.
\end{equation}
Two atoms at $\pm a$ already suffice for the variance class and for $\alpha\ge2$ (there $p_0=0$ as can be seen in Figure \ref{fig:kissing}); the heavy-tailed regime $\alpha\in(1,2)$ charges a third atom at the origin to absorb mass ($p_0>0$); and the slow-growth class concentrates on $0$ and a single distant atom ($p_-=0$).

The discreteness of these pairs is not merely a convenience: it is what lets us push past the fixed-margin regime into the fixed-confidence regime. Fixed-margin optimality does not, in general, transfer to the fixed-$\beta$ regime, where the two limits weigh $n$ differently (Section~\ref{sec:related}). However, as each almost least-favorable pair is carried by at most three atoms, the sufficient statistic for testing $\mb P^\star_{-\Delta}$ against $\mb P^\star_\Delta$ is a low-dimensional trinomial (or binomial) count, whose Bayes error we can analyze directly to recover the exact leading constant of the error margin $\Delta_n$ as $\beta\downarrow0$. The same least-favorable distributions thus do double duty, certifying the fixed-margin exponent and, in the fixed-confidence regime, the sharp high-confidence leading constant for the bounded-variance and bounded $\alpha$-moment classes and fixed-$\beta$ leading-order minimaxity for the slowly varying classes.

\subsection{Bounded Variance Class}
\label{sec:bound-vari-class}

We consider the bounded variance class $\mc C_0$ introduced in~\eqref{def:catoni-model} and briefly outline the fixed-margin result of \citet{catoni2012challenging}, which we will contrast with the synthesized estimator proposed in Section~\ref{ssec:sandwich}.

\paragraph{Fixed-Margin Regime.}
\citet{catoni2012challenging} first considers an envelope function $\psi(x)\leq \psi_{2,u}(x) = \log(1+x/\lambda+\abs{x/\lambda}^2/2)$ with
\begin{align*}
  \E{\mb P_0}{\exp(\psi(X-\Delta))} \leq & \E{\mb P_0}{1+(X-\Delta)/\lambda+(X-\Delta)^2/(2\lambda^2)}\\
                                        \leq & 1-\frac{\Delta}{\lambda}+\frac{\sigma^2+\Delta^2}{2\lambda^2}.
\end{align*}
Using $\log(1+t)\leq t$ and $\lambda = \Delta+\tfrac{\sigma^2}{\Delta}$, the same Chernoff bound argument as in the proof of Theorem~\ref{thm:feasibility-guarantee} implies for all $\mu\in\Re$ and $\mb P_\mu\in\mc C_\mu$,
\begin{align}
  \label{eq:catoni-rate-lb}
  \frac 1n \log \mb P_\mu \left[\mu_n-\mu> \Delta\right]\leq -\frac{\Delta^2}{2(\sigma^2+\Delta^2)}.
\end{align}
The symmetric bound on the left tail, $\tfrac1n\log\mb P_\mu\left[\mu_n-\mu<-\Delta\right]\leq-\tfrac{\Delta^2}{2(\sigma^2+\Delta^2)}$, follows similarly from $\psi(x)\geq \psi_{2,l}(x) = -\log(1-x/\lambda+\abs{x/\lambda}^2/2)$.
We may now identify Catoni's analysis with a feasible point in the set $C_\Delta$ defined in Section~\ref{ssec:sandwich} whose envelopes reproduce Catoni's $\psi_{2,u}$ and $\psi_{2,l}$ up to the additive rate $r_{\mathrm C}$. Indeed, setting $\lambda=\Delta+\sigma^2/\Delta$ in
\[
  \log\bigl(1+x/\lambda+x^2/(2\lambda^2)\bigr) \,=\, \log\bigl(1+\lambda_1^{\mathrm{C}}(x+\Delta)+\lambda_2^{\mathrm{C}}((x+\Delta)^2-\sigma^2)\bigr) - r_{\mathrm{C}}
\]
yields the dual multipliers and the exact rate $r_{\mathrm C}$ of Catoni's estimating function, sharper than his reported bound~\eqref{eq:catoni-rate-lb}:
\[
  \lambda_1^{\mathrm{C}} \,=\, e^{r_{\mathrm{C}}}\Bigl(\tfrac{1}{\lambda}-\tfrac{\Delta}{\lambda^2}\Bigr), \qquad
  \lambda_2^{\mathrm{C}} \,=\, \tfrac{e^{r_{\mathrm{C}}}}{(2\lambda^2)}, \qquad
  r_{C}(\Delta) \,=\,\log\Bigl(\frac{2((\sigma/\Delta)^2+1)}{2(\sigma/\Delta)^2+1}\Bigr) > \frac{\Delta^2}{2(\sigma^2+\Delta^2)}.
\]

However, $r_{\mathrm{C}}$ is in turn dominated by $r_M'$ once the multipliers are optimized directly rather than chosen ad hoc. For the bounded variance class everything is available in closed form: the optimal rate, the optimal dual multipliers, and the primal least-favorable pair on whose support the envelopes kiss (Figure~\ref{fig:kissing}); see also Figure~\ref{fig:catoni-rate}.

\begin{proposition}\label{prop:bounded-var-synthesis}
  For the bounded variance class $\mc C_0=\set{\mb P}{\E{\mb P}{X}=0,\,\E{\mb P}{X^2}\le\sigma^2}$ and every $\Delta>0$, the optimal exponential rate is
  \[
    r_M'(\Delta) \;=\; r_M(\Delta) \;=\; r(\Delta) \;=\; \tfrac{1}{2}\log\Bigl(1+\bigl(\tfrac{\Delta}{\sigma}\bigr)^{2}\Bigr).
  \]
  It is certified on the dual side by the synthesis multipliers
  \[
    \lambda_1^\star \;=\; \frac{\Delta}{\sigma^2+\Delta^2},\qquad \lambda_2^\star \;=\; \frac{\Delta^2}{2\sigma^2(\sigma^2+\Delta^2)},
  \]
  whose envelopes~\eqref{eq:envelopes} sandwich the synthesized estimating function at rate $r(\Delta)$, and attained on the primal side by the mirror two-atom least-favorable pair
  \[
    \mb P^\star_{\pm\Delta} \;=\; \frac{1\mp s^\star}{2}\,\delta_{-a} + \frac{1\pm s^\star}{2}\,\delta_{a},\qquad s^\star=\frac{\Delta/\sigma}{\sqrt{1+(\Delta/\sigma)^2}}\in(0,1),\quad a=\sigma\sqrt{1+(\Delta/\sigma)^2}=\Delta/s^\star.
  \]
\end{proposition}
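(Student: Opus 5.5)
Since Theorem~\ref{thm:matching} already gives $r_M'(\Delta)=r_M(\Delta)=r(\Delta)$, it suffices to compute $r(\Delta)$. We bracket it by a primal feasible pair, which gives an upper bound on $r$, and a dual feasible multiplier, which gives a lower bound on $r_M'$. We then check that the two values coincide at $\tfrac12\log(1+\Delta^2/\sigma^2)$. Throughout, write $u=\sqrt{1+(\Delta/\sigma)^2}$, so that $a=\sigma u$ and $s^\star=\Delta/a$.

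\emph{Primal side.} We first verify that $\mb P^\star_\Delta=\tfrac{1-s^\star}{2}\delta_{-a}+\tfrac{1+s^\star}{2}\delta_a$ lies in $\mc C_\Delta$. Its mean is $s^\star a=\Delta$. Its second moment about $\Delta$ is $a^2-\Delta^2=\sigma^2$, so the variance constraint is tight. The mirror $\mb P^\star_{-\Delta}$ lies in $\mc C_{-\Delta}$ by reflection. The affinity is $\rho=2\sqrt{(1-s^\star)(1+s^\star)}/2=\sqrt{1-(s^\star)^2}=1/u$. Hence $e^{-r(\Delta)}\ge 1/u$, that is, $r(\Delta)\le\log u=\tfrac12\log(1+\Delta^2/\sigma^2)$.

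\emph{Dual side.} We show that the stated $(\lambda_1^\star,\lambda_2^\star)$ lie in $C$ and satisfy the pairwise condition of Lemma~\ref{lem:representation} with $e^{2r}=u^2$. Then $r_M'(\Delta)\ge\log u$, and the chain of Theorem~\ref{thm:matching} closes.

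Positivity of $g_u^\lambda(x)=1+\lambda_1x+\lambda_2(x^2-\sigma^2)$ is a discriminant check on a convex quadratic: we need $\lambda_1^2<4\lambda_2(1-\lambda_2\sigma^2)$. Substituting the stated multipliers, this reduces to comparing $\Delta^2/(\sigma^2+\Delta^2)^2$ with $\tfrac{2\Delta^2}{\sigma^2(\sigma^2+\Delta^2)}\cdot\tfrac{2\sigma^2+\Delta^2}{2(\sigma^2+\Delta^2)}$, which holds by inspection. The same check covers $g_l^\lambda$ by symmetry.

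The main work is the pairwise inequality $g_l(x-\Delta)g_u(y+\Delta)\ge u^2$ for all $x\le y$. By Step~2 of Theorem~\ref{thm:matching}, together with the monotonicity of the envelopes on $[-\Delta,\infty)$ and $(-\infty,\Delta]$, it suffices to verify the pointwise version $G(x)\defn g_l^\lambda(x-\Delta)\,g_u^\lambda(x+\Delta)\ge u^2$ for all $x$. The reduction runs as follows. For $x\le y$, replace $x$ by $\max(x,\dots)$: the piecewise interpolant construction there needs only the pointwise sandwich plus $\lambda_1\ge0$, and $\lambda_1^\star\ge0$ holds here. The pointwise version itself I would check directly. $G$ is an even quartic in $x$, since reflection swaps the two factors. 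Write
\[
g_u^\lambda(x+\Delta)=A(x)+\lambda_1 x,\qquad g_l^\lambda(x-\Delta)=A(x)-\lambda_1 x,
\]
with $A(x)=1+\lambda_1\Delta+\lambda_2(x^2+\Delta^2-\sigma^2)$. Then $G=A^2-\lambda_1^2x^2$, a quadratic in $t=x^2$.

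It remains to show that its minimum over $t\ge0$ is exactly $u^2$, attained at $t=a^2$, which gives the kissing points $\pm a$ of Figure~\ref{fig:kissing}. Expanding, $G(t)=(c+\lambda_2 t)^2-\lambda_1^2t$ with $c=1+\lambda_1\Delta+\lambda_2(\Delta^2-\sigma^2)$. The minimizer is $t^\ast=(\lambda_1^2/2-c\lambda_2)/\lambda_2^2$, and the minimum value is $c\lambda_1^2/\lambda_2-\lambda_1^4/(4\lambda_2^2)$. Plugging in the multipliers should yield $t^\ast=\sigma^2+\Delta^2=a^2$ and minimum value $1+\Delta^2/\sigma^2$. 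This algebra is the step I expect to be fiddly but routine. If it fails, the complementary-slackness characterization, namely contact exactly on the support $\{\pm a\}$ with a double root there, can be used to solve for the multipliers and confirm them.
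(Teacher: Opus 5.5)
Your route is the paper's: the mirror two-atom pair bounds $r(\Delta)$ from above, and the product $g_l^\lambda(x-\Delta)\,g_u^\lambda(x+\Delta)$ certifies $r_M'(\Delta)$ from below. Theorem~\ref{thm:matching} then closes the chain. Several parts are sound: the primal computation, the discriminant check for $\lambda^\star\in C$ (which makes explicit what the paper leaves implicit), and the reduction from the pairwise to the pointwise condition via the Step~2 interpolant, which needs only $\lambda_1^\star\ge0$. Your sentence about replacing $x$ by $\max(x,\dots)$ is incomplete, but the interpolant argument after it is what does the work.

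The central algebraic step is wrong as written. Expanding $\lambda_2\bigl((x\pm\Delta)^2-\sigma^2\bigr)$ produces a cross term $\pm2\lambda_2\Delta x$, which your decomposition drops. The correct form is
\[
  g_u^\lambda(x+\Delta)=A(x)+b\,x,\qquad g_l^\lambda(x-\Delta)=A(x)-b\,x,\qquad b\defn\lambda_1+2\lambda_2\Delta .
\]
Hence $G=(c+\lambda_2t)^2-b^2t$ with $t=x^2$, not $(c+\lambda_2t)^2-\lambda_1^2t$.

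This is not cosmetic, because your prediction fails under your own formula. At the stated multipliers, $c=1+\Delta^2/(2\sigma^2)$, while $\lambda_1^2/(2\lambda_2)=\sigma^2/(\sigma^2+\Delta^2)<c$. So your $t^\ast$ is negative, and the minimum over $t\ge0$ sits at $t=0$ with value $c^2=u^2+\Delta^4/(4\sigma^4)>u^2$. That would certify a rate $\log c>\log u$, contradicting your own primal bound, and the kissing points $\pm a$ never appear. A complementary-slackness fallback built on this $G$ would be solving the wrong system and would not return the stated multipliers.

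With the cross term restored, $b=\Delta/\sigma^2$ and $b^2/(2\lambda_2)=1+\Delta^2/\sigma^2=u^2$. Then $t^\ast=a^2$, and the minimum $cb^2/\lambda_2-b^4/(4\lambda_2^2)$ equals $u^2$. In fact $G(x)-u^2=(\lambda_2^\star)^2(x^2-a^2)^2$ identically, which is exactly the identity the paper's proof uses. With that correction your argument coincides with the paper's.
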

\begin{proof}
  Write $\varepsilon\defn\Delta/\sigma$, $u\defn\sqrt{1+\varepsilon^2}$, and $r\defn\log u=\tfrac12\log(1+\varepsilon^2)$. By Theorem~\ref{thm:matching} the three rates coincide, $r_M'(\Delta)=r_M(\Delta)=r(\Delta)$, so it suffices to bound their common value below by $r$ (dual) and above by $r$ (primal).

  \emph{Dual lower bound $r_M'(\Delta)\ge r$.} The multipliers $\lambda^\star=(\lambda_1^\star,\lambda_2^\star)$ are non-negative, and with $\phi(x)=x^2$, $B=\sigma^2$ the synthesis envelopes~\eqref{eq:envelopes} satisfy the algebraic identity
  \[
    e^{\psi_u^{\lambda^\star}(x)}\,e^{-\psi_l^{\lambda^\star}(x)}-e^{2r}\;=\;(\lambda_2^\star)^2\bigl(x^2-\sigma^2 u^2\bigr)^2\;\ge\;0\qquad\forall x\in\Re,
  \]
  which is exactly the gap inequality $\psi_u^{\lambda^\star}(x)-\psi_l^{\lambda^\star}(x)\ge 2r$, with equality iff $x=\pm\sigma u$. Sandwiching any non-decreasing $\psi$ between the envelopes exhibits $(r,\lambda_1^\star,\lambda_2^\star)\in C_\Delta$, so $r_M'(\Delta)\ge r$.

  \emph{Primal upper bound $r(\Delta)\le r$.} For the pair above, $\E{\mb P^\star_\Delta}{X}=a s^\star=\Delta$ and $\E{\mb P^\star_\Delta}{(X-\Delta)^2}=a^2+\Delta^2-2\Delta a s^\star=a^2-\Delta^2=\sigma^2$, so $\mb P^\star_\Delta\in\mc C_\Delta$ and by mirror symmetry $\mb P^\star_{-\Delta}\in\mc C_{-\Delta}$. Hence, using $1-(s^\star)^2=1/(1+\varepsilon^2)$,
  \[
    r(\Delta) \;=\; -\log\!\sup_{(\mb P_-,\mb P_+)\in\mc C_{-\Delta}\times\mc C_\Delta}\!\!\rho(\mb P_-,\mb P_+) \;\le\; -\log\rho(\mb P^\star_{-\Delta},\mb P^\star_\Delta) \;=\; -\log\!\sqrt{1-(s^\star)^2} \;=\; r.
  \]
  The two bounds and the matching identity give $r_M'(\Delta)=r_M(\Delta)=r(\Delta)=r$, attained by the displayed dual and primal points.
\end{proof}

By Theorem~\ref{thm:matching} this common value is the optimal large-deviation exponent achievable by any measurable estimator, and it is attained by the synthesized $M$-estimators of Section~\ref{ssec:sandwich}:
\[
  \tfrac{1}{2}\log\Bigl(1+\bigl(\tfrac{\Delta}{\sigma}\bigr)^{2}\Bigr) \;=\; \lim_{n\to\infty}-\frac{1}{n}\log\,\beta_n(\mu_n;\Delta).
\]
The optimal multipliers $(\lambda_1^\star,\lambda_2^\star)$ defining this estimator's sandwich envelopes are given in Proposition~\ref{prop:bounded-var-synthesis}.

\begin{figure}[h]
  \centering
\begin{tikzpicture}[trim axis left, trim axis right]
  \begin{axis}[
    width=12cm,
    height=8cm,
  axis lines=middle,
  xlabel={$(\Delta/\sigma)^2$},
  ylabel={},
  domain=0:1,
  samples=200,
  xmin=0, xmax=1,
  ymin=0,
  grid=major,
  grid style={dashed,gray!30},
  legend,
  legend pos=outer north east,
  legend cell align=left,
  ]
  \addplot[thick, green] {1/2*1/(1/x+1)};
  \addlegendentry{\citep{catoni2012challenging}}

  \addplot[thick, red] {ln((2*(1/x+1))/(2/x+1))};
  \addlegendentry{$r_C$}

  \addplot[thick, black] {1/2*ln(1+x)};
  \addlegendentry{$r_M'=r_M=r$}

\end{axis}
\end{tikzpicture}
\caption{We consider the bounded variance class. The exact exponential rate $r_C$ at which the estimator of \citet{catoni2012challenging} drives down the probability of an error exceeding $\Delta$, as a function of $\Delta^2/\sigma^2$, is depicted in red. This rate exceeds the rate reported by \citet{catoni2012challenging} (green) yet is itself dominated by the optimized rate $r_M'=r_M=r$ (black).
}
  \label{fig:catoni-rate}
\end{figure}

\paragraph{Fixed-Confidence Regime.}
The optimality in the fixed-confidence regime is delivered by an elementary route: $N(0,\sigma^2)$ itself lies inside the bounded-variance class, so any estimator satisfying~\eqref{eq:confidence-guarantee} inherits the Gaussian lower bound $\Delta_n\ge\sigma\,\Phi^{-1}(1-\beta)/\sqrt n$ \citep[Proposition~6.1]{catoni2012challenging}; since $\Phi^{-1}(1-\beta)\sim\sqrt{2\log(1/\beta)}$ as $\beta\downarrow 0$, this pins the leading constant of $\Delta_n$ at $\sqrt2$, matching Catoni's upper bound. This Gaussian argument, however, does \emph{not} extend to the bounded $\alpha$-moment class when $\alpha>2$, because $N(0,\sigma^2)$ then falls outside the class. For $X\sim N(0,\tau^2)$, writing $X=\tau Z$ with $Z\sim N(0,1)$, the moment constraint reads $\E{}{\abs X^\alpha}=\tau^\alpha\,\E{}{\abs Z^\alpha}\le\sigma^\alpha$, so only Gaussians with
\[
  \tau\;\le\;\frac{\sigma}{m_\alpha},\qquad m_\alpha\defn\bigl(\E{}{\abs Z^\alpha}\bigr)^{1/\alpha}=\Bigl(\frac{2^{\alpha/2}\,\Gamma(\tfrac{(\alpha+1)}{2})}{\sqrt\pi}\Bigr)^{1/\alpha},
\]
lie inside. Feeding the largest such inscribed Gaussian, $N\!\bigl(0,(\sigma/m_\alpha)^2\bigr)$, into the deviation argument yields the fixed-$\beta$ lower bound $\Delta_n\ge(\sqrt2+o(1))\,\tfrac{\sigma}{m_\alpha}\sqrt{\log(1/\beta)/n}$, short of the upper bound $(\sqrt2+o(1))\,\sigma\sqrt{\log(1/\beta)/n}$ by the factor $m_\alpha>1$ strictly for $\alpha>2$. To resolve $\alpha>2$, we need a genuinely heavier-tailed least-favorable distribution, which we now derive.

\subsection{Bounded $\alpha$-Moment Class for \texorpdfstring{$\alpha\ge 2$}{alpha>=2}}
\label{sec:bounded-alpha-moment-large}

For $\alpha\ge 2$ the bounded $\alpha$-moment class is contained in Catoni's bounded-variance class so the synthesized bounded-variance $M$-estimator of Proposition~\ref{prop:bounded-var-synthesis} applies verbatim, delivering Catoni's rate $\tfrac12\log(1+(\Delta/\sigma)^2)$ and the sub-Gaussian margin $\Delta_n\le(\sqrt 2+o(1))\sigma\sqrt{\log(1/\beta)/n}$. The next proposition shows this dual certificate is sharp: a primal two-atom \emph{near} least-favorable pair matches its rate to leading order by appealing to a Lindeberg-Feller CLT.

\begin{proposition}\label{prop:two-atom-large-alpha}
Let $\alpha\ge 2$ and $\varepsilon\defn\Delta/\sigma$. Since $\mc C_0\subseteq\set{\mb P}{\E{\mb P}{X}=0,\ \E{\mb P}{X^2}\le\sigma^2}$ by Jensen, the bounded-variance multipliers of Proposition~\ref{prop:bounded-var-synthesis},
\[
  \lambda_1^\star=\frac{\Delta}{\sigma^2+\Delta^2},\qquad \lambda_2^\star=\frac{\Delta^2}{2\sigma^2(\sigma^2+\Delta^2)},
\]
remain feasible and their synthesized $M$-estimator certifies the rate
\[
  r_M'(\Delta)\;\ge\;\tfrac12\log\bigl(1+(\Delta/\sigma)^2\bigr).
\]
Furthermore, there is $\Delta_0>0$ such that for every $\Delta\in(0,\Delta_0]$ the system
\begin{equation}\label{eq:large-alpha-saddle}
  a\,s=\Delta,\qquad \tfrac{(1-s)}{2}(a+\Delta)^\alpha+\tfrac{(1+s)}{2}(a-\Delta)^\alpha=\sigma^\alpha
\end{equation}
has a solution $(a(\Delta),s(\Delta))\in(0,\infty)\times(0,1)$, depending continuously on $\Delta$, with $a(\Delta)=\sigma(1+O(\varepsilon^2))$ and $s(\Delta)=\varepsilon+O(\varepsilon^3)$, and the mirror two-atom near least-favorable pair
\[
  \mb P^\star_{\pm\Delta}=\frac{1\mp s(\Delta)}{2}\,\delta_{-a(\Delta)}+\frac{1\pm s(\Delta)}{2}\,\delta_{+a(\Delta)}\;\in\;\mc C_{\pm\Delta}
\]
gives
\[
  r(\Delta)\;\le\;-\tfrac12\log\bigl(1-s(\Delta)^2\bigr)\;=\;\tfrac12\log\bigl(1+(\Delta/\sigma)^2\bigr)\,(1+o(1)).
\]
\end{proposition}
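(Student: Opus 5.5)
The proof has three parts: the dual lower bound by class inclusion, solvability of the saddle system~\eqref{eq:large-alpha-saddle} by a one-dimensional continuity argument, and a direct evaluation of the affinity of the resulting two-atom pair.

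\emph{Dual lower bound.} This part is inclusion only. By Jensen, $\E{\mb P}{X^2}\le(\E{\mb P}{\abs X^\alpha})^{2/\alpha}\le\sigma^2$ for $\alpha\ge2$, so $\mc C_0$ lies inside the bounded-variance class. The suprema in the feasibility constraints~\eqref{eq:symmetric-feasibility} are therefore no larger than for the variance class. The multipliers and envelopes of Proposition~\ref{prop:bounded-var-synthesis} give a feasible non-decreasing $\psi$ at rate $\tfrac12\log(1+\varepsilon^2)$, and Theorem~\ref{thm:feasibility-guarantee} certifies that rate here. For the statement about $r_M'$ itself, the Lagrangian certificate of Lemma~\ref{lem:lagrangian-dual} should be phrased via the variance class, or equivalently via the convex majorization $x^2\le \sigma^{2-\alpha}\abs x^\alpha\cdot\tfrac2\alpha+\sigma^2(1-\tfrac2\alpha)$ (Young). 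I will simply invoke $r_M\ge$ the variance-class synthesis rate, which is what is asserted.

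\emph{Solving the system.} Substitute $s=\Delta/a$ and consider
\[
F(a,\Delta)\defn\tfrac{a-\Delta}{2a}(a+\Delta)^\alpha+\tfrac{a+\Delta}{2a}(a-\Delta)^\alpha-\sigma^\alpha
\]
for $a>\Delta$. At $\Delta=0$ we have $F(a,0)=a^\alpha-\sigma^\alpha$, with root $a=\sigma$ and $\partial_aF(\sigma,0)=\alpha\sigma^{\alpha-1}\ne0$. $F$ is $C^1$, indeed smooth near $(\sigma,0)$ since $a\pm\Delta>0$ there. The implicit function theorem then gives a continuous, indeed smooth, branch $a(\Delta)$ for small $\Delta$. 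Smoothness alone only yields $a=\sigma+O(\Delta)$. To get $O(\varepsilon^2)$, I will check that $F$ is even in $\Delta$: replacing $\Delta$ by $-\Delta$ swaps the two terms. Hence $a(\Delta)$ is even and $a=\sigma(1+O(\varepsilon^2))$. Then $s=\Delta/a=\varepsilon(1+O(\varepsilon^2))\in(0,1)$ for small $\Delta$.

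\emph{Membership and affinity.} For the pair $\mb P^\star_{\pm\Delta}$, the mean of $\mb P^\star_\Delta$ is $as=\Delta$. Its centered $\alpha$-moment is $\tfrac{1-s}{2}\abs{-a-\Delta}^\alpha+\tfrac{1+s}{2}\abs{a-\Delta}^\alpha=\sigma^\alpha$ by the second equation. Mirror symmetry gives $\mb P^\star_{-\Delta}\in\mc C_{-\Delta}$. The affinity is
\[
2\sqrt{\tfrac{1-s}{2}\cdot\tfrac{1+s}{2}}=\sqrt{1-s^2},
\]
so $r(\Delta)\le-\tfrac12\log(1-s^2)$ follows from the definition~\eqref{eq:intro-hellinger}. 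Finally, $-\tfrac12\log(1-s^2)=\tfrac12s^2(1+O(s^2))=\tfrac12\varepsilon^2(1+O(\varepsilon^2))$, and $\tfrac12\log(1+\varepsilon^2)=\tfrac12\varepsilon^2(1+O(\varepsilon^2))$, so their ratio tends to $1$.

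\emph{Main obstacle.} The delicate point is the quantitative expansion of $a(\Delta)$, which hinges on the evenness argument. I must also verify the nondegeneracy $a>\Delta$ along the branch, which is immediate since $a\to\sigma$. Note that the Lindeberg--Feller CLT mentioned in the text is not needed for this leading-order rate statement; it concerns the fixed-confidence extension.
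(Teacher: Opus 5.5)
Your proposal is correct and follows the paper's proof essentially step for step: the Jensen inclusion for the dual bound, the implicit function theorem at $(a,\Delta)=(\sigma,0)$ with the first-order term removed by the evenness of the equation in $\Delta$ (the paper phrases this as $\partial_\varepsilon F=0$ at $(1,0)$ after rescaling), and the direct affinity computation $\sqrt{1-s^2}$. One small tightening: the inclusion alone certifies $r_M$. The bound on $r_M'$ itself then follows either from $r_M'=r_M$ (Theorem~\ref{thm:matching}), or, as your Young remark suggests, by replacing $\lambda_2^\star$ with $(2/\alpha)\sigma^{2-\alpha}\lambda_2^\star$. That replacement widens the envelope band (raising $\psi_u$ and lowering $\psi_l$), which places the triple directly in $C_\Delta$ for the $\alpha$-class.
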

\begin{proof}
For $\alpha\ge2$ and any $\mb P\in\mc C_{\pm\Delta}$, Jensen's inequality gives $\E{\mb P}{(X\mp\Delta)^2}\le\bigl(\E{\mb P}{|X\mp\Delta|^\alpha}\bigr)^{2/\alpha}\le\sigma^2$, so $\mb P$ also obeys the bounded-variance constraint at the same margin. The monotone estimating function synthesized in Proposition~\ref{prop:bounded-var-synthesis} from $\lambda^\star=(\lambda_1^\star,\lambda_2^\star)$ therefore controls it too, and the corresponding $M$-estimator attains worst-case rate $\tfrac12\log(1+\varepsilon^2)$ on $\mc C_0$; hence $r_M'(\Delta)\ge\tfrac12\log(1+\varepsilon^2)$.

The pair $\mb P^\star_{\pm\Delta}$ has mean $\E{\mb P^\star_{\pm\Delta}}{X}=\pm\Delta$ for any $(a,s)$ with $as=\Delta$, and its moment constraint is the second equation of~\eqref{eq:large-alpha-saddle},
\[
  \E{\mb P^\star_\Delta}{|X-\Delta|^\alpha}=\tfrac{(1-s)}{2}(a+\Delta)^\alpha+\tfrac{(1+s)}{2}(a-\Delta)^\alpha=\sigma^\alpha .
\]
Substituting $s=\Delta/a$ and rescaling by $u\defn a/\sigma$, $\varepsilon\defn\Delta/\sigma$ (so $a=\sigma u$, $\Delta=\sigma\varepsilon$, and dividing through by $\sigma^\alpha$) turns this into $F(u;\varepsilon)=0$ with
\[
  F(u;\varepsilon)\defn(1-\varepsilon/u)(u+\varepsilon)^\alpha+(1+\varepsilon/u)(u-\varepsilon)^\alpha-2 .
\]
At $\varepsilon_0=0$, $F(u;0)=2u^\alpha-2$, so $u_0=1$. On a neighborhood of $(u_0, \varepsilon_0) = (1,0)$ both bases $u\pm\varepsilon$ are bounded away from $0$; since $t\mapsto t^\alpha$ is $C^2$ on $(0,\infty)$ and $u\mapsto\varepsilon/u$ is smooth for $u$ near $1$, $F$ is $C^\infty$ around $(u_0, \varepsilon_0)$. As $\partial F/\partial u\,|_{(u_0,\varepsilon_0)}=2\alpha> 0$, the $C^2$ implicit function theorem \citep{krantz2013implicit} yields a unique $C^2$ solution $u(\varepsilon)$ near $\varepsilon=\varepsilon_0=0$ with $u(\varepsilon_0)=u_0$.
Differentiating the identity $F(u(\varepsilon),\varepsilon)\equiv0$ and applying the chain rule gives $\partial_u F\cdot u'(\varepsilon)+\partial_\varepsilon F=0$, that is $u'(\varepsilon)=-\partial_\varepsilon F/\partial_u F$ along the branch. At $(1,0)$ we have $\partial F/\partial\varepsilon=0$ (the odd-in-$\varepsilon$ contributions cancel), so $u'(0)=0$, and a second-order Taylor expansion of the branch gives $u(\varepsilon)=u(0)+u'(0)\,\varepsilon+O(\varepsilon^2)=1+O(\varepsilon^2)$. Consequently $a(\Delta)=\sigma(1+O(\varepsilon^2))$ and $s(\Delta)=\Delta/a(\Delta)=\varepsilon+O(\varepsilon^3)$, and $\mb P^\star_{\pm\Delta}$ then satisfies both constraints, so $\mb P^\star_{\pm\Delta}\in\mc C_{\pm\Delta}$.

\emph{Rate.} The pairs $\mb P^\star_\Delta,\mb P^\star_{-\Delta}$ share the atoms $\pm a(\Delta)$ with masses interchanged, so $\rho(\mb P^\star_{-\Delta},\mb P^\star_\Delta)=2\cdot\tfrac12\sqrt{(1-s)(1+s)}=\sqrt{1-s(\Delta)^2}$, and by~\eqref{eq:intro-hellinger} we have $r(\Delta)\le-\log\rho(\mb P^\star_{-\Delta},\mb P^\star_\Delta)=-\tfrac12\log(1-s(\Delta)^2)=\tfrac12 s(\Delta)^2(1+o(1))=\tfrac{\Delta^2}{(2\sigma^2)}(1+o(1))=\tfrac12\log(1+\varepsilon^2)(1+o(1))$, using $s(\Delta)=\varepsilon+O(\varepsilon^3)$. 
\end{proof}

\begin{proposition}[Fixed-confidence lower bound at fixed $\beta$, $\alpha\ge 2$]\label{prop:CLT-lb-fixed-beta}
For every $\alpha\ge 2$, every $\beta\in(0,\tfrac12)$, and every sequence of estimators $(\mu_n)$ with worst-case confidence level~\eqref{eq:beta-n-def} obeying $\beta_n(\mu_n;\Delta_n)\le\beta$ for all $n$, we have
\[
\liminf_{n\to\infty}\,\frac{\sqrt{n}\,\Delta_n}{\sigma}\;\ge\;\Phi^{-1}(1-\beta).
\]
\end{proposition}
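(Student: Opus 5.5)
The plan is to reduce to a two-point test between the mirror two-atom pair of Proposition~\ref{prop:two-atom-large-alpha}, taken at a margin shrinking like $1/\sqrt n$, and then apply a Lindeberg--Feller CLT to the likelihood-ratio statistic. Suppose, for contradiction, that along a subsequence $\sqrt n\,\Delta_n/\sigma\to c<\Phi^{-1}(1-\beta)$; passing to a further subsequence we may assume $c\in[0,\infty)$. Fix $c'$ with $c<c'<\Phi^{-1}(1-\beta)$ and set $\Delta_n'\defn\sigma c'/\sqrt n>\Delta_n$ for $n$ large. Since $\Delta_n'\to0$, Proposition~\ref{prop:two-atom-large-alpha} furnishes feasible pairs $\mb P^\star_{\pm\Delta_n'}\in\mc C_{\pm\Delta_n'}$. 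These are supported on $\pm a_n$ with $a_n=\sigma(1+O(\varepsilon_n^2))$, $\varepsilon_n=\Delta_n'/\sigma$, and with masses $(1\mp s_n)/2$, where $s_n=\varepsilon_n+O(\varepsilon_n^3)$.

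Repeating the reduction \eqref{eq:LB-reduction} of Proposition~\ref{prop:LB} with $\Delta'=\Delta_n'$ gives $\beta\ge\beta_n(\mu_n;\Delta_n)\ge\max(\alpha_n,\gamma_n)$. Here $\alpha_n,\gamma_n$ are the two error probabilities of the test $\mathbf 1\{\mu_n\ge0\}$ between $n$ i.i.d.\ draws from $\mb P^\star_{-\Delta_n'}$ and from $\mb P^\star_{\Delta_n'}$. Hence $\tfrac12(\alpha_n+\gamma_n)\le\beta$, and it suffices to show that the minimax error $\inf_\phi\max(\alpha,\gamma)$ of this testing problem converges to $1-\Phi(c')>\beta$, which gives the contradiction.

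To compute that limit, observe that both hypotheses are carried by the atoms $\pm a_n$. The sufficient statistic is therefore $S_n=\sum_i \mathrm{sign}(X_i)$, and the log-likelihood ratio is $\log\tfrac{1+s_n}{1-s_n}\cdot S_n/2$, monotone in $S_n$. By Neyman--Pearson, the minimax test thresholds $S_n$, and by the symmetry $x\mapsto-x$ the threshold sits at $0$. Under $\mb P^\star_{\Delta_n'}$ the summands $\mathrm{sign}(X_i)$ are bounded, i.i.d.\ within each row, with mean $s_n$ and variance $1-s_n^2\to1$. Lindeberg--Feller (trivially verified, the summands being bounded) gives $(S_n-ns_n)/\sqrt n\Rightarrow N(0,1)$, where $ns_n/\sqrt n=\sqrt n\,\varepsilon_n(1+o(1))\to c'$. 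Hence $\mb P^\star_{\Delta_n'}[S_n<0]\to\Phi(-c')=1-\Phi(c')$, and symmetrically for the other error.

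The main obstacle is turning the Neyman--Pearson step into a rigorous lower bound on $\max(\alpha_n,\gamma_n)$ for an arbitrary test, including randomization and ties at $S_n=0$. I would handle it by lower bounding the Bayes risk $\tfrac12(\alpha_n+\gamma_n)$ through total variation, $\tfrac12(1-\mathrm{TV})$, and computing $\mathrm{TV}$ between the two product laws via the law of $S_n$. The CLT plus Scheffé-type local-limit control, or directly the likelihood-ratio formula $\mathrm{TV}=\mb P_+[L>1]-\mb P_-[L>1]$, then gives $\mathrm{TV}\to2\Phi(c')-1$. Consequently $\tfrac12(\alpha_n+\gamma_n)\to1-\Phi(c')>\beta$, contradicting $\tfrac12(\alpha_n+\gamma_n)\le\beta$. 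The case $c=\infty$ needs no argument, since then the $\liminf$ is already $\infty$.
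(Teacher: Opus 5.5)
Your proof is correct and follows essentially the paper's argument: a two-point reduction at an auxiliary margin to the mirror two-atom pair of Proposition~\ref{prop:two-atom-large-alpha}, the sign count as sufficient statistic, and a Lindeberg--Feller CLT for the resulting triangular array. The differences are minor. You place the auxiliary margin at $\sigma c'/\sqrt n$ with a strict gap $c<c'<\Phi^{-1}(1-\beta)$ and argue by contradiction, and you compute the exact Bayes risk via total variation. The paper instead lets $\Delta'\downarrow\Delta_n$ at fixed $n$ using continuity of $s(\cdot)$, and simply drops the tie term $\frac12\mb P[S_n=0]$. One harmless slip: the log-likelihood ratio is $\log\frac{1+s_n}{1-s_n}\,S_n$, without the factor $\frac12$; this does not matter because only its monotonicity in $S_n$ is used.
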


The proof is deferred to Appendix~\ref{app:deferred}.

The bound of Proposition~\ref{prop:CLT-lb-fixed-beta} is the exact analogue of the Gaussian lower bound of Section~\ref{sec:bound-vari-class}. For each fixed $\beta$ it gives $\liminf_{n\to\infty}\sqrt n\,\Delta_n/\sigma\ge\Phi^{-1}(1-\beta)$, and since $\Phi^{-1}(1-\beta)=\sqrt{2\log(1/\beta)}\,(1+o(1))$ as $\beta\downarrow0$, this matches Catoni's upper bound $\Delta_n\le(\sqrt 2+o(1))\,\sigma\sqrt{\log(1/\beta)/n}$ to leading order. The optimal high-confidence constant is therefore $\sqrt 2$ for the bounded $\alpha$-moment class for every $\alpha\ge 2$.

The argument proving Proposition~\ref{prop:CLT-lb-fixed-beta} is special to the light-tailed regime $\alpha\ge2$. Its engine is the two-atom least-favorable pair, whose bias $s(\Delta)\sim\Delta/\sigma$ \emph{vanishes} as $\Delta\downarrow0$; in the relevant scaling $\Delta_n\asymp\sigma\sqrt{\log(1/\beta)/n}$ this gives $\sqrt n\,s_n=\Theta(1)$, exactly the regime in which a binomial count is approximately Gaussian, so a central limit theorem inverts the Bayes error. For $\alpha\in(1,2)$ the class is heavy-tailed and the least-favorable distribution changes shape: it must place a \emph{third} atom at the origin to carry mass, and its bias no longer vanishes but settles at the constant $s^\star=\sqrt{\alpha(2-\alpha)}$. A binomial with non-vanishing bias is no longer approximately Gaussian; instead the trinomial sufficient statistic, after a Poisson coupling, converges to a Skellam law (a difference of two Poissons), and the Bayes error must be inverted through Bessel-function asymptotics rather than a CLT. The next subsection develops this more delicate analysis.

\subsection{Bounded $\alpha$-Moment Class for \texorpdfstring{$\alpha\in(1,2)$}{alpha in (1,2)}}
\label{sec:bounded-alpha-moment-1}

The bounded $\alpha$-moment class~\eqref{def:bhatt-model} is treated by \citet{lee2020optimal,chen2021generalized,bhatt2022nearly} through the one-parameter $M$-estimators built from the sandwich~\eqref{eq:samorodnitsky-butterfly}. At the Lee/Bhatt constant $C(\alpha)$ of Section~\ref{sec:Bhatt-model} this family attains the leading margin constant
\begin{equation}
  \label{eq:L-def}
  L(\alpha) \,\defn\, \frac{\alpha\,C(\alpha)^{1/\alpha}}{(\alpha-1)^{(\alpha-1)/\alpha}},
\end{equation}
which we show below is the smallest constant this family, and indeed any estimator, can achieve; throughout, $q\defn\alpha/(\alpha-1)\in(2,\infty)$ denotes the conjugate exponent. We contrast their construction with the synthesized estimator of Section~\ref{ssec:sandwich}.

\paragraph{Fixed-Margin Regime.}
We first pin the rate of the bounded $\alpha$-moment class to leading order, the exact analogue of Proposition~\ref{prop:two-atom-large-alpha} for the heavy-tailed regime: on the dual side the one-parameter estimator of \citet{lee2020optimal,bhatt2022nearly} certifies a closed-form lower bound on $r_M$, and on the primal side an explicit three-atom near least-favorable pair certifies a matching upper bound on $r$.

\begin{proposition}\label{prop:bhatt-synthesis}
Let $\alpha\in(1,2)$, $q=\alpha/(\alpha-1)$, and $\varepsilon\defn\Delta/\sigma$. The one-parameter estimating function~\eqref{eq:samorodnitsky-butterfly} of \citet{lee2020optimal,bhatt2022nearly}, at their constant $\tilde C=C(\alpha)$, is a feasible non-decreasing $M$-estimator and certifies
\[
  r_M(\Delta)\;\ge\;r_B(\Delta)\;\defn\;\frac{\alpha-1}{\alpha}\,\frac{\Delta^{q}}{\bigl(\alpha\,C(\alpha)\,M_\alpha(\Delta)\bigr)^{1/(\alpha-1)}},\qquad M_\alpha(\Delta):=\sup_{\mb P\in\mc C_0}\E{\mb P}{\abs{X-\Delta}^\alpha}.
\]
Consider the mirror three-atom pair
\[
  \mb P^\star_{\pm\Delta}=p_-\delta_{\mp a}+p_0\delta_0+p_+\delta_{\pm a},\qquad p_\pm=(1\pm s^\star)m,\ \ p_0=1-2m,\ \ s^\star=\sqrt{\alpha(2-\alpha)},
\]
in which the scale $a=\Delta/(p_+-p_-)$ fixes the means at $\pm\Delta$, and the mass parameter $m=m(\varepsilon)\in(0,\tfrac12)$ is chosen so that the $\alpha$-moment constraint binds, $\E{\mb P^\star_\Delta}{\abs{X-\Delta}^\alpha}=\sigma^\alpha$; then $\mb P^\star_{\pm\Delta}\in\mc C_{\pm\Delta}$ for every sufficiently small $\varepsilon$. By Lemma~\ref{lem:F-star-asymp}, $p_0+2\sqrt{p_+p_-}=1-K(\alpha)\varepsilon^q+o(\varepsilon^q)$, so
\begin{equation}\label{eq:K-def}
  r(\Delta)\;\le\;-\log\bigl(p_0+2\sqrt{p_+p_-}\bigr)\;=\;K(\alpha)\,\varepsilon^q+o(\varepsilon^q),\qquad K(\alpha)\defn\frac{\alpha-1}{\alpha\bigl(\alpha\,C(\alpha)\bigr)^{1/(\alpha-1)}}.
\end{equation}
\end{proposition}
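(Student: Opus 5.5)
The statement has two independent halves, a dual lower bound on $r_M$ and a primal upper bound on $r$. I would treat them separately, mirroring the proof of Proposition~\ref{prop:two-atom-large-alpha}.

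\emph{Dual half.} I would verify the Chernoff constraints~\eqref{eq:symmetric-feasibility} directly for a non-decreasing $\psi$ sandwiched as in~\eqref{eq:samorodnitsky-butterfly} with $\tilde C=C(\alpha)$ and scale $\lambda$.

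For the right tail, $\exp(\psi(x-\Delta))\le 1+(x-\Delta)/\lambda+C(\alpha)\abs{(x-\Delta)/\lambda}^\alpha$ pointwise. Taking expectations over $\mb P\in\mc C_0$ gives
\[
  \E{\mb P}{e^{\psi(X-\Delta)}}\le 1-\Delta/\lambda+C(\alpha)M_\alpha(\Delta)\lambda^{-\alpha}.
\]
The left tail follows by the reflection symmetry of $\mc C_0$. Using $\log(1+t)\le t$, feasibility then holds at rate
\[
  r=\Delta/\lambda-C(\alpha)M_\alpha(\Delta)\lambda^{-\alpha}.
\]
Maximizing over $\lambda>0$ is a one-variable calculus step. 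With $t=1/\lambda$, stationarity is $\Delta=\alpha C M_\alpha t^{\alpha-1}$, so $t^\star=(\Delta/(\alpha C M_\alpha))^{1/(\alpha-1)}$. Substituting gives $r=\Delta t^\star(1-1/\alpha)$, which is exactly $r_B(\Delta)$.

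The non-decreasing property of the interpolant is not needed for the Chernoff argument, but it is needed for an estimator to exist. Here I would cite the monotonicity shown by \citet{bhatt2022nearly} for the constant $C(\alpha)$; alternatively, take $\psi(y)=\sup_{x\le y}\psi_{\alpha,l}(x)$ and check that this stays below $\psi_{\alpha,u}$. The latter check is exactly what pins $C(\alpha)$.

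\emph{Primal half.}

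\textbf{Step 1 (means).} Since $p_+-p_-=2s^\star m$, the choice $a=\Delta/(2s^\star m)$ makes $\E{\mb P^\star_{\Delta}}{X}=a(p_+-p_-)=\Delta$. Mirror symmetry then gives mean $-\Delta$ for $\mb P^\star_{-\Delta}$.

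\textbf{Step 2 (moment constraint).} The constraint reads
\[
  G(m;\Delta)\defn p_-\abs{a+\Delta}^\alpha+p_0\Delta^\alpha+p_+\abs{a-\Delta}^\alpha=\sigma^\alpha.
\]
As $m\downarrow0$ we have $a\sim\Delta/(2s^\star m)\to\infty$, and the leading term is $2m\,a^\alpha\asymp m^{1-\alpha}\Delta^\alpha\to\infty$ because $\alpha>1$. At $m=\tfrac12$ and small $\varepsilon$, both $a$ and $\Delta$ are of order $\Delta$, so $G=O(\Delta^\alpha)<\sigma^\alpha$. Since $G$ is continuous in $m$, the intermediate value theorem yields a root $m(\varepsilon)\in(0,\tfrac12)$.

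To leading order, $2m(\Delta/(2s^\star m))^\alpha\approx\sigma^\alpha$. This gives
\[
  m\asymp\varepsilon^{\alpha/(\alpha-1)}=\varepsilon^q\to0, \qquad a\asymp\sigma\,\varepsilon^{1-q}\to\infty.
\]
Consequently $\Delta/a\to0$, which justifies the expansion $\abs{a\pm\Delta}^\alpha=a^\alpha(1\pm\alpha\Delta/a+O((\Delta/a)^2))$.

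\textbf{Step 3 (affinity).} The affinity is $p_0+2\sqrt{p_+p_-}=1-2m\bigl(1-\sqrt{1-(s^\star)^2}\bigr)$. Lemma~\ref{lem:F-star-asymp} gives its expansion with constant $K(\alpha)$, and $r(\Delta)\le-\log\rho$ then yields~\eqref{eq:K-def}. If I had to check that constant by hand, I would solve the constraint of Step~2 to first order in $\Delta/a$, including the correction term $\alpha\Delta/a$ weighted by $p_\pm$. I would then verify that with $s^\star=\sqrt{\alpha(2-\alpha)}$ the coefficient $2m(1-\sqrt{1-\alpha(2-\alpha)})=2m(\alpha-1)\cdot(\ldots)$ reduces to $K(\alpha)\varepsilon^q$.

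\textbf{Main obstacle.} The hardest step is the bookkeeping in Step~2: the correction terms of relative size $\Delta/a\asymp\varepsilon^{q-1}$ must be shown to be $o(1)$ relative to the leading term, so that they only contribute $o(\varepsilon^q)$ to the rate. Since Lemma~\ref{lem:F-star-asymp} is available, I would simply invoke it, and limit my own work to feasibility of the pair and the existence of $m(\varepsilon)$.
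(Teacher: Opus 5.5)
Your proposal is correct and follows the paper's proof. The dual half is the same Chernoff bound via $\log(1+t)\le t$, optimized at $\lambda^{\alpha-1}=\alpha C(\alpha)M_\alpha(\Delta)/\Delta$. The primal half is the affinity computation $p_0+2\sqrt{p_+p_-}$ followed by an appeal to Lemma~\ref{lem:F-star-asymp}; your intermediate-value existence argument for $m(\varepsilon)$ is a harmless supplement to the lemma's implicit-function construction, and your monotonicity remark fills in what the paper takes as given. Two minor slips to fix: $\Delta/a=2s^\star m\asymp\varepsilon^{q}$ rather than $\varepsilon^{q-1}$, and $1-\sqrt{1-(s^\star)^2}=2-\alpha$, so the affinity is exactly $1-2(2-\alpha)m$.
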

\begin{proof}
  The envelope~\eqref{eq:samorodnitsky-butterfly} satisfies $\psi(x)\le\psi_{\alpha,u}(x)=\log(1+x/\lambda+C(\alpha)\abs{x/\lambda}^\alpha)$, so using $\E{\mb P_0}{X}=0$,
\[
  \E{\mb P_0}{\exp(\psi(X-\Delta))}\le 1-\Delta/\lambda+C(\alpha)\,\E{\mb P_0}{\abs{X-\Delta}^\alpha}/\lambda^\alpha.
\]
Using $\log(1+t)\le t$ and taking the supremum over $\mb P_0\in\mc C_0$ (so that $\E{\mb P_0}{\abs{X-\Delta}^\alpha}\le M_\alpha(\Delta)$), $\sup_{\mb P_0\in\mc C_0}\log\E{\mb P_0}{\exp(\psi(X-\Delta))}\le-\Delta/\lambda+C(\alpha)M_\alpha(\Delta)/\lambda^\alpha$ for every $\lambda>0$; minimizing the right-hand side at $\lambda^{\alpha-1}=\alpha C(\alpha)M_\alpha(\Delta)/\Delta$ gives $\sup_{\mb P_0\in\mc C_0}\log\E{\mb P_0}{\exp(\psi(X-\Delta))}\le-r_B(\Delta)$, i.e.\ the feasibility constraint~\eqref{eq:sf-right} holds at rate $r_B(\Delta)$. The left tail is symmetric: from the lower envelope $\psi(x)\ge\psi_{\alpha,l}(x)=-\log(1-x/\lambda+C(\alpha)\abs{x/\lambda}^\alpha)$ of~\eqref{eq:samorodnitsky-butterfly} and the reflection invariance of $\mc C_0$ under $X\mapsto-X$, the same computation gives $\sup_{\mb P_0\in\mc C_0}\log\E{\mb P_0}{\exp(-\psi(X+\Delta))}\le-r_B(\Delta)$, the constraint~\eqref{eq:sf-left}. As $\psi$ is non-decreasing and $(\psi,r_B(\Delta))$ thus satisfies~\eqref{eq:symmetric-feasibility}, the definition~\eqref{eq:r_M} of $r_M$ gives $r_M(\Delta)\ge r_B(\Delta)$.

For the three-atom pair above, the mirror Hellinger affinity~\eqref{eq:rho-def} is $\rho(\mb P^\star_{-\Delta},\mb P^\star_\Delta)=\int\sqrt{\d\mb P^\star_{-\Delta}\,\d\mb P^\star_\Delta}=p_0+2\sqrt{p_+p_-}$ (the shared atom at $0$ contributes $p_0$, the swapped atoms at $\pm a$ contribute $2\sqrt{p_+p_-}$), so by~\eqref{eq:intro-hellinger},
\[
  r(\Delta)\;\le\;-\log\rho(\mb P^\star_{-\Delta},\mb P^\star_\Delta)=-\log\bigl(p_0+2\sqrt{p_+p_-}\bigr).
\]
Lemma~\ref{lem:F-star-asymp} furnishes a triple with $p_0+2\sqrt{p_+p_-}=1-K(\alpha)\varepsilon^q+o(\varepsilon^q)$, whence $r(\Delta)\le-\log(1-K(\alpha)\varepsilon^q+o(\varepsilon^q))=K(\alpha)\varepsilon^q+o(\varepsilon^q)$.
\end{proof}

The two certificates pin the rate to leading order. Since $M_\alpha(\Delta)\to\sigma^\alpha$ as $\Delta\downarrow0$, the dual rate is $$r_B(\Delta)=\frac{\alpha-1}{\alpha}\,\Delta^q/\bigl(\alpha\,C(\alpha)\,\sigma^\alpha\bigr)^{1/(\alpha-1)}(1+o(1))=K(\alpha)\varepsilon^q(1+o(1))$$ (using $\sigma^{\alpha/(\alpha-1)}=\sigma^q$), matching the primal upper bound~\eqref{eq:K-def}. As $r_M(\Delta)\le r(\Delta)$, the dual lower bound and the primal upper bound sandwich both rates, so
\[
  r(\Delta)=r_M(\Delta)\,(1+o(1))=K(\alpha)\,(\Delta/\sigma)^q\,(1+o(1))\qquad(\Delta\downarrow0).
\]

\begin{lemma}\label{lem:F-star-asymp}
  Let $\alpha\in(1,2)$ and conjugate exponent $q=\alpha/(\alpha-1)$, with $C(\alpha)$ as in Section~\ref{sec:Bhatt-model}. For every sufficiently small $\varepsilon>0$ there exist $p_0,p_+,p_-\ge 0$ with $p_0+p_++p_-=1$ and $a>0$ such that the pair
  \[
    \mb P^\star_{\Delta} = p_{-}\delta_{-a} + p_0\delta_{0} + p_{+}\delta_{a},\qquad
    \mb P^\star_{-\Delta} = p_{+}\delta_{-a} + p_0\delta_{0} + p_{-}\delta_{a}
  \]
  is feasible (i.e., $\mb P^\star_{\pm\Delta}\in\mc C_{\pm\Delta}$ for $\Delta=\sigma\varepsilon$) and satisfies
  \[
    p_0+2\sqrt{p_+p_-} \,=\, 1 - K(\alpha)\,\varepsilon^q + o(\varepsilon^q)
    \qquad\text{as } \varepsilon\downarrow 0,
  \]
  with $K(\alpha)$ as in~\eqref{eq:K-def}.
\end{lemma}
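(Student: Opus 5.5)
The plan is to take the explicit ansatz already recorded in Proposition~\ref{prop:bhatt-synthesis}, namely $p_\pm=(1\pm s)m$, $p_0=1-2m$, $a=\Delta/(p_+-p_-)=\Delta/(2sm)$ with $s=s^\star=\sqrt{\alpha(2-\alpha)}\in(0,1]$. Then I would verify three things in turn: feasibility, the affinity identity, and the asymptotics of $m$.

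\emph{Feasibility.} By construction $\E{\mb P^\star_\Delta}{X}=a(p_+-p_-)=\Delta$, and mirror symmetry gives the $-\Delta$ case. So it only remains to choose $m\in(0,\tfrac12)$ with
\[
G(m;\varepsilon)\defn (1-s)m\,(a+\Delta)^\alpha+(1-2m)\Delta^\alpha+(1+s)m\,\abs{a-\Delta}^\alpha=\sigma^\alpha ,\qquad a=\Delta/(2sm).
\]
Rescale by $\sigma$ and write $a/\Delta=1/(2sm)$. Then $G/\sigma^\alpha=\varepsilon^\alpha\bigl[(1-s)m(1/(2sm)+1)^\alpha+(1-2m)+(1+s)m\abs{1/(2sm)-1}^\alpha\bigr]$. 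As $m\downarrow0$ the bracket behaves like $2m(2sm)^{-\alpha}=2^{1-\alpha}s^{-\alpha}m^{1-\alpha}\to\infty$, since $\alpha>1$. At a fixed moderate $m$ (say $m=m_1$ with $2sm_1<1$) the bracket is $O(1)$, so $G<\sigma^\alpha$ for small $\varepsilon$. Continuity in $m$ and the intermediate value theorem then yield a root $m(\varepsilon)\in(0,m_1)$.

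\emph{Asymptotics of $m$.} Along the root, $\varepsilon^\alpha\cdot(\text{bracket})=1$ forces the bracket to be of order $\varepsilon^{-\alpha}$, hence $m\to0$. Expanding $(1/(2sm)\pm1)^\alpha=(2sm)^{-\alpha}(1\pm2sm)^\alpha=(2sm)^{-\alpha}(1+O(m))$ gives
\[
\varepsilon^\alpha\bigl[2^{1-\alpha}s^{-\alpha}m^{1-\alpha}(1+O(m))+O(1)\bigr]=1 .
\]
This yields $m=\tfrac12 s^{-q}\varepsilon^q(1+o(1))$, using $(2^{1-\alpha}s^{-\alpha})^{1/(\alpha-1)}=\tfrac12 s^{-q}$. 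All correction terms are relatively $O(m)+O(\varepsilon^\alpha)=o(1)$.

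\emph{Affinity.} With $1-s^2=(\alpha-1)^2$ we get $2\sqrt{p_+p_-}=2m(\alpha-1)$, exactly. Hence
\[
p_0+2\sqrt{p_+p_-}=1-2m(2-\alpha)=1-(2-\alpha)s^{-q}\varepsilon^q+o(\varepsilon^q).
\]
The proof then closes with the purely algebraic identity
\[
(2-\alpha)\bigl(\alpha(2-\alpha)\bigr)^{-q/2}=K(\alpha)=\frac{\alpha-1}{\alpha\,(\alpha C(\alpha))^{1/(\alpha-1)}} .
\]
To verify it I would raise both sides to the power $\alpha-1$ and substitute $C(\alpha)=\bigl(\tfrac{\alpha-1}{\alpha}\bigr)^{\alpha/2}\bigl(\tfrac{2-\alpha}{\alpha-1}\bigr)^{(2-\alpha)/2}$, then compare powers of $\alpha$, $\alpha-1$ and $2-\alpha$ separately. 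As a sanity check, $s^\star$ should be the maximizer of $s\mapsto 2(1-\sqrt{1-s^2})\cdot\tfrac12 s^{-q}$. This is consistent with the claim that this three-atom family is near least-favorable, and it is how I would double-check the exponents if the direct comparison disagrees.

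\emph{Main obstacle.} The main obstacle is the bookkeeping in the $C(\alpha)$ identity, together with ensuring uniformity of the $o(1)$ in the root equation. The latter is needed only to the precision that $m=\tfrac12 s^{-q}\varepsilon^q(1+o(1))$, which suffices because the affinity is affine in $m$.
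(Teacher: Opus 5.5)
Your proposal is correct and follows the paper's proof step for step: the same ansatz $p_\pm=(1\pm s^\star)m$, $p_0=1-2m$, the same exact affinity $1-2(2-\alpha)m$, the same asymptotics $2m=(s^\star)^{-q}\varepsilon^q(1+o(1))$, and the same identity $K(\alpha)=(2-\alpha)(s^\star)^{-q}$, which the paper routes through $L(\alpha)=s^\star/(2-\alpha)^{(\alpha-1)/\alpha}$; the one slip is in your sanity check, where $s^\star$ is the \emph{minimizer}, not the maximizer, of $(1-\sqrt{1-s^2})s^{-q}$ (this function blows up as $s\downarrow0$, and least-favorability means maximizing the affinity). The only methodological difference is that you solve the moment equation by the intermediate value theorem rather than by the implicit function theorem in $t=2m/\varepsilon^q$; this suffices for the lemma as stated, but the paper later needs continuity of $\varepsilon\mapsto m(\varepsilon)$ (Proposition~\ref{prop:skellam-lb-nonasymp}), which the implicit function theorem gives directly, and which you can recover by noting that the bracket is strictly decreasing for small $m$, since its derivative is dominated by $(1-\alpha)2^{1-\alpha}s^{-\alpha}m^{-\alpha}<0$, so the small root is unique and continuous in $\varepsilon$.
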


The proof is deferred to Appendix~\ref{app:deferred}.

Leading order aside, Theorem~\ref{thm:matching} gives the exact rate at any \emph{fixed} margin: the synthesis problem~\eqref{eq:r_M_prime} and the two-point lower bound~\eqref{eq:intro-hellinger} can each be solved numerically and agree on the common value $r_M'(\Delta)=r_M(\Delta)=r(\Delta)$. Figure~\ref{fig:bhatt-rate} reports this rate for $\alpha=1.5$: the optimized rate $r_M'$ dominates the one-parameter rate $r_B$ by feasibility, and across the depicted range of margins this domination is strict, a genuine gap rather than a numerical artefact.

\begin{figure}[h]
  \centering
\begin{tikzpicture}[trim axis left, trim axis right]
  \begin{axis}[
    width=12cm,
    height=8cm,
  axis lines=middle,
  xlabel={$(\Delta/\sigma)^{\alpha/(\alpha-1)}$},
  ylabel={},
  domain=0:1,
  samples=200,
  xmin=0, xmax=1,
  ymin=0,
  grid=major,
  grid style={dashed,gray!30},
  legend,
  legend pos=outer north east,
  legend cell align=left,
  ]

  \addplot[thick, green] table[col sep=comma, x=xs, y=r_bhatt] {data/bhatt-rate-a-1.5.csv};
  \addlegendentry{$r_B$}

  \addplot[thick, black] table[col sep=comma, x=xs, y=rmax] {data/bhatt-rate-a-1.5.csv};
  \addlegendentry{$r_M'=r_M=r$}

\end{axis}
\end{tikzpicture}
\caption{We consider the bounded $\alpha$-moment class with $\alpha=1.5$.
  The exponential rate $r_B$ at which the estimator of \citet{lee2020optimal,bhatt2022nearly} drives down the probability of an error exceeding $\Delta$, as a function of $(\Delta/\sigma)^{\alpha/(\alpha-1)}$, is depicted in green. This rate is numerically dominated by the optimized rate $r_M'$ (black) for all $\Delta\neq 0$ over the depicted range. }
  \label{fig:bhatt-rate}
\end{figure}

\paragraph{Fixed-Confidence Regime.}
The fixed-confidence guarantee of the one-parameter $M$-estimator of \citet{lee2020optimal,bhatt2022nearly} is immediate from the fixed-margin analysis above: Proposition~\ref{prop:bhatt-synthesis} shows its $\psi$ satisfies the feasibility constraints~\eqref{eq:symmetric-feasibility} at the rate $r_B(\Delta)=K(\alpha)(\Delta/\sigma)^q(1+o(1))$, so Theorem~\ref{thm:feasibility-guarantee} gives $\beta_n(\mu_n;\Delta)\le e^{-n\,r_B(\Delta)}$ for all $\mu$ and $\mb P_\mu\in\mc C_\mu$. Inverting $n\,r_B(\Delta_n)=\log(1/\beta)$ recovers their heavy-tailed counterpart of the variance-class margin~\eqref{eq:catoni-deviation}, with leading constant $L(\alpha)=K(\alpha)^{-(\alpha-1)/\alpha}$: for any $\beta\in(0,1)$ and $n$ sufficiently large,
\begin{equation}
  \label{eq:bhatt-deviation}
  \max(\mb P_\mu\!\left[\mu_n-\mu > \Delta_n\right], \mb P_\mu\!\left[\mu_n-\mu < -\Delta_n\right])  \leq \beta \quad \forall \mu\in\Re,\ \mb P_\mu\in \mc C_\mu,
\end{equation}
for the error margin
\begin{equation}
  \label{eq:bhatt-rate}
  \Delta_n =  \bigl(L(\alpha)+o(1)\bigr) \sigma \left(\tfrac{\log(1/\beta)}{n}\right)^{(\alpha-1)/\alpha}.
\end{equation}
\citet[Theorem 3.1]{devroye2015subgaussianmeanestimators} implies that any estimator satisfying~\eqref{eq:confidence-guarantee} requires
\begin{equation}
  \label{eq:lb-devroye}
  \Delta_n\geq \left(1+o(1)\right) \sigma \left(\tfrac{\log(1/\beta)}{n}\right)^{(\alpha-1)/\alpha},
\end{equation}
matching~\eqref{eq:bhatt-rate} in rate but with a strictly smaller leading constant, $1<L(\alpha)$ for $\alpha\in(1,2)$: by~\eqref{eq:L-def}, $\log L(\alpha)$ has derivative $-\log(2-\alpha)/\alpha^2>0$ on $(1,2)$ and $\log L(1)=0$, so $\log L(\alpha)=\int_1^\alpha -\log(2-u)/u^2\,\d u>0$, whence $L(\alpha)>1$.
The previous result will now be used to establish that the margin~\eqref{eq:bhatt-rate} is asymptotically minimax-optimal as $\beta\downarrow 0$ for the bounded $\alpha$-moment class, $\alpha\in(1,2)$, in direct analogy with the $\sqrt 2$-matching of \citet{catoni2012challenging} for the variance class. The argument is split into three pieces: a non-asymptotic binary hypothesis test lower bound (Proposition~\ref{prop:skellam-lb-nonasymp}) that holds at every $\beta\in(0,1/2)$ and every $n\ge 1$; its sequential limit at a fixed confidence level (Proposition~\ref{prop:skellam-lb-fixed-beta}); and the asymptotic analysis of that limit as $\beta\downarrow 0$ (Corollary~\ref{cor:skellam-lb-asymp}), which recovers the upper-bound constant exactly.

We recall the quantities from Lemma~\ref{lem:F-star-asymp} that we will use repeatedly. Setting $s^\star\defn\sqrt{\alpha(2-\alpha)}\in(0,1)$ and $\varepsilon\defn\Delta/\sigma$, the three-atom near least-favorable pair $\mb P^\star_{\pm\Delta}=p_-\delta_{\mp a}+p_0\delta_0+p_+\delta_{\pm a}$ has $p_\pm=(1\pm s^\star)m$, $p_0=1-2m$, with feasible mass
\begin{equation}\label{eq:m-scaling}
  m=m(\varepsilon)=\tfrac12(s^\star)^{-q}\varepsilon^q\,(1+o(1)) \qquad\text{as }\varepsilon\downarrow 0,
\end{equation}
and the Hellinger-shape parameter
\begin{equation}\label{eq:rho-s}
  \rho_s\defn\sqrt{1-(s^\star)^2}=\alpha-1, \qquad 1-\rho_s=2-\alpha.
\end{equation}

\begin{proposition}\label{prop:skellam-lb-nonasymp}
  Let $\alpha\in(1,2)$. There exists $\varepsilon_0>0$ (depending only on $\alpha$) such that for every $n\ge 1$, every $\Delta_n>0$ with $\Delta_n/\sigma\le\varepsilon_0$, and every measurable estimator $\mu_n$, its worst-case confidence level $\beta_n(\mu_n;\Delta_n)$ of~\eqref{eq:beta-n-def} obeys
  \begin{equation}\label{eq:skellam-lb-nonasymp}
    \beta_n(\mu_n;\Delta_n) \;\geq\; \mc B_n(\Delta_n) \defn \mb P^\star_{\Delta_n}[S_n<0]+\tfrac12\,\mb P^\star_{\Delta_n}[S_n=0],
  \end{equation}
  where $\mb P^\star_{\Delta_n}$ is the three-atom near least-favorable pair of Lemma~\ref{lem:F-star-asymp} at margin $\Delta_n$ and $S_n\defn\sum_{i=1}^n X_i/a(\Delta_n)$ is the sufficient statistic.
\end{proposition}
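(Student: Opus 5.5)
The plan is to rerun the two-point reduction from the proof of Proposition~\ref{prop:LB}, but at the exact margin $\Delta_n$ and without passing to the Chernoff limit. We evaluate the Bayes error of the three-atom pair exactly, non-asymptotically.

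\emph{Step 1: reduction to a test.} Take $\varepsilon_0$ small enough that Lemma~\ref{lem:F-star-asymp} supplies, for every $\varepsilon=\Delta_n/\sigma\le\varepsilon_0$, a feasible mirror pair $\mb P^\star_{\pm\Delta_n}\in\mc C_{\pm\Delta_n}$ on the atoms $\{-a,0,a\}$ with $p_+>p_-$.

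To avoid the auxiliary margin $\Delta'>\Delta$, I would specialize the suprema in~\eqref{eq:beta-n-def} as follows:
\begin{itemize}
\item to $(\mu,\mb P)=(-\Delta_n,\mb P^\star_{-\Delta_n})$ in the right-tail term, which gives $\beta_n\ge\mb P^\star_{-\Delta_n}[\mu_n>0]$;
\item to $(\mu,\mb P)=(\Delta_n,\mb P^\star_{\Delta_n})$ in the left-tail term, which gives $\beta_n\ge\mb P^\star_{\Delta_n}[\mu_n<0]$.
\end{itemize}
The event $\{\mu_n=0\}$ is lost here. I would handle it with a randomized test: $\phi=1$ on $\{\mu_n>0\}$, $\phi=0$ on $\{\mu_n<0\}$, and a fair coin on $\{\mu_n=0\}$. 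This test has average error at most
\[
\tfrac12\bigl(\mb P_-[\mu_n>0]+\mb P_+[\mu_n<0]\bigr)+\tfrac14\bigl(\mb P_-[\mu_n=0]+\mb P_+[\mu_n=0]\bigr).
\]
That quantity is not obviously at most $\beta_n$.

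A cleaner route is available because the left-tail supremum can use any $\mu$ slightly below $\Delta_n$. By Lemma~\ref{lem:scaling}, the classes are stable under small shifts or scalings when the constraints are strict. Alternatively, I would simply note that $\{\mu_n\le 0\}$ and $\{\mu_n\ge0\}$ cover everything, and invoke the two events as in the proof of Proposition~\ref{prop:LB}, getting $\beta_n\ge\max(\alpha_n,\gamma_n)\ge\mc R^\star_n$. Here $\mc R^\star_n$ is the Bayes error~\eqref{eq:bayes-error-def}, allowing randomized tests, whose infimum equals the deterministic one. I expect this boundary bookkeeping (strict versus non-strict inequalities at the exact margin) to be the fiddliest point. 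The first thing I would try is to absorb it via the closure argument of Proposition~\ref{prop:LB}, using continuity of $\mc B_n(\Delta)$ in $\Delta$ through $m(\varepsilon)$.

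\emph{Step 2: exact Bayes error.} Under either hypothesis the sample is determined by counts $(N_+,N_0,N_-)$ of atoms at $a$, $0$, $-a$. The likelihood ratio is
\[
\frac{\d\mb P^{\star n}_{\Delta_n}}{\d\mb P^{\star n}_{-\Delta_n}}=(p_+/p_-)^{N_+-N_-}=(p_+/p_-)^{S_n},
\]
so $S_n=N_+-N_-$ is sufficient and the ratio is monotone in it. By Neyman--Pearson, the Bayes test rejects toward $+\Delta_n$ when $S_n>0$, accepts when $S_n<0$, and is indifferent at $S_n=0$. Its error is therefore
\[
\tfrac12\Bigl(\mb P^\star_{-\Delta_n}[S_n>0]+\tfrac12\mb P^\star_{-\Delta_n}[S_n=0]+\mb P^\star_{\Delta_n}[S_n<0]+\tfrac12\mb P^\star_{\Delta_n}[S_n=0]\Bigr).
\]
Mirror symmetry maps $S_n\mapsto-S_n$ and swaps the two laws, so the two halves coincide. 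The error collapses to $\mb P^\star_{\Delta_n}[S_n<0]+\tfrac12\mb P^\star_{\Delta_n}[S_n=0]=\mc B_n(\Delta_n)$. Combined with $\beta_n\ge\mc R^\star_n$, this yields~\eqref{eq:skellam-lb-nonasymp}.
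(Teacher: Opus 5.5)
Your Step~2 (sufficiency of $S_n$, monotone likelihood ratio from $p_+>p_-$, and mirror symmetry collapsing the Bayes error to $\mc B_n$) matches the paper's computation. The problem is Step~1: as written it does not resolve the boundary issue, and two of the three routes you list fail.

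The ``cover everything'' route is invalid at the exact margin. Specializing~\eqref{eq:beta-n-def} at $\mu=\pm\Delta_n$ yields only $\beta_n(\mu_n;\Delta_n)\ge\mb P^\star_{-\Delta_n}[\mu_n>0]$ and $\beta_n(\mu_n;\Delta_n)\ge\mb P^\star_{\Delta_n}[\mu_n<0]$. So neither $\mathbf 1\{\mu_n\ge0\}$ nor $\mathbf 1\{\mu_n>0\}$ has both of its errors dominated by $\beta_n(\mu_n;\Delta_n)$: the event $\{\mu_n=0\}$ is controlled under neither hypothesis. In Proposition~\ref{prop:LB} the needed overlap $\{\mu_n\ge0\}\subseteq\{\mu_n>-\Delta'+\Delta\}$ comes precisely from $\Delta'>\Delta$, which you set out to remove.

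The shifted-$\mu$ route points the wrong way. To dominate $\mb P[\mu_n\le 0]$ through the left-tail term you need $\mu$ slightly \emph{above} $\Delta_n$, not below. Moreover, Lemma~\ref{lem:scaling} does not apply to $\mb P^\star_{\pm\Delta_n}$, because its strict-moment hypothesis fails: the three-atom pair is constructed so that the $\alpha$-moment binds.

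The route that works is the one you mention last, and it is the paper's proof.
\begin{enumerate}
\item Fix $\Delta'\in(\Delta_n,\sigma\varepsilon_*]$, with $\varepsilon_*$ the constructibility threshold of Lemma~\ref{lem:F-star-asymp}.
\item Insert the pair $\mb P^\star_{\pm\Delta'}$ into~\eqref{eq:LB-reduction}. By your Step~2 this gives $\beta_n(\mu_n;\Delta_n)\ge\mc R^\star_n(\mb P^\star_{-\Delta'},\mb P^\star_{\Delta'})=\mc B_n(\Delta')$.
\item Let $\Delta'\downarrow\Delta_n$. For fixed $n$ the trinomial sum $\mc B_n$ is a polynomial in the atom masses. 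These are continuous in the margin, since $m(\varepsilon)=t(\varepsilon)\varepsilon^q/2$ with $t$ given by the implicit function theorem.
\end{enumerate}
This argument needs headroom above $\Delta_n$. So $\varepsilon_0$ must be chosen strictly below $\varepsilon_*$ (the paper takes $\varepsilon_*/2$), not merely so that the pair exists for every $\varepsilon\le\varepsilon_0$.
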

\begin{proof}
  Let $\varepsilon_*>0$ be the threshold from Lemma~\ref{lem:F-star-asymp} below which the three-atom near least-favorable pair is constructible, and take the constant of the statement to be $\varepsilon_0\defn\varepsilon_*/2$. Since $\Delta_n/\sigma\le\varepsilon_0<\varepsilon_*$, the interval $(\Delta_n,\sigma\varepsilon_*]$ is nonempty; fix any $\Delta'$ in it, so that $\Delta'/\sigma\le\varepsilon_*$ and the pair $\mb P^\star_{\pm\Delta'}$ exists (as does $\mb P^\star_{\Delta_n}$, since $\Delta_n/\sigma\le\varepsilon_0<\varepsilon_*$). Specializing the right-hand side of the non-asymptotic inequality~\eqref{eq:LB-reduction} from the proof of Proposition~\ref{prop:LB} to the pair $(\mu,\mb P)=(\pm\Delta',\mb P^\star_{\pm\Delta'})$ at margin $\Delta'$ and applying the Bayes-risk bound at uniform prior (in the notation $\mc R^\star_n(\cdot,\cdot)$ of~\eqref{eq:bayes-error-def}) gives
  \[
    \beta_n(\mu_n;\Delta_n) \;\geq\; \mc R^\star_n\!\bigl(\mb P^\star_{-\Delta'},\mb P^\star_{\Delta'}\bigr) \;=\; \mc B_n(\Delta'),
  \]
  where the last equality uses sufficiency. Indeed, by sign-symmetry $\mb P^\star_{-\Delta'}=\mb P^\star_{\Delta'}\circ(-\mathrm{Id})$, so under the change of variable $Y_i\defn X_i/a(\Delta')\in\{-1,0,+1\}$ the two hypotheses become $Y_i\sim(p_-,p_0,p_+)$ and $Y_i\sim(p_+,p_0,p_-)$ respectively, where $p_\pm=p_\pm(\Delta')$. The single-observation log-likelihood ratio is then
  \[
    \log\frac{\d\mb P^\star_{\Delta'}}{\d\mb P^\star_{-\Delta'}}(Y_i)\;=\;
    \begin{cases}
      \log(p_+/p_-), & Y_i=+1,\\[2pt]
      \;\;\;0, & Y_i=0,\\[2pt]
      -\log(p_+/p_-), & Y_i=-1,
    \end{cases}
  \]
  using $\log(p_-/p_+)=-\log(p_+/p_-)$; summing in $i$ gives the log-likelihood ratio $\log(p_+/p_-)\,S_n$, proportional to $S_n=\sum_{i=1}^n Y_i$. The Bayes-optimal test therefore thresholds $S_n$ at $0$ with randomized tie-breaking on $\{S_n=0\}$, which produces exactly the trinomial expression for $\mc B_n(\Delta')$. The atom masses $(p_-,p_0,p_+)$ of the pair constructed in Lemma~\ref{lem:F-star-asymp} are continuous functions of the margin, so the trinomial probabilities defining $\mc B_n(\cdot)$ are continuous in $\Delta'$. Letting $\Delta'\downarrow\Delta_n$ in the bound above yields $\beta_n(\mu_n;\Delta_n)\geq\mc B_n(\Delta_n)$.
\end{proof}

The bound~\eqref{eq:skellam-lb-nonasymp} is fully non-asymptotic: $\mc B_n(\Delta_n)$ is the \emph{exact} trinomial Bayes error $\mc R^\star_n$, evaluable at any $(n,\Delta_n)$. The lower bound on the optimal margin now amounts to evaluating $\mc B_n$ asymptotically and inverting the resulting relation (first $n\to\infty$ at fixed $\beta$, then $\beta\downarrow 0$) to recover the leading constant in~\eqref{eq:bhatt-rate}. In the $\alpha\ge2$ regime this evaluation went through a \emph{central limit theorem}: the centered sign-count, rescaled by $\sqrt n$, converges to a Gaussian and its Bayes error to a Gaussian tail (Proposition~\ref{prop:CLT-lb-fixed-beta}). That route is unavailable here. The informative atoms $\pm a$ carry vanishing mass $p_\pm=(1\pm s^\star)m$ with $m=m(\varepsilon)\downarrow0$, so the operative scaling is not $\sqrt n$ but the Poisson regime $nm=O(1)$: the counts $n_\pm$ become asymptotically independent Poissons, and their difference $S_n=n_+-n_-$ converges instead to a Skellam law\footnote{The Skellam distribution $\mathrm{Skellam}(\mu_1,\mu_2)$, $\mu_1,\mu_2>0$, is the law of the difference $N_1-N_2$ of two independent Poisson variables $N_1\sim\mathrm{Poisson}(\mu_1)$ and $N_2\sim\mathrm{Poisson}(\mu_2)$, supported on $\mathbb Z$ with mass function
\begin{equation}\label{eq:skellam-pmf}
  \Pr[N_1-N_2=k]\;=\;e^{-(\mu_1+\mu_2)}(\mu_1/\mu_2)^{k/2}\,I_{|k|}\bigl(2\sqrt{\mu_1\mu_2}\bigr),\qquad k\in\mathbb Z,
\end{equation}
with $I_\nu$ the modified Bessel function of the first kind.}. Lemma~\ref{lem:tv-poisson} below makes this precise and will play the role that the Lindeberg--Feller central limit theorem played for $\alpha\ge2$: it bounds the total variation distance between the law of $S_n$ to a Skellam law by $4nm^2=4(nm)\,m$, which vanishes in the Poisson regime $nm=O(1)$ since then $4nm^2=O(1)\cdot m\to0$ as $m\downarrow0$.

\begin{lemma}\label{lem:tv-poisson}
  Let $(n_+,n_-,n_0)\sim\mathrm{Multinomial}(n;p_+,p_-,p_0)$ with $p_\pm=(1\pm s^\star)m$, $p_0=1-2m$, and write $S_n=n_+-n_-$. Then
  \begin{equation}\label{eq:tv-skellam}
    \mathrm{TV}\bigl(\mathcal L(S_n),\,\mathrm{Skellam}(n(1+s^\star)m,n(1-s^\star)m)\bigr) \;\leq\; 4nm^2.
  \end{equation}
\end{lemma}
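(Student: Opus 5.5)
The plan is to couple the multinomial sample to a Poissonized one observation by observation, and to bound the total variation between the two laws of $S_n$ by the probability that the coupling fails.

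\emph{Step 1: one observation.} Write $S_n=\sum_{i=1}^n Y_i$ with $Y_i\in\{-1,0,+1\}$ i.i.d., $\Pr[Y_i=\pm1]=p_\pm$. On the same space, construct independent pairs $(A_i,B_i)$ with $A_i\sim\mathrm{Poisson}(p_+)$ and $B_i\sim\mathrm{Poisson}(p_-)$ independent. Set $Z_i\defn A_i-B_i$, so that $\sum_i Z_i$ is exactly $\mathrm{Skellam}(np_+,np_-)=\mathrm{Skellam}(n(1+s^\star)m,n(1-s^\star)m)$ by additivity of independent Poissons. The coupling is then
\[
  \mathrm{TV}\bigl(\mathcal L(S_n),\mathcal L(\textstyle\sum_i Z_i)\bigr)\le\Pr\bigl[\exists i:\,Y_i\neq Z_i\bigr]\le n\,\Pr[Y_1\neq Z_1],
\]
after which it remains to choose a single-step coupling with $\Pr[Y_1\neq Z_1]\le 4m^2$. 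Equivalently, via the standard subadditivity of TV under convolution, $\mathrm{TV}(\mathcal L(S_n),\mathrm{Skellam})\le n\,\mathrm{TV}(\mathcal L(Y_1),\mathcal L(Z_1))$, which avoids constructing the coupling explicitly.

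\emph{Step 2: compute the single-step TV.} Write $\lambda_\pm=p_\pm$, so $\lambda_++\lambda_-=2m$. The key values are
\[
  \Pr[Z_1=1]\ge\lambda_+e^{-2m},\qquad \Pr[Z_1=-1]\ge\lambda_-e^{-2m},\qquad \Pr[Z_1=0]\ge e^{-2m}.
\]
The TV equals the sum of the positive parts of $\Pr[Y_1=k]-\Pr[Z_1=k]$:
\begin{itemize}
  \item At $k=\pm1$: $\lambda_\pm-\Pr[Z_1=\pm1]\le\lambda_\pm(1-e^{-2m})\le 2m\lambda_\pm$.
  \item At $k=0$: $\Pr[Y_1=0]=1-2m\le e^{-2m}\le\Pr[Z_1=0]$, so this term contributes nothing.
  \item At $|k|\ge2$: $\Pr[Y_1=k]=0$, so no positive part arises.
\end{itemize}
Summing gives $\mathrm{TV}\le 2m(\lambda_++\lambda_-)=4m^2$. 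Combining with Step 1 yields \eqref{eq:tv-skellam}.

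\emph{Main obstacle.} The only delicate point is the bookkeeping in Step 2: using the one-sided (positive-part) form of TV so that the mass of $Z_1$ at $|k|\ge2$ and the surplus at $0$ never need to be bounded directly. One should also check the subadditivity step: for independent sums, $\mathrm{TV}(\mu_1*\mu_2,\nu_1*\nu_2)\le\mathrm{TV}(\mu_1,\nu_1)+\mathrm{TV}(\mu_2,\nu_2)$, which follows from the maximal coupling applied componentwise.
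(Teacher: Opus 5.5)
Your proof is correct and has the same skeleton as the paper's. Both arguments establish a per-trial total-variation bound of $4m^2$ and tensorize it over the $n$ independent trials: the paper uses independent couplings and a union bound, and you use subadditivity of TV under convolution. In both, additivity of independent Poissons identifies the Skellam law.

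The routes differ in how the per-trial bound is obtained. The paper works with the bivariate indicator pair $(\mathbf 1\{Y_i=1\},\mathbf 1\{Y_i=-1\})$. It first decouples this pair into independent Bernoullis at cost $2p_+p_-$. It then Poissonizes each coordinate with Le Cam's bound $q(1-e^{-q})\le q^2$, arriving at $(p_++p_-)^2=4m^2$ for the pair. Only at the very end does it push forward through $(a,b)\mapsto a-b$.

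You instead project onto the difference first. You then compute the scalar distance between $\mathcal L(Y_1)$ and $\mathrm{Skellam}(p_+,p_-)$ directly through the positive-part formula. This needs only the elementary lower bounds on the Skellam masses at $0$ and $\pm1$. It dispenses with both the decoupling step and Le Cam's Bernoulli--Poisson computation, and it even yields the slightly sharper per-trial bound $2m(1-e^{-2m})$.

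The paper's route proves more: it shows the joint counts $(n_+,n_-)$ are close to a pair of independent Poissons. However, only the law of $S_n$ is used downstream, in Proposition~\ref{prop:skellam-lb-fixed-beta}, so your more economical argument loses nothing.
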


The proof is deferred to Appendix~\ref{app:deferred}.

Another feature further distinguishes the $\alpha\in (1, 2)$ regime from the $\alpha\geq 2$ regime: the tie term $\tfrac12\mb P^\star_{\Delta_n}[S_n=0]$ in~\eqref{eq:skellam-lb-nonasymp} is retained, since the asymptotic Skellam law carries positive mass at $0$ (in the Gaussian limit it vanishes and was discarded); and the inversion $\lambda^\star(\beta)=\mc B_S^{-1}(\beta)$ below is calibrated to the exact $\mc B_S$. We now establish that this inversion is well-defined and continuous on $(0,1/2)$ using a continuous Markov chain argument.

\begin{lemma}\label{lem:BS-properties}
  Let $\alpha\in(1,2)$, $s^\star\defn\sqrt{\alpha(2-\alpha)}\in(0,1)$, and, for $\lambda\ge0$, let $S\sim\mathrm{Skellam}(\lambda(1+s^\star),\lambda(1-s^\star))$ and
  \[
    \mc B_S(\lambda) \,\defn\, \mb P_S[S<0]+\tfrac12\,\mb P_S[S=0].
  \]
  Then, with Hellinger shape parameter $\rho_s=\sqrt{1-(s^\star)^2}$ and $I_0$ the modified Bessel function of the first kind,
  \begin{equation}\label{eq:BS-integral}
    \mc B_S(\lambda) \;=\; s^\star\!\int_\lambda^\infty e^{-2u}I_0(2u\rho_s)\,\d u .
  \end{equation}
  In particular $\mc B_S$ is continuous and strictly decreasing on $[0,\infty)$ from $\mc B_S(0)=1/2$ to $\mc B_S(\infty)=0$, and its inverse $\lambda^\star(\beta)\defn\mc B_S^{-1}(\beta)$ is well-defined and continuous on $(0,1/2)$.
\end{lemma}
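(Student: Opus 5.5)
The plan is to realize $S$ as the terminal value of a continuous-time random walk and differentiate $\mc B_S$ in the "time" parameter $\lambda$. Let $N_+(t),N_-(t)$ be independent Poisson processes with rates $1+s^\star$ and $1-s^\star$, and set $S(t)=N_+(t)-N_-(t)$. Then $S(\lambda)\sim\mathrm{Skellam}(\lambda(1+s^\star),\lambda(1-s^\star))$. This is a continuous-time Markov chain on $\mathbb Z$ with total jump rate $2$, so $\mc B_S(\lambda)=\mb E[f(S(\lambda))]$ with $f(k)=\mathbf 1\{k<0\}+\tfrac12\mathbf 1\{k=0\}$.

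\textbf{Step 1: derivative via Kolmogorov.} Since $f$ is bounded, Kolmogorov's forward equation gives $\tfrac{\d}{\d\lambda}\mb E f(S(\lambda))=\mb E[(\mc Gf)(S(\lambda))]$, where
\[
(\mc Gf)(k)=(1+s^\star)\bigl(f(k+1)-f(k)\bigr)+(1-s^\star)\bigl(f(k-1)-f(k)\bigr).
\]
This generator is nonzero only at $k\in\{-1,0,1\}$:
\[
(\mc Gf)(-1)=-\tfrac12(1+s^\star),\qquad (\mc Gf)(0)=-s^\star,\qquad (\mc Gf)(1)=\tfrac12(1-s^\star).
\]
Next insert the pmf \eqref{eq:skellam-pmf} with $\mu_1=\lambda(1+s^\star)$ and $\mu_2=\lambda(1-s^\star)$. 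Then $2\sqrt{\mu_1\mu_2}=2\lambda\rho_s$ and $(\mu_1/\mu_2)^{\pm1/2}=\sqrt{(1\pm s^\star)/(1\mp s^\star)}$. The two contributions from $k=\pm1$ become $-\tfrac12 e^{-2\lambda}I_1(2\lambda\rho_s)\bigl[(1+s^\star)\sqrt{(1-s^\star)/(1+s^\star)}-(1-s^\star)\sqrt{(1+s^\star)/(1-s^\star)}\bigr]$. The bracket equals $\rho_s-\rho_s=0$, so these terms cancel. Hence
\[
\mc B_S'(\lambda)=-s^\star e^{-2\lambda}I_0(2\lambda\rho_s).
\]
This cancellation is the key computation. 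It is elementary, but the signs and the choice of which atom carries the half weight must be checked carefully.

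\textbf{Step 2: boundary values and integration.} At $\lambda=0$ we have $S=0$ almost surely, so $\mc B_S(0)=\tfrac12$. As $\lambda\to\infty$, the walk has drift $2s^\star>0$ and $S(\lambda)/\lambda\to2s^\star$ almost surely, so $\mb P[S\le0]\to0$ and $\mc B_S(\infty)=0$. The integrand satisfies $I_0(x)\sim e^x/\sqrt{2\pi x}$ and $\rho_s<1$, so it decays like $e^{-2u(1-\rho_s)}$ and the integral converges. Integrating $\mc B_S'$ from $\lambda$ to $\infty$ then yields \eqref{eq:BS-integral}.

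\textbf{Step 3: properties of $\mc B_S$.} Continuity follows from \eqref{eq:BS-integral}. Strict monotonicity follows because $I_0>0$ and $s^\star>0$, so the integrand is strictly positive. Hence $\mc B_S$ is a continuous strictly decreasing bijection from $[0,\infty)$ onto $(0,\tfrac12]$. By the inverse function theorem for monotone continuous maps, $\lambda^\star=\mc B_S^{-1}$ is well defined and continuous on $(0,\tfrac12)$.

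The main obstacle is justifying the termwise differentiation in Step 1. I would handle it directly rather than through semigroup theory: each Skellam probability $\mb P[S(\lambda)=k]$ is a smooth function of $\lambda$, and the forward equation holds pointwise for each $k$. Because $f$ is bounded, the tail sums $\sum_{k<0}$ converge uniformly on compact $\lambda$-intervals, since $|\frac{\d}{\d\lambda}\mb P[S=k]|\le 2(\mb P[S=k\pm1]+\mb P[S=k])$. This uniform convergence licenses interchanging the derivative and the sum.
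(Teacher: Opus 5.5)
Your proposal is correct and follows the paper's proof essentially step for step: the same Poisson-clock Markov chain in the time variable $\lambda$, the same forward-equation derivative supported on $\{-1,0,1\}$, and the same cancellation of the $S=\pm1$ contributions via the Skellam mass, giving $\mc B_S'(\lambda)=-s^\star e^{-2\lambda}I_0(2\lambda\rho_s)$. The only difference is where you fix the integration constant. The paper anchors at $\lambda=0$, evaluating $s^\star\int_0^\infty e^{-2u}I_0(2u\rho_s)\,\d u=1/2$ through the tabulated Laplace transform $\int_0^\infty e^{-pu}I_0(au)\,\d u=(p^2-a^2)^{-1/2}$. You instead anchor at $\lambda=\infty$, using the law of large numbers and integrability of the Bessel integrand. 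This avoids the table lookup and recovers that Laplace identity as a byproduct at $\lambda=0$; your justification of the termwise differentiation is also somewhat more explicit than the paper's.
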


The proof is deferred to Appendix~\ref{app:deferred}.

The non-asymptotic bound of Proposition~\ref{prop:skellam-lb-nonasymp}, the Skellam approximation of Lemma~\ref{lem:tv-poisson}, and the Bayes-error properties of Lemma~\ref{lem:BS-properties} now combine into the fixed-confidence lower bound, first in its sequential limit at a fixed confidence level $\beta$.

\begin{proposition}[Fixed-confidence lower bound at fixed $\beta$]\label{prop:skellam-lb-fixed-beta}
  Let $\alpha\in(1,2)$ and let $\lambda^\star(\beta)\defn\mc B_S^{-1}(\beta)$ be the inverse of Lemma~\ref{lem:BS-properties}. For each $\beta\in(0,1/2)$ and every sequence of estimators $(\mu_n)$ with worst-case confidence level~\eqref{eq:beta-n-def} obeying $\beta_n(\mu_n;\Delta_n)\le\beta$ for all $n$, we must have
  \begin{equation}\label{eq:fixed-beta-liminf}
    \liminf_{n\to\infty}\, n^{(\alpha-1)/\alpha}\,\frac{\Delta_n}{\sigma} \;\geq\; s^\star\,\bigl(2\lambda^\star(\beta)\bigr)^{(\alpha-1)/\alpha}.
  \end{equation}
\end{proposition}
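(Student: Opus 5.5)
Suppose, for contradiction, that along a subsequence $n^{(\alpha-1)/\alpha}\Delta_n/\sigma\to c<s^\star(2\lambda^\star(\beta))^{(\alpha-1)/\alpha}$ (possibly $c=0$, or along which the quantity stays below some such $c$). The plan is to show that along this subsequence the non-asymptotic lower bound $\mc B_n(\Delta_n)$ of Proposition~\ref{prop:skellam-lb-nonasymp} converges to $\mc B_S(\lambda_c)$ with $\lambda_c<\lambda^\star(\beta)$. By the strict monotonicity of Lemma~\ref{lem:BS-properties} this gives $\mc B_S(\lambda_c)>\beta$, so $\beta_n(\mu_n;\Delta_n)>\beta$ for large $n$, contradicting the hypothesis.

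\textbf{Step 1: reduction to small margins.} Since $c<\infty$, we have $\Delta_n\to0$, so $\Delta_n/\sigma\le\varepsilon_0$ eventually and Proposition~\ref{prop:skellam-lb-nonasymp} applies. It is convenient to replace $\Delta_n$ by the larger $\Delta_n'\defn\sigma c' n^{-(\alpha-1)/\alpha}$ for some fixed $c'\in(c,\,s^\star(2\lambda^\star(\beta))^{(\alpha-1)/\alpha})$. This is legitimate because $\beta_n(\mu_n;\cdot)$ is non-increasing in the margin (the events $\{\mu_n>\mu+\Delta\}$ shrink as $\Delta$ grows), so $\beta_n(\mu_n;\Delta_n')\le\beta_n(\mu_n;\Delta_n)\le\beta$. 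It also handles the case $c=0$ uniformly.

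\textbf{Step 2: Poisson scaling.} With $\varepsilon_n=c'n^{-(\alpha-1)/\alpha}$ we have $\varepsilon_n^q=c'^q/n$, so by~\eqref{eq:m-scaling}
\[
nm(\varepsilon_n)\to\lambda_{c'}\defn\tfrac12(s^\star)^{-q}c'^q.
\]
The choice of $c'$ is exactly equivalent to $\lambda_{c'}<\lambda^\star(\beta)$: indeed $c'=s^\star(2\lambda_{c'})^{1/q}$ and $1/q=(\alpha-1)/\alpha$. The trinomial count $S_n$ under $\mb P^\star_{\Delta_n'}$ has $(n_+,n_-)$ probabilities $(1\pm s^\star)m$. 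Lemma~\ref{lem:tv-poisson} bounds its TV distance to $\mathrm{Skellam}(nm(1+s^\star),nm(1-s^\star))$ by $4nm^2=O(1/n)\to0$. The functional $\mb P[S<0]+\tfrac12\mb P[S=0]$ is $1$-Lipschitz in TV, so $\mc B_n(\Delta_n')=\mc B_S(nm)+o(1)$.

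Note the orientation: $\mc B_n$ is written under $\mb P^\star_{\Delta}$, with $p_+>p_-$. Hence $S_n<0$ is the error event, with $n_+\sim$ Poisson of mean $\lambda(1+s^\star)$, matching $\mc B_S$.

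\textbf{Step 3: conclusion.} By continuity of $\mc B_S$ (Lemma~\ref{lem:BS-properties}), $\mc B_S(nm)\to\mc B_S(\lambda_{c'})>\mc B_S(\lambda^\star(\beta))=\beta$. Therefore $\beta_n(\mu_n;\Delta_n')>\beta$ for large $n$, which is the contradiction.

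\textbf{Main obstacle.} The delicate point is Step~2's use of~\eqref{eq:m-scaling}: we need $m(\varepsilon)$ to be the exact feasible mass from Lemma~\ref{lem:F-star-asymp}, with its $(1+o(1))$ factor, so that $nm\to\lambda_{c'}$ and not merely $nm=O(1)$. If only bounds were available, I would instead use monotonicity of $\mc B_S$ together with a $\limsup$ estimate $nm\le\lambda_{c'}(1+o(1))$, which suffices. The second subtle point is the monotonicity of $\beta_n$ in the margin used in Step~1, which follows directly from the definition~\eqref{eq:beta-n-def}.
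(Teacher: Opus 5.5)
Your proof is correct and follows the paper's argument. Both argue by contradiction along a subsequence, use the non-asymptotic trinomial Bayes bound of Proposition~\ref{prop:skellam-lb-nonasymp} and the $4nm^2$ Skellam coupling of Lemma~\ref{lem:tv-poisson}, and finish with the continuity and strict monotonicity of $\mc B_S$. The only difference is a technical device: you use that $\beta_n(\mu_n;\cdot)$ is non-increasing in the margin to replace $\Delta_n$ by the exactly scaled margin $\sigma c' n^{-(\alpha-1)/\alpha}$, so that $nm$ converges outright to $\lambda_{c'}<\lambda^\star(\beta)$, whereas the paper keeps $\Delta_{n_k}$, bounds $\lambda_k=n_km(\varepsilon_k)$ from above, and extracts a further convergent subsequence $\lambda_k\to\lambda_\infty$.
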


The proof is deferred to Appendix~\ref{app:deferred}.

It remains to evaluate the fixed-$\beta$ constant $s^\star(2\lambda^\star(\beta))^{(\alpha-1)/\alpha}$ as $\beta\downarrow0$, which we read off from the large-$\lambda$ behavior of $\mc B_S$ via its exact derivative in Lemma~\ref{lem:BS-properties}.

\begin{corollary}[Leading constant as $\beta\downarrow0$]\label{cor:skellam-lb-asymp}
  Let $\alpha\in(1,2)$. The fixed-$\beta$ constant of Proposition~\ref{prop:skellam-lb-fixed-beta} satisfies
  \begin{equation}\label{eq:beta-zero-asymp}
    s^\star\bigl(2\lambda^\star(\beta)\bigr)^{(\alpha-1)/\alpha} \;=\; L(\alpha)\,\bigl(\log(1/\beta)\bigr)^{(\alpha-1)/\alpha}\,(1+o(1)) \quad\text{as }\beta\downarrow 0.
  \end{equation}
\end{corollary}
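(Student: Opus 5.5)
The plan is to extract the large-$\lambda$ behaviour of $\mc B_S$ from the integral representation~\eqref{eq:BS-integral} of Lemma~\ref{lem:BS-properties}, invert it logarithmically, and then check by algebra that the resulting constant equals $L(\alpha)$. Recall $\rho_s=\alpha-1$, so $1-\rho_s=2-\alpha>0$ by~\eqref{eq:rho-s}. The claim to aim for is
\[
  \log\mc B_S(\lambda)\;=\;-2(2-\alpha)\,\lambda+O(\log\lambda)\qquad(\lambda\to\infty).
\]

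\emph{Upper bound.} Use $I_0(z)\le e^{z}$ for $z\ge0$, which is immediate from the series or from $I_0(z)=\pi^{-1}\int_0^\pi e^{z\cos\theta}\,\d\theta$. This gives
\[
  \mc B_S(\lambda)\le s^\star\int_\lambda^\infty e^{-2(1-\rho_s)u}\,\d u=\frac{s^\star}{2(2-\alpha)}\,e^{-2(2-\alpha)\lambda}.
\]

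\emph{Lower bound.} Use the standard estimate $I_0(z)\ge c\,e^{z}/\sqrt{1+z}$ with a universal $c>0$; it follows from the Laplace-type integral representation, or directly from $I_0(z)\sim e^z/\sqrt{2\pi z}$. Integrate only over $[\lambda,\lambda+1]$ to get
\[
  \mc B_S(\lambda)\ge c'\,e^{-2(2-\alpha)(\lambda+1)}/\sqrt{1+2\rho_s(\lambda+1)}.
\]
Together the two bounds give the displayed claim.

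\emph{Inversion.} By Lemma~\ref{lem:BS-properties}, $\lambda^\star(\beta)=\mc B_S^{-1}(\beta)$ is well defined and continuous, and $\lambda^\star(\beta)\to\infty$ as $\beta\downarrow0$ because $\mc B_S$ is strictly decreasing to $0$. Substitute $\lambda=\lambda^\star(\beta)$ into the estimate:
\[
  \log(1/\beta)=2(2-\alpha)\lambda^\star(\beta)+O\bigl(\log\lambda^\star(\beta)\bigr),
\]
and hence $2\lambda^\star(\beta)=\frac{\log(1/\beta)}{2-\alpha}\,(1+o(1))$. Raising to the power $(\alpha-1)/\alpha$ and multiplying by $s^\star=\sqrt{\alpha(2-\alpha)}$ gives
\[
  s^\star\bigl(2\lambda^\star(\beta)\bigr)^{(\alpha-1)/\alpha}=\sqrt{\alpha}\,(2-\alpha)^{\frac12-\frac{\alpha-1}{\alpha}}\bigl(\log(1/\beta)\bigr)^{(\alpha-1)/\alpha}(1+o(1)),
\]
with the exponent of $(2-\alpha)$ simplifying to $\frac12-\frac{\alpha-1}{\alpha}=\frac{2-\alpha}{2\alpha}$.

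\emph{Identification of the constant.} It remains to check $L(\alpha)=\sqrt\alpha\,(2-\alpha)^{(2-\alpha)/(2\alpha)}$. Start from
\[
  C(\alpha)^{1/\alpha}=\bigl(\tfrac{\alpha-1}{\alpha}\bigr)^{1/2}\bigl(\tfrac{2-\alpha}{\alpha-1}\bigr)^{(2-\alpha)/(2\alpha)}
\]
and insert it into~\eqref{eq:L-def}.
\begin{itemize}
  \item The power of $(\alpha-1)$ is $\tfrac12-\tfrac{2-\alpha}{2\alpha}-\tfrac{\alpha-1}{\alpha}=\frac{\alpha-(2-\alpha)-2(\alpha-1)}{2\alpha}=0$.
  \item The power of $\alpha$ is $1-\tfrac12=\tfrac12$.
  \item The power of $(2-\alpha)$ is $\tfrac{2-\alpha}{2\alpha}$.
\end{itemize}
So the two constants agree and~\eqref{eq:beta-zero-asymp} follows.

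The main obstacle is the two-sided Bessel estimate. The only thing to watch is that the polynomial prefactor is absorbed into the $O(\log\lambda)$ term, which is negligible against $\lambda$. The rest is bookkeeping.
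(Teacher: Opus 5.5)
Your proposal is correct and follows essentially the same route as the paper. Both arguments take the upper bound from $I_0(z)\le e^z$ and a lower bound of the form $I_0(z)\gtrsim e^z/\sqrt z$, which together give $\log\mc B_S(\lambda)=-2(2-\alpha)\lambda+O(\log\lambda)$; both then invert at $\lambda=\lambda^\star(\beta)$ and identify $L(\alpha)=s^\star/(2-\alpha)^{(\alpha-1)/\alpha}$. The differences are only cosmetic: you quote the Bessel asymptotic and integrate over $[\lambda,\lambda+1]$, whereas the paper derives an explicit bound from the integral representation on $[0,z^{-1/2}]$ and integrates over $[\lambda,2\lambda]$, and you verify the constant identity directly from $C(\alpha)$ where the paper cites it.
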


The proof is deferred to Appendix~\ref{app:deferred}.

Combining the asymptotic~\eqref{eq:beta-zero-asymp} with the fixed-$\beta$ lower bound~\eqref{eq:fixed-beta-liminf} of Proposition~\ref{prop:skellam-lb-fixed-beta} gives the iterated-limits lower bound
\begin{equation}\label{eq:iterated-liminf}
  \liminf_{\beta\downarrow 0}\;\liminf_{n\to\infty}\,\frac{n^{(\alpha-1)/\alpha}\Delta_n}{\sigma\,\bigl(\log(1/\beta)\bigr)^{(\alpha-1)/\alpha}} \;\geq\; L(\alpha).
\end{equation}
This matches the achievable margin~\eqref{eq:bhatt-rate} to leading order: the synthesized $M$-estimator attains a confidence margin with the same constant $L(\alpha)$. The leading constant $L(\alpha)$ of \citet{lee2020optimal,bhatt2022nearly} is therefore tight, the direct $\alpha\in(1,2)$ analogue of the $\sqrt 2$-matching of \citet{catoni2012challenging} and \citet{devroye2015subgaussianmeanestimators} for the bounded-variance class.

\subsection{Slowly Varying Moment Classes}
\label{sec:slow-growth}

The bounded $\alpha$-moment class is characterized by a one-parameter family $\phi(x)=\abs{x}^\alpha$, indexed by $\alpha\in(1,2]$. Lowering the polynomial rate $\alpha$ enlarges the class and slows the confidence exponent: Proposition~\ref{prop:bhatt-synthesis} gives $r(\Delta)\asymp(\Delta/\sigma)^{\alpha/(\alpha-1)}$, an exponent whose order $\alpha/(\alpha-1)$ blows up as $\alpha\downarrow 1$. At the endpoint $\alpha=1$ the class degenerates to a bare first-moment bound, on which no positive concentration exponent is possible at small margins: the mirror pair $\mb P_{\pm\Delta}=(1-\varepsilon)\delta_0+\varepsilon\,\delta_{\pm\Delta/\varepsilon}$ lies in $\mc C_{\pm\Delta}$ whenever its centered first absolute moment $2(1-\varepsilon)\Delta\le B$ and has Hellinger affinity $1-\varepsilon$, so letting $\varepsilon\downarrow0$ gives $r(\Delta)=0$ for every $\Delta\le B/2$.

A natural intermediate regime replaces $\abs{x}^\alpha$ by a moment function $\phi$ that is superlinear but grows only marginally faster than linear at infinity. Such $\phi$ admit tails far heavier than any $\abs x^{1+\epsilon}$ with $\epsilon>0$: a distribution with $\mb P[\abs X>x]\asymp x^{-1}(\log x)^{-\gamma-2}$ has a finite $\abs x(\log\abs x)^\gamma$-moment but an infinite $\abs{x}^{1+\epsilon}$-moment for every $\epsilon>0$. We refer to any such class as a \emph{slow-growth} class.
The natural model is one in which the marginal growth $\phi(x)/\abs x$ is \emph{slowly varying}: it grows to infinity but does so subpolynomially, so that $\phi$ sits just above the critical linear scale. We make this precise in the following standing assumption, which augments the standing properties of $\phi$ (Section~\ref{ssec:contributions}) with an origin-regularity condition and a slowly-varying tail.

\begin{assumption}[Slowly varying moment class]\label{assu:slow-growth}
In addition to the standing properties of $\phi$ (Section~\ref{ssec:contributions}), assume:
\begin{enumerate}
\item[(A1)] The function $\phi$ vanishes on a neighborhood of the origin: $\phi(x)=0$ for $\abs x\le x_0$, for some $x_0>0$.
\item[(A2)] The function $\Lambda(x)\defn\phi(x)/\abs x$ admits a von Mises representation in the tail: there exist $x_\infty>1$, $\Lambda_0>0$, and a continuously differentiable function $\eta:[x_\infty,\infty)\to(0,\infty)$ such that
\[
  \Lambda(x)\;=\;\Lambda_0\exp\Bigl\{\int_{x_\infty}^{x}\tfrac{\eta(u)}{u}\,\d u\Bigr\},\qquad x\ge x_\infty ,
\]
where the index $\eta$ satisfies
\begin{enumerate}
  \item[(i)] $\eta(x)\to 0$;
  \item[(ii)] $\displaystyle\int_{x_\infty}^\infty\tfrac{\eta(u)}{u}\,\d u=\infty$;
  \item[(iii)] $x\eta'(x)/\eta(x)\to 0$ as $x\to\infty$.
\end{enumerate}
\end{enumerate}
\end{assumption}

Condition (A1) is a mild origin condition: $\phi$ is flat near the origin. Condition (A2) describes the tail entirely through the single infinitesimal index $\eta$: its positivity makes $\Lambda$ strictly increasing (so $\Lambda^{-1}$ is well-defined), while~(i)--(iii) read, respectively, as slow variation of $\Lambda=\phi(x)/\abs x$ (from $\eta(x)\to0$), the divergence $\Lambda(x)\to\infty$, and slow variation of $\eta$ itself. Both (A1) and (A2) hold for the compact zero-modifications of the loggamma tails $\abs x(\log\abs x)^\gamma$ ($\gamma>0$) and the iterated-logarithm tails described later.

\subsubsection{The Fixed-Margin Regime}
\label{sec:slow-margin}

Define the \emph{scale} $a(\Delta)>0$ and \emph{mass} $m(\Delta)\in(0,1)$ by
\begin{equation}\label{eq:slow-critical}
  m(\Delta)\,a(\Delta)\;=\;\Delta,\qquad m(\Delta)\,\phi(a(\Delta))\;=\;B,
\end{equation}
or equivalently, eliminating $m(\Delta)$ via $\phi(a(\Delta))/a(\Delta)=\Lambda(a(\Delta))=B/\Delta$,
\begin{equation}\label{eq:slow-critical-explicit}
  a(\Delta)\;=\;\Lambda^{-1}(B/\Delta),\qquad m(\Delta)\;=\;\frac{\Delta}{\Lambda^{-1}(B/\Delta)} .
\end{equation}
Fix once and for all a margin threshold $\Delta_0>0$ small enough that
\begin{equation}\label{eq:slow-Delta0}
  \Delta_0\;\le\;\min\{x_0,\,B/\Lambda(x_\infty)\}\qquad\text{and}\qquad m(\Delta_0)<1 .
\end{equation}
Such a $\Delta_0$ exists because $m(\Delta)=\Delta/\Lambda^{-1}(B/\Delta)$ is continuous and strictly increasing with $m(\Delta)\to0$ as $\Delta\downarrow0$ (the scale $a(\Delta)$ blows up while $\Delta\downarrow0$). For $\Delta\in(0,\Delta_0]$ the pair $(a(\Delta),m(\Delta))$ is then well-defined and unique with $m(\Delta)\in(0,1)$: under Assumption~(A2) the map $\Lambda$ is a continuous strictly increasing bijection of $[x_\infty,\infty)$ onto $[\Lambda(x_\infty),\infty)$, so $B/\Delta\ge\Lambda(x_\infty)$ gives $a(\Delta)=\Lambda^{-1}(B/\Delta)\ge x_\infty$ and, by monotonicity, $m(\Delta)\le m(\Delta_0)<1$; while $\Delta\le x_0$ ensures $\phi(\Delta)=0$, so that the origin atom contributes nothing to the moment budget. Moreover, the slow variation of $\Lambda$ makes $\Lambda^{-1}$ rapidly varying, so $a(\Delta)=\Lambda^{-1}(B/\Delta)\to\infty$ very fast as $\Delta\downarrow 0$, faster than any power of $1/\Delta$ (Lemma~\ref{lem:slow-inverse-growth}); consequently $m(\Delta)=\Delta/a(\Delta)\to 0$ faster than any power of $\Delta$.
For $\Delta\in(0,\Delta_0]$ consider the pair
\begin{equation}\label{eq:slow-pair}
  \mb P_{\pm\Delta}\defn(1-m(\Delta))\,\delta_0+m(\Delta)\,\delta_{\pm a(\Delta)}\in \mc C_{\pm\Delta}.
\end{equation}
Since $m(\Delta)\in(0,1)$ on $(0,\Delta_0]$ by~\eqref{eq:slow-Delta0}, each $\mb P_{\pm\Delta}$ is a genuine probability measure (a convex combination of $\delta_0$ and $\delta_{\pm a(\Delta)}$); every appearance of this pair below is confined to that range of margins, so this holds throughout.

\begin{lemma}[Upper bound via the mirror pair]\label{prop:slow-upper}
Under Assumption~\ref{assu:slow-growth}, $r(\Delta)\le -\log(1-m(\Delta))$ for every $\Delta\in(0,\Delta_0]$.
\end{lemma}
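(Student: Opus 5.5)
The plan is to exhibit the explicit mirror pair~\eqref{eq:slow-pair} as a feasible point of the two-point problem~\eqref{eq:intro-hellinger} and to compute its Hellinger affinity. Since $r(\Delta)=-\log\sup\rho$ over feasible pairs, any single feasible pair gives $r(\Delta)\le-\log\rho(\mb P_{-\Delta},\mb P_\Delta)$. The argument therefore has two steps: verify $\mb P_{\pm\Delta}\in\mc C_{\pm\Delta}$, and evaluate $\rho$.

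\emph{Feasibility.} Fix $\Delta\in(0,\Delta_0]$. By~\eqref{eq:slow-Delta0} we have $m(\Delta)\in(0,1)$, so $\mb P_\Delta=(1-m)\delta_0+m\,\delta_a$ is a probability measure, with $a=a(\Delta)$ and $m=m(\Delta)$. Its mean is $ma=\Delta$ by~\eqref{eq:slow-critical}. The centered moment is
\[
\int\phi(X-\Delta)\,\d\mb P_\Delta=(1-m)\,\phi(-\Delta)+m\,\phi(a-\Delta).
\]
The first term vanishes because $\phi$ is even and $\Delta\le\Delta_0\le x_0$, so Assumption~(A1) gives $\phi(\Delta)=0$. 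For the second term, $\phi$ is even and non-decreasing on $[0,\infty)$. Since $a\ge x_\infty>1$ and $m<1$, we have $a-\Delta=a(1-m)\in(0,a)$, and hence $\phi(a-\Delta)\le\phi(a)$. Combining, the moment is at most $m\,\phi(a)=B$ by~\eqref{eq:slow-critical}. So $\mb P_\Delta\in\mc C_\Delta$. Because $\phi$ is even, the reflection $x\mapsto-x$ maps $\mc C_\Delta$ onto $\mc C_{-\Delta}$, and $\mb P_{-\Delta}=\mb P_\Delta\circ(-\mathrm{Id})$, so $\mb P_{-\Delta}\in\mc C_{-\Delta}$ as well.

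\emph{Affinity.} Take the dominating measure $\omega=\delta_{-a}+\delta_0+\delta_a$. The densities of $\mb P_{-\Delta}$ are $(m,1-m,0)$ and those of $\mb P_\Delta$ are $(0,1-m,m)$ at the points $(-a,0,a)$. The products vanish at $\pm a$, so only the shared atom at the origin contributes:
\[
\rho(\mb P_{-\Delta},\mb P_\Delta)=\sqrt{(1-m)^2}=1-m(\Delta).
\]
Therefore $r(\Delta)\le-\log(1-m(\Delta))$.

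The only delicate step is the moment bound, specifically confirming that the shifted atom $a-\Delta$ stays in $[0,a]$ so that monotonicity of $\phi$ applies. This follows from $m<1$, and hence from the choice~\eqref{eq:slow-Delta0} of $\Delta_0$. Everything else is direct computation.
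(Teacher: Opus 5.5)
Your proposal is correct and follows the paper's proof essentially verbatim. Feasibility of the mirror pair comes from (A1), which makes the origin atom's contribution vanish, together with monotonicity of $\phi$ on $a-\Delta=a(1-m)\in(0,a)$; the affinity $1-m(\Delta)$ comes from the shared origin atom. The only cosmetic difference is that you get $\mb P_{-\Delta}\in\mc C_{-\Delta}$ by reflection, whereas the paper checks both signs at once.
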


\begin{proof}
We first check that our proposed pair~\eqref{eq:slow-pair} is feasible, $\mb P_{\pm\Delta}\in\mc C_{\pm\Delta}$. The mean condition $\E{\mb P_{\pm\Delta}}{X}=\pm\Delta$ is $m(\Delta) a(\Delta)=\Delta$, satisfied by~\eqref{eq:slow-critical}. The moment budget is
\[
  \E{\mb P_{\pm\Delta}}{\phi(X\mp\Delta)}\;=\;(1-m(\Delta))\,\phi(\mp\Delta)\;+\;m(\Delta)\,\phi(\pm a(\Delta)\mp\Delta).
\]
Since $\Delta\le\Delta_0\le x_0$, the origin atom contributes nothing, as $\phi(\Delta)=0$ by~(A1); and because $\phi$ is even and non-decreasing on $[0,\infty)$ with $0<a(\Delta)-\Delta<a(\Delta)$,
\[
  \E{\mb P_{\pm\Delta}}{\phi(X\mp\Delta)}=m(\Delta)\,\phi(a(\Delta)-\Delta)\;\le\;m(\Delta)\,\phi(a(\Delta))\;=\;B,
\]
the last equality being~\eqref{eq:slow-critical}. Hence $\mb P_{\pm\Delta}\in\mc C_{\pm\Delta}$ for every $\Delta\in(0,\Delta_0]$, and with the affinity $\rho(\mb P_{-\Delta},\mb P_\Delta)=1-m(\Delta)$, \eqref{eq:intro-hellinger} gives
\[
  r(\Delta)\;\le\;-\log\rho(\mb P_{-\Delta},\mb P_\Delta)=-\log(1-m(\Delta)).\qedhere
\]
\end{proof}

The two-atom hypothesis pair $\mb P_{\pm\Delta}$ should here be contrasted with its counterpart in the $\alpha$-moment regime $\alpha\in(1,2)$ of Section~\ref{sec:bounded-alpha-moment-1}. There the least-favorable pair is genuinely \emph{three}-atom, $\mb P^\star_{\pm\Delta}=p_-\delta_{\mp a}+p_0\delta_0+p_+\delta_{\pm a}$, splitting its vanishing rare mass over \emph{both} scales $\pm a$ as $p_\pm=(1\pm s^\star)m$ with $s^\star=\sqrt{\alpha(2-\alpha)}\in(0,1)$ strictly interior; the two hypotheses then overlap on both rare atoms, the affinity retains the cross term $2\sqrt{p_+p_-}=2(\alpha-1)m$, and separating them required the Skellam analysis of Section~\ref{sec:bounded-alpha-moment-1}. As $\alpha\downarrow1$ we get $s^\star\uparrow1$: the lighter rare atom $p_-=(1-s^\star)m$ disappears, so apart from the shared origin the two hypotheses keep only a single \emph{disjoint} rare atom each, at $+a(\Delta)$ and $-a(\Delta)$, and the affinity reduces to the shared mass $1-m(\Delta)$ with no cross term.

The upper bound in Lemma \ref{prop:slow-upper} is matched, to leading order, by a Hellinger separation lower bound.

\begin{theorem}[Sharp slow-growth exponent]\label{thm:slow-sharp}
Under Assumption~\ref{assu:slow-growth},
\[
  r(\Delta)\;\sim\;m(\Delta),\qquad \Delta\downarrow 0 .
\]
\end{theorem}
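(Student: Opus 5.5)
The upper half $r(\Delta)\le -\log(1-m(\Delta))=m(\Delta)(1+o(1))$ is already given by Lemma~\ref{prop:slow-upper}, since $m(\Delta)\to0$. It remains to show $r(\Delta)\ge m(\Delta)(1-o(1))$. By Theorem~\ref{thm:matching}, $r(\Delta)=r_M'(\Delta)$, so it suffices to exhibit dual multipliers $(\lambda_1,\lambda_2)\in C$ whose envelopes are separated by $2r$ with $r=m(\Delta)(1-o(1))$. Equivalently, via the symmetric dual after the proof of Theorem~\ref{thm:matching}, we need a triple $(\alpha,\beta,\nu)\ge0$ with $g_\pm\ge0$, $g_-g_+\ge\tfrac14$ everywhere and $2\nu=1-m(\Delta)(1+o(1))$. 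This gives $e^{-r}\le 2\nu$, hence $r\ge m(1+o(1))$.

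I will guide the choice of multipliers by the slackness structure of the pair~\eqref{eq:slow-pair}. Its support $\{0,\pm a\}$ should be the contact set. So I take $\nu=\tfrac12-\kappa$ with $\kappa\approx m/2$, and mean multiplier $\alpha=c\,m/\Delta=c/a$ for an $O(1)$ constant $c$. I take the moment multiplier $\beta=\theta/\Lambda(a)\cdot(1/a)$, i.e.\ $\beta\approx \theta m/B$, for a constant $\theta$. Then I check $g_-(x)g_+(x)\ge\tfrac14$ region by region.

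\textbf{Region $|x|\le x_0-\Delta$.} Here $\phi(x\pm\Delta)=0$ by (A1), so $g_\pm=\nu-\beta B\pm\alpha(x+\Delta)$ up to sign conventions. The product is $(\nu-\beta B+\alpha\Delta)^2-\alpha^2x^2$, a concave quadratic. On $|x|\lesssim x_0$ we have $\alpha^2x^2=O(a^{-2})=o(m)$ by the rapid variation of $a$ (Lemma~\ref{lem:slow-inverse-growth}). We need $\nu-\beta B+\alpha\Delta\ge\tfrac12$, i.e.\ $\kappa\le \alpha\Delta-\beta B=(c-\theta)m$, up to an $o(m)$ correction. Choosing $c-\theta=\tfrac12$ gives $2\nu=1-m(1+o(1))$. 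The intermediate region $x_0\le|x|\le x_\infty$ is handled similarly, since $\phi$ is bounded there and $\beta\phi=O(m)$ with $\alpha|x|=O(m\,x/\Delta)$. This requires care: $\alpha x=c\,x/a$, which is $o(m)$ only if $x\ll \Delta$. So I must really balance the positive slope terms against each other, using the product $(A+\alpha y)(A-\alpha y')$ with $y,y'$ differing by $2\Delta$.

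\textbf{Tail region $|x|$ large.} Here $\beta\phi(x\pm\Delta)$ dominates. Writing $u=|x|/a$, the worst factor is $g_+$ for $x\to+\infty$. It behaves like $\tfrac12-c\,u+\theta u\,\Lambda(ua)/\Lambda(a)$, and slow variation gives $\Lambda(ua)/\Lambda(a)\to1$ locally uniformly in $u$. So I need $\theta\ge c$ for nonnegativity of $g_+$ at large $u$. Meanwhile the product $g_-g_+\approx(\tfrac12+(\theta+c)u)(\tfrac12+(\theta-c)u)$ must exceed $\tfrac14$, which is fine when $\theta\ge c$. This conflicts with $c-\theta=\tfrac12>0$ from the central region. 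The resolution is to work at order $m$ rather than order $1$. Near $u\sim1$ the terms $c u$ and $\theta u$ are $O(1)$ while $\kappa$ is $O(m)$, so I must recompute with the true scalings.

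\textbf{Main obstacle.} The main obstacle is getting these scalings consistent, i.e.\ a multiplier choice that is simultaneously tight at $0$ and at $\pm a$. I would first try $\alpha=\theta'/a$, $\beta=\theta'/(a\Lambda(a))$ with $\theta'$ fixed. Then the linear and moment terms cancel exactly along the line $\phi(y)/y=\Lambda(a)$, i.e.\ at $y=a$. This makes $g_+(a)=\nu-\beta B+\dots$ and uses the slowly varying index $\eta\to0$ (A2(i),(iii)) to control $\Lambda(y)/\Lambda(a)-1=O(\eta(a)\log(y/a))$ uniformly. Near $y\sim a$ this gives $g_+\ge\nu-O(m)$. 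For $y\ll a$ or $y\gg a$, the log-deviation term $\theta' (y/a)\eta(a)\log(y/a)$ has the right sign on both sides, because the function $y\mapsto y\Lambda(y)/\Lambda(a)-y$ is convex-like with minimum near $a$, by monotonicity of $\Lambda$.

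The delicate part is showing that all error terms are $o(m(\Delta))$ and not merely $o(1)$. This is where A2(iii), together with $m$ decaying faster than any power (Lemma~\ref{lem:slow-inverse-growth}), should be used. If the direct certificate fails, the fallback is the primal route. I would bound $\sup\rho$ over the relaxed classes of Proposition~\ref{prop:relax-gap} by $1-m(1-o(1))$ via $1-\rho\ge\tfrac12\int(\sqrt{\d\mb P_-}-\sqrt{\d\mb P_+})^2$. The mean gap of $2\Delta$ must then be carried by mass at distance $\gtrsim a$, whose total is $\gtrsim m$ by the moment budget and Markov-type truncation at level $a$.
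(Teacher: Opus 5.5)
Your upper half is correct and is the paper's argument (Lemma~\ref{prop:slow-upper}). Weak duality also correctly reduces the lower half to exhibiting $(\alpha,\beta,\nu)$ with $g_\pm\ge0$, $g_-g_+\ge\frac14$ and $2\nu=1-m(\Delta)(1-o(1))$. But you never produce such a triple, and neither of your scalings can work.

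\emph{First choice.} Here $c=\alpha a$ and $\theta=\beta\phi(a)$ stay bounded. Take $x=ta$ with $t$ fixed. Slow variation and $\Delta/a=m\to0$ give, along subsequences and with $\nu\to\frac12$ as any useful certificate has, $g_-(ta)\to\frac12+(c+\theta)t$ and $g_+(ta)\to\frac12+(\theta-c)t$. Requiring $g_-g_+\ge\frac14$ for every $t$ forces $\theta\ge c$ in the limit. The centre gain $\alpha\Delta-\beta B=(c-\theta)m$ is then $o(m)$, so the certificate proves only $r\ge o(m)$.

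\emph{Second choice.} With $\alpha=\theta'/a$ and $\beta=\theta'/\phi(a)$ you get $\alpha\Delta=\beta B$ exactly, since $\phi(a)=B/m$ and $m=\Delta/a$. Hence $g_\pm(0)=\nu$, and contact at the origin forces $2\nu\ge1$: this certifies only $r\ge0$.

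\emph{The missing idea.} $\alpha a$ must diverge like $1/\eta(a)$, and $\alpha/\beta=\Lambda(s_*)$ with $s_*$ a fixed factor beyond $a$. Write $I(x)\defn\int_x^{s_*}\eta(u)u^{-1}\,\d u$. To leading order (set $g_\pm(0)=\frac12$ and drop the $\pm\Delta$ shifts inside $\phi$), the contact constraint becomes
\[
  2\alpha x\sinh I(x)\;\le\;1\qquad(0<x<s_*),
\]
while the centre gain is $\alpha\Delta\bigl(1-e^{-I(a)}\bigr)$. With $I(x)\approx\eta(a)\log(s_*/x)$ the constraint peaks at $x=s_*/e$. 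Optimizing gives $s_*=ea$, $\alpha\approx1/(2a\eta(a))$, and a gain of exactly $m/2$. So a certificate exists, but it is tight at leading order. With $\alpha=(1-\epsilon)/(2a\eta(a))$ you must control every correction uniformly to relative $o(1)$:
\begin{itemize}
\item the variation of $\eta$ on $[x,ea]$ for $x\ll a$;
\item the region $|x|\le x_\infty$, where (A2) says nothing;
\item the $\pm\Delta$ shifts;
\item the cost $\alpha^2x_\infty^2$ of that inner region.
\end{itemize}
None of this is in the proposal.

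Your primal fallback is the route the paper actually takes (Lemma~\ref{prop:slow-hellinger}), but the sketch omits exactly what produces the constant $1$. Write $\d\mb P_\pm/\d\mb S=1\pm\tau$ with $\mb S=\frac12(\mb P_-+\mb P_+)$. Then $1-\rho=\int G(|\tau|)\,\d\mb S$ with $G(t)=1-\sqrt{1-t^2}\le t$, strictly for $t\in(0,1)$.

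A moment-budget or Markov-type argument naturally controls $\int|\tau|\,\d\mb S$ on the far region, not $1-\rho$. The mean shift can be carried at distances well below $a$ by larger masses with partial overlap $|\tau|<1$, where $G(|\tau|)<|\tau|$. Markov bounds far mass from above, not Hellinger cost from below. The quadratic bound $G(t)\ge t^2/2$ can never give more than $m/2$: the pair \eqref{eq:slow-pair} itself has $\frac12\int\tau^2\,\d\mb S=m/2$, while its $1-\rho$ equals $m$.

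The paper closes this gap with three steps:
\begin{itemize}
\item the rapid-comparison inequality of Lemma~\ref{lem:slow-rapid}, which trades the square-root behaviour of $G$ near $t=1$ against the slow variation of $\Lambda$;
\item Jensen's inequality for the eventually convex $F=1/\Lambda^{-1}$;
\item the inverse stability of Lemma~\ref{lem:slow-D-IS}.
\end{itemize}
Without an argument of this kind, the sharp lower bound $r(\Delta)\ge m(\Delta)(1-o(1))$, which is the content of the theorem, remains unproved.
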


\begin{proof}
An upper bound $r(\Delta)\le m(\Delta)(1+o(1))$ follows from Lemma~\ref{prop:slow-upper}. For the matching lower bound, the definition~\eqref{eq:intro-hellinger} of $r(\Delta)$ and the elementary inequality $-\log u\ge 1-u$ give
\[
  r(\Delta)\;=\;-\log\!\!\sup_{\mb P_-\in\mc C_{-\Delta},\,\mb P_+\in\mc C_{\Delta}}\!\!\rho(\mb P_-,\mb P_+)\;=\;\inf_{\mb P_-\in\mc C_{-\Delta},\,\mb P_+\in\mc C_{\Delta}}\!\bigl(-\log\rho(\mb P_-,\mb P_+)\bigr)\;\ge\;\inf_{\mb P_-\in\mc C_{-\Delta},\,\mb P_+\in\mc C_{\Delta}}\!\bigl(1-\rho(\mb P_-,\mb P_+)\bigr) .
\]
By Lemma~\ref{prop:slow-hellinger} (specialized to $\mu=0$) the right-hand side is at least $m(\Delta)(1+o(1))$, whence $r(\Delta)\ge m(\Delta)(1+o(1))$. Combining the two bounds gives $r(\Delta)\sim m(\Delta)$.
\end{proof}

\begin{lemma}[Hellinger separation modulus]\label{prop:slow-hellinger}
Under Assumption~\ref{assu:slow-growth},
\[
  \liminf_{\Delta\downarrow0}\;
  \inf_{\mu\in\Re}\;
  \inf_{\mb P_{\pm}\in\mc C_{\mu\pm\Delta}}\;
  \frac{1-\rho(\mb P_-,\mb P_+)}{m(\Delta)}\;\ge\;1 .
\]
\end{lemma}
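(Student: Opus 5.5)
The ratio in the statement is invariant under translation, so I will first reduce to $\mu=0$. The map $x\mapsto x-\mu$ carries $\mc C_{\mu\pm\Delta}$ onto $\mc C_{\pm\Delta}$ and leaves $\rho$ unchanged. The infimum over $\mu$ is then the value at $\mu=0$.

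Rather than bounding $1-\rho$ directly for each pair, I will bound $\sup\rho$ from above by weak Lagrangian duality, in the spirit of Step~2 of the proof of Theorem~\ref{thm:matching}. A direct Cauchy--Schwarz attack, writing $2\Delta=\int x\,(\d\mb P_+-\d\mb P_-)$ and splitting into bulk and tail, only gives $1-\rho\gtrsim\Delta^2/(\text{second moment})$. That is of order $\Delta m(\Delta)$, not $m(\Delta)$, because (A1) lets both distributions spread freely inside $[-x_0,x_0]$.

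\emph{Weak duality with equality constraints.} For genuine probability pairs the mass and mean constraints are equalities. Their multipliers $\nu_\pm,\alpha_\pm$ may therefore have any sign; only the moment multipliers $\beta_\pm\ge0$ are sign-constrained. I will set
\begin{align*}
  g_-(x)&=\nu+\alpha(x+\Delta)+\beta\bigl(\phi(x+\Delta)-B\bigr),\\
  g_+(x)&=\nu-\alpha(x-\Delta)+\beta\bigl(\phi(x-\Delta)-B\bigr).
\end{align*}
If $g_\pm\ge0$ and $g_-g_+\ge\tfrac14$ pointwise, then the pointwise inequality of Lemma~\ref{lem:amgm-dual} can be integrated and the constraints substituted. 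This gives $\rho(\mb P_-,\mb P_+)\le 2\nu$ for every feasible pair, so the whole problem is to exhibit such a certificate with $2\nu=1-m(\Delta)(1+o(1))$.

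\emph{Choice of multipliers.} The guide is the two-atom pair~\eqref{eq:slow-pair}: the envelopes should kiss at $0$ and at $\pm a(\Delta)$. I will take $\beta=\theta/(2\phi(a))=\theta m/(2B)$ and $\alpha=\theta/(2a)$, with $\theta=1+o(1)$. I will then choose $\nu$ so that $g_-(0)g_+(0)=\tfrac14$; using $\phi(\pm\Delta)=0$ from (A1), this gives $g_\pm(0)=\nu+\alpha\Delta-\beta B=\tfrac12$. Since $\alpha\Delta=\theta m/2$ and $\beta B=\theta m/2$, this yields $\nu=\tfrac12$, which is too large. I expect the correct tuning instead has $\alpha\Delta$ and $\beta B$ slightly unbalanced, with $2\nu=1-m(1+o(1))$. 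The sign of $\alpha$ and the exact value of $\theta$ then have to be fixed by requiring tangency of $g_-g_+$ with $\tfrac14$ near $x=\pm a$ (for instance $g_-(a)$ large and $g_+(a)$ small but positive). This is the step I would compute first, working out the dual of the explicit two-point problem with atoms at $\{0,\pm a\}$.

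\emph{Verification, the main obstacle.} The hard step is checking $g_-(x)g_+(x)\ge\tfrac14$ for all $x$, uniformly, as $\Delta\downarrow0$. I would split the line into three regions.
\begin{itemize}
\item[(i)] $|x|\le x_0-\Delta$: here both $\phi$ terms vanish. The product is $(\nu'+\alpha\Delta)^2-\alpha^2x^2$, where $\nu'=\nu-\beta B$. Since $\alpha=O(1/a)$, $|x|\le x_0$, and $a\to\infty$ faster than any power (Lemma~\ref{lem:slow-inverse-growth}), the loss $\alpha^2x_0^2$ is $o(m)$ and can be absorbed into the $o(1)$ in $\theta$.
\item[(ii)] $x_0\le|x|\le a(1-\delta)$: here I will use that $\phi(y)=|y|\Lambda(|y|)$ with $\Lambda$ increasing. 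This guarantees that $\beta\phi$ dominates the linear term $\alpha|x|$ up to a factor $\Lambda(|x|)/\Lambda(a)$. The point is to show that the product stays above $\tfrac14$ everywhere in this range, so that the envelopes only touch at the intended points.
\item[(iii)] $|x|\ge a(1-\delta)$: here the slow variation in (A2)(i),(iii), via the von Mises form, gives $\phi(x\pm\Delta)=\phi(x)(1+o(1))$ uniformly. The strict growth of $\Lambda$ beyond $a$ then keeps the product above $\tfrac14$.
\end{itemize}
If exact pointwise tangency proves too delicate, I would fall back to a certificate valid with $\theta=1-\delta$ for fixed $\delta>0$. That gives $\liminf(1-\rho)/m\ge1-O(\delta)$, and letting $\delta\downarrow0$ finishes the proof.
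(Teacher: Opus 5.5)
Your weak-duality reduction is correct. For probability pairs with exact means, $g_\pm\ge0$ and $g_-g_+\ge\tfrac14$ pointwise give $\rho\le2\nu$. By the strong duality in Step~2 of Theorem~\ref{thm:matching}, a certificate with $2\nu=1-m(\Delta)(1+o(1))$ exists precisely when the lemma is true. The whole content of the lemma therefore lies in constructing and globally verifying such a certificate, and the proposal does neither. Worse, your ansatz cannot be rescued by ``slightly unbalancing'' $\alpha\Delta$ against $\beta B$.

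To see this, write $2\nu=1-\kappa m$ with $m=m(\Delta)$ and $a=a(\Delta)$. Since $\phi(\pm\Delta)=0$, the constraint at $x=0$ forces $\alpha\Delta-\beta B\ge\kappa m/2$, i.e.\ $\alpha a\ge\beta\phi(a)+\kappa/2$. Evaluating at $x=aY+\Delta$ then gives
\[
  g_+(aY+\Delta)=\nu-\beta B+\beta\phi(aY)-\alpha aY\;\le\;\tfrac12-\tfrac{\kappa Y}{2}+\beta\phi(a)\,Y\Bigl(\tfrac{\Lambda(aY)}{\Lambda(a)}-1\Bigr).
\]
Suppose $\beta\phi(a)$ stays bounded, as for your $\beta=\theta/(2\phi(a))$ or any tuning keeping $\alpha\Delta,\beta B=O(m)$. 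Then slow variation of $\Lambda$ kills the last term for each fixed $Y$, so $g_+<0$ once $Y>1/\kappa$ and $\Delta$ is small. Your claim in (iii), that the growth of $\Lambda$ beyond $a$ keeps the product above $\tfrac14$, is therefore false at that scale.

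Since $\Lambda(aY)/\Lambda(a)-1\approx\eta(a)\log Y$, any certificate with $\kappa$ bounded below needs $\beta\phi(a)\gtrsim1/\eta(a)\to\infty$. One finds the relevant scale is:
\begin{itemize}
\item $\beta B\asymp m/\eta(a)$;
\item $\alpha\approx\beta\phi'(a)\asymp1/(a\eta(a))$;
\item near $x\approx a$, $g_+=O(\eta(a))$ while $g_-\asymp1/\eta(a)$.
\end{itemize}

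At that scale the verification is genuinely second order, and the proposal does not address this. Errors in $\Lambda(ay)/\Lambda(a)$ get multiplied by $\beta\phi(a)\sim1/\eta(a)$, so the estimate $\phi(x\pm\Delta)=\phi(x)(1+o(1))$ is far too coarse. You would need $\Lambda(ay)/\Lambda(a)=1+\eta(a)\log y+o(\eta(a))$, which is where (A2)(iii) enters. You would also need global control for $y$ outside compacts, and for $x_0\le|x|\le x_\infty$, where the von Mises form is unavailable.

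The leading-order picture shows how tight this is. Take $\alpha=\beta\phi'(a)$ and $\beta\phi(a)=c/\eta(a)$. The constraint at $0$ becomes exactly $\kappa\le2c$. The product constraint along $x\approx at$ reduces at leading order to $2c\,t(1-\log t)\le1$. Since $\max_t t(1-\log t)=1$, attained at $t=1$, pushing $\kappa\to1$ makes the constraint tangent at $x\approx a$. A fixed slack $\kappa=1-\delta$ and careful error bookkeeping are therefore unavoidable, and none of this is in the proposal.

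The paper avoids the dual construction by staying primal. It writes $1-\rho=\int G(|\tau|)\,\d\mb S$ and applies Cauchy--Schwarz only on $\{|x-\mu|\le R\}$; the obstruction to a global Cauchy--Schwarz bound is the tail, where no second moment is controlled. The tail is handled by:
\begin{itemize}
\item the rapid-comparison inequality of Lemma~\ref{lem:slow-rapid};
\item Jensen for the eventually convex $F=1/\Lambda^{-1}$ (Lemmas~\ref{lem:slow-F-convex} and~\ref{lem:slow-allocation});
\item the inverse stability of Lemma~\ref{lem:slow-D-IS}.
\end{itemize}
These lemmas carry the same $\eta$-level information that your certificate would have to encode explicitly.
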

\begin{proof}
  The proof of Lemma~\ref{prop:slow-hellinger}, together with the supporting deterministic estimates for the slow-growth tail, is given in Appendix~\ref{app:slow-growth}.
\end{proof}

As opposed to the classes of Sections~\ref{sec:bound-vari-class}--\ref{sec:bounded-alpha-moment-1}, we do not construct an estimator attaining the rate $r(\Delta)$; we only invoke the matching theorem~\ref{thm:matching}, which guarantees an $M$-estimator attaining $r(\Delta)$ to \emph{exist}, and even pins it down as the solution of the two-dimensional convex optimization problem over the multipliers $(\lambda_1,\lambda_2)\in\Re\times\Re_+$ of Section~\ref{ssec:sandwich}.
Nevertheless, we must point out here that the exponent $r(\Delta)$ is genuinely sub-polynomial in $\Delta$: as $\Delta\downarrow0$ it decays like $m(\Delta)=\Delta/\Lambda^{-1}(B/\Delta)$, far faster than the $(\Delta/\sigma)^{\alpha/(\alpha-1)}$ of the bounded $\alpha$-moment class, reflecting how much harder concentration becomes once the moment function only marginally exceeds linear growth. To make the latter more tangible we consider two particular examples.

\paragraph{Examples.}
The following canonical slow tails all satisfy Assumption~\ref{assu:slow-growth}.

\emph{Log-gamma tails.}
Fix $\gamma>0$ and let $\phi$ be any even function that is flat near the origin, agreeing with $\abs x(\log\abs x)^\gamma$ for large $\abs x$ (a compact zero-modification of the loggamma tail). Then $\Lambda(x)=(\log x)^\gamma$, with $\eta(x)=\gamma/\log x\to0$, $\int^\infty\!\eta(u)/u\,\d u=\gamma\log\log x\to\infty$, and $x\eta'(x)/\eta(x)=-1/\log x\to0$. This verifies (A2). Since $\Lambda^{-1}(z)=\exp(z^{1/\gamma})$, Theorem~\ref{thm:slow-sharp} gives the following.

\begin{corollary}[Log-gamma slow growth]\label{cor:slow-loggamma}
For every $\gamma>0$ and every $\phi$ as above,
\[
  r(\Delta)\;\sim\;\Delta\,\exp\bigl\{-(B/\Delta)^{1/\gamma}\bigr\},\qquad \Delta\downarrow0 .
\]
\end{corollary}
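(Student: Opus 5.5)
The plan is to apply Theorem~\ref{thm:slow-sharp}, $r(\Delta)\sim m(\Delta)=\Delta/\Lambda^{-1}(B/\Delta)$, and compute $\Lambda^{-1}$ explicitly for the log-gamma tail. The first step is to fix the regime in which $\Lambda(x)=\phi(x)/\abs x$ equals $(\log x)^\gamma$. Since $\phi$ agrees with $\abs x(\log\abs x)^\gamma$ for all $\abs x\ge x_1$ (some $x_1$), we have $\Lambda(x)=(\log x)^\gamma$ for $x\ge x_1$. Enlarging $x_\infty$ if necessary to $x_\infty\ge\max(x_1,e)$ keeps Assumption~\ref{assu:slow-growth}~(A2) intact. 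Indeed, on $[x_\infty,\infty)$ we have
\[
\Lambda(x)=\Lambda_0\exp\Bigl\{\int_{x_\infty}^x \gamma/(u\log u)\,\d u\Bigr\},\qquad \Lambda_0=(\log x_\infty)^\gamma,
\]
and the index $\eta(u)=\gamma/\log u$ satisfies (i)--(iii) as verified just before the corollary. Condition (A1) holds by the flatness of $\phi$ near the origin. Only the tail restriction of $\Lambda$ matters, because the threshold $\Delta_0$ in~\eqref{eq:slow-Delta0} forces $B/\Delta\ge\Lambda(x_\infty)$, so $a(\Delta)=\Lambda^{-1}(B/\Delta)\ge x_\infty$.

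The second step is to invert. For $z\ge\Lambda(x_\infty)$, solving $(\log x)^\gamma=z$ on $[x_\infty,\infty)$ gives $\Lambda^{-1}(z)=\exp(z^{1/\gamma})$, which is well defined since $\Lambda$ is a strictly increasing bijection there. Substituting into~\eqref{eq:slow-critical-explicit} then yields, for every $\Delta\in(0,\Delta_0]$,
\[
m(\Delta)=\Delta\exp\bigl\{-(B/\Delta)^{1/\gamma}\bigr\}.
\]
This is an exact identity rather than an asymptotic one, so no error terms need to be tracked.

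The final step is to conclude from Theorem~\ref{thm:slow-sharp}, which gives $r(\Delta)/m(\Delta)\to1$ as $\Delta\downarrow0$; combined with the identity above, this is exactly the claim. The only point needing care is the first step: checking that the compact zero-modification really satisfies both (A1) and (A2) with a single choice of $x_\infty$. In particular, the von Mises representation must hold exactly, not merely asymptotically, on $[x_\infty,\infty)$. This is immediate once $x_\infty$ lies beyond the point where $\phi$ coincides with the log-gamma tail, so I expect no genuine obstacle.
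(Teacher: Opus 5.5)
Your proposal is correct and follows the paper's own route: verify (A2) with $\eta(u)=\gamma/\log u$, invert to get $\Lambda^{-1}(z)=\exp(z^{1/\gamma})$ so that $m(\Delta)=\Delta\exp\{-(B/\Delta)^{1/\gamma}\}$ exactly for small $\Delta$, and conclude from Theorem~\ref{thm:slow-sharp}. Your choice $x_\infty\ge\max(x_1,e)$, which makes the von Mises representation hold exactly with $\Lambda_0=(\log x_\infty)^\gamma$, is a detail the paper leaves implicit.
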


\emph{Iterated logarithms.}
The assumption also covers the still slower tails $\Lambda(x)=\log^{\circ k}x$ for any fixed $k\ge1$. Write $L_j(x)\defn\log x\,\log^{\circ2}x\cdots\log^{\circ j}x$ for the product of the first $j$ iterated logarithms (with $L_0\defn1$). A direct computation gives, for large $x$,
\[
  \eta(x)=\frac{1}{L_k(x)}\to0,\qquad
  \int^\infty\frac{\eta(u)}{u}\,\d u=\log^{\circ(k+1)}x\to\infty,\qquad
  \frac{x\eta'(x)}{\eta(x)}=-\sum_{j=1}^{k}\frac{1}{L_j(x)}\to0,
\]
which verifies (A2) for every $k\ge1$. Writing $\exp^{\circ k}$ for the inverse of $\log^{\circ k}$, we have $\Lambda^{-1}(z)=\exp^{\circ k}(z)$, so $a(\Delta)=\exp^{\circ k}(B/\Delta)$ and Theorem~\ref{thm:slow-sharp} gives the following.

\begin{corollary}[Iterated-logarithm slow growth]\label{cor:slow-iterated}
For every $k\ge1$ and every even $\phi$ that is flat near the origin with $\Lambda(x)=\log^{\circ k}x$ for large $\abs x$,
\[
  r(\Delta)\;\sim\;\frac{\Delta}{\exp^{\circ k}(B/\Delta)},\qquad \Delta\downarrow0 .
\]
\end{corollary}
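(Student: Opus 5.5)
This is a direct specialization of Theorem~\ref{thm:slow-sharp}, which gives $r(\Delta)\sim m(\Delta)=\Delta/\Lambda^{-1}(B/\Delta)$ as $\Delta\downarrow0$ under Assumption~\ref{assu:slow-growth}. So the plan has three parts: verify that Assumption~\ref{assu:slow-growth} holds, identify $\Lambda^{-1}$ in closed form, and substitute.

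\emph{Checking (A1).} This holds by hypothesis, since $\phi$ is even and flat near the origin.

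\emph{Checking (A2).} For $x\ge x_\infty$ with $x_\infty$ large, $\Lambda(x)=\phi(x)/x=\log^{\circ k}x$. I will take
\[
  \eta(x)=\frac{x\Lambda'(x)}{\Lambda(x)}.
\]
By the chain rule, $\frac{\d}{\d x}\log^{\circ k}x=1/(x\,L_{k-1}(x))$, so $\eta(x)=1/L_k(x)$. Then $\int_{x_\infty}^x\eta(u)/u\,\d u=\log\Lambda(x)-\log\Lambda(x_\infty)$, which gives the von Mises representation with $\Lambda_0=\Lambda(x_\infty)$.

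Conditions (i)--(iii) then follow directly:
\begin{itemize}
  \item[(i)] $\eta\to0$ because $L_k\to\infty$.
  \item[(ii)] The integral equals $\log^{\circ(k+1)}x$ up to a constant, which diverges.
  \item[(iii)] We have $x\eta'/\eta=-x L_k'/L_k=-\sum_{j=1}^k x\,(\log^{\circ j})'/\log^{\circ j}=-\sum_{j=1}^k 1/L_j(x)\to0$.
\end{itemize}
Positivity and continuous differentiability of $\eta$ on $[x_\infty,\infty)$ are immediate once $x_\infty>\exp^{\circ k}(1)$, so that every $\log^{\circ j}x>0$. The standing properties also need confirming: evenness, continuity, super-linearity (since $\Lambda\to\infty$), monotonicity on $[0,\infty)$, and $\phi(0)=0<B$. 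The intermediate part of $\phi$ is arbitrary but is assumed admissible as part of ``every $\phi$ as above.''

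\emph{Inverting and substituting.} On $[x_\infty,\infty)$, $\Lambda$ is a strictly increasing bijection onto $[\Lambda(x_\infty),\infty)$. For $B/\Delta\ge\Lambda(x_\infty)$ we therefore get $a(\Delta)=\Lambda^{-1}(B/\Delta)=\exp^{\circ k}(B/\Delta)$ by \eqref{eq:slow-critical-explicit}. Hence $m(\Delta)=\Delta/\exp^{\circ k}(B/\Delta)$ for all small $\Delta$, and Theorem~\ref{thm:slow-sharp} yields the claim.

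The only point needing care is step (iii), which is the derivative of a product of iterated logarithms. It reduces to the telescoping identity $x(\log^{\circ j})'(x)=1/L_{j-1}(x)$, from which $x(\log^{\circ j})'/\log^{\circ j}=1/L_j(x)$. Beyond that, the argument is bookkeeping.
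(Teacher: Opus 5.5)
Your proposal is correct and follows the paper's route exactly. You verify (A2) with $\eta=1/L_k$, using the same computations for (i)--(iii). You then identify $\Lambda^{-1}=\exp^{\circ k}$ on $[\Lambda(x_\infty),\infty)$, so that $a(\Delta)=\exp^{\circ k}(B/\Delta)$, and invoke Theorem~\ref{thm:slow-sharp}. Your remark that the intermediate part of $\phi$ must satisfy the standing properties is a fair reading of a hypothesis the statement leaves implicit.
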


\subsubsection{The Fixed-Confidence Regime}
\label{sec:slow-fixed-beta}

We start by recording the margin required by the fixed-margin-optimal $M$-estimator, the one the matching theorem~\ref{thm:matching} guarantees to exist without us exhibiting it explicitly, to achieve a fixed confidence level, and then prove a matching lower bound.

\begin{corollary}[Achievable fixed-$\beta$ margin]\label{cor:slow-achievable}
Fix $\beta\in(0,1)$ and write $\xi_n\defn\log(1/\beta)/n$. The fixed-margin-optimal $M$-estimator of Section~\ref{ssec:sandwich} attains confidence level $\beta$ at margin
\[
  \Delta_n\;=\;m^{-1}(\xi_n)\,(1+o(1)),\qquad n\to\infty .
\]
\end{corollary}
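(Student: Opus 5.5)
The plan is to combine the non-asymptotic guarantee of Theorem~\ref{thm:feasibility-guarantee} at the optimal rate with the sharp asymptotics of Theorem~\ref{thm:slow-sharp}, and then invert. For every $\Delta\in(0,\Delta_0]$ with $r(\Delta)>0$, Theorem~\ref{thm:matching} and Lemma~\ref{lem:attainment} give multipliers $\lambda^\star(\Delta)\in C$ and a non-decreasing $\psi_\Delta$ sandwiched at rate $r_M'(\Delta)=r(\Delta)$. Corollary~\ref{cor:estimator-exists} makes $\psi_\Delta$ satisfy Huber's condition~\eqref{ass:huber}, so the canonical selection~\eqref{eq:canonical-estimator} is a measurable $M$-estimator. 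By Theorem~\ref{thm:feasibility-guarantee}, it satisfies $\beta_n(\mu_n;\Delta)\le e^{-n r(\Delta)}$ for every $n$. So the confidence level $\beta$ is met at margin $\Delta$ as soon as $r(\Delta)\ge\xi_n$. We therefore take $\Delta_n$ to be the smallest such margin, i.e.\ a generalized inverse $\Delta_n\defn\inf\{\Delta\in(0,\Delta_0]:r(\Delta)\ge\xi_n\}$, and tune the estimator to $\psi_{\Delta_n}$.

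The key steps are as follows.

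First, we need $r$ positive near $0$ and, for the inverse to behave, essentially monotone. Positivity follows from Theorem~\ref{thm:slow-sharp}, since $r(\Delta)\sim m(\Delta)>0$. Monotonicity I would obtain from the definition: if $\Delta<\Delta'$, any feasible pair at $\Delta'$ can be transported to one at $\Delta$ by the scaling of Lemma~\ref{lem:scaling}, or by mixing with a point mass to shrink the mean. Rather than prove this exactly, it suffices to work with the sandwich $(1-\delta)m(\Delta)\le r(\Delta)\le(1+\delta)m(\Delta)$, valid for $\Delta\le\Delta_\delta$. Since $m$ is continuous and strictly increasing with $m(0+)=0$ and $\xi_n\downarrow0$, for $n$ large we get
\[
m^{-1}\bigl(\xi_n/(1+\delta)\bigr)\le\Delta_n\le m^{-1}\bigl(\xi_n/(1-\delta)\bigr).
\]
Indeed, the right endpoint satisfies $r\ge(1-\delta)m=\xi_n$, so $\Delta_n$ is at most it. For the left endpoint, any $\Delta$ below it has $r(\Delta)\le(1+\delta)m(\Delta)<\xi_n$.

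Second, we convert these bounds into $\Delta_n=m^{-1}(\xi_n)(1+o(1))$. This needs $m^{-1}(c\,\xi)/m^{-1}(\xi)\to1$ as $\xi\downarrow0$ for fixed $c$ near $1$, with the deviation vanishing as $c\to1$. Equivalently, $m(\Delta(1\pm\epsilon))/m(\Delta)$ must stay bounded away from $1$ (above or below) by a factor that does not collapse as $\Delta\downarrow0$. Since $m(\Delta)=\Delta/\Lambda^{-1}(B/\Delta)$ and $\Lambda^{-1}$ is rapidly varying (Lemma~\ref{lem:slow-inverse-growth}), changing $\Delta$ by a factor $1-\epsilon$ changes $B/\Delta$ by a factor $1+\epsilon'$. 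This multiplies $\Lambda^{-1}$ by an unbounded amount, so $m(\Delta(1-\epsilon))/m(\Delta)\to0$, and symmetrically $m(\Delta(1+\epsilon))/m(\Delta)\to\infty$. Hence for any fixed $c\in(1/2,2)$ and $\epsilon>0$, eventually $m(\Delta(1-\epsilon))<c\,m(\Delta)<m(\Delta(1+\epsilon))$. This gives $(1-\epsilon)m^{-1}(\xi)\le m^{-1}(c\xi)\le(1+\epsilon)m^{-1}(\xi)$, and letting $\delta,\epsilon\downarrow0$ closes the argument.

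The main obstacle is this second step: verifying from (A2) that $m$ is "rapidly varying at $0$" in exactly this sense. I would derive it from the von Mises representation. We have $\log\Lambda(x(1+t))-\log\Lambda(x)=\int_x^{x(1+t)}\eta(u)/u\,\d u\approx\eta(x)\log(1+t)$, using slow variation of $\eta$ (condition (iii)). Inverting, a relative change $\epsilon'$ in $\Lambda$ requires $\log(1+t)\approx\epsilon'/\eta(x)\to\infty$ by condition (i). Hence $\Lambda^{-1}((1+\epsilon')z)/\Lambda^{-1}(z)\to\infty$, which dominates the linear factor $\Delta$ in $m$.
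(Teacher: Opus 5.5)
Your main line is sound, but one step is unsupported as written. You set $\Delta_n$ to be the infimum of $\{\Delta\in(0,\Delta_0]:\,r(\Delta)\ge\xi_n\}$ and tune the estimator to $\psi_{\Delta_n}$. Theorem~\ref{thm:feasibility-guarantee} then gives $\beta_n(\mu_n;\Delta_n)\le\beta$ only if $r(\Delta_n)\ge\xi_n$, i.e.\ only if this infimum is attained. Because you explicitly set monotonicity of $r$ aside, nothing in your argument guarantees that.

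There are two easy repairs. The first is to tune at the explicit margin $m^{-1}\bigl(\xi_n/(1-\delta)\bigr)$, which you already showed has $r\ge\xi_n$. Your rapid-variation argument gives $m^{-1}(c\,\xi)\sim m^{-1}(\xi)$ for every fixed $c>0$, so $\delta$ can even stay fixed. This is exactly the paper's choice $\Delta_n=m^{-1}\bigl((1+\zeta)\xi_n\bigr)$. The second is to prove that $r$ is non-decreasing. The common scaling $S_t$ with $t=\Delta/\Delta'<1$ leaves the affinity unchanged and maps a feasible pair at $\Delta'$ to one at $\Delta$, since $|t(y\mp\Delta')|\le|y\mp\Delta'|$ and $\phi$ is even and non-decreasing on $[0,\infty)$. (Mixing with a point mass, your other suggestion, does not obviously preserve the shifted moment constraint.) Combined with $\inf_{\Delta'>\Delta}r(\Delta')\le r(\Delta)$ from the proof of Proposition~\ref{prop:LB}, monotonicity makes $r$ right-continuous, so the infimum belongs to the set.

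Granting that, your route differs from the paper's at the asymptotic step. The paper writes $m^{-1}(\rho)=\rho\,\phi^{-1}(B/\rho)$ and notes that $\phi(x)=|x|\Lambda(|x|)$ is regularly varying of index $1$. It then uses inversion of regular variation \citep[Proposition~2.6(v)]{resnick2007heavy} to conclude that $m^{-1}$ is slowly varying at $0$, needing only the lower half $r\ge m\,(1+o(1))$ of Theorem~\ref{thm:slow-sharp}. You instead prove the equivalent fact, rapid variation of $m$ at $0$, directly from the von Mises representation, which is self-contained. Lemma~\ref{lem:slow-inverse-growth} alone would not suffice here, since superpolynomial growth of $\Lambda^{-1}$ does not imply $\Lambda^{-1}(cz)/\Lambda^{-1}(z)\to\infty$. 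Your extra use of the upper half $r\le(1+\delta)m$ additionally shows that the smallest margin certified by $e^{-nr}$ is itself $m^{-1}(\xi_n)(1+o(1))$, which the corollary does not require.

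Your derivation should be tightened, however. The approximation $\int_x^{x(1+t)}\eta(u)/u\,\d u\approx\eta(x)\log(1+t)$ is justified by uniform convergence only for $t$ in compacts. The step $\log(1+t)\approx\epsilon'/\eta(x)\to\infty$ applies it exactly where $t\to\infty$, so it is not justified as stated. The clean argument is that for each fixed $K$, $\log\Lambda(Kx)-\log\Lambda(x)\le\log K\,\sup_{u\ge x}\eta(u)\to0$. Reaching the fixed increment $\log(1+\epsilon')$ therefore forces $\Lambda^{-1}\bigl((1+\epsilon')z\bigr)/\Lambda^{-1}(z)\to\infty$. This needs only (A2)(i), not (iii), and is the same contradiction argument used in Lemma~\ref{lem:slow-D-IS}.
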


The proof is deferred to Appendix~\ref{app:deferred}.

For the matching lower bound, the disjointness of the rare atoms in the near least-favorable pair~\eqref{eq:slow-pair} makes the testing problem collapse to an elementary binomial: a single non-zero observation reveals the hypothesis.

\begin{proposition}[Non-asymptotic fixed-$\beta$ lower bound]\label{prop:slow-fixed-beta}
For every $n\ge 1$, every $\beta\in(0,\tfrac12)$, every margin $\Delta_n\in(0,\Delta_0)$, and every measurable estimator $\mu_n$ with worst-case confidence level~\eqref{eq:beta-n-def} obeying $\beta_n(\mu_n;\Delta_n)\le\beta$, we must have
\[
  \beta\;\ge\;\tfrac12\,(1-m(\Delta_n))^{\,n}.
\]
\end{proposition}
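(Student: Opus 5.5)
We reduce the estimation problem to a two-point test between the slow-growth pair \eqref{eq:slow-pair} at a slightly enlarged margin, exactly as in the proof of Proposition~\ref{prop:LB} and Proposition~\ref{prop:skellam-lb-nonasymp}. Fix $\Delta'\in(\Delta_n,\Delta_0]$, which is possible since $\Delta_n<\Delta_0$. By Lemma~\ref{prop:slow-upper} the pair $\mb P_{\pm\Delta'}=(1-m(\Delta'))\delta_0+m(\Delta')\delta_{\pm a(\Delta')}$ lies in $\mc C_{\pm\Delta'}$. The reduction~\eqref{eq:LB-reduction} with the test $\mathbf 1\{\mu_n\ge0\}$ then gives
\[
  \beta\;\ge\;\beta_n(\mu_n;\Delta_n)\;\ge\;\max(\alpha_n,\gamma_n)\;\ge\;\mc R^\star_n(\mb P_{-\Delta'},\mb P_{\Delta'}).
\]

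Next we compute the Bayes error exactly. Under either hypothesis each observation is $0$ or $\pm a(\Delta')$. The nonzero atoms are disjoint across the two hypotheses, since $a(\Delta')>0$. Hence the likelihood ratio is $0$ or $\infty$ as soon as some $X_i\ne0$. On the event $E\defn\{X_1=\dots=X_n=0\}$, the two product laws coincide, each giving $E$ probability $(1-m(\Delta'))^n$. Any test $\phi$ is constant on $E$, so $\mb P_-[\phi=1]+\mb P_+[\phi=0]\ge(1-m(\Delta'))^n$. This yields
\[
  \mc R^\star_n\;=\;\tfrac12(1-m(\Delta'))^n,
\]
with equality attained by deciding correctly off $E$.

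Finally we let $\Delta'\downarrow\Delta_n$. The map $m(\Delta)=\Delta/\Lambda^{-1}(B/\Delta)$ is continuous on $(0,\Delta_0]$, as noted after~\eqref{eq:slow-Delta0}. So $\beta\ge\tfrac12(1-m(\Delta'))^n\to\tfrac12(1-m(\Delta_n))^n$. Alternatively, since $m$ is increasing we have $m(\Delta')\ge m(\Delta_n)$, so the direct inequality points the wrong way, and we genuinely need this limit.

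The main point needing care is the enlarged-margin trick. It converts the strict events $\{\mu_n>\mu+\Delta_n\}$ into the test events. It is also why we take $\Delta'$ strictly larger than $\Delta_n$ and use continuity of $m$, rather than working at $\Delta_n$ directly. The rest is elementary.
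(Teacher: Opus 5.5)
Your proposal is correct and follows the paper's proof essentially step for step. It uses the same enlarged-margin two-point reduction via \eqref{eq:LB-reduction} to the slow-growth pair at $\Delta'\in(\Delta_n,\Delta_0]$, the same exact Bayes error $\frac12(1-m(\Delta'))^n$ from the disjointness of the rare atoms, and the same passage $\Delta'\downarrow\Delta_n$ by continuity of $m$. Your argument that any test is constant on the all-zero event is just a direct rephrasing of the paper's likelihood-ratio description of the optimal test.
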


The proof is deferred to Appendix~\ref{app:deferred}.

The bound is fully non-asymptotic: $\tfrac12(1-m(\Delta_n))^n$ is the Bayes error of a coin informative with probability $m(\Delta_n)$ per draw. Inverting it yields a lower bound on the attainable margin.

\begin{corollary}[Fixed-$\beta$ margin lower bound]\label{cor:slow-fixed-beta}
Fix $\beta\in(0,\tfrac12)$ and let $\xi_n=\log(1/\beta)/n$ as in Corollary~\ref{cor:slow-achievable}. Any estimator sequence $(\mu_n)$ with $\beta_n(\mu_n;\Delta_n)\le\beta$ for all $n$ must satisfy
\[
  \Delta_n\;\ge\;m^{-1}(\xi_n)\,(1-o(1)),\qquad n\to\infty .
\]
\end{corollary}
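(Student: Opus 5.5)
The plan is to invert the non-asymptotic bound of Proposition~\ref{prop:slow-fixed-beta}. Fix $\beta\in(0,\tfrac12)$ and an estimator sequence with $\beta_n(\mu_n;\Delta_n)\le\beta$. First dispose of the case $\Delta_n\ge\Delta_0$. Since $\xi_n\to0$ and $m^{-1}(\xi_n)\to0$, the claimed bound then holds trivially for all large $n$. So assume $\Delta_n<\Delta_0$. Proposition~\ref{prop:slow-fixed-beta} gives $(1-m(\Delta_n))^n\le2\beta$, i.e.
\[
  -n\log(1-m(\Delta_n))\;\ge\;\log(1/(2\beta)) .
\]
A first consequence is that $m(\Delta_n)\to0$ along any subsequence where $\Delta_n<\Delta_0$. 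Otherwise the left side would be at least $cn\to\infty$, which is consistent with the inequality, so this does not follow directly. I will instead argue by contradiction on $\Delta_n$ small. If $\Delta_n\le m^{-1}(\xi_n)(1-\delta)$ along a subsequence, then $\Delta_n\to0$, hence $m(\Delta_n)\to0$ and $-\log(1-m(\Delta_n))=m(\Delta_n)(1+o(1))$. The inequality then reads $m(\Delta_n)\ge\tfrac{\log(1/(2\beta))}{n}(1+o(1))$.

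\textbf{Main obstacle.} The constant is $\log(1/(2\beta))$ rather than $\log(1/\beta)$, so the bound gives $m(\Delta_n)\ge c_\beta\,\xi_n(1+o(1))$ with $c_\beta=\log(1/(2\beta))/\log(1/\beta)<1$. Naively this yields only $\Delta_n\ge m^{-1}(c_\beta\xi_n)$, and the remaining step is to show that $m^{-1}(c\,\xi)\sim m^{-1}(\xi)$ as $\xi\downarrow0$ for every fixed $c>0$, i.e.\ that $m^{-1}$ is slowly varying at $0$. I will prove this from Assumption~(A2). We have $m(\Delta)=\Delta/\Lambda^{-1}(B/\Delta)$, and by Lemma~\ref{lem:slow-inverse-growth} $\Lambda^{-1}$ is rapidly varying, so $\log m(\Delta)=\log\Delta-\log\Lambda^{-1}(B/\Delta)$ is dominated by the second term. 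Writing $y=B/\Delta$ and $\ell(y)=\log\Lambda^{-1}(y)$, the von Mises form gives $\ell'(y)=1/(y\,\eta(\Lambda^{-1}(y)))$, which tends to infinity relative to $1/y$ because $\eta\to0$. Consequently, changing $\log m$ by the additive constant $\log c$ shifts $y$ by a relative amount $O(\eta/|\log c|^{-1}\cdot 1/y)$. More precisely, $\ell(y(1+h))-\ell(y)\approx h/\eta(\Lambda^{-1}(y))$, so a shift of $\log c$ needs $h\approx|\log c|\,\eta\to0$. Condition~(iii) is what keeps $\eta$ essentially constant over this window, and it makes the linearization rigorous. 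Hence $m^{-1}(c\xi)=m^{-1}(\xi)(1+o(1))$.

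Combining the pieces, along the contradicting subsequence we get $\Delta_n\ge m^{-1}(c_\beta\xi_n(1+o(1)))=m^{-1}(\xi_n)(1+o(1))$, contradicting $\Delta_n\le(1-\delta)m^{-1}(\xi_n)$. Since $\delta>0$ was arbitrary, this proves $\Delta_n\ge m^{-1}(\xi_n)(1-o(1))$. The absorption of the $(1+o(1))$ factor inside $m^{-1}$ uses the same slow-variation estimate. The continuity and strict monotonicity of $m$ noted after~\eqref{eq:slow-Delta0} justify applying $m^{-1}$ to both sides.
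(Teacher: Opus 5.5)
Your proof is correct and is essentially the paper's argument: a contradicting subsequence forces $\Delta_{n_k}\to0$, and Proposition~\ref{prop:slow-fixed-beta} with $-\log(1-m)\sim m$ yields $m(\Delta_{n_k})\ge c_\beta\,\xi_{n_k}(1+o(1))$ for the same $c_\beta<1$, which slow variation of $m^{-1}$ then absorbs. The one difference is that the paper imports slow variation from the proof of Corollary~\ref{cor:slow-achievable} (via $m^{-1}(\rho)=\rho\,\phi^{-1}(B/\rho)$ and regular variation of $\phi^{-1}$), whereas your derivation from $\ell'(y)=1/(y\,\eta(\Lambda^{-1}(y)))$ also works and in fact needs only the one-sided bound $\ell(y')-\ell(y)\ge\log(y'/y)/\sup_{u\ge\Lambda^{-1}(y)}\eta(u)$, i.e.\ condition~(i) rather than~(iii); you should also drop the retracted ``first consequence'' sentence.
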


The proof is deferred to Appendix~\ref{app:deferred}.

The lower bound matches the margin of the fixed-margin-optimal $M$-estimator of Corollary~\ref{cor:slow-achievable} to leading order. The fixed-margin-optimal $M$-estimator is therefore also fixed-confidence optimal on the slow-growth class, attaining the minimax margin $m^{-1}(\xi_n)$.

The matching here is stronger than in the earlier classes. For the bounded-variance and bounded $\alpha$-moment classes of Sections~\ref{sec:bound-vari-class}--\ref{sec:bounded-alpha-moment-1} the achievable and minimax margins agree on their leading {constant} only in the high-confidence limit $\beta\downarrow0$ (Catoni's $\sqrt2$ and the $L(\alpha)$ of \citet{lee2020optimal,bhatt2022nearly}); on the slow-growth class they instead match to leading order at \emph{every} fixed $\beta\in(0,\tfrac12)$. The strengthening is real but modest: it looks \emph{much} stronger than it is, because the leading constant does not depend on $\beta$ at all, leaving no $\beta$-dependent constant to match across. Indeed, $\beta$ enters the budget $\xi_n=\log(1/\beta)/n$ only as the multiplicative constant $\log(1/\beta)$, while $m^{-1}(\xi)=\xi\,\phi^{-1}(B/\xi)$ is slowly varying as $\xi\downarrow0$ and hence asymptotically invariant under constant rescaling of its argument, $m^{-1}(c\,\xi_n)\sim m^{-1}(\xi_n)$ for every fixed $c>0$.
Specializing to the two tails of Section~\ref{sec:slow-margin} makes this margin concrete: the log-gamma tail $\phi(x)\sim\abs x(\log\abs x)^\gamma$ gives $\Delta_n\sim B/(\log n)^\gamma$, and the iterated-logarithm tail $\Lambda(x)=\log^{\circ k}x$ the still slower $\Delta_n\sim B/\log^{\circ k} n$. In both, the confidence $\beta$ has dropped out of the leading term, so the attainable margin is dictated by the sample size alone, and shrinks only (iterated-)logarithmically in $n$.

\section{Discussion}
\label{sec:discussion}

We have kept the development to one-dimensional location, symmetric moment classes, and i.i.d.\ data in order to expose the convex mechanism cleanly. In discussing extensions it is worth separating the two halves of the result. The \emph{construction} of the estimator, and the upper bound it certifies, rests only on the Chernoff bound and the Lagrangian-duality argument of Section~\ref{sec:synthesis}, and carries over to several richer settings essentially unchanged. The \emph{matching lower bound}, by contrast, is tied to the one-dimensional symmetric i.i.d.\ structure through the two-point Hellinger argument of Section~\ref{sec:lower-bound}; whether it extends is a genuine question in most of the settings below, not a corollary of the present analysis.

\paragraph{Extensions of the synthesis.} Three generalizations leave the convex synthesis problem intact, so it continues to characterize the achievable rate. For \emph{asymmetric tails}, the Lagrangian-dual derivation goes through whenever $\psi$ is non-decreasing away from the origin, now with possibly different envelopes on the two sides. For \emph{multiple moment constraints}, imposing $\int\phi_k(X)\,\d\mb P\le B_k$ for $k=1,\dots,K$ adds one multiplier per constraint and leaves the sandwich structure unchanged. For \emph{martingale data}, the exponential supermartingales $M_k^\pm$ of Remark~\ref{rmk:martingale} extend the confidence guarantee of Theorem~\ref{thm:feasibility-guarantee} to martingale-difference sequences; replacing the Chernoff bound by Ville's maximal inequality $\mb P[\sup_{k\geq 0}M_k^\pm\ge1/\beta]\le\beta$ moreover renders it \emph{anytime-valid}, holding simultaneously over all sample sizes and hence under optional stopping, as in the extensions of \citet{catoni2012challenging} and \citet{bhatt2022nearly}.

\paragraph{Open directions for matching optimality.} In other settings the estimator extends but exact optimality does not follow from our analysis. For \emph{multivariate location}, a one-dimensional confidence estimator can be lifted through the median-of-projections reduction of \citet{prasad2020robust}; for \emph{regression}, one may apply the synthesis to the residuals of a regression $M$-estimator. In both cases the estimator is available, but the two-point lower bound and the resulting exact match are particular to the one-dimensional symmetric problem, and whether the synthesized rate remains optimal is open. 

\section{Conclusions}
\label{sec:conclusions}

We have shown that, for symmetric moment classes, the synthesis of a confidence-optimal $M$-estimator reduces to a tractable convex optimization problem over a pair of dual multipliers, and that the resulting estimating function is sandwiched between two explicit envelopes determined by these multipliers. The synthesis rate matches a two-point Hellinger lower bound that holds for \emph{every} measurable estimator, yielding matching exponential upper and lower bounds on the fixed-margin confidence rate for general symmetric moment classes.

Specializing the framework recovers Catoni's optimal sub-Gaussian constant for the bounded-variance class and extends it to the heavier-tailed bounded $\alpha$-moment classes, sharpening the lower-bound constant of \citet{devroye2015subgaussianmeanestimators} to the exact value attained at a (near) least-favorable pair and thereby establishing the leading constant of \citet{lee2020optimal,bhatt2022nearly} as tight. Pushing beyond polynomial tails, we treat the slowly varying moment classes, a substantially heavier regime in which the moment function only marginally exceeds linear growth and the optimal exponent degenerates to a sub-polynomial rate. Across these classes the (near) least-favorable distributions driving the matching bounds take three qualitatively distinct shapes: a symmetric two-atom pair with vanishing bias in the light-tailed regime, analyzed through a central limit theorem; a three-atom pair carrying an additional central mass and a strictly interior bias in the heavy polynomial regime, requiring a Skellam analysis; and a degenerate two-atom pair retaining only the central mass and a single informative outlier in the slowly varying regime, where an elementary binomial argument suffices. For each of these concrete classes the fixed-margin-optimal estimator also matches the fixed-confidence margin to leading order: the sharp high-confidence leading constant for the bounded-variance and bounded $\alpha$-moment classes, and fixed-$\beta$ leading-order minimaxity for the slowly varying classes.

\paragraph*{Acknowledgements}
Bart P.G.\ van Parys gratefully acknowledges funding from NWO Vidi grant VI.Vidi.243.021.
The authors used Anthropic's Claude for copyediting.

\bibliography{main}

\appendix

\section{Deferred Proofs}
\label{app:deferred}

This appendix collects the proofs deferred from the main text, organized by the section in which each result is stated.

\subsection{Proofs of Section~\ref{sec:synthesis}}

\begin{proof}[Proof of Lemma~\ref{lem:lagrangian-dual}]
Since $L$ is measurable and bounded below, $\E{\mb P}{L(X)}\in(-\infty,+\infty]$ is well defined for every $\mb P\in\mc C_0$, as are the constraint integrals $\E{\mb P}{X}=0$ and $\E{\mb P}{\phi(X)}\le B<\infty$ that define $\mc C_0$. Fix $\mb P\in\mc C_0$ and $(\lambda_1,\lambda_2)\in\Re\times\Re_+$, and set $M:=\sup_{x\in\Re}\bigl(L(x)-\lambda_1 x-\lambda_2(\phi(x)-B)\bigr)$. If $M=+\infty$ the bound is trivial; otherwise $L(x)\le M+\lambda_1 x+\lambda_2(\phi(x)-B)$ for all $x$, so integrating against $\mb P$ and using $\E{\mb P}{X}=0$ together with $\lambda_2\bigl(\E{\mb P}{\phi(X)}-B\bigr)\le 0$ gives $\E{\mb P}{L(X)}\le M$. Taking the supremum over $\mb P\in\mc C_0$ and then the infimum over $(\lambda_1,\lambda_2)$ proves the inequality.
\end{proof}

\subsection{Proofs of Section~\ref{sec:lower-bound}}

\begin{proof}[Proof of Proposition~\ref{prop:relax-gap}]
Since $\mc C_{\pm\Delta}\subseteq\mc C^{\mathrm{relax}}_{\pm\Delta}$, $\sup_{\mc C^{\mathrm{relax}}}\rho\ge\sup_{\mc C}\rho$ and hence $r_{\mathrm{relax}}(\Delta)\le r(\Delta)$. For the reverse, fix $\epsilon>0$ and, using Proposition~\ref{prop:mirror} (the relaxed classes $\mc C^{\mathrm{relax}}_{\pm\Delta}$ are convex and reflection-conjugate, since $\mc C_0=\bar{\mc C}_0$), a sign-symmetric pair $(\mb P^\star_{-\Delta}, \mb P^\star_{\Delta})$ feasible in~\eqref{eq:rstar-relax} with $\rho(\mb P^\star_{-\Delta},\mb P^\star_\Delta)\ge e^{-r_{\mathrm{relax}}(\Delta)}-\epsilon$. It suffices to obtain $\mb P^\star_{\pm\Delta}\in\mc C_{\pm\Delta}$ without decreasing $\rho$, since the resulting pair is then feasible in~\eqref{eq:intro-hellinger} and gives $e^{-r(\Delta)}\ge e^{-r_{\mathrm{relax}}(\Delta)}-\epsilon$. \emph{Unit mass.} If $\int\d\mb P^\star_{\pm\Delta}<1$, add an atom of the deficit mass at $\pm\Delta$: there the mean integrand $X\mp\Delta$ vanishes and the moment integrand $\phi(X\mp\Delta)-B$ equals $\phi(0)-B<0$, so $\int(X\mp\Delta)\,\d\mb P^\star_{\pm\Delta}$ is unchanged and $\int(\phi(X\mp\Delta)-B)\,\d\mb P^\star_{\pm\Delta}$ only decreases (both constraints remain satisfied) while $\rho$ does not decrease (monotone in each argument) and sign-symmetry is preserved. We may thus take $\mb P^\star_{\pm\Delta}$ to be probability measures with $\E{\mb P^\star_{-\Delta}}{X}\le-\Delta$, $\E{\mb P^\star_\Delta}{X}\ge\Delta$ and $\E{\mb P^\star_{\pm\Delta}}{\phi(X\mp\Delta)}\le B$. \emph{Tight mean.} If $\E{\mb P^\star_{-\Delta}}{X}=-\Delta$ then $\mb P^\star_{\pm\Delta}\in\mc C_{\pm\Delta}$ already; otherwise $\E{\mb P^\star_{-\Delta}}{X}=-\Delta-\delta$ for some $\delta> 0$, and we tighten the mean by a shrinkage map. For $t\geq 0$ define the measurable shrinkage map
\[
  f_t(x) \;:=\; \begin{cases} \min(x+t,-\Delta) & x<-\Delta,\\ x & x\in[-\Delta,\Delta],\\ \max(x-t,\Delta) & x>\Delta, \end{cases}
\]
which pushes the tails inward by amount $t$ but never crosses into the interior $(-\Delta,\Delta)$, and let $T_t\mb P:=(f_t)_*\mb P$. Three properties hold for any $t\geq 0$:

(i) \emph{Affinity non-decreasing:} pushing both measures through the common map $f_t$ can only increase the affinity, $\rho(T_t\mb P^\star_{-\Delta}, T_t\mb P^\star_\Delta)\ge\rho(\mb P^\star_{-\Delta},\mb P^\star_\Delta)$. This is the data-processing inequality, in the direction fixed by the identity $H^2(\mb P,\mb Q)=2\bigl(1-\rho(\mb P,\mb Q)\bigr)$: the squared Hellinger distance is the $f$-divergence with convex $f(t)=(\sqrt t-1)^2$, hence non-increasing under the common pushforward $(f_t)_*$, so the affinity $\rho=1-\tfrac12H^2$ is non-decreasing.

(ii) \emph{Moments slacken:} $|f_t(x)+\Delta|\le|x+\Delta|$ for all $x$ (direct case-check), so by evenness and monotonicity of $\phi$ on $[0,\infty)$,
\[
  \int\!\phi(y+\Delta)\,\d(T_t\mb P^\star_{-\Delta})(y) \;=\; \int\!\phi(f_t(x)+\Delta)\,\d\mb P^\star_{-\Delta}(x) \;\le\; \int\!\phi(x+\Delta)\,\d\mb P^\star_{-\Delta}(x) \;\le\; B,
\]
and symmetrically for $\mb P^\star_\Delta$.

(iii) \emph{Mean continuous and crosses $-\Delta$:} the map $t\mapsto m(t):=\E{T_t\mb P^\star_{-\Delta}}{X}$ is continuous (dominated convergence with $|f_t(x)|\le|x|\vee\Delta$ integrable under $\mb P^\star_{-\Delta}$), with $m(0)=-\Delta-\delta<-\Delta$ and $\lim_{t\to\infty}m(t)=L$ where $L=\int h\,\d\mb P^\star_{-\Delta}\ge -\Delta$ with $h(x)\defn(-\Delta)\vee(x\wedge\Delta)$ the pointwise limit of $f_t$. The inequality $L\ge -\Delta$ is strict whenever $\mb P^\star_{-\Delta}$ has positive mass in $(-\Delta,\infty)$, which must hold whenever $\rho(\mb P^\star_{-\Delta},\mb P^\star_\Delta)>0$: otherwise $\mb P^\star_{-\Delta}$ would be supported in $(-\infty,-\Delta]$ and by mirror symmetry $\mb P^\star_\Delta$ in $[\Delta,\infty)$, disjoint supports forcing $\rho(\mb P^\star_{-\Delta},\mb P^\star_\Delta)=0$. Hence $L>-\Delta$, and by the intermediate-value theorem there exists $t^\star\geq 0$ with $m(t^\star)=-\Delta$; by sign-symmetry the corresponding mean of $T_{t^\star}\mb P^\star_\Delta$ equals $\Delta$.

The pair $(T_{t^\star}\mb P^\star_{-\Delta}, T_{t^\star}\mb P^\star_\Delta)$ is feasible in~\eqref{eq:intro-hellinger} with $\rho\ge e^{-r_{\mathrm{relax}}(\Delta)}-\epsilon$, so $e^{-r(\Delta)}\ge e^{-r_{\mathrm{relax}}(\Delta)}-\epsilon$. Letting $\epsilon\downarrow0$ gives $r(\Delta)\le r_{\mathrm{relax}}(\Delta)$.
\end{proof}

\subsection{Proofs of Section~\ref{sec:illustrations-2}}

\subsubsection{Proofs of Section~\ref{sec:bounded-alpha-moment-large}}

\begin{proof}[Proof of Proposition~\ref{prop:CLT-lb-fixed-beta}]
Let $\mb P^\star_{\pm\Delta_n}$ be the two-atom near least-favorable pair of Proposition~\ref{prop:two-atom-large-alpha}, available whenever $\Delta_n\le\Delta_0$. Steps 1--3 below show that $\beta\ge\mc B^{\mathrm{bin}}_n(\Delta_n)$ for every such $n$, by bounding the confidence of any estimator from below by an explicit two-point Bayes error.

\emph{Step 1 (two-point reduction).} The non-asymptotic inequality~\eqref{eq:LB-reduction} from the proof of Proposition~\ref{prop:LB}, applied to the pair $\mb P^\star_{\pm\Delta'}$ at a margin $\Delta'>\Delta_n$ where the near least-favorable pair still exists, bounds $\beta$ from below by the Bayes error $\mc R^\star_n(\mb P^\star_{-\Delta'},\mb P^\star_{\Delta'})$ of testing the two hypotheses on $n$ i.i.d.\ samples under a uniform prior (see Equation~\eqref{eq:bayes-error-def}).

\emph{Step 2 (the optimal test is a sign test).} Because each hypothesis is supported on only the two atoms $\pm a(\Delta')$, recording which atom each sample hits turns the data into $n$ i.i.d.\ signs, and the number $N_+$ of $+a(\Delta')$ outcomes is a sufficient statistic. The problem is thus to distinguish two binomials,
\[
  N_+\sim\mathrm{Bin}\bigl(n,\tfrac{(1+s(\Delta'))}{2}\bigr)\ \text{under}\ \mb P^\star_{\Delta'}\qquad\text{versus}\qquad N_+\sim\mathrm{Bin}\bigl(n,\tfrac{(1-s(\Delta'))}{2}\bigr)\ \text{under}\ \mb P^\star_{-\Delta'},
\]
two mirror binomials with success probabilities $\tfrac{(1\pm s(\Delta'))}{2}$. Writing $s\defn s(\Delta')$, the likelihood ratio between the two hypotheses at the observed count $N_+=k$ is
\[
  \frac{\d\mb P^\star_{\Delta'}}{\d\mb P^\star_{-\Delta'}}(k)\;=\;\frac{\bigl(\tfrac{(1+s)}{2}\bigr)^{k}\bigl(\tfrac{(1-s)}{2}\bigr)^{n-k}}{\bigl(\tfrac{(1-s)}{2}\bigr)^{k}\bigl(\tfrac{(1+s)}{2}\bigr)^{n-k}}\;=\;\Bigl(\tfrac{(1+s)}{(1-s)}\Bigr)^{2k-n},
\]
which, since $\tfrac{(1+s)}{(1-s)}>1$, is strictly increasing in the centered count $S_n\defn\sum_{i=1}^n X_i/a(\Delta')=2N_+-n\in\{-n,\dots,n\}$. The Bayes-optimal test of Step~1 therefore thresholds $S_n$; at the uniform prior the threshold sits at likelihood ratio $1$, i.e.\ $S_n=0$ with fair-coin tie-breaking, the standard sign test. Its error is the Bayes error of Step~1, now in closed form, $\mc R^\star_n(\mb P^\star_{-\Delta'},\mb P^\star_{\Delta'})=\mb P^\star_{\Delta'}[S_n<0]+\tfrac12\mb P^\star_{\Delta'}[S_n=0]$. We will only need the weaker lower bound obtained by discarding the non-negative tie term, namely the binomial left tail
\[
  \mc B^{\mathrm{bin}}_n(\Delta')\;\defn\;\mb P^\star_{\Delta'}[S_n<0]=\Pr\bigl[N_+<\tfrac n2\bigr]\;\le\;\mc R^\star_n(\mb P^\star_{-\Delta'},\mb P^\star_{\Delta'}).
\]

\emph{Step 3 (pass to the margin $\Delta_n$).} For fixed $n$, $\mc B^{\mathrm{bin}}_n(\Delta')=\sum_{k<n/2}\binom nk p^k(1-p)^{n-k}$ is a finite sum of binomial probabilities over the $\Delta'$-independent range $\{k:k<n/2\}$, with success probability $p=\tfrac{(1+s(\Delta'))}{2}$; it is therefore a polynomial in $p$, hence continuous in $s(\Delta')$. Since $\Delta'\mapsto s(\Delta')$ is continuous (Proposition~\ref{prop:two-atom-large-alpha}), so is $\Delta'\mapsto\mc B^{\mathrm{bin}}_n(\Delta')$, and letting $\Delta'\downarrow\Delta_n$ gives
\[
\beta\;\ge\;\mc B^{\mathrm{bin}}_n(\Delta_n),\qquad S_n=\sum_{i=1}^n X_i/a(\Delta_n).
\]
Under $\mb P^\star_{\Delta_n}$ this $S_n$ is a sum of $n$ i.i.d.\ $\pm 1$ variables of bias $s_n\defn s(\Delta_n)$.

Let $z^\star\defn\liminf_{n\to\infty}\sqrt n\,\Delta_n/\sigma\in[0,\infty]$. If $z^\star=\infty$ the bound is immediate, so assume $z^\star<\infty$ and pass to a subsequence $n_k\to\infty$ along which $\sqrt{n_k}\,\Delta_{n_k}/\sigma\to z^\star$. Then $\Delta_{n_k}\to 0$, so $\Delta_{n_k}\le\Delta_0$ for $k$ large and Steps~1--3 give $\beta\ge\mc B^{\mathrm{bin}}_{n_k}(\Delta_{n_k})$. By Proposition~\ref{prop:two-atom-large-alpha} the biases satisfy $s_{n_k}=\Delta_{n_k}/\sigma\,(1+o(1))$, so $\sqrt{n_k}\,s_{n_k}\to z^\star$. For each $k$, write $S_{n_k}=\sum_{i=1}^{n_k}Y^{(k)}_i$ with $Y^{(k)}_i\in\{-1,+1\}$ iid, $\Pr[Y_i^{(k)}=+1]=(1+s_{n_k})/2$, so $\E{}{Y^{(k)}_i}=s_{n_k}$ and $\mathrm{Var}(Y^{(k)}_i)=1-s_{n_k}^2$. Center and rescale each row to unit total variance,
\[
  X_{k,i}\;\defn\;\frac{Y^{(k)}_i-s_{n_k}}{\sqrt{n_k(1-s_{n_k}^2)}},\qquad 1\le i\le n_k,
\]
so that $\E{}{X_{k,i}}=0$, $\mathrm{Var}(X_{k,i})=1/n_k$, and $\sum_{i=1}^{n_k}\mathrm{Var}(X_{k,i})=1$. The entries in our triangular array are uniformly small: $|X_{k,i}|\le c_k\defn 2/\sqrt{n_k(1-s_{n_k}^2)}$, with $c_k\to 0$ since $n_k\to\infty$ and $s_{n_k}\to 0$. Hence for every $\epsilon>0$, once $c_k\le\epsilon$ the event $|X_{k,i}|>\epsilon$ does not occur and the Lindeberg-Feller condition holds, i.e., $\sum_{i=1}^{n_k}\E{}{X_{k,i}^2\,\mathbf 1\{|X_{k,i}|>\epsilon\}}\to 0$. The Lindeberg--Feller CLT therefore gives
\[
  \sum_{i=1}^{n_k}X_{k,i}=\frac{S_{n_k}-n_k s_{n_k}}{\sqrt{n_k(1-s_{n_k}^2)}}\;\Rightarrow\;N(0,1)\quad\text{under }\mb P^\star_{\Delta_{n_k}},
\]
Write $W_k\defn\sum_{i=1}^{n_k}X_{k,i}$ for this normalized sum and $t_k\defn n_k s_{n_k}/\sqrt{n_k(1-s_{n_k}^2)}=\sqrt{n_k}\,s_{n_k}/\sqrt{1-s_{n_k}^2}$ for the standardized mean, so that $\{S_{n_k}<0\}=\{W_k<-t_k\}$ and hence
\[
  \mc B^{\mathrm{bin}}_{n_k}(\Delta_{n_k})=\mb P^\star_{\Delta_{n_k}}[W_k<-t_k].
\]
Since $s_{n_k}\to 0$ and $\sqrt{n_k}s_{n_k}\to z^\star$ we have $t_k\to z^\star$; as $W_k\Rightarrow N(0,1)$ and the standard normal distribution function $\Phi$ is continuous, the distribution functions converge uniformly (P\'olya's theorem), so evaluating at the convergent thresholds $-t_k\to-z^\star$ gives $\mb P^\star_{\Delta_{n_k}}[W_k<-t_k]\to\Phi(-z^\star)$, whence
\[
\mc B^{\mathrm{bin}}_{n_k}(\Delta_{n_k})\;\longrightarrow\;\Phi(-z^\star).
\]
Since $\beta\ge\mc B^{\mathrm{bin}}_{n_k}(\Delta_{n_k})$ for every $k$, letting $k\to\infty$ gives $\beta\ge\Phi(-z^\star)$, that is $z^\star\ge\Phi^{-1}(1-\beta)$. Hence $\liminf_{n\to\infty}\sqrt n\,\Delta_n/\sigma\ge\Phi^{-1}(1-\beta)$.
\end{proof}

\subsubsection{Proofs of Section~\ref{sec:bounded-alpha-moment-1}}

\begin{proof}[Proof of Lemma~\ref{lem:F-star-asymp}]
  By construction $\E{\mb P^\star_{\pm\Delta}}{X}=\pm(p_+-p_-)a$, so setting $d\defn p_+-p_->0$ and $a\defn\Delta/d$ fixes the mean. The $\alpha$-moment about $\pm\Delta$ then evaluates to
  \[
    \E{\mb P^\star_\Delta}{\abs{X-\Delta}^\alpha} \,=\, \Delta^\alpha\!\left[\frac{p_-(1+d)^\alpha+p_+(1-d)^\alpha}{d^\alpha} + p_0\right],
  \]
  with the mirror identity for $\mb P^\star_{-\Delta}$. Choosing $m=m(\varepsilon)$ so that this $\alpha$-moment meets the budget $\Delta^\alpha\varepsilon^{-\alpha}$ with equality reduces feasibility to the single equation
  \begin{equation}\label{eq:two-point-alpha-constraint}
    \frac{p_-(1+d)^\alpha+p_+(1-d)^\alpha}{d^\alpha} + p_0 \;=\; \varepsilon^{-\alpha}.
  \end{equation}

  Set $s^\star\defn\sqrt{\alpha(2-\alpha)}\in(0,1)$ and parametrize the pair $\mb P_{\pm \Delta}$ by $m\in(0,\tfrac{1}{2})$ via $p_\pm=(1\pm s^\star)m$, $p_0=1-2m$, so that $d=2ms^\star$. Since $p_\pm=(1\pm s^\star)m$,
  \[
    p_-(1+d)^\alpha+p_+(1-d)^\alpha=m\bigl[(1-s^\star)(1+d)^\alpha+(1+s^\star)(1-d)^\alpha\bigr]=2m\,\tilde h(m),
  \]
  where $\tilde h(m)\defn\tfrac12\bigl[(1-s^\star)(1+2ms^\star)^\alpha+(1+s^\star)(1-2ms^\star)^\alpha\bigr]$ is $C^\infty$ on $[0,\tfrac12)$ with $\tilde h(0)=1$. As $d^\alpha=(2ms^\star)^\alpha=(2m)^\alpha(s^\star)^\alpha$, the left-hand side of~\eqref{eq:two-point-alpha-constraint} is
  \[
    \frac{p_-(1+d)^\alpha+p_+(1-d)^\alpha}{d^\alpha}+p_0=\frac{2m\,\tilde h(m)}{(2m)^\alpha(s^\star)^\alpha}+(1-2m)=\frac{(2m)^{1-\alpha}\tilde h(m)}{(s^\star)^\alpha}+1-2m.
  \]

  \emph{Existence and leading order via a rescaled implicit function theorem.} We solve the moment constraint~\eqref{eq:two-point-alpha-constraint}, now in the form
  \[
    \frac{(2m)^{1-\alpha}\tilde h(m)}{(s^\star)^\alpha}+1-2m=\varepsilon^{-\alpha},
  \]
  for $m=m(\varepsilon)$, after rescaling to the anticipated order by $t\defn 2m/\varepsilon^q$. Since $q(1-\alpha)=-\alpha$ we have $(2m)^{1-\alpha}=t^{1-\alpha}\varepsilon^{-\alpha}$, and multiplying through by $\varepsilon^\alpha$ recasts the equation, for $\varepsilon>0$, as $G(t,\varepsilon)=0$ with
  \[
    G(t,\varepsilon)\defn\frac{t^{1-\alpha}\,\tilde h(t\abs{\varepsilon}^q/2)}{(s^\star)^\alpha}+\abs{\varepsilon}^\alpha\bigl(1-t\abs{\varepsilon}^q\bigr)-1.
  \]
  The absolute values leave the equation unchanged for the margins $\varepsilon>0$ of interest, but render $G$ real-valued and smooth on a \emph{two-sided} neighborhood of $\varepsilon=0$, as the standard implicit function theorem requires. At $\varepsilon_0=0$, $G(t,\varepsilon_0)=t^{1-\alpha}/(s^\star)^\alpha-1$ vanishes at $t_0\defn(s^\star)^{-q}$. On a neighborhood of $(t_0,\varepsilon_0)=((s^\star)^{-q},0)$ the map $G$ is $C^1$: $t^{1-\alpha}$ and $\tilde h$ are smooth, $\varepsilon\mapsto\abs{\varepsilon}^\alpha$ is $C^1$ at $0$ (as $\alpha>1$), and $\varepsilon\mapsto\abs{\varepsilon}^q$ is $C^2$ (as $q>2$). As $\partial_t G\,|_{(t_0,\varepsilon_0)}=(1-\alpha)t_0^{-\alpha}/(s^\star)^\alpha\ne0$, the $C^1$ implicit function theorem~\citep{krantz2013implicit} yields a unique $C^1$ solution $t(\varepsilon)$ near $\varepsilon=\varepsilon_0=0$ with $t(\varepsilon_0)=t_0$ and $G(t(\varepsilon),\varepsilon)=0$; restricting to $\varepsilon>0$ recovers the solutions of the moment constraint. Hence for every sufficiently small $\varepsilon>0$ the mass $m(\varepsilon)=t(\varepsilon)\varepsilon^q/2\in(0,\tfrac12)$ solves~\eqref{eq:two-point-alpha-constraint} (so $\mb P^\star_{\pm\Delta}\in\mc C_{\pm\Delta}$) and
  \[
    2m(\varepsilon)=t(\varepsilon)\,\varepsilon^q=(s^\star)^{-q}\varepsilon^q\bigl(1+o(1)\bigr)\qquad(\varepsilon\downarrow0).
  \]

  \emph{Objective value.} Recall that the mirror pair's affinity is $\rho(\mb P^\star_{-\Delta},\mb P^\star_\Delta)=p_0+2\sqrt{p_+p_-}$. Substituting $p_0=1-2m$ and $\sqrt{p_+p_-}=m\sqrt{1-(s^\star)^2}$ gives $p_0+2\sqrt{p_+p_-}=1-2m+2m\sqrt{1-(s^\star)^2}$, and with $\sqrt{1-(s^\star)^2}=\alpha-1$ (since $(s^\star)^2=\alpha(2-\alpha)=1-(\alpha-1)^2$),
  \[
    p_0+2\sqrt{p_+p_-} \,=\, 1 - 2(2-\alpha)\,m(\varepsilon) \,=\, 1 - \frac{2-\alpha}{(s^\star)^q}\,\varepsilon^q + o(\varepsilon^q).
  \]
  It remains to identify the leading coefficient with $K(\alpha)$. Comparing the definitions of $K(\alpha)$ in~\eqref{eq:K-def} and of $L(\alpha)$ in~\eqref{eq:L-def} yields the rate--margin identity $K(\alpha)=L(\alpha)^{-q}$. From the explicit formula for $C(\alpha)$ (Section~\ref{sec:Bhatt-model}) a direct computation gives $L(\alpha)=s^\star/(2-\alpha)^{(\alpha-1)/\alpha}$, hence
  \[
    K(\alpha) \,=\, L(\alpha)^{-q} \,=\, \frac{(2-\alpha)^{(\alpha-1)q/\alpha}}{(s^\star)^q} \,=\, \frac{2-\alpha}{(s^\star)^q},
  \]
  using $(\alpha-1)q/\alpha=1$. Substituting completes the proof.
\end{proof}

\begin{proof}[Proof of Lemma~\ref{lem:tv-poisson}]
  We use the Poisson approximation theorem of \citet{lecam1960} in its single-variable coupling form: for any $q\in[0,1]$, a $\mathrm{Bernoulli}(q)$ variable $B$ and a $\mathrm{Poisson}(q)$ variable $Z$ can be realized on a common probability space so that
  \begin{equation}\label{eq:lecam-coupling}
    \Pr[B\neq Z]\;=\;\mathrm{TV}\bigl(\mathrm{Bernoulli}(q),\mathrm{Poisson}(q)\bigr)\;=\;q\bigl(1-e^{-q}\bigr)\;\le\;q^2 .
  \end{equation}
  The middle equality is the maximal coupling; the value follows by writing total variation as $\tfrac12\sum_{k\ge0}|p_k-p_k'|$, where $p_k$ and $p_k'$ are the $\mathrm{Bernoulli}(q)$ and $\mathrm{Poisson}(q)$ masses. On the Bernoulli side only $p_0=1-q$ and $p_1=q$ are nonzero, while $p_k'=e^{-q}q^k/k!$, so
  \[
    \begin{aligned}
      \mathrm{TV}\bigl(\mathrm{Bernoulli}(q),\mathrm{Poisson}(q)\bigr)
      &=\tfrac12\Bigl[\bigl|(1-q)-e^{-q}\bigr|+\bigl|q-qe^{-q}\bigr|+\textstyle\sum_{k\ge2}e^{-q}q^k/k!\Bigr]\\
      &=\tfrac12\Bigl[\bigl(e^{-q}-(1-q)\bigr)+q\bigl(1-e^{-q}\bigr)+\bigl(1-(1+q)e^{-q}\bigr)\Bigr]\\
      &=\tfrac12\cdot 2q\bigl(1-e^{-q}\bigr)=q\bigl(1-e^{-q}\bigr)\le q^2,
    \end{aligned}
  \]
  using $e^{-q}\ge 1-q$ at $k=0$ and $\sum_{k\ge2}e^{-q}q^k/k!=1-(1+q)e^{-q}$ at the tail.

  Realize the counts through an iid sample $Y_1,\ldots,Y_n\in\{-1,0,1\}$ with $\Pr[Y_i=\pm1]=p_\pm$ and $\Pr[Y_i=0]=p_0$, and set $U_i\defn\mathbf 1\{Y_i=+1\}\sim\mathrm{Bernoulli}(p_+)$ and $V_i\defn\mathbf 1\{Y_i=-1\}\sim\mathrm{Bernoulli}(p_-)$, so that $n_\pm=\sum_{i=1}^n U_i,\ \sum_{i=1}^n V_i$ and $S_n=\sum_{i=1}^n(U_i-V_i)$.

  Fix a trial $i$. We approximate the law of the indicator pair $(U_i,V_i)$ by that of a pair of \emph{independent} Poissons $\tilde Y_i^+\sim\mathrm{Poisson}(p_+)$, $\tilde Y_i^-\sim\mathrm{Poisson}(p_-)$ in two steps.
  \begin{itemize}
    \item[(a)] \emph{Decouple.} Let $(U_i',V_i')$ be \emph{independent} Bernoullis with the same marginals $p_+,p_-$. The two pairs live on $\{0,1\}^2$, assigning different probability masses according to the table
    \[
      \begin{array}{c|cccc}
        & (0,0) & (1,0) & (0,1) & (1,1)\\[1pt]\hline
        (U_i,V_i) & 1-p_+-p_- & p_+ & p_- & 0 \\[1pt]
        (U_i',V_i') & (1-p_+)(1-p_-) & p_+(1-p_-) & (1-p_+)p_- & p_+p_-
      \end{array}
    \]
    Each of the four entries differs in absolute value by exactly $p_+p_-$, so
    \[
      \mathrm{TV}\bigl(\mathcal L(U_i,V_i),\mathcal L(U_i',V_i')\bigr)=\tfrac12\!\!\sum_{(a,b)\in\{0,1\}^2}\!\!\bigl|\Pr[(U_i,V_i)=(a,b)]-\Pr[(U_i',V_i')=(a,b)]\bigr|=2p_+p_-.
    \]
    \item[(b)] \emph{Poissonize.} Couple $U_i'$ with $\tilde Y_i^+$ by the maximal coupling of~\eqref{eq:lecam-coupling}, so $$\Pr[U_i'\ne\tilde Y_i^+]=\mathrm{TV}(\mathrm{Bernoulli}(p_+),\mathrm{Poisson}(p_+)),$$ and, independently, couple $V_i'$ with $\tilde Y_i^-$ likewise. As the two couplings are independent this is a valid coupling of $(U_i',V_i')$ with $(\tilde Y_i^+,\tilde Y_i^-)$, and the pair disagrees only if a coordinate does, $\{(U_i',V_i')\ne(\tilde Y_i^+,\tilde Y_i^-)\}=\{U_i'\ne\tilde Y_i^+\}\cup\{V_i'\ne\tilde Y_i^-\}$. By the coupling bound for total variation followed by a union bound over these two events,
    \begin{align*}
      \mathrm{TV}\bigl(\mathcal L(U_i',V_i'),\mathcal L(\tilde Y_i^+,\tilde Y_i^-)\bigr) \le & \Pr[(U_i', V_i')\ne(\tilde Y_i^+, \tilde Y_i^-)]\\
      = & \Pr[U_i'\ne\tilde Y_i^+]+\Pr[V_i'\ne\tilde Y_i^-] \\
      \le &\;p_+^2+p_-^2,
    \end{align*}
    the last step applying~\eqref{eq:lecam-coupling} with $q=p_+$ and with $q=p_-$.
  \end{itemize}
  The triangle inequality combines the two steps,
  \begin{equation}\label{eq:per-trial-tv}
    \mathrm{TV}\bigl(\mathcal L(U_i,V_i),\mathcal L(\tilde Y_i^+,\tilde Y_i^-)\bigr)\;\le\;2p_+p_-+p_+^2+p_-^2\;=\;(p_++p_-)^2\;=\;4m^2,
  \end{equation}
  using $p_++p_-=2m$. Carry out these couplings independently across the $n$ trials; then $\tilde n_\pm\defn\sum_{i=1}^n\tilde Y_i^\pm\sim\mathrm{Poisson}(np_\pm)$ are independent, and the disagreement event $\{(n_+,n_-)\ne(\tilde n_+,\tilde n_-)\}$ is in $\bigcup_{i=1}^n\{(U_i,V_i)\ne(\tilde Y_i^+,\tilde Y_i^-)\}$, so a union bound over~\eqref{eq:per-trial-tv} gives $\mathrm{TV}\bigl(\mathcal L(n_+,n_-),\mathcal L(\tilde n_+,\tilde n_-)\bigr)\le 4nm^2$. Finally $S_n=n_+-n_-$ and $\tilde n_+-\tilde n_-\sim\mathrm{Skellam}(np_+,np_-)$, so the data-processing inequality for the map $(a,b)\mapsto a-b$ yields
  \[
    \mathrm{TV}\bigl(\mathcal L(S_n),\mathrm{Skellam}(np_+,np_-)\bigr)\;\le\;4nm^2,
  \]
  which is~\eqref{eq:tv-skellam}.
\end{proof}

\begin{proof}[Proof of Lemma~\ref{lem:BS-properties}]
Treat the parameter $\lambda$ as a time variable, and let $S(\lambda)$ be the continuous-time Markov chain on $\mathbb Z$ started at $S(0)=0$ that jumps $+1$ at rate $1+s^\star$ and $-1$ at rate $1-s^\star$, i.e.\ with $Q$-matrix
  \[
    q_{k,k+1}=1+s^\star,\qquad q_{k,k-1}=1-s^\star,\qquad q_{kk}=-2,\qquad q_{kj}=0\ \ (|j-k|\ge2).
  \]
  The $+1$ and $-1$ jumps occur along two independent Poisson clocks of rates $1+s^\star$ and $1-s^\star$, so by time $\lambda$ the numbers of up- and down-jumps are independent $\mathrm{Poisson}(\lambda(1+s^\star))$ and $\mathrm{Poisson}(\lambda(1-s^\star))$ variables; their difference $S(\lambda)$ therefore has law $\mathrm{Skellam}(\lambda(1+s^\star),\lambda(1-s^\star))$, exactly the law appearing in $\mc B_S(\lambda)$. The rates are bounded, so writing $p_k(\lambda)\defn\Pr[S(\lambda)=k]$, the forward equation $P'(\lambda)=P(\lambda)Q$~\citep[Thm.~2.8.2(c)]{norris1997markov} reads $p_k'(\lambda)=\sum_{j\in\mathbb Z}p_j(\lambda)q_{jk}$. Hence, for any bounded $f:\mathbb Z\to\Re$, the double sum being absolutely convergent (as $f$ is bounded and $\sum_{k\in\mathbb Z}|q_{jk}|=4$),
  \[
    \frac{\d}{\d\lambda}\mb E\bigl[f(S(\lambda))\bigr]=\sum_{k\in\mathbb Z} f(k)\,p_k'(\lambda)=\sum_{j\in\mathbb Z} p_j(\lambda)\sum_{k\in\mathbb Z} q_{jk}f(k)
  \]
  where, using $q_{jj}=-2=-(1+s^\star)-(1-s^\star)$,
  \[
    \sum_{k\in\mathbb Z} q_{jk}f(k)=(1+s^\star)\bigl(f(j+1)-f(j)\bigr)+(1-s^\star)\bigl(f(j-1)-f(j)\bigr).
  \]
  Thus
  \[
    \frac{\d}{\d\lambda}\mb E\bigl[f(S(\lambda))\bigr]=(1+s^\star)\,\mb E\bigl[f(S+1)-f(S)\bigr]+(1-s^\star)\,\mb E\bigl[f(S-1)-f(S)\bigr].
  \]
  Consider now in particular $f(z)=\mathbf 1\{z<0\}+\tfrac12\mathbf 1\{z=0\}$, so that $\mb E[f(S(\lambda))]=\mc B_S(\lambda)$. Here $f(z+1)-f(z)=-\tfrac12$ for $z\in\{-1,0\}$ and $0$ otherwise, while $f(z-1)-f(z)=+\tfrac12$ for $z\in\{0,1\}$ and $0$ otherwise, so
  \[
    \mc B_S'(\lambda)=-\frac{1+s^\star}{2}\bigl(\mb P_S[S=-1]+\mb P_S[S=0]\bigr)+\frac{1-s^\star}{2}\bigl(\mb P_S[S=0]+\mb P_S[S=1]\bigr).
  \]
  From the Skellam mass $p_S(k;\lambda)=e^{-2\lambda}\bigl(\tfrac{(1+s^\star)}{(1-s^\star)}\bigr)^{k/2}I_{|k|}\bigl(2\lambda\rho_s\bigr)$ (and $I_{|1|}=I_{|-1|}$) we have $\mb P_S[S=1]=\tfrac{(1+s^\star)}{(1-s^\star)}\,\mb P_S[S=-1]$, hence $(1-s^\star)\mb P_S[S=1]=(1+s^\star)\mb P_S[S=-1]$ and the $S=\pm1$ contributions cancel, leaving
  \[
    \mc B_S'(\lambda)=-s^\star\,\mb P_S[S=0]=-s^\star e^{-2\lambda}I_0\bigl(2\lambda\rho_s\bigr),\qquad\lambda>0 .
  \]
  At $\lambda=0$ the law degenerates, $S\sim\mathrm{Skellam}(0,0)=\delta_0$, so $\mb P_S[S<0]=0$, $\mb P_S[S=0]=1$, and $\mc B_S(0)=\tfrac12$. The standard Laplace transform $\int_0^\infty e^{-pu}I_0(au)\,\d u=(p^2-a^2)^{-1/2}$ ($p>a\ge0$)~\citep[Equation~6.611.4]{gradshteyn2007table}, with $p=2$, $a=2\rho_s$ and $p^2-a^2=4(s^\star)^2$, gives $s^\star\!\int_0^\infty e^{-2u}I_0(2u\rho_s)\,\d u=\tfrac{s^\star}{(2s^\star)}=\tfrac12=\mc B_S(0)$. Integrating the derivative from $0$ to $\lambda$,
  \[
    \mc B_S(\lambda)=\mc B_S(0)+\int_0^\lambda\mc B_S'(u)\,\d u=s^\star\!\int_0^\infty e^{-2u}I_0(2u\rho_s)\,\d u-s^\star\!\int_0^\lambda e^{-2u}I_0(2u\rho_s)\,\d u=s^\star\!\int_\lambda^\infty e^{-2u}I_0(2u\rho_s)\,\d u,
  \]
  which is~\eqref{eq:BS-integral}. The integrand $e^{-2u}I_0(2u\rho_s)$ is positive, so $\mc B_S$ is $C^1$ and strictly decreasing on $[0,\infty)$, with $\mc B_S(\infty)=0$; with $\mc B_S(0)=\tfrac12$ it is a continuous strictly decreasing bijection of $[0,\infty)$ onto $(0,1/2]$, and the inverse $\lambda^\star(\beta)=\mc B_S^{-1}(\beta)$ is well-defined and continuous on $(0,1/2)$.
\end{proof}

\begin{proof}[Proof of Proposition~\ref{prop:skellam-lb-fixed-beta}]
  Suppose, towards contradiction, that along a subsequence $n=n_k\to\infty$,
  \begin{equation}\label{eq:contra-i}
    n_k^{(\alpha-1)/\alpha}\Delta_{n_k}/\sigma \;\leq\; (1-\delta)\,s^\star\bigl(2\lambda^\star(\beta)\bigr)^{(\alpha-1)/\alpha}
  \end{equation}
  for some $\delta>0$. Writing $\varepsilon_k\defn\Delta_{n_k}/\sigma$, the hypothesis~\eqref{eq:contra-i} reads $\varepsilon_k\le(1-\delta)\,s^\star\bigl(2\lambda^\star(\beta)\bigr)^{(\alpha-1)/\alpha}\,n_k^{-(\alpha-1)/\alpha}$, so $\varepsilon_k\to0$ as $k\to\infty$ (since $(\alpha-1)/\alpha>0$) and the small-margin scaling~\eqref{eq:m-scaling} applies. The value $\lambda_k\defn n_k\,m(\varepsilon_k)$ satisfies
  \[
    \lambda_k \;=\; \tfrac12(s^\star)^{-q}n_k\varepsilon_k^q(1+o(1)) \;\leq\; \tfrac12(s^\star)^{-q}\bigl[(1-\delta)s^\star\bigr]^q\bigl(2\lambda^\star(\beta)\bigr)^{q(\alpha-1)/\alpha}\bigl(1+o(1)\bigr).
  \]
  Using $q(\alpha-1)/\alpha=1$ this simplifies to $\lambda_{k}\le(1-\delta)^q\lambda^\star(\beta)(1+o(1))$, so $\limsup_{k\to\infty}\lambda_k\le(1-\delta)^q\lambda^\star(\beta)<\lambda^\star(\beta)$. Passing to a further subsequence, $\lambda_k\to\lambda_\infty\in[0,(1-\delta)^q\lambda^\star(\beta)]$.

  We now compare the trinomial Bayes error $\mc B_{n_k}(\Delta_{n_k})=\mb P^\star_{\Delta_{n_k}}[S_{n_k}<0]+\tfrac12\mb P^\star_{\Delta_{n_k}}[S_{n_k}=0]$ (defined in~\eqref{eq:skellam-lb-nonasymp}, with $S_n=\sum_{i=1}^n X_i/a(\Delta_n)\in\{-n,\dots,n\}$) to its Skellam counterpart $\mc B_S(\lambda)=\mb P_S[S<0]+\tfrac12\mb P_S[S=0]$ for $S\sim\mathrm{Skellam}(\lambda(1+s^\star),\lambda(1-s^\star))$ (Lemma~\ref{lem:BS-properties}). Both are the expectation of the same bounded functional $f(z)\defn\mathbf 1\{z<0\}+\tfrac12\mathbf 1\{z=0\}\in[0,1]$: writing $\mc S_k\defn\mathrm{Skellam}(\lambda_k(1+s^\star),\lambda_k(1-s^\star))$ for the Skellam law (its parameter matched to the binomial via $\lambda_k=n_km(\varepsilon_k)$), we have $\mc B_{n_k}(\Delta_{n_k})=\E{\mc L(S_{n_k})}{f}$ and $\mc B_S(\lambda_k)=\E{\mc S_k}{f}$. Lemma~\ref{lem:tv-poisson} bounds the total variation between these two laws by $4n_km(\varepsilon_k)^2=4\lambda_k^2/n_k$, which vanishes since $\lambda_k$ is bounded while $n_k\to\infty$. As $f\in[0,1]$, a difference of expectations is at most the total variation between the underlying laws, so
  \[
    \bigl|\mc B_{n_k}(\Delta_{n_k}) - \mc B_S(\lambda_k)\bigr|
    \;=\; \bigl|\E{\mc L(S_{n_k})}{f} - \E{\mc S_k}{f}\bigr|
    \;\leq\; \mathrm{TV}\bigl(\mc L(S_{n_k}),\mc S_k\bigr)
    \;\leq\; \frac{4\lambda_k^2}{n_k}
    \;\to\; 0.
  \]
  Continuity of $\mc B_S$ at $\lambda_\infty$ (Lemma~\ref{lem:BS-properties}) gives $\mc B_S(\lambda_k)\to\mc B_S(\lambda_\infty)$, and combining the two yields $\mc B_{n_k}(\Delta_{n_k})\to\mc B_S(\lambda_\infty)$. By strict monotonicity, $\mc B_S(\lambda_\infty)>\mc B_S(\lambda^\star(\beta))=\beta$. But Proposition~\ref{prop:skellam-lb-nonasymp} together with the hypothesis $\beta_{n_k}(\mu_{n_k};\Delta_{n_k})\le\beta$ gives $\beta\ge\beta_{n_k}(\mu_{n_k};\Delta_{n_k})\ge\mc B_{n_k}(\Delta_{n_k})\to\mc B_S(\lambda_\infty)>\beta$; a contradiction. 
\end{proof}

\begin{proof}[Proof of Corollary~\ref{cor:skellam-lb-asymp}]
  Recall $\mc B_S(\lambda)=s^\star\int_\lambda^\infty e^{-2u}I_0(2u\rho_s)\,\d u$ with $\rho_s=\sqrt{1-(s^\star)^2}$ and $I_0$ the modified Bessel function of the first kind, and abbreviate $a\defn2(1-\rho_s)>0$. The integral representation $I_0(z)=\tfrac1\pi\int_0^\pi e^{z\cos\theta}\,\d\theta$ gives elementary two-sided bounds. From $\cos\theta\le1$, $I_0(z)\le e^z$, whence
  \[
    \mc B_S(\lambda)\;\le\;s^\star\!\int_\lambda^\infty e^{-au}\,\d u\;=\;C\,e^{-a\lambda},\qquad C\defn\frac{s^\star}{2(1-\rho_s)} .
  \]
  For a matching lower bound, restrict the representation to $\theta\in[0,z^{-1/2}]$ where $\cos\theta\ge1-\theta^2/2\ge1-\tfrac{1}{(2z)}$, so
  \[
    I_0(z)\;\ge\;\tfrac1\pi\!\int_0^{z^{-1/2}}\!e^{z\cos\theta}\,\d\theta\;\ge\;\tfrac1\pi\,z^{-1/2}e^{z-1/2}\;=\;\frac{e^{-1/2}}{\pi}\,\frac{e^z}{\sqrt z}\qquad(z\ge1/\pi^2).
  \]
  Taking $z=2u\rho_s\ge1/\pi^2$ (i.e.\ $u\ge\tfrac{1}{(2\pi^2\rho_s)}$) gives $e^{-2u}I_0(2u\rho_s)\ge\kappa\,u^{-1/2}e^{-au}$ with $\kappa\defn\tfrac{e^{-1/2}}{(\pi\sqrt{2\rho_s})}$. Restricting the integral to $[\lambda,2\lambda]$ and using $u^{-1/2}\ge(2\lambda)^{-1/2}$ there, with
  \[
    \int_\lambda^{2\lambda}e^{-au}\,\d u=\frac{e^{-a\lambda}-e^{-2a\lambda}}{a}=\frac{e^{-a\lambda}}{a}\bigl(1-e^{-a\lambda}\bigr)\ge\frac{e^{-a\lambda}}{2a}\qquad(a\lambda\ge\log 2,\ \text{so } e^{-a\lambda}\le\tfrac12),
  \]
  we obtain
  \[
    \mc B_S(\lambda)\;\ge\;s^\star\kappa\,(2\lambda)^{-1/2}\,\frac{e^{-a\lambda}}{2a}\;=\;c\,\lambda^{-1/2}e^{-a\lambda},\qquad c\defn\frac{s^\star\kappa}{2\sqrt2\,a},
  \]
  for all $\lambda\ge\max\{\tfrac{1}{(2\pi^2\rho_s)},\tfrac{\log 2}{a}\}$. We invert these bounds directly. By Lemma~\ref{lem:BS-properties} the inverse $\lambda^\star(\beta)=\mc B_S^{-1}(\beta)\to\infty$ as $\beta\downarrow0$ (as $\mc B_S(\infty)=0$); in particular $\lambda^\star(\beta)\ge\max\{\tfrac{1}{(2\pi^2\rho_s)},\tfrac{\log 2}{a}\}$ for $\beta$ small enough, so the lower bound is available at $\lambda=\lambda^\star(\beta)$. Evaluating the sandwich there, where $\mc B_S(\lambda^\star(\beta))=\beta$, and taking logarithms in $c\,\lambda^\star(\beta)^{-1/2}e^{-a\lambda^\star(\beta)}\le\beta\le C\,e^{-a\lambda^\star(\beta)}$,
  \[
    \log(1/\beta)+\log c-\tfrac12\log\lambda^\star(\beta)\;\le\;a\lambda^\star(\beta)\;\le\;\log(1/\beta)+\log C .
  \]
  The right inequality already gives $\lambda^\star(\beta)\le a^{-1}\bigl(\log(1/\beta)+\log C\bigr)=O(\log(1/\beta))$, hence $\log\lambda^\star(\beta)=O(\log\log(1/\beta))=o(\log(1/\beta))$; the two-sided bound then collapses to $a\lambda^\star(\beta)=\log(1/\beta)(1+o(1))$. With $a=2(1-\rho_s)=2(2-\alpha)$ from~\eqref{eq:rho-s},
  \[
    2\lambda^\star(\beta) \;=\; \frac{\log(1/\beta)}{2-\alpha}\,(1+o(1)),
  \]
  and substituting,
  \[
    s^\star\bigl(2\lambda^\star(\beta)\bigr)^{(\alpha-1)/\alpha} \;=\; \frac{s^\star}{(2-\alpha)^{(\alpha-1)/\alpha}}\bigl(\log(1/\beta)\bigr)^{(\alpha-1)/\alpha}(1+o(1)) \;=\; L(\alpha)\bigl(\log(1/\beta)\bigr)^{(\alpha-1)/\alpha}(1+o(1)),
  \]
  using $L(\alpha)=s^\star/(2-\alpha)^{(\alpha-1)/\alpha}$, established in the proof of Lemma~\ref{lem:F-star-asymp}, which by its definition~\eqref{eq:L-def} equals the upper-bound constant $\alpha\,C(\alpha)^{1/\alpha}/(\alpha-1)^{(\alpha-1)/\alpha}$.
\end{proof}

\subsubsection{Proofs of Section~\ref{sec:slow-growth}}

\begin{proof}[Proof of Corollary~\ref{cor:slow-achievable}]
By Theorem~\ref{thm:feasibility-guarantee} this estimator obeys $\beta_n(\mu_n;\Delta)\le e^{-n r_M(\Delta)}$, and $r_M(\Delta)=r(\Delta)=m(\Delta)(1+o(1))$ as $\Delta\downarrow0$ (Theorems~\ref{thm:matching} and~\ref{thm:slow-sharp}). For $\Delta$ small enough that $B/\Delta\ge\Lambda(x_\infty)$, so that $\Lambda^{-1}(B/\Delta)$ is defined, the map $\Delta\mapsto m(\Delta)=\Delta/\Lambda^{-1}(B/\Delta)$ is continuous and strictly increasing with $m(\Delta)\downarrow0$, and its inverse is
\begin{equation}\label{eq:slow-m-inverse}
  m^{-1}(\rho)\;=\;\rho\,\phi^{-1}(B/\rho),
\end{equation}
since, for $\rho$ small enough that $B/\rho\ge\phi(x_\infty)$ where $\phi$ restricts to a strictly increasing bijection of $[x_\infty,\infty)$ onto $[\phi(x_\infty),\infty)$, the value $a\defn\phi^{-1}(B/\rho)\ge x_\infty$ gives $\Lambda(a)=\phi(a)/a=B/(\rho a)$, so $\Delta\defn\rho a$ has $a(\Delta)=a$ and $m(\Delta)=\Delta/a=\rho$. Because $\phi(x)=\abs x\,\Lambda(\abs x)$ is regularly varying of index $1$ at infinity, so is its inverse $\phi^{-1}$ \citep[Proposition~2.6(v)]{resnick2007heavy}; hence $\rho\mapsto\phi^{-1}(B/\rho)$ is regularly varying of index $-1$ as $\rho\downarrow0$, and $m^{-1}(\rho)=\rho\,\phi^{-1}(B/\rho)$ is \emph{slowly varying} there.

Fix any $\zeta>0$ and put $\Delta_n\defn m^{-1}\bigl((1+\zeta)\xi_n\bigr)$, so $m(\Delta_n)=(1+\zeta)\xi_n$ and $\Delta_n\downarrow0$. Then $r_M(\Delta_n)=(1+\zeta)\xi_n(1+o(1))\ge\xi_n$ for all large $n$, whence $\beta_n(\mu_n;\Delta_n)\le e^{-n\xi_n}=\beta$. Finally, since $m^{-1}$ is slowly varying and $1+\zeta$ is a fixed constant,
\[
  \Delta_n=m^{-1}\bigl((1+\zeta)\xi_n\bigr)=m^{-1}(\xi_n)\,(1+o(1)),
\]
the asserted margin, with closed form $\xi_n\,\phi^{-1}(B/\xi_n)$ by~\eqref{eq:slow-m-inverse}.
\end{proof}

\begin{proof}[Proof of Proposition~\ref{prop:slow-fixed-beta}]
Fix $\Delta'\in(\Delta_n,\Delta_0]$ (nonempty since $\Delta_n<\Delta_0$) and consider $\mb P^\star_{\pm\Delta'}\defn(1-m(\Delta'))\delta_0+m(\Delta')\delta_{\pm a(\Delta')}\in\mc C_{\pm\Delta'}$, the pair~\eqref{eq:slow-pair} at margin $\Delta'$. Specializing the non-asymptotic inequality~\eqref{eq:LB-reduction} to $(\mu,\mb P)=(\pm\Delta',\mb P^\star_{\pm\Delta'})$ and applying the Bayes-risk bound at uniform prior gives $\beta\ge\mc R^\star_n(\mb P^\star_{-\Delta'},\mb P^\star_{\Delta'})$. Under the change of variable $Y_i\defn X_i/a(\Delta')\in\{-1,0,+1\}$ the two hypotheses are $Y_i\sim((1-m')\,\text{at }0,\ m'\,\text{at }+1)$ and its mirror, with $m'=m(\Delta')$. As the rare atoms are disjoint, the single-observation log-likelihood ratio is $+\infty$ on $\{Y_i=+1\}$, $-\infty$ on $\{Y_i=-1\}$, and $0$ on $\{Y_i=0\}$: the Bayes-optimal test errs only on the event that all $n$ observations equal $0$, which has probability $(1-m')^n$ under either hypothesis, and then errs with probability $\tfrac12$. Hence $\mc R^\star_n=\tfrac12(1-m(\Delta'))^n$, and letting $\Delta'\downarrow\Delta_n$ with continuity of $\Delta\mapsto m(\Delta)$ gives the claim.
\end{proof}

\begin{proof}[Proof of Corollary~\ref{cor:slow-fixed-beta}]
Suppose, towards contradiction, that $\liminf_{n\to\infty} \Delta_n/m^{-1}(\xi_n)<1$; then there are $\delta\in(0,1)$ and a subsequence $n_k\to\infty$ with $\Delta_{n_k}\le(1-\delta)\,m^{-1}(\xi_{n_k})$. Since $\xi_{n_k}\to0$ and $m^{-1}(\rho)\downarrow0$ as $\rho\downarrow0$, this forces $\Delta_{n_k}\to0$; in particular $\Delta_{n_k}<\Delta_0$ for $k$ large, so Proposition~\ref{prop:slow-fixed-beta} gives $\tfrac12(1-m(\Delta_{n_k}))^{n_k}\le\beta$, i.e.\ $n_k\log(1/(1-m(\Delta_{n_k})))\ge\log(1/(2\beta))$. As $\Delta_{n_k}\to0$ forces $m(\Delta_{n_k})\to0$, we have $\log(1/(1-m(\Delta_{n_k})))=-\log(1-m(\Delta_{n_k}))=m(\Delta_{n_k})(1+o(1))$, so  $n_k\,m(\Delta_{n_k})(1+o(1))\ge\log(1/(2\beta))$, hence $m(\Delta_{n_k})\ge\tfrac{\log(1/(2\beta))}{n_k}(1-o(1))=c_\beta\,\xi_{n_k}(1-o(1))$, where $c_\beta\defn 1-\log2/\log(1/\beta)\in(0,1)$ is a fixed constant; fixing any $\kappa\in(0,c_\beta)$ gives $m(\Delta_{n_k})\ge\kappa\,\xi_{n_k}$ for $k$ large. Since $m$ is increasing and $m^{-1}$ is slowly varying (see proof of Corollary~\ref{cor:slow-achievable}, via \citealp[Proposition~2.6(v)]{resnick2007heavy}) with $\kappa$ fixed, $\Delta_{n_k}\ge m^{-1}(\kappa\,\xi_{n_k})=m^{-1}(\xi_{n_k})(1+o(1))$, contradicting $\Delta_{n_k}\le(1-\delta)\,m^{-1}(\xi_{n_k})$. Hence $\liminf_{n\to\infty} \Delta_n/m^{-1}(\xi_n)\ge1$.
\end{proof}

\section{Right-Continuity of $r$}
\label{app:right-continuity}

The following three lemmas support the right-continuity step $\inf_{\Delta'>\Delta}r(\Delta')\le r(\Delta)$ in the proof of Proposition~\ref{prop:LB}. Throughout, $r(\Delta)$ is the lower-bound problem~\eqref{eq:intro-hellinger} and $\phi$ is the even, continuous, super-linear function of~\eqref{eq:moment-class-intro}, non-decreasing on $[0,\infty)$ with $\phi(0)<B$.

\begin{lemma}[Finiteness]\label{lem:r-finite}
For every $\Delta>0$, $r(\Delta)<\infty$.
\end{lemma}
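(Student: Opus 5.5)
To prove $r(\Delta)<\infty$ it suffices, by the definition~\eqref{eq:intro-hellinger}, to exhibit one feasible pair $(\mb P_{-\Delta},\mb P_\Delta)\in\mc C_{-\Delta}\times\mc C_\Delta$ with strictly positive affinity $\rho(\mb P_{-\Delta},\mb P_\Delta)>0$. Then $\sup\rho\ge\rho(\mb P_{-\Delta},\mb P_\Delta)>0$ and hence $r(\Delta)\le-\log\rho(\mb P_{-\Delta},\mb P_\Delta)<\infty$. Positive affinity only requires the two measures to share an atom carrying positive mass under both. So I would build a mirror pair of the form~\eqref{eq:almost-lf-pair}, with a common atom at the origin and one distant atom on each side, in the spirit of the slow-growth pair~\eqref{eq:slow-pair}.

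\textbf{Construction.} Concretely, I would take
\[
  \mb P_{\pm\Delta}\defn(1-m)\,\delta_0+m\,\delta_{\pm\Delta/m},\qquad m\in(0,1),
\]
so that the mean is $\pm\Delta$ exactly for every $m$, and $\mb P_{-\Delta}=\mb P_\Delta\circ(-\mathrm{Id})$. The affinity is $\rho=1-m>0$, since the atoms at $\pm\Delta/m$ are disjoint and the origin atom is shared. By evenness of $\phi$ and reflection, it is enough to check the moment constraint for $\mb P_\Delta$:
\[
  \E{\mb P_\Delta}{\phi(X-\Delta)}=(1-m)\,\phi(\Delta)+m\,\phi\bigl(\Delta/m-\Delta\bigr)\le B .
\]

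\textbf{Main obstacle.} Choosing $m$ is the only delicate step. The standing assumptions give $\phi(0)<B$ but say nothing about $\phi(\Delta)$, which may exceed $B$ for large $\Delta$. Super-linearity also makes $m\,\phi(\Delta/m)\to\infty$ as $m\downarrow0$, so small $m$ fails. I would therefore instead take $m$ close to $1$, or more flexibly a general three-atom pair $p\,\delta_{-b}+p_0\,\delta_{c}+p'\,\delta_{b'}$ whose shared atom sits near the target means. The simplest robust choice is the two-atom pair
\[
  \mb P_{\Delta}=\tfrac{1-s}{2}\delta_{-a}+\tfrac{1+s}{2}\delta_{a},\qquad as=\Delta ,
\]
and its mirror, as in Proposition~\ref{prop:bounded-var-synthesis}. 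This pair has affinity $\sqrt{1-s^2}>0$ whenever $s<1$.

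With $a=\Delta/s$, the moment is
\[
  \tfrac{1-s}{2}\,\phi(a+\Delta)+\tfrac{1+s}{2}\,\phi(a-\Delta).
\]
As $s\uparrow1$ we have $a\to\Delta$, so this converges by continuity of $\phi$ to $\phi(0)<B$. Hence some $s<1$ close to $1$ gives strict feasibility, and thus positive affinity.

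That closes the argument. The key point is to approach the degenerate point-mass pair $\delta_{\pm\Delta}$, which is strictly feasible since $\phi(0)<B$, while keeping a little overlapping mass.
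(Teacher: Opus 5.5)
Your proof is correct and follows essentially the paper's route: start from the degenerate pair $\delta_{\pm\Delta}$, which is strictly feasible but has zero affinity, perturb it to gain overlap, and use continuity of $\phi$ at $0$ together with $\phi(0)<B$ to keep strict feasibility. The only difference is the witness. The paper uses exactly the option you mentioned and then set aside, $p\,\delta_0+(1-p)\,\delta_{\pm\Delta/(1-p)}$ with $p\downarrow0$ (your $m=1-p\uparrow1$, affinity $p$), whereas your two-atom pair puts the overlap on the swapped atoms $\pm a$ (affinity $\sqrt{1-s^2}$), and both work equally well.
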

\begin{proof}
Fix $p\in(0,1)$ and set $\mb P_{\pm\Delta}=p\,\delta_0+(1-p)\,\delta_{\pm\Delta/(1-p)}$. Then $\E{\mb P_{\pm\Delta}}{X}=\pm\Delta$, and as the two measures share only the atom at $0$ (of mass $p$) their affinity is $\rho(\mb P_{-\Delta},\mb P_\Delta)=\sqrt{p\cdot p}=p$. By continuity of $\phi$,
\[
\int\phi(x-\Delta)\,\d\mb P_\Delta(x)=p\,\phi(\Delta)+(1-p)\,\phi \Bigl(\tfrac{p\Delta}{(1-p)}\Bigr)\xrightarrow[p\downarrow0]{}\phi(0)<B,
\]
and symmetrically for $\mb P_{-\Delta}$. Hence for $p$ small enough the pair is feasible in~\eqref{eq:intro-hellinger} at level $\Delta$, giving $r(\Delta)=-\log\sup\rho\le-\log p=\log(1/p)<\infty$.
\end{proof}

\begin{lemma}[Compact strictly-feasible near-maximisers]\label{lem:compact-strict}
For every $\Delta>0$ and $\eta>0$ there exist compactly supported $\mb P_{-\Delta},\mb P_\Delta$ with $\mb P_{\pm\Delta}\in\mc C_{\pm\Delta}$, strict moment constraints $\int\phi(x\mp\Delta)\,\d\mb P_{\pm\Delta}<B$ and
\[
\rho(\mb P_{-\Delta},\mb P_\Delta)\ge e^{-r(\Delta)}-\eta.
\]
\end{lemma}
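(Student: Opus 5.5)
The plan is to start from an arbitrary near-maximiser and modify it in three stages: truncate to compact support, restore the exact mean, and create slack in the moment constraint. Each stage should lose at most $\eta/3$ in affinity.

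By Lemma~\ref{lem:r-finite}, $e^{-r(\Delta)}>0$. Fix a feasible pair $(\mb P_{-\Delta},\mb P_\Delta)\in\mc C_{-\Delta}\times\mc C_\Delta$ with $\rho\ge e^{-r(\Delta)}-\eta/3$. By Proposition~\ref{prop:mirror} we may take it to be a mirror pair, so it suffices to build $\mb P_\Delta$ and define $\mb P_{-\Delta}$ by reflection. Every operation below is chosen to commute with reflection.

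\emph{Step 1 (strict slack by mixing).} Let $\mb Q_{\pm}\defn\delta_{\pm\Delta}$. This lies in $\mc C_{\pm\Delta}$ with moment $\phi(0)<B$. For $\theta\in(0,1)$ set $\mb P^\theta_{\pm\Delta}\defn(1-\theta)\mb P_{\pm\Delta}+\theta\mb Q_\pm$. The mean stays $\pm\Delta$ and the moment is at most $(1-\theta)B+\theta\phi(0)<B$, so the constraint is strict. By joint concavity and nonnegativity of $\rho$, $\rho(\mb P^\theta_{-\Delta},\mb P^\theta_\Delta)\ge(1-\theta)\rho(\mb P_{-\Delta},\mb P_\Delta)$. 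Taking $\theta$ small costs at most $\eta/3$, and we record the slack $s\defn\theta(B-\phi(0))>0$.

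\emph{Step 2 (truncation).} For $M$ large, take the pushforward of $\mb P^\theta_\Delta$ under the clamp $c_M(x)=(-M)\vee(x\wedge M)$, and apply the same common map to $\mb P^\theta_{-\Delta}$. Since $c_M$ is odd, the mirror structure is preserved. By data processing for the Hellinger distance (as in Proposition~\ref{prop:relax-gap}(i)), the affinity does not decrease. For $M>\Delta$ we have $|c_M(x)-\Delta|\le|x-\Delta|$, so the moment does not increase. The mean, however, moves by at most $\E{}{|X|\mathbf 1\{|X|>M\}}$, which tends to $0$ by integrability.

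\emph{Step 3 (restoring the mean; the main obstacle).} The clamped measure has mean $\Delta+\tau_M$ with $\tau_M\to0$, and we must restore it to exactly $\Delta$ without spending more than the slack $s$ or losing affinity. I will correct with a small mass transfer. Write $\tilde{\mb P}\defn(c_M)_*\mb P^\theta_\Delta$ and replace it by $(1-\kappa)\tilde{\mb P}+\kappa\,\delta_{y}$, choosing the point $y=\Delta-\tau_M(1-\kappa)/\kappa$. This fixes the mean exactly. Taking $\kappa=\sqrt{|\tau_M|}$ gives $|y-\Delta|\le\sqrt{|\tau_M|}\to0$, so $y$ stays in a compact set. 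The moment then becomes
\[
(1-\kappa)(\text{old moment})+\kappa\phi(y-\Delta)\le B-s+\kappa\,\bigl(\phi(y-\Delta)+B\bigr),
\]
which remains $<B$ for large $M$ because $\kappa\to0$. The affinity drops by at most the factor $(1-\kappa)$. The mirrored correction uses $-y$, so the mirror structure is again preserved. For $M$ large the total loss across the three stages is below $\eta$, and both measures are supported in $[-M-|y|,M+|y|]$. This yields compact support, feasibility, strict moment constraints, and $\rho\ge e^{-r(\Delta)}-\eta$.
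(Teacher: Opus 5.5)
Your proposal is correct and follows essentially the same route as the paper: mix with $\delta_{\pm\Delta}$ to create slack in the moment constraint, truncate, and then restore the exact mean with a corrective atom whose mass and distance from $\pm\Delta$ are both about $\sqrt{|\tau_M|}$, exactly the paper's choice $\theta_\pm=|e_\pm|^{1/2}$. The differences are minor. You truncate by the odd clamp $c_M$, so data processing makes the affinity non-decreasing and the moment non-increasing outright, whereas the paper conditions on $\{|x|\le R\}$ and argues by convergence; and your mirror-pair reduction is harmless but not needed.
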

\begin{proof}
By Lemma~\ref{lem:r-finite} $r(\Delta)<\infty$, so by definition of the supremum in~\eqref{eq:intro-hellinger} there is a feasible pair $(\widetilde{\mb P}_{-\Delta},\widetilde{\mb P}_\Delta)$ at level $\Delta$ with $\rho(\widetilde{\mb P}_{-\Delta},\widetilde{\mb P}_\Delta)\ge e^{-r(\Delta)}-\eta/4$. We transform it in three steps into the desired $(\mb P_{-\Delta},\mb P_\Delta)$, handling the two components $\widetilde{\mb P}_{-\Delta}$ and $\widetilde{\mb P}_\Delta$ in parallel; throughout, an expression with $\pm$ and the matching $\mp$ is to be read for both signs.

\emph{Strict moments.} Replace $\widetilde{\mb P}_{\pm\Delta}$ by $(1-a)\widetilde{\mb P}_{\pm\Delta}+a\,\delta_{\pm\Delta}$. The mean is unchanged, and since $\phi(0)<B$,
\[
\int\phi(x\mp\Delta)\,\d\bigl[(1-a)\widetilde{\mb P}_{\pm\Delta}+a\,\delta_{\pm\Delta}\bigr]=(1-a)\!\int\phi(x\mp\Delta)\,\d\widetilde{\mb P}_{\pm\Delta}+a\,\phi(0)\le(1-a)B+a\,\phi(0)<B,
\]
so both moment constraints become strict. The mixed pair has affinity at least $\rho((1-a)\widetilde{\mb P}_{-\Delta}+a\,\delta_{-\Delta}, (1-a)\widetilde{\mb P}_{\Delta}+a\,\delta_{\Delta}) \geq (1-a)\,\rho(\widetilde{\mb P}_{-\Delta},\widetilde{\mb P}_\Delta)$, so for $a$ small it stays at least $e^{-r(\Delta)}-\eta/2$.

\emph{Truncation.} For $R>0$ let $A=\{|x|\le R\}$, so that $\widetilde{\mb P}_{\pm\Delta}(A)\to1$ as $R\to\infty$, and condition both measures on $A$: $\widetilde{\mb P}_{\pm\Delta,A}=\widetilde{\mb P}_{\pm\Delta}(\cdot\mid A)$, each supported in $[-R,R]$. Then
\[
\rho(\widetilde{\mb P}_{-\Delta,A},\widetilde{\mb P}_{\Delta,A})=\frac{\int_A\sqrt{\d\widetilde{\mb P}_{-\Delta}\,\d\widetilde{\mb P}_\Delta}}{\sqrt{\widetilde{\mb P}_{-\Delta}(A)\,\widetilde{\mb P}_\Delta(A)}}\;\ge\;\int_A\sqrt{\d\widetilde{\mb P}_{-\Delta}\,\d\widetilde{\mb P}_\Delta}\xrightarrow[R\to\infty]{}\rho(\widetilde{\mb P}_{-\Delta},\widetilde{\mb P}_\Delta).
\]
The conditioned means and moments converge to their original values as $R\to\infty$.

\emph{Exact mean.} Conditioning has perturbed the mean: by the truncation step the conditioned mean $m_\pm:=\E{\widetilde{\mb P}_{\pm\Delta,A}}{X}$ converges to $\pm\Delta$ as $R\to\infty$, so the discrepancy $e_\pm:=m_\pm\mp\Delta$ tends to $0$. If $e_\pm=0$ set $\mb P_{\pm\Delta}=\widetilde{\mb P}_{\pm\Delta,A}$. Otherwise restore the exact mean by adding one small atom: put $\theta_\pm=|e_\pm|^{1/2}\in(0,1)$ and
\[
y_\pm=\frac{\pm\Delta-(1-\theta_\pm)m_\pm}{\theta_\pm},\qquad
\mb P_{\pm\Delta}=(1-\theta_\pm)\widetilde{\mb P}_{\pm\Delta,A}+\theta_\pm\,\delta_{y_\pm},
\]
where $y_\pm$ is chosen exactly so that $(1-\theta_\pm)m_\pm+\theta_\pm y_\pm=\pm\Delta$, i.e.\ $\E{\mb P_{\pm\Delta}}{X}=\pm\Delta$. As $R\to\infty$ this corrective atom is negligible in both mass and location:
\[
\theta_\pm=|e_\pm|^{1/2}\to0,\qquad
\bigl|y_\pm-(\pm\Delta)\bigr|=(1-\theta_\pm)\frac{|e_\pm|}{\theta_\pm}=(1-\theta_\pm)\,|e_\pm|^{1/2}\to0 ,
\]
so $y_\pm\to\pm\Delta$. Three consequences for $R$ large. First, since $\widetilde{\mb P}_{\pm\Delta,A}$ is supported in $[-R,R]$ and $y_\pm\to\pm\Delta$, the measure $\mb P_{\pm\Delta}$ is supported in $[-R,R]\cup\{y_\pm\}$, hence compactly supported. Second, by continuity of $\phi$,
\[
\int\phi(x\mp\Delta)\,\d\mb P_{\pm\Delta}=(1-\theta_\pm)\!\int\phi(x\mp\Delta)\,\d\widetilde{\mb P}_{\pm\Delta,A}+\theta_\pm\,\phi\bigl(y_\pm\mp\Delta\bigr)\xrightarrow[R\to\infty]{}\int\phi(x\mp\Delta)\,\d\widetilde{\mb P}_{\pm\Delta}<B,
\]
the limit being strict from the strict-moments step (the atom term vanishes as $\theta_\pm\to0$ and $\phi(y_\pm\mp\Delta)\to\phi(0)$), so the moment constraint stays strict. Third,
\[
\rho(\mb P_{-\Delta},\mb P_\Delta)\ge\rho\bigl((1-\theta_-)\widetilde{\mb P}_{-\Delta,A},(1-\theta_+)\widetilde{\mb P}_{\Delta,A}\bigr)=\sqrt{(1-\theta_-)(1-\theta_+)}\,\rho(\widetilde{\mb P}_{-\Delta,A},\widetilde{\mb P}_{\Delta,A}),
\]
with $\theta_\pm\to0$. Combining the three steps (taking $a$ small, then $R$ large, then the corrective atoms negligible), the pair $(\mb P_{-\Delta},\mb P_\Delta)$ is compactly supported, has strict moments, and satisfies $\rho(\mb P_{-\Delta},\mb P_\Delta)\ge e^{-r(\Delta)}-\eta$.
\end{proof}

\begin{lemma}[Scaling]\label{lem:scaling}
Let $S_t(x)=tx$ for $t>0$. The common pushforward $(\mb P_{-\Delta},\mb P_\Delta)\mapsto\bigl((S_t)_*\mb P_{-\Delta},(S_t)_*\mb P_\Delta\bigr)$ leaves the affinity $\rho(\mb P_{-\Delta},\mb P_\Delta)$ unchanged. If the pair is compactly supported, feasible in~\eqref{eq:intro-hellinger} at level $\Delta$, and has strict moment constraints, then its common pushforward is feasible at level $t\Delta$ for all $t$ sufficiently close to $1$.
\end{lemma}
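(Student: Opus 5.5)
The plan treats the two claims of Lemma~\ref{lem:scaling} separately: invariance of the affinity under a common bijection, then feasibility of the scaled pair via a continuity argument in $t$.

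\emph{Affinity invariance.} Fix a dominating measure $\omega$ for the pair, say $\omega=\mb P_{-\Delta}+\mb P_\Delta$, and write $p_\pm=\d\mb P_{\pm\Delta}/\d\omega$. Then $(S_t)_*\omega$ dominates both pushforwards, with densities $p_\pm\circ S_t^{-1}$, because $S_t$ is a Borel bijection with Borel inverse $S_{1/t}$. Hence
\[
\rho\bigl((S_t)_*\mb P_{-\Delta},(S_t)_*\mb P_\Delta\bigr)=\int\sqrt{p_-\circ S_t^{-1}\,p_+\circ S_t^{-1}}\;\d(S_t)_*\omega=\int\sqrt{p_-p_+}\,\d\omega=\rho(\mb P_{-\Delta},\mb P_\Delta).
\]
Alternatively, this is data processing applied in both directions (through $S_t$ and through $S_{1/t}$), as in property~(i) of the proof of Proposition~\ref{prop:relax-gap}.

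\emph{Feasibility at level $t\Delta$.} The mean condition is exact for every $t>0$. By linearity, $\int x\,\d(S_t)_*\mb P_{\pm\Delta}=t\int x\,\d\mb P_{\pm\Delta}=\pm t\Delta$, so no approximation is needed there.

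For the moment constraint, change variables to get
\[
\int\phi(y\mp t\Delta)\,\d(S_t)_*\mb P_{\pm\Delta}(y)=\int\phi\bigl(t(x\mp\Delta)\bigr)\,\d\mb P_{\pm\Delta}(x)=:G_\pm(t).
\]
The aim is to show that $G_\pm$ is continuous at $t=1$.
\begin{itemize}
\item The pair is supported in some $[-R,R]$.
\item For $t\in[\tfrac12,2]$, the arguments $t(x\mp\Delta)$ therefore lie in the compact set $[-2(R+\Delta),2(R+\Delta)]$.
\item $\phi$ is continuous, hence uniformly continuous and bounded on that compact set.
\item So the integrand converges uniformly in $x$ as $t\to1$, and $G_\pm(t)\to G_\pm(1)$ (dominated convergence also suffices).
\end{itemize}
Since $G_\pm(1)<B$ by the strict-moment hypothesis, there is a $\delta>0$ with $G_\pm(t)<B$ for all $|t-1|<\delta$, for both signs. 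The pushforward pair then lies in $\mc C_{-t\Delta}\times\mc C_{t\Delta}$, i.e.\ it is feasible in~\eqref{eq:intro-hellinger} at level $t\Delta$.

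\emph{Where the hypotheses are used.} No step here is genuinely hard. The one point needing care is the continuity of $G_\pm$, which is why compact support and strict moments are required. Without compact support, $\phi(t(x-\Delta))$ could fail to be integrable for every $t>1$: think of a super-linear $\phi$ growing like $e^{x}$ and a measure with borderline tails. Without strictness, even a continuous $G_\pm$ could exceed $B$ immediately for $t>1$. Lemma~\ref{lem:compact-strict} supplies exactly these two properties.
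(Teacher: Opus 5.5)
Your proposal is correct and follows essentially the same route as the paper. Both argue invariance of the affinity under the common bijection $S_t$ by change of variables, note that the means scale exactly to $\pm t\Delta$, and obtain continuity of the moment functional at $t=1$ by bounding $\phi$ on a compact set (compact support plus dominated convergence), so that strict feasibility persists for $t$ near $1$.
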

\begin{proof}
The affinity is invariant under the common bijection $S_t$ (change of variables). The means scale by $t$, so $(S_t)_*\mb P_{\pm\Delta}$ has mean $\pm t\Delta$. For the moment constraint,
\[
\int\phi(x-t\Delta)\,\d\bigl((S_t)_*\mb P_\Delta\bigr)(x)=\int\phi\bigl(t(y-\Delta)\bigr)\,\d\mb P_\Delta(y),
\]
and symmetrically for $\mb P_{-\Delta}$. Write $g(t):=\int\phi\bigl(t(y-\Delta)\bigr)\,\d\mb P_\Delta(y)$ for the right-hand side, and let $[-M,M]$ contain the support of $\mb P_\Delta$. For $t$ in the neighborhood $[\tfrac12,\tfrac32]$ of $1$ and $y\in[-M,M]$ the argument $t(y-\Delta)$ lies in the compact interval $K:=[-\tfrac32(M+\Delta),\tfrac32(M+\Delta)]$, on which the continuous function $\phi$ attains a finite bound $C:=\sup_K\phi$. The integrand $y\mapsto\phi\bigl(t(y-\Delta)\bigr)$ is therefore dominated by the constant $C$, which is $\mb P_\Delta$-integrable, while for each fixed $y$ it converges to $\phi(y-\Delta)$ as $t\to1$ by continuity of $\phi$. Dominated convergence gives
\[
g(t)\xrightarrow[t\to1]{}g(1)=\int\phi(y-\Delta)\,\d\mb P_\Delta(y),
\]
so $g$ is continuous at $t=1$. Since the pair is strictly feasible at level $\Delta$, $g(1)<B$, and hence $g(t)<B$ for all $t$ sufficiently close to $1$; the same applies to $\mb P_{-\Delta}$.
\end{proof}

\section{Slow-Growth Lower Bound}
\label{app:slow-growth}

This appendix proves Lemma~\ref{prop:slow-hellinger}. Throughout, Assumption~\ref{assu:slow-growth} is in force, and we use the shifted class $\mc C_\mu$ of~\eqref{eq:shifted-class}, the lower-bound problem $r(\Delta)$ of~\eqref{eq:intro-hellinger}, the scale and mass $a(\Delta),m(\Delta)$ of~\eqref{eq:slow-critical-explicit}, and the mirror affinity $\rho(\cdot,\cdot)$ of~\eqref{eq:rho-def}. For $z\ge\Lambda(x_\infty)$ write
\begin{equation}\label{eq:slow-def-a-F}
  A(z):=\Lambda^{-1}(z),\qquad F(z):=\frac{1}{A(z)},
\end{equation}
so that $a(\Delta)=A(B/\Delta)=\Lambda^{-1}(B/\Delta)$ and $m(\Delta)=\Delta/a(\Delta)$.

\begin{lemma}[Superpolynomial growth of $A$]\label{lem:slow-inverse-growth}
Under Assumption~\ref{assu:slow-growth}(A2), for every $q>0$,
\[
  \frac{A(z)}{z^q}\longrightarrow\infty,\qquad z\to\infty;
\]
in particular $z/A(z)\to0$.
\end{lemma}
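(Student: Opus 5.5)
The plan is to exploit the von Mises representation of $\Lambda$ in (A2), since the claim is equivalent to $\log\Lambda$ growing more slowly than $\epsilon\log x$ for every $\epsilon>0$, and then to invert that bound.

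\emph{Step 1 (subpolynomial growth of $\Lambda$).} Fix $\epsilon>0$. By (A2)(i) there is $x_\epsilon\ge x_\infty$ with $\eta(u)\le\epsilon$ for all $u\ge x_\epsilon$. The representation then gives, for $x\ge x_\epsilon$,
\[
  \log\Lambda(x)\;=\;\log\Lambda(x_\epsilon)+\int_{x_\epsilon}^{x}\frac{\eta(u)}{u}\,\d u\;\le\;\log\Lambda(x_\epsilon)+\epsilon\log(x/x_\epsilon).
\]
Writing $c_\epsilon\defn\Lambda(x_\epsilon)x_\epsilon^{-\epsilon}$, this reads $\Lambda(x)\le c_\epsilon x^\epsilon$ for $x\ge x_\epsilon$. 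Only (i) is needed here. Condition (ii) is used solely to know that $\Lambda(x)\to\infty$, so that $A=\Lambda^{-1}$ is defined on all of $[\Lambda(x_\infty),\infty)$ and $A(z)\to\infty$ as $z\to\infty$. Positivity of $\eta$ gives strict monotonicity of $\Lambda$, hence that $A$ is a well-defined increasing bijection.

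\emph{Step 2 (inversion).} For $z$ large enough that $A(z)\ge x_\epsilon$, which holds because $A(z)\to\infty$, apply Step 1 at $x=A(z)$. This gives $z=\Lambda(A(z))\le c_\epsilon A(z)^\epsilon$, i.e.
\[
  A(z)\;\ge\;(z/c_\epsilon)^{1/\epsilon}.
\]
Given $q>0$, choose $\epsilon\defn 1/(2q)$. Then $A(z)/z^q\ge c_\epsilon^{-2q}z^{2q}/z^q=c_\epsilon^{-2q}z^{q}\to\infty$. Taking $q=1$ yields $z/A(z)\to0$, the stated particular case.

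There is no real obstacle. The only point requiring care is to verify that $A(z)$ eventually exceeds the threshold $x_\epsilon$ before substituting it into Step 1, and this follows from the divergence of $\Lambda$ guaranteed by (ii) together with monotonicity. Conditions (iii) and (A1) are not needed for this lemma.
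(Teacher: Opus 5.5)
Your argument is correct, but it takes a different route from the paper. The paper does not prove the lemma directly. It cites Resnick's regular-variation calculus (Proposition~2.6): $A$, as the inverse of the slowly varying $\Lambda$, is rapidly varying of index $+\infty$, and this forces growth faster than every power.

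You instead unpack that machinery in the case at hand. Your Step~1 is the Potter-type bound $\Lambda(x)\le c_\epsilon x^\epsilon$, i.e.\ $\log\Lambda(x)/\log x\to0$, derived straight from the von Mises representation using only $\eta\to0$. Your Step~2 is a one-line inversion of that bound.

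Your route buys three things. It is self-contained. It gives a quantitative statement, $A(z)\ge(z/c_\epsilon)^{1/\epsilon}$ for all $z\ge\Lambda(x_\epsilon)$, with an explicit constant. And it shows exactly which hypotheses are used: (A2)(i) for the power bound, (A2)(ii) for unboundedness of $\Lambda$, and positivity and continuity of $\eta$ so that $A$ is a well-defined increasing bijection.

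The paper's citation leaves the reader to check that (A2) makes $\Lambda$ slowly varying and to pass from rapid variation to the superpolynomial bound. In exchange it is brief, and it keeps the lemma inside the regular-variation framework the paper uses elsewhere, for instance for the slow variation of $m^{-1}$ and for the uniform convergence theorem in Lemma~\ref{lem:slow-D-IS}.
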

\begin{proof}
This is a standard property of inverses of slowly varying functions, and follows directly from \cite[Proposition~2.6(i,v)]{resnick2007heavy} (equivalently, $A$ is rapidly varying of index $+\infty$, being the inverse of a slowly varying function).
\end{proof}

\begin{lemma}[Eventual convexity of the reciprocal inverse]\label{lem:slow-F-convex}
Under Assumption~\ref{assu:slow-growth}, the function $F(z)=1/\Lambda^{-1}(z)$ is eventually convex.
\end{lemma}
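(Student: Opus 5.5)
Since $F$ is defined through the inverse $A=\Lambda^{-1}$, the natural route is to differentiate through the inverse and express everything in terms of the index $\eta$ of Assumption~\ref{assu:slow-growth}(A2). On $[x_\infty,\infty)$ the von Mises representation gives $\Lambda'(x)=\Lambda(x)\eta(x)/x>0$, and $\eta$ is $C^1$, so $\Lambda$ is $C^2$ there with
\[
  \Lambda''(x)=\frac{\Lambda(x)}{x^2}\Bigl(\eta(x)^2+x\eta'(x)-\eta(x)\Bigr).
\]
By the inverse function theorem $A$ is $C^2$ on $[\Lambda(x_\infty),\infty)$, with $A'(z)=1/\Lambda'(A(z))$ and $A''(z)=-\Lambda''(A(z))/\Lambda'(A(z))^3$. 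Hence $F=1/A$ is $C^2$, and $F''=(2A'^2-AA'')/A^3$. Since $A>0$, eventual convexity reduces to the sign condition $2A'(z)^2-A(z)A''(z)\ge0$ for large $z$.

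Substituting $x=A(z)\to\infty$ (Lemma~\ref{lem:slow-inverse-growth}), I will rewrite this condition as
\[
  2A'^2-AA''=\frac{1}{\Lambda'(x)^3}\bigl(2\Lambda'(x)+x\Lambda''(x)\bigr).
\]
Inserting the formulas for $\Lambda'$ and $\Lambda''$ then gives
\[
  2\Lambda'+x\Lambda''=\frac{\Lambda(x)}{x}\bigl(2\eta+\eta^2+x\eta'-\eta\bigr)=\frac{\Lambda(x)\eta(x)}{x}\Bigl(1+\eta(x)+\frac{x\eta'(x)}{\eta(x)}\Bigr).
\]
The prefactor $\Lambda\eta/x$ and $\Lambda'^3$ are strictly positive. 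Conditions (A2)(i) and (iii) make the bracket tend to $1$. So the bracket is positive for all $x$ beyond some $x_1$, and therefore $F''(z)>0$ for $z\ge\Lambda(x_1)$. This gives the eventual convexity, in fact strict.

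The only delicate step is the bookkeeping of signs in $A''$ and the cancellation between $2\Lambda'$ and $x\Lambda''$. The naive term $-\eta$ in $\Lambda''$ makes $\Lambda$ eventually concave, so the conclusion hinges on the factor $2$ from differentiating $1/A$ dominating that term. I would double-check this against the example $\Lambda=\log$, where $A=e^z$ and $F=e^{-z}$ is visibly convex. Condition (ii) is not needed here, except through the fact that $A(z)\to\infty$, which ensures $x=A(z)$ eventually exceeds $x_1$.
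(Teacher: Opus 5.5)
Your proof is correct and follows the paper's argument essentially line by line. The paper also differentiates through the inverse to obtain $F''(z)=\bigl(2\Lambda'(x)+x\Lambda''(x)\bigr)/\bigl(x^3\Lambda'(x)^3\bigr)$ at $x=A(z)$, and shows the numerator equals $\frac{\Lambda(x)\eta(x)}{x}\bigl[1+\eta(x)+x\eta'(x)/\eta(x)\bigr]$, which is positive for large $x$ by (A2)(i),(iii). The only cosmetic difference is that the paper recognizes this numerator as $\phi''(x)$, so eventual convexity of $F$ amounts to eventual convexity of the tail of $\phi(x)=x\Lambda(x)$.
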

\begin{proof}
For $x\ge x_\infty$, differentiating the von Mises representation~(A2) gives $\Lambda'(x)=\eta(x)\Lambda(x)/x>0$, hence $\phi'(x)=\Lambda(x)(1+\eta(x))$ and
\[
  \phi''(x)=2\Lambda'(x)+x\Lambda''(x)=\frac{\Lambda(x)\,\eta(x)}{x}\Bigl[1+\eta(x)+\frac{x\eta'(x)}{\eta(x)}\Bigr] .
\]
By~(A2)(i),(iii), $\eta(x)\to0$ and $x\eta'(x)/\eta(x)\to0$, so the bracket tends to $1$; choose $R\ge x_\infty$ so large that $\eta$ is positive and continuously differentiable on $[R,\infty)$ and the bracket is positive there. Then $\Lambda$ is $C^2$ on $[R,\infty)$ (as $\eta$ is $C^1$) and the tail $\phi(x)=x\Lambda(x)$ is convex there, $\phi''(x)\ge0$ for $x\ge R$.

Write $x=A(z)=\Lambda^{-1}(z)$, well-defined for $z\ge\Lambda(x_\infty)$ since by~(A2) the map $\Lambda$ is a strictly increasing bijection of $[x_\infty,\infty)$ onto $[\Lambda(x_\infty),\infty)$, so that $z=\Lambda(x)$ and $F(z)=1/x$. By the inverse-function rule $\frac{\d x}{\d z}=A'(z)=\frac{1}{\Lambda'(x)}$ and the chain rule we get
\[
  F'(z)=-\frac{1}{x^2}\frac{\d x}{\d z}=-\frac{1}{x^2\Lambda'(x)} .
\]
Writing $F'(z)=-[g(x)]^{-1}$ with $g(x):=x^2\Lambda'(x)$, and applying the same process,
\[
  F''(z)=\frac{1}{\Lambda'(x)}\frac{\d}{\d x}\!\left(-\frac{1}{g(x)}\right)=\frac{1}{\Lambda'(x)}\frac{g'(x)}{g(x)^2},\qquad g'(x)=x\bigl(2\Lambda'(x)+x\Lambda''(x)\bigr),
\]
so that $F''(z)=\bigl(2\Lambda'(x)+x\Lambda''(x)\bigr)/\bigl(x^3(\Lambda'(x))^3\bigr) = \phi''(x)/\bigl(x^3(\Lambda'(x))^3\bigr)$. The numerator is nonnegative and the denominator positive, so $F''(z)\ge0$ for $z\ge\Lambda(R)$.
\end{proof}

\begin{lemma}[Derivative regularity implies inverse stability]\label{lem:slow-D-IS}
Under Assumption~\ref{assu:slow-growth}, for every fixed $K<\infty$,
\[
  \sup_{0\le s\le K\sqrt{z/A(z)}}\left|\frac{A(z/(1-s))}{A(z)}-1\right|\longrightarrow0,\qquad z\to\infty .
\]
\end{lemma}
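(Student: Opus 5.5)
The plan is to control the logarithm of the ratio through the derivative of $A=\Lambda^{-1}$, read off from the von Mises representation (A2). Since $A$ is increasing, $A(z/(1-s))/A(z)\ge1$ for $s\in[0,1)$, so only the upper deviation needs bounding.

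\emph{Logarithmic derivative of $A$.} For $w\ge\Lambda(x_\infty)$ put $x=A(w)$. As in the proof of Lemma~\ref{lem:slow-F-convex}, $\Lambda'(x)=\eta(x)\Lambda(x)/x$ and $\Lambda(x)=w$, so
\[
  \frac{A'(w)}{A(w)}=\frac{1}{x\,\Lambda'(x)}=\frac{1}{w\,\eta(A(w))}.
\]
Hence, for $0\le s<1$,
\[
  \log\frac{A(z/(1-s))}{A(z)}=\int_z^{z/(1-s)}\frac{\d w}{w\,\eta(A(w))}.
\]
The only danger is that $\eta\to0$ blows up the integrand.

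\emph{Lower bound on $\eta$.} By (A2)(iii), $\eta$ is slowly varying. Indeed $\log\eta(y)-\log\eta(y_1)=\int_{y_1}^{y}(u\eta'(u)/\eta(u))\,\d u/u$ with integrand $o(1)/u$, so $\log\eta(y)=o(\log y)$. Fix $\delta\in(0,1/3)$. Then, for all sufficiently large $y$,
\[
  \eta(y)\ge y^{-\delta}.
\]
In the same way $\Lambda$ is slowly varying by (A2)(i), so $\Lambda(y)\le y^{\delta}$ for large $y$. With $y=A(z)$ this gives $z\le A(z)^{\delta}$.

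\emph{Bootstrap to avoid circularity.} A direct bound would require knowing that $A(w)$ stays comparable to $A(z)$ on $[z,z/(1-s)]$, which is exactly what is being proved. To break this circularity, use the differential inequality
\[
  \frac{\d}{\d w}\log A(w)\le \frac{A(w)^{\delta}}{w},
\]
valid for large $w$ by the previous step. Equivalently $\frac{\d}{\d w}\bigl(-A(w)^{-\delta}/\delta\bigr)\le 1/w$. Integrating from $z$ to $w\le z/(1-s)$ gives
\[
  A(w)^{-\delta}\ge A(z)^{-\delta}-\delta\log\tfrac1{1-s},
  \qquad\text{hence}\qquad
  A(w)^{\delta}\le \frac{A(z)^{\delta}}{1-\delta A(z)^{\delta}\log\frac1{1-s}},
\]
whenever the denominator is positive.

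\emph{Controlling $s$.} Now $s\le K\sqrt{z/A(z)}\to0$ by Lemma~\ref{lem:slow-inverse-growth}, so $\log\frac1{1-s}\le 2s$ for large $z$. Using $z\le A(z)^{\delta}$,
\[
  A(z)^{\delta}\log\tfrac1{1-s}\le 2K\,A(z)^{\delta}\bigl(z/A(z)\bigr)^{1/2}\le 2K\,A(z)^{(3\delta-1)/2}\to0,
\]
because $\delta<1/3$ and $A(z)\to\infty$. So the denominator above tends to $1$, and for large $z$ we get $A(w)^{\delta}\le 2A(z)^{\delta}$ uniformly over $w\in[z,z/(1-s)]$ and over the admissible $s$.

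\emph{Conclusion.} Inserting this into the integral formula gives
\[
  0\le \log\frac{A(z/(1-s))}{A(z)}\le 2A(z)^{\delta}\log\tfrac1{1-s}\le 4K\,A(z)^{(3\delta-1)/2}\to0,
\]
uniformly in $0\le s\le K\sqrt{z/A(z)}$, which yields the claim. The main obstacle is the circularity handled in the bootstrap step. The slowly varying lower bound on $\eta$ via (iii) is the second point needing care, but it is standard.
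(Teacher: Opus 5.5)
Your proof is correct, but it breaks the circularity by a different mechanism than the paper. Both arguments rest on the same comparison: the admissible relative step $s\le K\sqrt{z/A(z)}$ is negligible against $\eta(A(z))$ because $\eta$ and $\Lambda$ are both subpolynomial. In the paper this is the claim $\sqrt{\Lambda(x)/x}=o(\eta(x))$, proved from $\log\eta(x)=o(\log x)$ and $\log\Lambda(x)=o(\log x)$, which is exactly the content of your bounds $\eta(y)\ge y^{-\delta}$ and $\Lambda(y)\le y^{\delta}$.

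The difference is how each proof controls $\eta$ along a range whose extent is what is being estimated. The paper works in the spatial variable $x=A(z)$. It invokes the uniform convergence theorem for the slowly varying $\eta$ to get $\int_x^{(1+\varepsilon)x}\eta(u)/u\,\d u=(1+o(1))\eta(x)\log(1+\varepsilon)$. It then argues by contradiction: if $A\bigl(\Lambda(x)/(1-K\sqrt{\Lambda(x)/x})\bigr)\ge(1+\varepsilon)x$, then $\log\Lambda$ would have to increase by order $\eta(x)$, whereas only $O(\sqrt{\Lambda(x)/x})$ is available. Monotonicity of the integral means it only ever looks at $\eta$ on the fixed window $[x,(1+\varepsilon)x]$.

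You instead work in the level variable via $\frac{\d}{\d w}\log A(w)=1/(w\,\eta(A(w)))$. Because your crude bound on $\eta$ holds on the whole tail, the Bihari-type comparison for $A^{-\delta}$ controls $A(w)$ on $[z,z/(1-s)]$ with no a priori localization.

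Your route is direct rather than by contradiction. It needs only an integration of (A2)(iii), not the uniform convergence theorem. It also gives an explicit rate: the supremum is $O\bigl(A(z)^{-(1-3\delta)/2}\bigr)$ for every $\delta\in(0,1/3)$. The paper's route is purely qualitative but isolates $\sqrt{\Lambda(x)/x}=o(\eta(x))$ as the single structural driver.

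A minor simplification: you do not need Lemma~\ref{lem:slow-inverse-growth} to get $s\to0$, since $z\le A(z)^{\delta}$ already gives $z/A(z)\le A(z)^{\delta-1}\to0$.
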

\begin{proof}
Put $\delta(x):=\sqrt{\Lambda(x)/x}$. Since $\Lambda$ is slowly varying, $\delta(x)\to0$. Moreover, by~(A2)(iii) the von Mises condition $x\eta'(x)/\eta(x)\to0$ holds, so $\eta$ is itself slowly varying via its Karamata representation \citep{resnick2007heavy}. Fix $\varepsilon>0$. We have
\[
  \int_x^{(1+\varepsilon)x}\frac{\eta(u)}{u}\,\d u=\int_1^{1+\varepsilon}\frac{\eta(tx)}{t}\,\d t=\eta(x)\int_1^{1+\varepsilon}\frac{\eta(tx)/\eta(x)}{t}\,\d t .
\]
By the uniform convergence theorem for slowly varying functions~\cite[Proposition~2.4]{resnick2007heavy}, $\eta(tx)/\eta(x)\to1$ uniformly for $t\in[1,1+\varepsilon]$ as $x\to\infty$. Writing $\eta(tx)/\eta(x)=1+\varrho(t,x)$, the inner integral is
\[
  \int_1^{1+\varepsilon}\frac{\d t}{t}+\int_1^{1+\varepsilon}\frac{\varrho(t,x)}{t}\,\d t=(1+o(1))\log(1+\varepsilon),
\]
since $\bigl|\int_1^{1+\varepsilon}\tfrac{\varrho(t,x)}{t}\,\d t\bigr|\le\bigl(\sup_{t\in[1,1+\varepsilon]}|\varrho(t,x)|\bigr)\int_1^{1+\varepsilon}\tfrac{\d t}{t}=o(1)\,\log(1+\varepsilon)$, the supremum vanishing as $x\to\infty$ by uniform convergence.
Hence
\[
  \int_x^{(1+\varepsilon)x}\frac{\eta(u)}{u}\,\d u=(1+o(1))\eta(x)\log(1+\varepsilon).
\]
We claim $\delta(x)=o(\eta(x))$, i.e.\ $\eta(x)/\delta(x)\to\infty$; indeed, since $\delta(x)=\sqrt{\Lambda(x)/x}$,
\[
  \log\frac{\eta(x)}{\delta(x)}=\log\eta(x)-\log\delta(x)=\log\eta(x)-\tfrac12\log\Lambda(x)+\tfrac12\log x .
\]
Both $\eta$ and $\Lambda$ are slowly varying, and hence $\log\eta(x)=o(\log x)$ and $\log\Lambda(x)=o(\log x)$, so
\[
  \log\frac{\eta(x)}{\delta(x)}=\tfrac12\log x+o(\log x)\longrightarrow\infty ,
\]
giving $\delta(x)=o(\eta(x))$.
Fix $K\in(0,\infty)$. For all large $x$ we have both $K\delta(x)<1/2$ and $x\ge x_\infty$; then for $s\in[0,K\delta(x)]$ the argument $\Lambda(x)/(1-s)\ge\Lambda(x)\ge\Lambda(x_\infty)$ lies in the domain $[\Lambda(x_\infty),\infty)$ of $A=\Lambda^{-1}$. Since $A=\Lambda^{-1}$ is increasing, $s\mapsto A(\Lambda(x)/(1-s))$ is increasing on $[0,K\delta(x)]$ and is at least $A(\Lambda(x))=x$, so
\begin{equation}\label{eq:slow-supKbound}
  \sup_{0\le s\le K\delta(x)}\left|\frac{A(\Lambda(x)/(1-s))}{x}-1\right|=\frac{A(\Lambda(x)/(1-K\delta(x)))}{x}-1 .
\end{equation}
Let $y(x):=A(\Lambda(x)/(1-K\delta(x)))$. Then $y(x)\ge x$ and  $K\delta(x)<1/2$ yields
\[
  \log\Lambda(y(x))-\log\Lambda(x)=\log\frac1{1-K\delta(x)}\le 2K\delta(x)=o(\eta(x)).
\]
We claim $y(x)/x\to1$ for $x\to\infty$. If not, then for some $\varepsilon>0$ and a subsequence, $y(x)\ge(1+\varepsilon)x$. Along this subsequence we may assume $x\ge x_\infty$, so that $x_\infty\le x\le y(x)$ and the von Mises representation~(A2) applies on $[x,y(x)]$, giving
\[
  \log\Lambda(y(x))-\log\Lambda(x)=\int_x^{y(x)}\frac{\eta(u)}{u}\,\d u\ge\int_x^{(1+\varepsilon)x}\frac{\eta(u)}{u}\,\d u=(1+o(1))\eta(x)\log(1+\varepsilon),
\]
contradicting $\log\Lambda(y(x))-\log\Lambda(x)=o(\eta(x))$. Thus $y(x)/x\to1$, and~\eqref{eq:slow-supKbound} gives $\sup_{0\le s\le K\delta(x)}|A(\Lambda(x)/(1-s))/x-1|\to0$. Writing $z=\Lambda(x)$, so $x=A(z)$ and $\delta(x)=\sqrt{z/A(z)}$, yields the assertion.
\end{proof}

For $0\le t\le1$, set $G(t):=1-\sqrt{1-t^2}$. We shall use
\begin{equation}\label{eq:slow-G-quadratic}
  G(t)=\frac{t^2}{1+\sqrt{1-t^2}}\ge\frac{t^2}{2},\qquad 0\le t\le1 .
\end{equation}

\begin{lemma}[Rapid comparison]\label{lem:slow-rapid}
Let $\varepsilon_R:=\sup_{u\ge R}\eta(u)$. Then $\varepsilon_R\downarrow0$, and for all sufficiently large $R$, all $x\ge R$, and all $0<t\le1$,
\[
  G(t)\ge e^{-\varepsilon_R}\,x\,t\,F\!\left(\frac{\Lambda(x)}{t}\right).
\]
\end{lemma}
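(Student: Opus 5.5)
The plan is to reduce the claim to an elementary one-variable inequality, using the von Mises representation~(A2) to control how fast $A=\Lambda^{-1}$ grows. First, $\varepsilon_R\downarrow0$ because $R\mapsto\sup_{u\ge R}\eta(u)$ is non-increasing and $\eta(u)\to0$ by (A2)(i). Take $R\ge x_\infty$ large enough that $\varepsilon_R<1/4$, say.

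Fix $x\ge R$ and $t\in(0,1]$, and put $y\defn A(\Lambda(x)/t)$. Then $\Lambda(y)=\Lambda(x)/t\ge\Lambda(x)$, so $y\ge x\ge x_\infty$ and $F(\Lambda(x)/t)=1/y$. The von Mises representation on $[x,y]$ gives
\[
  \log(1/t)=\log\Lambda(y)-\log\Lambda(x)=\int_x^y\frac{\eta(u)}{u}\,\d u\le\varepsilon_R\log(y/x),
\]
and hence $y/x\ge t^{-1/\varepsilon_R}$. Consequently $x\,t\,F(\Lambda(x)/t)=xt/y\le t^{1+1/\varepsilon_R}$. It therefore suffices to prove the scalar inequality
\[
  G(t)\ge e^{-\varepsilon}\,t^{p},\qquad p\defn1+1/\varepsilon,\ \varepsilon=\varepsilon_R,\ t\in(0,1].
\]

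Using the identity~\eqref{eq:slow-G-quadratic}, $G(t)=t^2/(1+\sqrt{1-t^2})$, the target becomes
\[
  \varepsilon+(p-2)\log(1/t)\ \ge\ \log\bigl(1+\sqrt{1-t^2}\bigr).
\]
For $t$ bounded away from $1$ this is easy: the left side grows like $(1/\varepsilon)\log(1/t)$, while the right side is at most $\log2$. So on $t\le1-c$ it holds once $\varepsilon$ is small.

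The main obstacle is the regime $t=1-h$ with $h$ small, where the right side is $\approx\sqrt{2h}$ and the left side is $\approx\varepsilon+(p-2)h$. Here I would minimize $\varepsilon-\sqrt{2h}+(p-2)h$ over $h$. The minimizer satisfies $\sqrt{2h}=1/(p-2)$, and the minimum value is
\[
  \varepsilon-\frac{1}{2(p-2)}=\varepsilon-\frac{\varepsilon}{2(1-\varepsilon)}>0
\]
for $\varepsilon<1/2$. To make this rigorous rather than asymptotic, I would use the explicit bounds $\log(1/(1-h))\ge h$ and $\log(1+\sqrt{1-t^2})\le\sqrt{1-t^2}\le\sqrt{2h}$, which turn the estimate into exactly this quadratic-in-$\sqrt h$ inequality, now valid for every $h\in[0,1)$. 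With those bounds the case split is unnecessary, and the argument closes for all sufficiently large $R$.
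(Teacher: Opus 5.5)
Your proposal is correct and follows essentially the same route as the paper. Both arguments use the von Mises bound to get $x\,t\,F(\Lambda(x)/t)\le t^{1+1/\varepsilon_R}$, and both close the scalar inequality by minimizing the same convex quadratic, which leaves the same positive margin $\varepsilon_R-\varepsilon_R/(2(1-\varepsilon_R))$; your variable $\sqrt{2h}$ simply plays the role of the paper's $w=\sqrt{1-t^2}$.
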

\begin{proof}
The statement on $\varepsilon_R$ follows from (A2). Fix $R>x_\infty$ so large that $\varepsilon_R<1/2$. Let $x\ge R$, $0<t\le1$, and put $y(x):=A(\Lambda(x)/t)$. Then $x_\infty<x\le y(x)$ (the second inequality since $t\le1$ and $A$ is increasing), so the von Mises representation~(A2) applies on $[x,y(x)]$ and gives
\[
  -\log t=\log\Lambda(y(x))-\log\Lambda(x)=\int_x^{y(x)}\frac{\eta(u)}{u}\,\d u\le\varepsilon_R\log(y(x)/x).
\]
Hence $x/y(x)\le t^{1/\varepsilon_R}$. Set $w=\sqrt{1-t^2}\in[0,1]$. Then, using the equality in~\eqref{eq:slow-G-quadratic},
\[
  \frac{t^{1/\varepsilon_R}}{G(t)/t}=(1-w^2)^{(1/\varepsilon_R-1)/2}(1+w).
\]
Using $\log(1-w^2)\le-w^2$ and $\log(1+w)\le w$, $\log[t^{1/\varepsilon_R}/(G(t)/t)]\le-\frac{1-\varepsilon_R}{2\varepsilon_R}w^2+w$. The right-hand side is a concave quadratic in $w$, with maximum $\varepsilon_R/[2(1-\varepsilon_R)]\le\varepsilon_R$ for $\varepsilon_R\le1/2$. Therefore $t^{1/\varepsilon_R}\le e^{\varepsilon_R}G(t)/t$. Since $F(\Lambda(x)/t)=1/y(x)$,
\[
  x\,t\,F\!\left(\frac{\Lambda(x)}{t}\right)=\frac{xt}{y(x)}\le t^{1+1/\varepsilon_R}\le e^{\varepsilon_R}G(t),
\]
implying the assertion.
\end{proof}

\begin{lemma}[A variation of Jensen's inequality]\label{lem:slow-allocation}
Let $B>0$, $L>0$, and let $F$ be positive, non-increasing, and convex on $[y_0,\infty)$. Let $\mb M$ be a finite nonnegative measure and $Y$ a measurable function with $Y\ge y_0$ $\mb M$-almost everywhere. If $\mb M(\Re)\ge L$ and $\int Y\,\d\mb M\le B$, then $\int F(Y)\,\d\mb M\ge L\,F(B/L)$.
\end{lemma}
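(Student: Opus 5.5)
The plan is to reduce to the classical Jensen inequality for the normalized measure, with one extra step that uses monotonicity of $F$ to handle the mismatch between the actual mass $\mb M(\Re)$ and the lower bound $L$. Write $M_0\defn\mb M(\Re)\ge L>0$ and consider the probability measure $\bar{\mb M}\defn\mb M/M_0$. Since $Y\ge y_0$ $\mb M$-a.e., the mean $\bar Y\defn\int Y\,\d\bar{\mb M}$ lies in $[y_0,\infty)$, and $\bar Y\le B/M_0$ by assumption. If $\int Y\,\d\mb M=+\infty$ this is excluded by $\int Y\,\d\mb M\le B$, so $\bar Y$ is finite. (If $\int F(Y)\,\d\mb M=+\infty$ there is nothing to prove, so we may assume integrability; $F(Y)\ge0$ makes the integral well defined in any case.)

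First I apply Jensen's inequality for the convex function $F$ on the interval $[y_0,\infty)$ under $\bar{\mb M}$:
\[
  \int F(Y)\,\d\mb M\;=\;M_0\int F(Y)\,\d\bar{\mb M}\;\ge\;M_0\,F(\bar Y)\;\ge\;M_0\,F(B/M_0),
\]
the last step using that $F$ is non-increasing and $\bar Y\le B/M_0$. This requires $B/M_0\ge y_0$. That holds because $B\ge\int Y\,\d\mb M\ge y_0M_0$ when $y_0\ge0$. If $y_0<0$, I would note that $\bar Y\in[y_0,B/M_0]$ still places $B/M_0$ in the domain.

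Second, I show that $h(M)\defn M\,F(B/M)$ is non-decreasing in $M$ on the relevant range, so that $M_0\ge L$ gives $h(M_0)\ge h(L)=L\,F(B/L)$. This is the only genuinely new point. Writing $u=B/M$, we have $h=B\,F(u)/u$. For a positive convex $F$ on $[y_0,\infty)$ with $y_0\ge0$, the map $u\mapsto F(u)/u$ is non-increasing: for $u<v$, convexity gives $F(u)\ge F(v)+(u-v)F'_+(v)$, and $F'_+\le0$ because $F$ is non-increasing, so $F(u)\ge F(v)\ge F(v)\,u/v$. More simply, $F(u)\ge F(v)$ together with $1/u>1/v$ and $F>0$ gives $F(u)/u\ge F(v)/v$ directly. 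Hence $h$ increases as $u=B/M$ decreases, that is, as $M$ increases.

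The main obstacle is checking the domain: $B/L$ must lie in $[y_0,\infty)$ so that $F(B/L)$ is defined, and the monotonicity step requires positive arguments. Since $B/L\ge B/M_0\ge\bar Y\ge y_0$, domain membership is automatic. In the application $y_0>0$ (it is a large threshold $\Lambda(R)$), so positivity holds. I would state the lemma under $y_0\ge0$ if the general case becomes awkward.
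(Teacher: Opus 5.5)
Your proof is correct and follows the paper's argument: Jensen for the normalized measure, monotonicity of $F$ to replace the mean by $B/M_0$, and then $M_0F(B/M_0)\ge M_0F(B/L)\ge L\,F(B/L)$, which is your monotonicity of $u\mapsto F(u)/u$ unwound (convexity is not needed for this last step). Your hedging about $y_0\ge0$ is unnecessary: $\int Y\,\d\mb M\ge y_0\,\mb M(\Re)$ holds for any real $y_0$, and $B/M_0$ and $B/L$ are automatically positive since $B,L,M_0>0$, so your argument already covers the lemma as stated.
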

\begin{proof}
Let $N=\mb M(\Re)$. Since $Y\ge y_0$, the average $N^{-1}\int Y\,\d\mb M$ belongs to $[y_0,\infty)$, and Jensen's inequality gives $\int F(Y)\,\d\mb M\ge N F(N^{-1}\int Y\,\d\mb M)$. As $F$ is non-increasing and $\int Y\,\d\mb M\le B$, this is $\ge N F(B/N)$. Because $N\ge L$, $B/N\le B/L$; since $F$ is non-increasing and positive, $N F(B/N)\ge N F(B/L)\ge L F(B/L)$.
\end{proof}

\begin{proof}[Proof of Lemma~\ref{prop:slow-hellinger}]
Fix $\Delta>0$, $\mu\in\Re$, and feasible $\mb P_-\in\mc C_{\mu-\Delta}$, $\mb P_+\in\mc C_{\mu+\Delta}$. Let $\mb S:=(\mb P_-+\mb P_+)/2$. There is a measurable random variable $\tau$ with $|\tau|\le1$ defined by $\tfrac{\d\mb P_+}{\d\mb S}=1+\tau$ and $\tfrac{\d\mb P_-}{\d\mb S}=1-\tau$. We have 
\begin{equation}\label{eq:slow-D-as-G}
  D:=1-\rho(\mb P_-,\mb P_+)=\int G(|\tau|)\,\d\mb S .
\end{equation}
We derive a non-asymptotic lower bound for $D$, independent of $\mb P_\pm$ and $\mu$.

\medskip\noindent\emph{Step 1: centered mean separation.}
Since $\mb P_\pm$ are probability measures, $\int\tau\,\d\mb S=\tfrac12(\mb P_+(\Re)-\mb P_-(\Re))=0$. The mean constraints give $2\Delta=\int x\,\mb P_+(\d x)-\int x\,\mb P_-(\d x)=2\int(x-\mu)\tau(x)\,\mb S(\d x)$, hence
\begin{equation}\label{eq:slow-mean-separation}
  \Delta\le\int|x-\mu|\,|\tau(x)|\,\mb S(\d x).
\end{equation}
Fix $R\ge2x_\infty$ large enough that $F$ is convex on $[\Lambda(R/2),\infty)$ (Lemma~\ref{lem:slow-F-convex}) and that $\sup_{u\ge R/2}\eta(u)<1/2$, so that Lemma~\ref{lem:slow-rapid} is applicable at every scale $\ge R/2$ (as used in Step~3). Since $R/2\ge x_\infty$, the identity $\phi(y)=|y|\Lambda(|y|)$ then holds for all $|y|\ge R/2$. We fix $R$ first and then let $\Delta\downarrow0$; hence we may assume $\Delta<R/2$.

\medskip\noindent\emph{Step 2: the small-$x$ region $|x-\mu|\le R$.}
Let $A_R:=\{|x-\mu|\le R\}$ and $c_R:=\int_{A_R}|x-\mu|\,|\tau(x)|\,\mb S(\d x)$. By Cauchy--Schwarz and $\mb S(A_R)\le1$,
\[
  c_R^2\le\Bigl(\int_{A_R}|x-\mu|^2\,\mb S(\d x)\Bigr)\Bigl(\int_{A_R}|\tau|^2\,\mb S(\d x)\Bigr)\le R^2\int_{A_R}|\tau|^2\,\mb S(\d x),
\]
so by~\eqref{eq:slow-G-quadratic},
\begin{equation}\label{eq:slow-compact-cost}
  \int_{A_R}G(|\tau|)\,\d\mb S\ge\tfrac12\int_{A_R}|\tau|^2\,\d\mb S\ge\frac{c_R^2}{2R^2}.
\end{equation}

\medskip\noindent\emph{Step 3: the large-$x$ region $|x-\mu|\ge R$.}
Let $E_R:=\{|x-\mu|>R,\ |\tau(x)|>0\}$ and, on $E_R$, $u_\Delta(x):=|x-\mu|-\Delta$, so $u_\Delta(x)>R/2$ as we assume $\Delta<R/2$. Define the finite nonnegative measure $\mb M(\d x):=\mathbf 1_{E_R}(x)u_\Delta(x)|\tau(x)|\,\mb S(\d x)$ and
\[
  Y(x):=\begin{cases}\Lambda(u_\Delta(x))/|\tau(x)|,&x\in E_R,\\[2pt]\Lambda(R/2),&x\notin E_R,\end{cases}
\]
so $Y\ge\Lambda(R/2)$. For $x\in E_R$, Lemma~\ref{lem:slow-rapid} with spatial argument $u_\Delta(x)$ and $t=|\tau(x)|>0$ gives $G(|\tau(x)|)\ge e^{-\varepsilon_{R/2}}u_\Delta(x)|\tau(x)|F(\Lambda(u_\Delta(x))/|\tau(x)|)$ with $\varepsilon_{R/2}:=\sup_{u\ge R/2}\eta(u)$. Hence
\begin{equation}\label{eq:slow-tail-rapid}
\begin{aligned}
  \int_{E_R}G(|\tau|)\,\d\mb S&\ge e^{-\varepsilon_{R/2}}\int_{E_R}F\!\left(\frac{\Lambda(u_\Delta(x))}{|\tau(x)|}\right)u_\Delta(x)|\tau(x)|\,\mb S(\d x)\\
  &=e^{-\varepsilon_{R/2}}\int F(Y)\,\d\mb M .
\end{aligned}
\end{equation}
We next estimate $\mb M(\Re)$. By definition of $\mb M$, with $u_\Delta=|x-\mu|-\Delta$,
\[
  \mb M(\Re)=\int_{E_R}|x-\mu|\,|\tau|\,\d\mb S-\Delta\int_{E_R}|\tau|\,\d\mb S .
\]
In the first integral the integrand vanishes where $\tau=0$, so, splitting off $A_R$ and using~\eqref{eq:slow-mean-separation},
\[
  \int_{E_R}|x-\mu|\,|\tau|\,\d\mb S=\int_{|x-\mu|>R}|x-\mu|\,|\tau|\,\d\mb S=\int|x-\mu|\,|\tau|\,\d\mb S-c_R\ge\Delta-c_R,
\]
while, again because the integrand vanishes where $\tau=0$, $\int_{E_R}|\tau|\,\d\mb S=\int_{|x-\mu|>R}|\tau|\,\d\mb S$. Combining,
\begin{equation}\label{eq:slow-M-mass-pre}
  \mb M(\Re)\ge\Delta-c_R-\Delta\int_{|x-\mu|>R}|\tau(x)|\,\mb S(\d x).
\end{equation}
By~\eqref{eq:slow-G-quadratic} and~\eqref{eq:slow-D-as-G}, $\int|\tau|^2\,\d\mb S\le2D$, so Cauchy--Schwarz gives $\int_{|x-\mu|>R}|\tau|\,\d\mb S\le\sqrt{2D}$, whence
\begin{equation}\label{eq:slow-M-mass}
  \mb M(\Re)\ge\Delta-c_R-\Delta\sqrt{2D}.
\end{equation}
For the budget, since $\mb P_-\in\mc C_{\mu-\Delta}$ and $\mb P_+\in\mc C_{\mu+\Delta}$,
\begin{align*}
  B\ge&\,\frac12\!\int\phi(x-\mu+\Delta)\,\mb P_-(\d x)+\frac12\!\int\phi(x-\mu-\Delta)\,\mb P_+(\d x)\\
  =& \int\Bigl[\frac{(1-\tau)}2\phi(x-\mu+\Delta)+\frac{(1+\tau)}2\phi(x-\mu-\Delta)\Bigr]\mb S(\d x)\\
  =& \int\Bigl[\frac{(1-\tau)}2\phi(|x-\mu+\Delta|)+\frac{(1+\tau)}2\phi(|x-\mu-\Delta|)\Bigr]\mb S(\d x).
\end{align*}
For every $x$, the triangle inequality gives $|x-\mu\pm\Delta|\ge|x-\mu|-\Delta=u_\Delta(x)$. On $E_R$, $u_\Delta(x)>R/2\ge x_\infty>0$, so both $|x-\mu\pm\Delta|$ and $u_\Delta(x)$ lie in $[0,\infty)$ where $\phi$ is non-decreasing; the integrand above is at least $\phi(u_\Delta(x))$ on $E_R$. As that integrand is nonnegative everywhere, we may restrict the budget integral to $E_R$, where $\phi(u_\Delta)=u_\Delta\Lambda(u_\Delta)$ as $u_\Delta>x_\infty$. Hence
\begin{equation}\label{eq:slow-Y-budget}
  \int Y\,\d\mb M=\int_{E_R}u_\Delta(x)\Lambda(u_\Delta(x))\,\mb S(\d x)=\int_{E_R}\phi(u_\Delta(x))\,\mb S(\d x)\le B.
\end{equation}

\medskip\noindent\emph{Step 4: the master inequality.}
Define $L_R:=(\Delta-c_R-\Delta\sqrt{2D})_+$ and $\Psi_R:=L_R F(B/L_R)$ if $L_R>0$, else $\Psi_R:=0$. If $L_R>0$, then $\mb M(\Re)\ge L_R$ by~\eqref{eq:slow-M-mass}, $Y\ge\Lambda(R/2)$, and~\eqref{eq:slow-Y-budget} holds, so Lemma~\ref{lem:slow-allocation} gives $\int F(Y)\,\d\mb M\ge L_R F(B/L_R)=\Psi_R$; if $L_R=0$ this is trivial. Combining with~\eqref{eq:slow-D-as-G},~\eqref{eq:slow-compact-cost}, and~\eqref{eq:slow-tail-rapid},
\begin{equation}\label{eq:slow-master-bound}
  D\ge\frac{c_R^2}{2R^2}+e^{-\varepsilon_{R/2}}\Psi_R .
\end{equation}

\medskip\noindent\emph{Step 5: scalar uniformity argument.}
Put $z:=B/\Delta$, so that $m(\Delta)=\Delta/A(z)$, and fix $\theta\in(0,1)$. We distinguish two cases according to the size of $D$ relative to $m(\Delta)$.

\emph{Case 1 ($D\ge\theta m(\Delta)$).} Dividing by $m(\Delta)$,
\begin{equation}\label{eq:slow-case-large-D}
  \frac{D}{m(\Delta)}\ge\theta .
\end{equation}

\emph{Case 2 ($D<\theta m(\Delta)$).} The master bound~\eqref{eq:slow-master-bound}, dropping the nonnegative $e^{-\varepsilon_{R/2}}\Psi_R$ term, gives $D\ge c_R^2/(2R^2)$; with $D<\theta m(\Delta)$ this yields $c_R<R\sqrt{2\theta m(\Delta)}$. Define $s_\Delta:=(c_R+\Delta\sqrt{2D})/\Delta$. From $D<\theta m(\Delta)$ and $c_R<R\sqrt{2\theta m(\Delta)}$, and substituting $m(\Delta)=\Delta/A(z)$ with $\Delta=B/z$,
\begin{equation}\label{eq:slow-s-bound}
  s_\Delta\le R\sqrt{\frac{2\theta m(\Delta)}{\Delta^2}}+\sqrt{2\theta m(\Delta)}=R\sqrt{\frac{2\theta}{B}}\sqrt{\frac{z}{A(z)}}+\frac{\sqrt{2\theta B}}{z}\sqrt{\frac{z}{A(z)}} .
\end{equation}
For $0<\Delta<B$ we have $z=B/\Delta>1$; set $K_{R,\theta}:=R\sqrt{2\theta/B}+\sqrt{2\theta B}$. Then, for such $\Delta$,
\begin{equation}\label{eq:slow-s-admissible}
  0\le s_\Delta\le K_{R,\theta}\sqrt{z/A(z)}=:S_\Delta(R,\theta).
\end{equation}
By Lemma~\ref{lem:slow-inverse-growth}, $z/A(z)\to0$, so $S_\Delta(R,\theta)\to0$ and $S_\Delta(R,\theta)<1$ for small $\Delta$; then~\eqref{eq:slow-s-admissible} gives $L_R=\Delta(1-s_\Delta)>0$, so $\Psi_R=L_R F(B/L_R)$. Set $\omega_\Delta(R,\theta):=\sup_{0\le s\le S_\Delta(R,\theta)}|A(z/(1-s))/A(z)-1|$. Lemma~\ref{lem:slow-D-IS} with $K=K_{R,\theta}$ gives $\omega_\Delta(R,\theta)\to0$. Since $s_\Delta\le S_\Delta(R,\theta)$ and $B/L_R=z/(1-s_\Delta)$,
\begin{equation}\label{eq:slow-tail-asymptotic-uniform}
  \Psi_R=\frac{\Delta(1-s_\Delta)}{A(z/(1-s_\Delta))}=m(\Delta)\frac{1-s_\Delta}{A(z/(1-s_\Delta))/A(z)}\ge m(\Delta)\frac{1-S_\Delta(R,\theta)}{1+\omega_\Delta(R,\theta)} .
\end{equation}
Combining~\eqref{eq:slow-master-bound} and~\eqref{eq:slow-tail-asymptotic-uniform} in this case,
\begin{equation}\label{eq:slow-case-small-D}
  \frac{D}{m(\Delta)}\ge e^{-\varepsilon_{R/2}}\frac{1-S_\Delta(R,\theta)}{1+\omega_\Delta(R,\theta)} .
\end{equation}
The alternatives~\eqref{eq:slow-case-large-D} and~\eqref{eq:slow-case-small-D} show that every admissible triple $(\mu,\mb P_-,\mb P_+)$ satisfies
\[
  \frac{1-\rho(\mb P_-,\mb P_+)}{m(\Delta)}=\frac{D}{m(\Delta)}\ge\min\!\left\{\theta,\ e^{-\varepsilon_{R/2}}\frac{1-S_\Delta(R,\theta)}{1+\omega_\Delta(R,\theta)}\right\},
\]
with right-hand side independent of $\mu,\mb P_-,\mb P_+$. For every sufficiently large $R$, taking $\liminf_{\Delta\downarrow0}$ and using $S_\Delta(R,\theta)\to0$, $\omega_\Delta(R,\theta)\to0$ gives
\[
  \liminf_{\Delta\downarrow0}\inf_{\mu\in\Re}\inf_{\mb P_{\pm}\in\mc C_{\mu\pm\Delta}}\tfrac{(1-\rho(\mb P_-,\mb P_+))}{m(\Delta)}\ge\min\{\theta,e^{-\varepsilon_{R/2}}\}.
\]
Letting $\theta\uparrow1$ and then $R\to\infty$, and using $\varepsilon_{R/2}\downarrow0$, proves the claim.
\end{proof}

\end{document}